\documentclass{amsart}
\usepackage{graphicx}
\usepackage{amsmath}%in Winedt
\usepackage{amssymb}
\usepackage{amsthm}

\newtheorem{theorem}{Theorem}[section]
\newtheorem{lem}[theorem]{Lemma}

\theoremstyle{Corollary}

\newtheorem{prop}[theorem]{Proposition}

\numberwithin{equation}{section}

%    Absolute value notation

%    Blank box placeholder for figures (to avoid requiring any
%    particular graphics capabilities for printing this document).

\begin{document}

\title[Webster scalar curvature flow II]{The Webster scalar curvature flow on CR sphere. Part II}

%    Information for first author
\author{Pak Tung Ho}
%    Address of record for the research reported here
\address{Department of Mathematics, Sogang University, Seoul
121-742, Korea}

\email{ptho@sogang.ac.kr, paktungho@yahoo.com.hk}

%    General info
\subjclass[2000]{Primary 32V20, 53C44; Secondary 53C21, 35R01}

\date{January 25, 2013.}

\keywords{Webster scalar curvature, CR sphere, CR Yamabe problem}

\begin{abstract}
This is the second of two papers,
in which we study the problem of prescribing Webster scalar curvature on the CR sphere
as a given function $f$. Using the Webster scalar curvature flow,
we prove an existence result under suitable assumptions on the Morse indices of $f$.
\end{abstract}

\maketitle

\section{Introduction}

Suppose $(M,g_0)$ is a compact $n$-dimensional Riemannian manifold without boundary, where $n\geq 3$.
Given a function $f$ on $M$, the problem of prescribing scalar curvature
is  to find a metric $g$ conformal to $g_0$ such that $R_g=f$. When $f$ is constant, it is the Yamabe problem,
which was solved by Trudinger \cite{Trudinger}, Aubin \cite{Aubin0}, and Schoen \cite{Schoen}. When
$(M,g_0)$ is the $n$-dimensional sphere $S^n$ with $g_0$ being the standard metric in $S^n$, it is the so-called Nirenberg's problem and was studied in
\cite{Chang&Gursky&Yang,Chang&Yang5,Chang&Yang3,Chang&Yang4,Kazdan&Warner2,Struwe}. Kazdan and Warner \cite{Kazdan&Warner2},
using a clever integration by parts, found a necessary condition, which is now known as Kazdan-Warner condition. More precisely,
they showed that if $f$ can be prescribed as the scalar curvature of a metric $g=u^{\frac{4}{n-2}}g_0$, then
$$\int_{S^n}\langle\nabla_{g_0}f,\nabla_{g_0}x_i\rangle_{g_0}u^{\frac{2n}{n-2}}dV_{g_0}=0\hspace{2mm}\mbox{ for }i=1,2,...,n+1,$$
where $x_i$ is the coordinate function of $\mathbb{R}^{n+1}$ restricted to $S^n$.
Later, Chang and Yang \cite{Chang&Yang5} proved the following (see also \cite{Chang&Gursky&Yang}):

\begin{theorem}[Chang-Yang]\label{thm1.3}
Suppose that $f$ is a smooth positive Morse function with only non-degenerate critical points and satisfies the
degree condition:
\begin{equation*}
\sum_{\nabla_{g_0}f(x),\Delta_{g_0}f(x)<0}(-1)^{ind(f,x)}\neq -1.
\end{equation*}
If $\|f-n(n+1)\|_{C^0(S^n)}$ is sufficiently small,
then there exists a metric $g$ conformal to $g_0$ such that its scalar curvature $R_g=f$.
\end{theorem}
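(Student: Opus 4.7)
First I would reformulate the problem variationally. Writing the conformal metric as $g = u^{4/(n-2)} g_0$ with $u > 0$, the equation $R_g = f$ becomes
\[
-\tfrac{4(n-1)}{n-2}\Delta_{g_0} u + R_{g_0} u = f\, u^{(n+2)/(n-2)}\quad\text{on } S^n,
\]
which is the Euler--Lagrange equation of the functional
\[
J_f(u) = \frac{\int_{S^n}\bigl(\tfrac{4(n-1)}{n-2}|\nabla u|^2 + R_{g_0} u^2\bigr) dV_{g_0}}{\left(\int_{S^n} f u^{2n/(n-2)} dV_{g_0}\right)^{(n-2)/n}}.
\]
When $f \equiv n(n+1)$ this is the Yamabe problem on $S^n$, whose positive solutions form the noncompact family $\{\delta_{P,\lambda}\}_{(P,\lambda)\in S^n \times (0,\infty)}$ of standard bubbles obtained from pull-back by conformal automorphisms of $S^n$.

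\textbf{Finite-dimensional reduction.} Since $\|f - n(n+1)\|_{C^0}$ is small, the plan is to seek solutions of the form $u = \alpha\, \delta_{P,\lambda} + \omega$ with $\omega$ being $L^2$-orthogonal to the tangent space of the bubble manifold at $\delta_{P,\lambda}$. The linearized Yamabe operator at $\delta_{P,\lambda}$ has kernel precisely this tangent space and is uniformly invertible on its orthogonal complement, so a Lyapunov--Schmidt argument produces, for each $(P,\lambda)$ and each sufficiently small $\|f - n(n+1)\|_{C^0}$, a unique small correction $\omega(P,\lambda)$. Substituting back reduces the infinite-dimensional problem to finding critical points of a reduced functional $\Phi(P,\lambda)$ on $S^n \times (0,\infty)$.

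\textbf{Expansion and critical points at infinity.} Next I would compute the large-$\lambda$ asymptotics of $\Phi$. Using the concentration of $\delta_{P,\lambda}^{2n/(n-2)}$ near $P$, one obtains an expansion of the form
\[
\Phi(P,\lambda) = c_0 + c_1\bigl(f(P) - n(n+1)\bigr) - \frac{c_2}{\lambda^2}\Delta_{g_0}f(P) + o(\lambda^{-2})
\]
with positive constants $c_1, c_2$. The Kazdan--Warner identity rules out concentration at points where $\Delta_{g_0} f > 0$, so the only critical points at infinity of $\Phi$ lie over critical points $P_j$ of $f$ satisfying $\Delta_{g_0} f(P_j) < 0$; each contributes a nondegenerate critical point at infinity whose Morse index with respect to $\Phi$ equals $\mathrm{ind}(f, P_j)$ (the $\lambda$-direction is stable precisely because $\Delta_{g_0} f(P_j) < 0$).

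\textbf{Degree computation and main obstacle.} Finally I would invoke Leray--Schauder degree theory. On a large sublevel set of $J_f$, $\nabla J_f$ is compact and its total degree is a topological invariant, which can be computed when $f \equiv n(n+1)$ to equal $-1$. Excising small neighborhoods of the critical points at infinity modifies the degree by the sum of their local indices, so the degree counted over genuine critical points of $J_f$ equals
\[
-1 - \sum_{\nabla f(x)=0,\, \Delta f(x)<0}(-1)^{\mathrm{ind}(f,x)},
\]
which by hypothesis is nonzero. This forces the existence of an actual critical point of $J_f$, and elliptic regularity upgrades it to a smooth positive solution $u$ realizing $f$ as the scalar curvature of $u^{4/(n-2)}g_0$. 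The main obstacle is the rigorous execution of the third step: deriving the sharp expansion of $\Phi$ together with a Bahri--Coron-style decomposition of Palais--Smale sequences into bubbles, which is what justifies the "excision" procedure and pins down the exact contribution of each critical point at infinity to the global degree.
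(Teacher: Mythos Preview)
This theorem is not proved in the present paper: it is quoted in the introduction as a known result of Chang and Yang (with the reference \cite{Chang&Yang5}, see also \cite{Chang&Gursky&Yang}), purely as background and motivation for the CR analogue that the paper actually establishes. There is therefore no proof in the paper to compare your proposal against.

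As for your outline itself, it is a reasonable sketch of a Bahri--Coron/degree-theoretic approach to the perturbative Nirenberg problem, and results of this type have indeed been obtained along such lines. It is worth noting, though, that Chang and Yang's original argument is somewhat different in spirit: rather than a finite-dimensional Lyapunov--Schmidt reduction followed by a global degree count, they work with a variational/min-max framework and a careful analysis of the energy levels near the critical points at infinity, showing that under the degree hypothesis a min-max level must be achieved by an actual critical point. Your approach and theirs lead to the same conclusion, but the mechanisms are not identical; in particular your assertion that the total degree of $\nabla J_f$ on a sublevel set equals $-1$ when $f\equiv n(n+1)$ would need justification, since in that case the critical manifold is noncompact (the full conformal group) and the naive degree is not obviously well defined.
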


Using the scalar curvature flow, Chen and Xu \cite{Chen&Xu} was able to estimate how small $\|f-n(n+1)\|_{C^0(S^n)}$ should be. More precisely,
they proved the following:
\begin{theorem}[Chen and Xu \cite{Chen&Xu}]
Suppose that $f$ is a smooth positive function on the $n$-dimensional sphere $S^{n}$ with only non-degenerate critical points
with Morse indices $ind(f,x)$ and such
that $\Delta_{g_0}f (x)\neq 0$ at any such point $x$. Let
$$m_i =\#\{x\in S^{n}: \nabla_{g_0}f(x) = 0, \Delta_{g_0}f(x)< 0, ind(f,x) = n-i\}.$$
Furthermore, suppose $\delta_n = 2^{\frac{2}{n}}$
if $3\leq n\leq 4$ or
$=2^{\frac{2}{n-2}}$
 for $n\geq 5$.
If there is no solution with coefficient $k_i\geq 0$ to the system of equations
\begin{equation*}
m_0 = 1 + k_0, m_i = k_{i-1} + k_i\mbox{ for }1\leq i\leq n, k_{n} = 0,
\end{equation*}
and
$f$ satisfies
\begin{equation*}
\max_{S^{n}}f/\min_{S^{n}}f<\delta_n,
\end{equation*}
then $f$ can be realized as the scalar curvature of some metric conformal to the standard metric
$g_0$.
\end{theorem}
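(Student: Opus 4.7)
The plan is to set up the normalized scalar curvature flow
\begin{equation*}
\partial_t g = -(R_g - \overline{r}(t)\, f)\, g
\end{equation*}
on $S^n$, where $\overline{r}(t)$ is a Lagrange multiplier chosen so that an appropriate normalization (e.g.\ $\int f\, dV_g = \mathrm{const}$) is preserved along the flow. Writing $g = u^{4/(n-2)} g_0$, this becomes a parabolic PDE for $u$. First I would establish long-time existence in $C^\infty$ by combining a concentration-compactness decomposition with Moser iteration to get a uniform $L^\infty$ bound on $u$, and then invoking standard parabolic regularity.

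Next I would analyze the asymptotic behavior as $t \to \infty$. Either the flow converges in $C^\infty$, in which case the limit metric satisfies $R_g = \overline{r}_\infty f$ and, after a constant rescaling of $f$, solves the prescribed curvature problem; or the conformal factor $u(t)$ bubbles off. Adapting bubble-tree techniques from the Yamabe flow literature, bubbles can only form at critical points of $f$ with $\Delta_{g_0} f < 0$, and the Morse index at such a point governs the ``index'' of the bubble viewed as a saddle-type end of the flow. The pinching hypothesis $\max f/\min f < \delta_n$ is precisely the threshold below which multi-bubble concentration is energetically forbidden: the Yamabe energy required to form two or more bubbles would exceed the energy available along the flow, so only simple single-bubble blow-up is possible.

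Finally, I would package the count via Morse theory along the flow, treating it as a gradient-type flow on the space of unit-normalized conformal metrics, a space with Euler characteristic $1$. Letting $m_i$ denote the number of admissible critical points of $f$ of index $n-i$ (the candidate limits of convergent trajectories) and $k_i$ the number of index-$i$ flow ends corresponding to bubble formation, the Morse-theoretic identities take the form
\begin{equation*}
m_0 = 1 + k_0, \qquad m_i = k_{i-1} + k_i, \qquad k_n = 0.
\end{equation*}
If this system admits no solution in non-negative integers, then some $m_i$ cannot be absorbed into bubble ends alone, so a convergent trajectory must exist, yielding the desired metric. The hard part, I expect, is the sharp bubble-tree analysis underpinning the threshold $\delta_n$: ruling out two-bubble concentration precisely at this threshold requires the sharp Sobolev/Aubin constant on $S^n$, and the computation genuinely differs between $3 \le n \le 4$ and $n \ge 5$, which accounts for the two cases in the definition of $\delta_n$.
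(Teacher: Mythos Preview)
This theorem is quoted from Chen--Xu \cite{Chen&Xu} and is not proved in the present paper; the paper instead establishes the CR analogue (Theorem \ref{thm1.1}) by adapting the same strategy. Your outline is broadly aligned with that strategy at the level of the flow set-up, long-time existence, the single-bubble dichotomy forced by the pinching hypothesis, and the identification of concentration points as critical points of $f$ with negative Laplacian.

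However, your Morse-theoretic packaging mis-assigns the roles of $m_i$ and $k_i$. You call the $m_i$ ``candidate limits of convergent trajectories'' and the $k_i$ counts of ``flow ends corresponding to bubble formation,'' but this is inverted. The actual argument (both in Chen--Xu and in the CR version carried out here in Section \ref{section7}) is by contradiction: one assumes $f$ is \emph{not} realizable, so that \emph{every} flow line concentrates at some critical point $p_i$ with $\Delta f(p_i)<0$. One then shows, as in Proposition \ref{prop7.1}, that the sublevel sets $L_\gamma=\{E_f\le\gamma\}$ of the energy functional form a filtration in which crossing the level $\beta_i$ associated to such a $p_i$ attaches a cell of dimension $n-\mathrm{ind}(f,p_i)$, while the top sublevel set is contractible. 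The Morse polynomial identity
\[
\sum_{i} m_i t^i \;=\; 1 + (1+t)\sum_i k_i t^i,\qquad k_i\ge 0,
\]
then follows from standard infinite-dimensional Morse theory (Chang \cite{Chang}), and equating coefficients yields the system in the statement. Thus the $m_i$ count the \emph{blow-up} cells, not convergent limits, and the $k_i$ are purely algebraic remainders in the Morse inequality rather than counts of any flow ends. The contradiction is that the hypothesis forbids a non-negative solution to this system, yet the blow-up assumption manufactures one. Your sketch would need to be recast in these terms; in particular, there is no direct ``trajectory counting'' step---the combinatorics comes entirely from the homotopy type of the sublevel sets.
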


In this paper, we are interested in the problem of prescribing Webster scalar curvature. More precisely,
suppose $(M,\theta_0)$ is a compact strongly pseudoconvex CR manifold of real dimensional $2n+1$ with a given contact form $\theta_0$.
We are interested in the following question:
can we find a contact form $\theta$ conformal to $\theta_0$ such that its Webster scalar curvature
$R_{\theta}=f$? This has been studied in \cite{Chtioui&Elmehdi&Gamara,Felli&Uguzzoni,Gamara3,Ho1,Malchiodi&Uguzzoni,Riahia&Gamara,Salem&Gamara}.
When $f$ is constant, this is the CR Yamabe problem, which was solved by Jerison and Lee in \cite{Jerison&Lee1,Jerison&Lee2,Jerison&Lee3},
and by Gamara and Yacoub in \cite{Gamara2,Gamara1}. As an analogy of Nirenberg's problem, we want to study the problem of prescribing  Webster scalar curvature on the CR sphere $(S^{2n+1},\theta_0)$.

From now on, we assume that $f$ is a smooth positive Morse function on $S^{2n+1}$ with only non-degenerate critical points in the sense that
$\Delta_{\theta_0}f(x)\neq 0$ whenever $f'(x)=0$. Here $f'(x)$ denotes the gradient of $f$ with respect to the standard Riemannian metric on
$S^{2n+1}$. In \cite{Malchiodi&Uguzzoni}, Malchiodi and Uguzzoni proved the following:{\footnote {Note that
Theorem \ref{thm1.2} in \cite{Malchiodi&Uguzzoni} was stated in terms of Heisenberg group $\mathbb{H}^n$. But one can easily see that the statement here is
equivalent to theirs.}}
\begin{theorem}[Malchiodi and Uguzzoni \cite{Malchiodi&Uguzzoni}]\label{thm1.2}
If $f$ satisfies
\begin{equation}\label{1.2}
\sum_{f'(x)=0,\,\Delta_{\theta_0}f(x)<0}(-1)^{ind(f,x)}\neq -1,
\end{equation}
where $ind(f,x)$ denotes the Morse index of $f$ at $x$, then $f$ can be realized as the Webster scalar curvature of some contact form conformal to $\theta_0$,
provided that $f$ is sufficiently closed to the Webster scalar curvature of the standard contact form on $S^{2n+1}$ in sup norm.
\end{theorem}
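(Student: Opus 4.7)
The plan is a Lyapunov--Schmidt finite-dimensional reduction, adapted to the CR sphere in the spirit of the Chang--Yang proof of Theorem \ref{thm1.3}. Seeking $\theta = u^{2/n}\theta_0$ with $R_\theta = f$ amounts to solving the CR-Yamabe-type equation
\[
-c_n \Delta_{b} u + R_{\theta_0} u = f\, u^{(n+2)/n},
\]
where $\Delta_b$ is the sub-Laplacian of $\theta_0$ and $c_n$ is the CR Yamabe constant. Since $\|f-R_{\theta_0}\|_{C^0}\ll 1$, solutions are expected to lie near the manifold of standard bubbles of the CR Yamabe problem. By the work of Jerison--Lee, the positive solutions for $f\equiv R_{\theta_0}$ form a smooth $(2n+2)$-dimensional manifold $Z = \{\omega_\sigma\}$ generated from one ground state by the CR conformal group; one may parameterize $\sigma = (y,t)\in S^{2n+1}\times(0,\infty)$, with $t\to\infty$ corresponding to concentration at $y$. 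A crucial input is that the linearized operator at each $\omega_\sigma$ has kernel exactly $T_{\omega_\sigma}Z$.

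Using this non-degeneracy, for each $\sigma$ I would solve by contraction mapping for a small correction $v(\sigma)\in (T_{\omega_\sigma}Z)^\perp$ so that $u_\sigma := \omega_\sigma + v(\sigma)$ satisfies the equation modulo $T_{\omega_\sigma}Z$. Then $\Phi(\sigma) := J_f(u_\sigma)$ is a smooth function on $Z$ whose critical points are in bijection with genuine solutions. A careful expansion as $t\to\infty$ should take the form
\[
\Phi(y,t) = A_0 + \alpha_1(t)\bigl(f(y)-\bar f\bigr) + \alpha_2(t)\,\Delta_{\theta_0} f(y) + \ldots,
\]
with $\alpha_1(t),\alpha_2(t)>0$, $\alpha_1(t),\alpha_2(t)\to 0$, and $\alpha_2/\alpha_1\to 0$ as $t\to\infty$, so that the ``critical points at infinity'' of $\Phi$ are precisely those $y$ satisfying $f'(y)=0$ and $\Delta_{\theta_0} f(y)<0$, their local Morse contribution being $(-1)^{ind(f,y)}$.

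Existence will then follow from a degree argument. The Leray--Schauder degree of the problem at $f\equiv R_{\theta_0}$ equals $-1$, reflecting the Euler characteristic of the compactified bubble manifold, while the sum of the contributions coming from the critical points at infinity equals $\sum_{f'(y)=0,\,\Delta_{\theta_0}f(y)<0}(-1)^{ind(f,y)}$. Hypothesis \eqref{1.2} forces these two quantities to disagree, so $\Phi$ must possess an interior critical point, and this critical point produces a true solution of the equation. The main obstacle will be carrying out the blow-up expansion above in the sub-elliptic setting: one must work in Folland--Stein Sobolev spaces, use the explicit Heisenberg-group ground states, and estimate the CR Green's function of $-c_n\Delta_b + R_{\theta_0}$ with enough precision to isolate the coefficient of $\Delta_{\theta_0}f$ at the correct order in $t$. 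The non-degeneracy of $Z$ is also delicate but is furnished by the Jerison--Lee classification of entire positive solutions on $\mathbb{H}^n$.
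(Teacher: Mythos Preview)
Your outline is a reasonable sketch of the original perturbative strategy of Malchiodi--Uguzzoni (itself the CR adaptation of Chang--Yang): reduce near the Jerison--Lee bubble manifold, obtain a finite-dimensional functional $\Phi$ on $S^{2n+1}\times(0,\infty)$, expand at infinity to see that the only ``critical points at infinity'' sit over critical points of $f$ with $\Delta_{\theta_0}f<0$, and conclude by a degree count. The main technical points you flag (non-degeneracy of the bubble manifold, Folland--Stein analysis, isolating the $\Delta_{\theta_0}f$ coefficient) are indeed the genuine work, and your expansion is schematically correct though the precise form of the leading term is closer to $c_0 f(y)^{-n/(n+1)}$ than to $A_0+\alpha_1(t)(f(y)-\bar f)$.

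However, the present paper does \emph{not} prove Theorem~\ref{thm1.2} this way. It is quoted as a known result, and the paper instead derives it as a corollary of the main Theorem~\ref{thm1.1}: if $\|f-R_{\theta_0}\|_{C^0}$ is small enough then the simple bubble condition $\max f/\min f<2^{1/n}$ is automatic, and setting $t=-1$ in the Morse polynomial identity \eqref{7.53} shows that any solution of the system \eqref{1.1} forces $\sum (-1)^{ind(f,x)}=-1$, contradicting \eqref{1.2}. Thus the paper's route is: long-time analysis of the Webster scalar curvature flow $\Rightarrow$ Morse-theoretic decomposition of sublevel sets of $E_f$ (Proposition~\ref{prop7.1}) $\Rightarrow$ Theorem~\ref{thm1.1} $\Rightarrow$ Theorem~\ref{thm1.2}.

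The trade-off is clear. Your (and Malchiodi--Uguzzoni's) finite-dimensional reduction is shorter and works for all $n\ge 1$, but is intrinsically perturbative with no explicit smallness threshold. The flow approach is far heavier---it requires the entire machinery of Part~I plus Sections~\ref{section5}--\ref{section7} here---but it yields the quantitative bound $\max f/\min f<2^{1/n}$ replacing ``$f$ sufficiently close to a constant,'' at the cost of the restriction $n\ge 2$ needed in Lemmas~\ref{lem6.2} and~\ref{lem6.7}.
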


This is CR version of Theorem \ref{thm1.3}.
It is important to know how large the difference in sup norm can possibly be.
To answer this question, we follow the argument of Chen-Xu in \cite{Chen&Xu} and consider the Webster scalar curvature flow.
By using the Webster scalar curvature flow, we prove
the following theorem, which is our main result:

\begin{theorem}\label{thm1.1}
Suppose that $n\geq 2$ and $f$ is a smooth positive function on $S^{2n+1}$ with only non-degenerate critical points
with Morse indices $ind(f,x)$ and such
that $\Delta_{\theta_0}f (x)\neq 0$ at any such point $x$. Let
\begin{equation}\label{1.0}
m_i =\#\{x\in S^{2n+1}: f'(x) = 0, \Delta_{\theta_0}f(x)< 0, ind(f,x) = 2n+1-i\}.
\end{equation}
If there is no solution with coefficient $k_i\geq 0$ to the system of equations
\begin{equation}\label{1.1}
m_0 = 1 + k_0, m_i = k_{i-1} + k_i\mbox{ for }1\leq i\leq 2n+1, k_{2n+1} = 0,
\end{equation}
and
$f$ satisfies the simple bubble condition, namely
\begin{equation}\tag{sbc}\label{sbc}
\max_{S^{2n+1}}f/\min_{S^{2n+1}}f<2^{\frac{1}{n}},
\end{equation}
then $f$ can be realized as the Webster scalar curvature of some contact form conformal to
$\theta_0$.
\end{theorem}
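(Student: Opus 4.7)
The plan is to adapt the scalar curvature flow approach of Chen--Xu \cite{Chen&Xu} to the CR setting. Writing $\theta(t) = u(t)^{p-2}\theta_0$ with critical CR exponent $p = 2 + 2/n$, I would use the volume-normalized Webster scalar curvature flow (presumably set up in Part I of this series) along which the weighted Yamabe-type quotient
\[
E_f(u) = \frac{\int_{S^{2n+1}}\bigl(|\nabla_{\theta_0} u|^2 + c_n R_{\theta_0}\, u^2\bigr)\, d\theta_0}{\Bigl(\int_{S^{2n+1}} f\, u^p\, d\theta_0\Bigr)^{2/p}}
\]
decreases monotonically; its stationary points solve $R_\theta = f$.

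Next I would establish long-time existence together with the following alternative: either $u(t)$ subconverges in $C^\infty$ to a solution of $R_\theta = f$, or else $u(t)$ concentrates as a bubble tree at a finite set of points. A Kazdan--Warner-type identity localizes the possible concentration points inside $\{x: f'(x) = 0,\ \Delta_{\theta_0} f(x) < 0\}$, which is precisely the set appearing in \eqref{1.0}. Here the simple bubble condition \eqref{sbc} is decisive: a direct comparison of energy levels shows that the highest $1$-bubble critical value $Y(S^{2n+1})/(\min f)^{2/p}$ lies strictly below the lowest $2$-bubble critical value $2^{(p-2)/p}\,Y(S^{2n+1})/(\max f)^{2/p}$ precisely when $\max f / \min f < 2^{1/n}$. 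Since the flow is energy-decreasing, under \eqref{sbc} it can never reach multi-bubble levels, and its asymptotic set consists only of genuine solutions together with single-bubble ``critical points at infinity'' indexed by the above critical set.

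The proof would then be finished by a Morse-theoretic argument in the spirit of Malchiodi--Uguzzoni \cite{Malchiodi&Uguzzoni}. To each critical point $x$ of $f$ with $\mathrm{ind}(f,x) = 2n+1-i$ and $\Delta_{\theta_0} f(x) < 0$ one assigns an unstable cell of dimension $i$ for the negative gradient flow of $E_f$, so that $m_i$ from \eqref{1.0} counts the $i$-cells among the critical points at infinity. If no genuine critical point of $E_f$ existed, the resulting Morse complex would have to compute the Poincar\'e polynomial of the initial sublevel set, which equals $1$; the relation
\[
\sum_i m_i\, t^i - 1 = (1+t)\sum_i k_i\, t^i, \qquad k_i \geq 0,
\]
expressing this is exactly the system \eqref{1.1}. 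Since \eqref{1.1} is assumed to have no non-negative integer solution, $E_f$ must possess a stationary point, yielding the desired contact form with $R_\theta = f$.

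The principal difficulty is the blow-up analysis underlying step two: proving the single-bubble alternative under \eqref{sbc} requires sharp interaction estimates between concentrating CR bubbles built from the explicit Jerison--Lee \cite{Jerison&Lee3} extremals, together with uniform $C^\infty$ regularity of the flow away from the concentration set. Both ingredients are technically more delicate than their Riemannian counterparts because the Webster sub-Laplacian is degenerate; in particular the Moser iteration and the pseudoconformal covariance estimates must be carried out in Folland--Stein spaces rather than in classical Sobolev spaces. Once these analytic inputs and the CR-adapted Morse lemma at infinity are in place, the topological part of the argument parallels Chen--Xu \cite{Chen&Xu}.
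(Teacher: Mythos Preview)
Your proposal is correct and follows essentially the same architecture as the paper: the Webster scalar curvature flow decreases $E_f$; under \eqref{sbc} the only failure mode is single-bubble concentration at critical points of $f$ with $\Delta_{\theta_0}f<0$; and a Morse-theoretic comparison of sublevel sets then forces the polynomial identity \eqref{1.1}, contradicting the hypothesis. The paper's technical core---which you correctly flag as the main difficulty---is not a bubble-tree/interaction-estimate analysis in the Bahri style you sketch, but rather an explicit \emph{shadow flow} computation: one tracks the center of mass $\Theta(t)$ and the dilation parameter $\epsilon(t)$ of the normalizing CR automorphism $\phi(t)$ in Heisenberg coordinates, derives ODEs for them via a spectral decomposition of $\alpha f_\phi-R_h$ against the first eigenspace of $-\Delta_{\theta_0}$, and reads off both the localization $\Delta_{\theta_0}f(Q)\le 0$ and the cell-attachment index $2n+1-\mathrm{ind}(f,p_i)$ from these asymptotics. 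The homotopy equivalences of sublevel sets (your ``Morse lemma at infinity'') are then built by hand from the flow together with explicit deformations in the $(\epsilon,p,v)$ coordinates on a neighborhood of each bubble, rather than invoked as a black box.
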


We remark that Theorem \ref{thm1.1} in fact implies Theorem \ref{thm1.2}. See the remark after the proof of
Theorem \ref{thm1.1} in section \ref{section7}.

This project began when I was invited by Prof. Paul Yang to visit Princeton University in the summer of 2011.
I would like to thank Prof. Paul Yang, without his support this paper would not have been possible.
I would like to thank Prof. Xingwang Xu, who answered many of my questions about his paper.
I am also grateful to Prof. Sai-Kee Yeung for his continuous encouragement and support for the last few years.
I would like to thank Prof. Jih-Hsin Cheng for the invitation to the Institute of Mathematics, Academia Sinica in the January of 2013,
where part of this work was done. This work was supported by the National Research Foundation of Korea (NRF) grant funded
by the Korea government (MEST) (No.2012R1A1A1004274).

\section{The Webster scalar curvature flow}

Let  $\theta_0$ be the standard contact form on the sphere
$S^{2n+1}=\{x=(x_1,...,x_{n+1}):|x|^2=1\}\subset\mathbb{C}^{n+1}$, i.e.
$$\theta_{0}=\sqrt{-1}(\overline{\partial}-\partial)|x|^2
=\sqrt{-1}\sum_{j=1}^{n+1}(x_{j}d\overline{x}_{j}-\overline{x}_{j}dx_{j}).$$
Then $(S^{2n+1},\theta_0)$ is a compact strictly pseudoconvex CR
manifold of real dimension $2n+1$.
Suppose $f$ is a smooth positive function on $S^{2n+1}$. Let $u_0\in C^\infty(S^{2n+1})$ such that
\begin{equation}\label{2.01}
\int_{S^{2n+1}}u_0^{2+\frac{2}{n}}dV_{\theta_0}=\int_{S^{2n+1}}dV_{\theta_0}.
\end{equation}
We introduced the Webster scalar curvature flow in part I \cite{Ho3}, which
is defined as the evolution of the contact form
$\theta=\theta(t)$, $t\geq 0$ as follows:
\begin{equation}\label{2.1}
\frac{\partial }{\partial t}\theta=(\alpha f- R_{\theta})\theta,
\hspace{4mm} \theta\big|_{t=0}=u_0^{\frac{2}{n}}\theta_0,
\end{equation}
where $R_\theta$ is the Webster scalar curvature  of the contact form
$\theta$ and $\alpha=\alpha(t)$ is given by
\begin{equation}\label{2.2}
\alpha\int_{S^{2n+1}}f dV_{\theta}=\int_{S^{2n+1}}R_\theta dV_{\theta}.
\end{equation}
If we write $\theta=u^{\frac{2}{n}}\theta_0$ where $u=u(t)$,
then (\ref{2.1}) is equivalent to
the following evolution equation of the conformal factor $u$:
\begin{equation}\label{2.3}
\frac{\partial u}{\partial t}=\frac{n}{2}(\alpha f- R_{\theta})u,
\hspace{4mm} u|_{t=0}=u_0.
\end{equation}
Since $\theta=u^{\frac{2}{n}}\theta_0$, the Webster scalar curvature
$R_\theta$ of $\theta$ satisfies the following CR Yamabe equation
\begin{equation}\label{2.4}
R_\theta=u^{-(1+\frac{2}{n})}\left(-(2+\frac{2}{n})\Delta_{\theta_0}u+R_{\theta_0}u\right),
\end{equation}
where $R_{\theta_0}=n(n+1)/2$ is the Webster scalar curvature of
$\theta_0$.

We recall some of the results we have proved in part I. In \cite{Ho3},
we established the long-time existence of the flow (\ref{2.1}). See section 2.3 in \cite{Ho3}. Define
\begin{equation}\label{2.5}
E(u)=\int_{S^{2n+1}}\left((2+\frac{2}{n})|\nabla_{\theta_0}u|^2_{\theta_0}
+R_{\theta_0}u^2\right)dV_{\theta_0}=\int_{S^{2n+1}}R_\theta dV_\theta
\end{equation}
where the last equality follows from (\ref{2.4}). We also define
\begin{equation}\label{2.7}
E_f(u)=\frac{E(u)}{(\int_{S^{2n+1}}fu^{2+\frac{2}{n}}dV_{\theta_0})^{\frac{n}{n+1}}}.
\end{equation}
We have proved in part I that (see Proposition 2.2 in \cite{Ho3}):
\begin{prop}\label{prop2.2}
The functional $E_f$ is non-increasing along the flow $(\ref{2.3})$. Indeed,
$$\frac{d}{dt}E_f(u)=-n\int_{S^{2n+1}}(\alpha f-R_\theta)^2u^{2+\frac{2}{n}}dV_{\theta_0}\Big/
\left(\int_{S^{2n+1}}fu^{2+\frac{2}{n}}dV_{\theta_0}\right)^{\frac{n}{n+1}}\leq 0.$$
\end{prop}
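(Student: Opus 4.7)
The plan is to compute $\frac{d}{dt}E_f(u)$ directly by the quotient rule, writing $E_f=E\cdot I^{-n/(n+1)}$ with $I(u):=\int_{S^{2n+1}} f u^{2+2/n}\,dV_{\theta_0}$, then reducing each piece using the flow equation (\ref{2.3}), the CR Yamabe equation (\ref{2.4}), and the normalization (\ref{2.2}).

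First I would differentiate $E$ in time. Using the left-most expression in (\ref{2.5}) and integrating by parts on the compact CR sphere, one gets
\begin{equation*}
\frac{d}{dt}E(u)=2\int_{S^{2n+1}} u_t\bigl(-(2+\tfrac{2}{n})\Delta_{\theta_0}u+R_{\theta_0}u\bigr)\,dV_{\theta_0}
=2\int_{S^{2n+1}}u_t\,R_\theta\, u^{1+2/n}\,dV_{\theta_0},
\end{equation*}
where the second equality uses (\ref{2.4}). Substituting the flow (\ref{2.3}) produces
\begin{equation*}
\frac{d}{dt}E(u)=n\int_{S^{2n+1}}(\alpha f-R_\theta)R_\theta\, u^{2+2/n}\,dV_{\theta_0}.
\end{equation*}
Differentiating $I$ is simpler and gives
\begin{equation*}
\frac{d}{dt}I(u)=(2+\tfrac{2}{n})\int_{S^{2n+1}} f u^{1+2/n}u_t\,dV_{\theta_0}=(n+1)\int_{S^{2n+1}} f(\alpha f-R_\theta)u^{2+2/n}\,dV_{\theta_0}.
\end{equation*}

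The next observation is the clean identity $\alpha=E/I$, which follows by combining (\ref{2.2}), the right-most equality in (\ref{2.5}), and the fact that $dV_\theta=u^{2+2/n}dV_{\theta_0}$ (so $I=\int f\,dV_\theta$). Applying the quotient rule and factoring out $I^{-n/(n+1)}$, we then have
\begin{equation*}
\frac{d}{dt}E_f(u)=\frac{1}{I^{n/(n+1)}}\left(\frac{dE}{dt}-\frac{n}{n+1}\,\alpha\,\frac{dI}{dt}\right).
\end{equation*}
Plugging in the two derivatives computed above, the $(n+1)$ in $dI/dt$ cancels the $n/(n+1)$ factor, and the integrand collapses into a perfect square:
\begin{equation*}
\frac{dE}{dt}-\frac{n}{n+1}\alpha\frac{dI}{dt}=n\int_{S^{2n+1}}(\alpha f-R_\theta)\bigl(R_\theta-\alpha f\bigr)u^{2+2/n}\,dV_{\theta_0}=-n\int_{S^{2n+1}}(\alpha f-R_\theta)^2 u^{2+2/n}\,dV_{\theta_0},
\end{equation*}
which, after dividing by $I^{n/(n+1)}$, is precisely the stated formula; non-positivity is then manifest.

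No step is genuinely hard. The only place requiring a little care is the integration by parts in the first display, which relies on the sub-Laplacian being self-adjoint on the closed manifold $(S^{2n+1},\theta_0)$ so that $\int(2+\tfrac{2}{n})\nabla_{\theta_0}u\cdot\nabla_{\theta_0}u_t\,dV_{\theta_0}=-\int u_t\,(2+\tfrac{2}{n})\Delta_{\theta_0}u\,dV_{\theta_0}$; this is standard but should be cited. Apart from that, the computation is a textbook monotonicity-of-energy argument in which the identification $\alpha=E/I$ is what makes the cross terms annihilate into a clean square.
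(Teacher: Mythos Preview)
Your computation is correct and is the standard monotonicity argument for this kind of normalized energy: differentiate $E$ and $I$ separately, use the flow equation and the Yamabe equation to rewrite each, observe $\alpha=E/I$, and watch the cross terms collapse into $-(\,\alpha f-R_\theta\,)^2$. The paper does not actually prove this proposition here but cites Part~I \cite{Ho3}; the argument there is the same direct calculation you have given.
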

Recall
the definition of the normalized contact form:
For every smooth positive function $u(t)$, set
$P(t)=\displaystyle\int_{S^{2n+1}}x
u(t)^{2+\frac{2}{n}}dV_{\theta_0}$ where $x=(x_1,...,x_{n+1})\in
S^{2n+1}\subset\mathbb{C}^{n+1}$, and we define
\begin{equation}\label{cm}
\widehat{P(t)}=\displaystyle\frac{P(t)}{\|P(t)\|}\mbox{ if }\|P(t)\|\neq 0,
\mbox{ otherwise }\widehat{P(t)}=P(t).
\end{equation}
Clearly $\widehat{P(t)}\in S^{2n+1}$ smoothly depends
on the time $t$ if $u$ does. There exists a family of conformal CR
diffeomorphisms $\phi(t): S^{2n+1}\rightarrow S^{2n+1}$ such that
(see \cite{Frank&Lieb})
\begin{equation}\label{4.19}
\int_{S^{2n+1}}x\,dV_h=(0,...,0)\in\mathbb{C}^{n+1}\hspace{2mm}\mbox{ for all }t>0,
\end{equation}
where the new contact form
\begin{equation}\label{4.19a}
h=h(t)=\phi(t)^*\big(\theta(t)\big)=v(t)^{2+\frac{2}{n}}\theta_0
\end{equation}
is called the normalized contact form with
$v=v(t)=(u(t)\circ\phi(t))\big|\det(d\phi(t))\big|^{\frac{n}{2n+2}}$
and the volume form $dV_h=v(t)^{2+\frac{2}{n}}dV_{\theta_0}$. In
fact, the conformal CR diffeomorphism may be represented as
$\phi(t)=\phi_{p(t),r(t)}=\Psi\circ T_{p(t)}\circ D_{r(t)}\circ\pi$
for some $p(t) \in\mathbb{H}^n$ and $r(t)>0$. Here the CR
diffeomorphism $\pi:S^{2n+1}\setminus\{(0,...,0,-1)\}\rightarrow\mathbb{H}^n$
is given by
\begin{equation}\label{4.20}
\pi(x)=\left(\frac{x'}{1+x_{n+1}},Re\big(\sqrt{-1}\frac{1-x_{n+1}}{1+x_{n+1}}\big)\right)
, x=(x',x_{n+1})\in S^{2n+1},
\end{equation}
 where $\mathbb{H}^n$ denotes the
Heisenberg group, and
 $D_{\lambda},
T_{(z',\tau')}:\mathbb{H}^n\rightarrow\mathbb{H}^n$ are respectively
the dilation and translation on $\mathbb{H}^n$ given by
\begin{equation}\label{4.21}
D_{\lambda}(z,\tau)=(\lambda z, \lambda^2\tau)\mbox{ and }
T_{(z',\tau')}(z,\tau)=(z+z',\tau+\tau'+2Im(z'\cdot
\overline{z}))\mbox{ for }(z,\tau)\in\mathbb{H}^n.
\end{equation}
And $\Psi=\pi^{-1}$ is the inverse of $\pi$.

Meanwhile, the normalized function $v$ satisfies
\begin{equation}\label{4.22}
-(2+\frac{2}{n})\Delta_{\theta_0}v+R_{\theta_0}v=R_hv^{1+\frac{2}{n}},
\end{equation}
where $R_h=R_\theta\circ\phi(t)$ is the Webster scalar curvature of
the normalized contact form $h=h(t)$ in view of (\ref{4.19a}). Hereafter, we set
$f_\phi=f\circ\phi$.

Let $\delta_n=\max_{S^{2n+1}}f/\min_{S^{2n+1}}f.$ By assumption (\ref{sbc}) in Theorem \ref{thm1.1}, we
have $\delta_n<2^{\frac{1}{n}}.$ Then there exists $\epsilon_0>0$
such that
$$\displaystyle\frac{\delta_n^{\frac{n}{n+1}}}{2^{\frac{1}{n+1}}}=\frac{1-\epsilon_0}{1+\epsilon_0}.$$
In particular,
$(1+\epsilon_0)\delta_n^{\frac{n}{n+1}}<2^{\frac{1}{n+1}}$.
 Set
\begin{equation}\label{4.15}
\beta=(1+\epsilon_0)Y(S^{2n+1},\theta_0)\left(\min_{S^{2n+1}}f\right)^{-\frac{n}{n+1}}.
\end{equation}

The following was proved in part I. See Theorem 4.7 in \cite{Ho3}.

\begin{theorem}\label{thm4.7}
For any given $u_0$ satisfying $(\ref{2.01})$ with
$E_f(u_0)\leq\beta$ with $\beta$ defined as in $(\ref{4.15})$, consider the flow $\theta(t)$ defined in $(\ref{2.1})$ with initial data
$u_0$. Let $\{t_k\}$ be a time sequence of the flow with
$t_k\rightarrow\infty$ as $k\rightarrow\infty$. Let $\{\theta_k\}$
be the corresponding contact forms such that
$\theta_k=u(t_k)^{\frac{2}{n}}\theta_0$. Assume that
$\|R_{\theta_k}-R_\infty\|_{L^{p_1}(S^{2n+1},\theta_k)}\rightarrow
0$ as $k\rightarrow\infty$ for some $p_1>n+1$ and a smooth function
$R_\infty>0$ defined on $S^{2n+1}$ which satisfies the simple bubble
condition (sbc):
$$\frac{\max_{S^{2n+1}}R_\infty}{\min_{S^{2n+1}}R_\infty}<2^{\frac{1}{n}}.$$
Then, up to a subsequence, either\\
\emph{(i)} $\{u_k\}$ is uniformly bounded in $S^p_2(S^{2n+1},\theta_0)$ for some $p\in(n+1,p_1)$.
 Furthermore, $u_k\rightarrow u_\infty$ in
$S^p_2(S^{2n+1},\theta_0)$ as $k\rightarrow\infty$, where $\theta_\infty=u_\infty^{\frac{2}{n}}\theta_0$
has Webster scalar curvature $R_\infty$, or\\
\emph{(ii)} let
$h_k=\phi(t_k)^*(\theta_k)=v_k^{\frac{2}{n}}\theta_0$ be the
associated sequence of the normalized contact forms satisfying
$\displaystyle\int_{S^{2n+1}}x\,dV_{h_k}=(0,...,0)\in\mathbb{C}^{n+1}$. Then, there exists
$Q\in S^{2n+1}$ such that
\begin{equation}\label{4.23}
dV_{\theta_k}\rightharpoonup\mbox{\emph{Vol}}(S^{2n+1},\theta_0)\delta_Q,\hspace{2mm}\mbox{
as }k\rightarrow\infty
\end{equation}
in the weak sense of measures. In addition, for any $\lambda\in (0,1)$, we have
\begin{equation}\label{4.24}
v_k\rightarrow 1\mbox{ in }
C^{1,\lambda}_P(S^{2n+1})\hspace{2mm}\mbox{ as }k\rightarrow\infty.
\end{equation}
Here $C^{1,\lambda}_P(S^{2n+1})$ is the parabolic H\"{o}rmander
H\"{o}lder spaces.
\end{theorem}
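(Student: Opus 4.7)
The plan is to establish a dichotomy according to whether the conformal factors $u_k$ remain uniformly bounded in $L^\infty(S^{2n+1})$ or concentrate, and then analyse each case by combining the monotonicity of $E_f$ from Proposition \ref{prop2.2} with Folland--Stein subelliptic regularity for $\Delta_{\theta_0}$. As preparation, since $E_f(u_k)\le E_f(u_0)\le\beta$ is bounded along the flow and $f\le\max_{S^{2n+1}}f$, one obtains a uniform $S^1_2$ bound on $u_k$ via (\ref{2.5}); up to a subsequence $u_k\rightharpoonup u_\infty$ weakly in $S^1_2$ and $u_k^{2+2/n}dV_{\theta_0}\rightharpoonup u_\infty^{2+2/n}dV_{\theta_0}+\nu$ for some nonnegative concentration measure $\nu$.

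For case (i) I would assume in addition that $\{\|u_k\|_{L^\infty}\}$ is bounded. Writing (\ref{2.4}) as $-(2+\tfrac{2}{n})\Delta_{\theta_0}u_k+R_{\theta_0}u_k=R_{\theta_k}u_k^{1+2/n}$, the $L^\infty$ bound combined with the hypothesis $R_{\theta_k}\to R_\infty$ in $L^{p_1}(\theta_k)$ shows that the right-hand side lies in a bounded subset of $L^p(\theta_0)$ for any $p\in(n+1,p_1)$. Subelliptic $L^p$ estimates for the CR sublaplacian then produce a uniform $S^p_2$ bound, and Rellich compactness gives a strongly $S^p_2$-convergent subsequence whose limit $u_\infty$ satisfies (\ref{2.4}) with $R_{\theta_\infty}=R_\infty$.

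If instead $\|u_k\|_{L^\infty}\to\infty$, I would perform a CR concentration-compactness analysis in the spirit of Struwe and Malchiodi--Uguzzoni. The key numerical input is that $\beta$ is chosen so that $(1+\epsilon_0)\delta_n^{n/(n+1)}<2^{1/(n+1)}$, which, combined with the simple bubble condition (sbc) for $R_\infty$ and the sharp Jerison--Lee CR Sobolev inequality, forces two or more widely-separated bubbles to exceed the energy budget $\beta$. Hence $\nu$ must be supported at a single point $Q\in S^{2n+1}$; an estimate on its mass together with the volume normalisation $\int_{S^{2n+1}} u_k^{2+2/n}dV_{\theta_0}=\mathrm{Vol}(S^{2n+1},\theta_0)$ then yields (\ref{4.23}).

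To deduce (\ref{4.24}) I would switch to the normalized factors $v_k$ determined by $\phi(t_k)=\Psi\circ T_{p(t_k)}\circ D_{r(t_k)}\circ\pi$, which exhaust the conformal CR freedom and enforce the center of mass condition (\ref{4.19}). The normalized Webster scalar curvatures $R_{h_k}$ remain bounded in $L^{p_1}$, and the single-point concentration of $u_k$ at $Q$ together with $R_{\theta_k}\to R_\infty$ in $L^{p_1}(\theta_k)$ forces $R_{h_k}$ to approach a positive constant in $L^{p_1}$. Subelliptic Schauder estimates applied to (\ref{4.22}) then yield uniform $S^p_2$ bounds for $v_k$; any weak limit is a positive constant scalar curvature solution of the CR Yamabe equation with vanishing center of mass, so by the Frank--Lieb characterization of normalized extremizers it must equal $1$, and a final bootstrap upgrades to $C^{1,\lambda}_P$ convergence. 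I expect the third paragraph to contain the main obstacle: making the bubble-energy accounting rigorous enough to exclude multibubbling and to locate all the absorbed volume at the single point $Q$ is the technically heaviest step, and it is precisely here that (sbc) for $R_\infty$ together with the numerical definition of $\beta$ in terms of $\epsilon_0$ and $\delta_n$ must be used sharply.
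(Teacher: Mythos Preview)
The paper does not actually prove Theorem~\ref{thm4.7} here; immediately before its statement the author writes ``The following was proved in part I. See Theorem 4.7 in \cite{Ho3},'' so there is no in-paper proof to compare against. What you have written is a reasonable outline of the Struwe/Chen--Xu style argument adapted to the CR setting, and it is almost certainly the approach taken in Part~I: energy monotonicity gives a uniform $S^1_2$ bound, the dichotomy is between compactness and single-bubble concentration, the latter being forced by the energy ceiling $\beta$ and the simple bubble condition, and the normalized sequence is handled via subelliptic regularity and the Frank--Lieb classification of CR Yamabe extremizers with vanishing center of mass.

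One point worth sharpening in your sketch: in the last paragraph you argue that $R_{h_k}$ approaches a positive constant in $L^{p_1}$ \emph{because} $u_k$ concentrates at a single point $Q$, and then use this to bootstrap $v_k$. But the identification of $Q$ and the concentration statement (\ref{4.23}) typically come \emph{after} one has already shown $v_k\to 1$, not before; the usual order is to first prove a uniform two-sided $L^\infty$ bound on $v_k$ directly from the center-of-mass normalization (via a CR Harnack/concentration argument that does not presuppose knowledge of $Q$), then bootstrap (\ref{4.22}) to get $S^p_2$ compactness of $v_k$, identify the limit as $1$, and only then read off (\ref{4.23}) by pulling back. As written your paragraph risks circularity. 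This is a matter of ordering rather than a genuine gap, and you have correctly flagged the multibubble exclusion as the place where the numerology of $\beta$, $\epsilon_0$, $\delta_n$ and the (sbc) for $R_\infty$ must be used sharply.
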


It follows from Theorem \ref{thm4.7} that we have the following dichotomy: Either
the flow converges in $S_2^p$ for some $p>2n+2$, and in this case, $f$ can be realized as the
Webster scalar curvature of some contact form conformal to
$\theta_0$ thanks to Lemma 4.8 in \cite{Ho3}, or
the corresponding normalized flow $h(t)$ defined in (\ref{4.19a}) converges.

Starting from now, we will assume that, with the initial data
$u_0\in C^\infty_f$ where
$$u_0\in C^\infty_f:=\{u\in C_*^\infty: u>0\mbox{ and }E_f(u)\leq \beta\}$$
with $\beta$ defined as (\ref{4.15}) and
$$C^\infty_*:=\Big\{0<u\in C^\infty(S^{2n+1}): \theta=u^{\frac{2}{n}}\theta_0
\mbox{ satisfies }\int_{S^{2n+1}}u^{2+\frac{2}{n}}dV_{\theta_0}
=\int_{S^{2n+1}}dV_{\theta_0}\Big\},$$
the flow (\ref{2.3}) does not converge and $f$
cannot be realized as the Webster scalar curvature in the conformal
class of $\theta_0$. So Theorem \ref{thm4.7} can always be applied
without further mention.

The following lemma was proved in part I. See Lemma 4.16 in \cite{Ho3}.

\begin{lem}\label{lem4.14}
Let $f:S^{2n+1}\rightarrow\mathbb{R}$ be a smooth positive
non-degenerate Morse function satisfying the simple bubble condition
(sbc):
$$\frac{\max_{S^{2n+1}}f}{\min_{S^{2n+1}}f}<2^{\frac{1}{n}}.$$
Suppose that $f$ cannot be realized as the the Webster scalar
curvature of any contact form conformal to $\theta_0$. Let $u(t)$ be
a smooth solution of $(\ref{2.3})$ with initial data $u_0\in
C_f^\infty$. Then there exists a family of CR diffeomorphism
$\phi(t)$ on $S^{2n+1}$ with the normailized contact form
$h(t)=v(t)^{\frac{2}{n}}\theta_0=\phi(t)^*\big(\theta(t)\big)$ such
that as $t\rightarrow\infty$
$$v(t)\rightarrow 1,\hspace{2mm}h(t)\rightarrow\theta_0\hspace{2mm}\mbox{ in }C^{1,\gamma}_P(S^{2n+1})$$
for any $\gamma\in(0,1)$, and $\phi(t)-\widehat{P(t)}\rightarrow 0$ in
$L^2(S^{2n+1},\theta_0)$. Moreover, as $t\rightarrow\infty$, we have
$$\|f\circ\phi(t)-f(\widehat{P(t)})\|_{L^2(S^{2n+1},\theta_0)}\rightarrow 0\hspace{2mm}\mbox{ and
}\hspace{2mm}\alpha(t)f(\widehat{P(t)})\rightarrow R_{\theta_0}.$$
Here $\widehat{P(t)}$ is defined as in $(\ref{cm})$.
\end{lem}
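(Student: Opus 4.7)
My plan is to first invoke Theorem \ref{thm4.7} along a suitable time subsequence, then propagate the resulting concentration information to the full time parameter. By Proposition \ref{prop2.2} the integral $\int_0^\infty\!\int_{S^{2n+1}}(\alpha f-R_\theta)^2 u^{2+\frac{2}{n}}dV_{\theta_0}\,dt$ is finite, so I can pick $t_k\to\infty$ along which $\int(\alpha(t_k)f-R_{\theta_k})^2 u_k^{2+\frac{2}{n}}dV_{\theta_0}\to 0$; together with the two-sided bounds on $\alpha(t)$ coming from the monotonicity of $E_f$ and the conservation $\int u^{2+\frac{2}{n}}dV_{\theta_0}=\mathrm{Vol}(S^{2n+1},\theta_0)$, this verifies the hypothesis of Theorem \ref{thm4.7} with $R_\infty=\alpha_\infty f$ for some $\alpha_\infty>0$. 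Since $f$ cannot be realized as a Webster scalar curvature conformal to $\theta_0$ (a rescaling of the limit $\theta_\infty$ in alternative (i) would produce such a realization), alternative (ii) must hold: there is $Q\in S^{2n+1}$ with $dV_{\theta_k}\rightharpoonup\mathrm{Vol}(S^{2n+1},\theta_0)\,\delta_Q$ and $v_k\to 1$ in $C^{1,\lambda}_P$. Because the constant function $1$ is the only possible limit, $v(t)\to 1$ and $h(t)\to\theta_0$ in $C^{1,\gamma}_P$ as $t\to\infty$ for every $\gamma\in(0,1)$.

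The main obstacle is the $L^2$-closeness of $\phi(t)$ to the single point $\widehat{P(t)}\in S^{2n+1}$. My approach exploits the identity $h(t)=\phi(t)^*\theta(t)$, which by change of variables gives, for every $g\in C(S^{2n+1})$,
\[
\int_{S^{2n+1}} g\bigl(\phi(t)(x)\bigr)\,dV_{h(t)}(x)=\int_{S^{2n+1}} g(y)\,dV_{\theta(t)}(y).
\]
Along the subsequence the right-hand side tends to $\mathrm{Vol}(S^{2n+1},\theta_0)\,g(Q)$ by weak convergence of measures, while on the left the uniform convergence $v_k\to 1$ allows one to replace $dV_{h_k}$ by $dV_{\theta_0}$ in the limit; thus $\int g\circ\phi(t_k)\,dV_{\theta_0}\to\mathrm{Vol}(S^{2n+1},\theta_0)\,g(Q)$ for every continuous $g$. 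Specializing to $g(y)=|y-Q|^2$, which vanishes at $Q$, yields $\phi(t_k)\to Q$ in $L^2(S^{2n+1},\theta_0)$. Taking $g(y)=y_i$ in the same identity gives $P(t_k)\to\mathrm{Vol}(S^{2n+1},\theta_0)\,Q$, hence $\widehat{P(t_k)}\to Q$, so the triangle inequality produces $\phi(t_k)-\widehat{P(t_k)}\to 0$ in $L^2(S^{2n+1},\theta_0)$. Since every subsequence of $t\to\infty$ admits such a sub-subsequence, the convergence holds along the full time.

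The remaining assertions are short consequences. The Lipschitz continuity of $f$ on $S^{2n+1}$ gives
\[
\|f\circ\phi(t)-f(\widehat{P(t)})\|_{L^2(S^{2n+1},\theta_0)}^2\le C\int_{S^{2n+1}}|\phi(t)-\widehat{P(t)}|^2\,dV_{\theta_0}\to 0.
\]
For the asymptotic of $\alpha(t)$, rewrite (\ref{2.2}) as $\alpha(t)=E(u(t))/\int_{S^{2n+1}} f\,dV_{\theta(t)}$. Diffeomorphism invariance of the total Webster curvature integral gives $E(u(t))=E(v(t))$, and $v(t)\to 1$ in $C^{1,\gamma}_P$ forces $E(v(t))\to R_{\theta_0}\,\mathrm{Vol}(S^{2n+1},\theta_0)$; meanwhile $\int_{S^{2n+1}} f\,dV_{\theta_k}\to f(Q)\,\mathrm{Vol}(S^{2n+1},\theta_0)$ by weak convergence. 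Hence $\alpha(t_k)\to R_{\theta_0}/f(Q)$, and combined with $f(\widehat{P(t_k)})\to f(Q)$ this yields $\alpha(t_k)f(\widehat{P(t_k)})\to R_{\theta_0}$; a subsequence argument then upgrades this to $\alpha(t)f(\widehat{P(t)})\to R_{\theta_0}$ as $t\to\infty$.
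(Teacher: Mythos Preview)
The paper does not actually prove this lemma here; it is quoted from Part~I (see the sentence ``The following lemma was proved in part I. See Lemma 4.16 in \cite{Ho3}'' just before the statement). So there is no in-paper argument to compare against, and your outline must stand on its own.

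Your strategy is the natural one and is essentially correct: apply Theorem~\ref{thm4.7} along subsequences, rule out alternative (i) by the non-realizability hypothesis, and then use the change-of-variables identity $\int g\circ\phi\,dV_h=\int g\,dV_\theta$ together with the weak concentration $dV_{\theta_k}\rightharpoonup\mathrm{Vol}(S^{2n+1},\theta_0)\,\delta_Q$ to force $\phi(t_k)\to Q$ in $L^2$ and $\widehat{P(t_k)}\to Q$. The subsequence-of-subsequence argument is valid because all the limits you need ($0$ for $\phi-\widehat{P}$, $0$ for $\|f\circ\phi-f(\widehat{P})\|_{L^2}$, and $R_{\theta_0}$ for $\alpha f(\widehat{P})$) are independent of the particular concentration point $Q$ produced by Theorem~\ref{thm4.7}.

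There is, however, one genuine gap. You invoke Proposition~\ref{prop2.2} to produce times $t_k$ with $\int(\alpha(t_k)f-R_{\theta_k})^2\,dV_{\theta_k}\to 0$, and then claim this ``verifies the hypothesis of Theorem~\ref{thm4.7}''. It does not: Theorem~\ref{thm4.7} requires $\|R_{\theta_k}-R_\infty\|_{L^{p_1}(S^{2n+1},\theta_k)}\to 0$ for some $p_1>n+1$, and since $n\ge 1$ this is strictly stronger than $L^2$. The fix is immediate within the paper's framework: Lemma~\ref{lem3.2} (also imported from Part~I) already gives $F_p(t)\to 0$ as $t\to\infty$ for \emph{every} $p<\infty$, so the hypothesis of Theorem~\ref{thm4.7} is satisfied along \emph{any} sequence $t_k\to\infty$ once you pass to a further subsequence on which $\alpha(t_k)$ converges. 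You should replace your appeal to Proposition~\ref{prop2.2} by Lemma~\ref{lem3.2}; this also streamlines the subsequence argument, since you no longer need to first select special times at which $F_2$ is small.
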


\section{Analysis on the vector field $\xi=(d\phi)^{-1}\displaystyle\frac{d\phi}{dt}$}\label{section5}

\subsection{Normalized curvature flow}

Let us start with the normalized flow defined in (\ref{4.19}) and (\ref{4.19a}), which satisfies (\ref{4.22}). J. H. Cheng proved the following
Kazdan-Warner type condition in \cite{Cheng}:
\begin{equation}\label{5.1}
\int_{S^{2n+1}}\langle\nabla_{\theta_0}x,
\nabla_{\theta_0}R_h\rangle_{\theta_0}dV_h=(0,...,0)\mbox{ and
}\int_{S^{2n+1}}\langle\nabla_{\theta_0}\overline{x},
\nabla_{\theta_0}R_h\rangle_{\theta_0}dV_h=(0,...,0),
\end{equation}
where $x=(x_1,...,x_{n+1})\in S^{2n+1}\subset\mathbb{C}^{n+1}$ and
$\overline{x}=(\overline{x}_1,...,\overline{x}_{n+1})$.

For the corresponding conformal CR diffeomorphism of the normalized flow (\ref{4.19}), we let
$\phi(t)=(\phi_1(t),...,\phi_{n+1}(t))\in
S^{2n+1}\subset\mathbb{C}^{n+1}$. We define $\xi=(d\phi)^{-1}\displaystyle\frac{d\phi}{dt}.$
Recall that $v(t)=(u(t)\circ\phi(t))\big|\det(d\phi(t))\big|^{\frac{n}{2n+2}}$. Differentiating it with respect to
$t$ and using (\ref{2.3}), we obtain
\begin{equation}\label{5.a}
v_t=\frac{n}{2}(\alpha f_\phi-R_h)v+\frac{n}{2n+2}v\,\mbox{div}'_{h}(\xi),
\end{equation}
where $\mbox{div}'_h$ is the subdivergence operator of type $(1,0)$ with respect to the contact form $h$ (see \cite{Cheng} for the definition).
Differentiating (\ref{4.19}) with respect to $t$ and using (\ref{5.a}), we get
\begin{equation}\label{5.b}
\begin{split}
(0,...,0)&=\frac{d}{dt}\left(\int_{S^{2n+1}}x\,dV_h\right)=\frac{2n+2}{n}\left(\int_{S^{2n+1}}xv_tv^{1+\frac{2}{n}}\,dV_{\theta_0}\right)\\
&=(n+1)\int_{S^{2n+1}}x(\alpha f_\phi-R_h)dV_h+\int_{S^{2n+1}}x\,\mbox{div}'_{h}(\xi)dV_h\\
&=(n+1)\int_{S^{2n+1}}x(\alpha f_\phi-R_h)dV_h+\int_{S^{2n+1}}x\,\mbox{div}'_{\theta_0}(v^{2+\frac{2}{n}}\xi)dV_{\theta_0}\\
&=(n+1)\int_{S^{2n+1}}x(\alpha f_\phi-R_h)dV_h-\int_{S^{2n+1}}\xi\,dV_{h}.\\
\end{split}
\end{equation}

\subsection{Cayley transform}

The Cayley transform is the CR diffeomorphism
$\pi:S^{2n+1}\setminus\{S\}\rightarrow\mathbb{H}^n$ given in  (\ref{4.20}), i.e.
$$
\pi(x)=\left(\frac{x_1}{1+x_{n+1}},\cdots,\frac{x_n}{1+x_{n+1}},
Re\big(\sqrt{-1}\frac{1-x_{n+1}}{1+x_{n+1}}\big)\right),
$$
where $x=(x_1,\cdots,x_n,x_{n+1})\in S^{2n+1}\setminus\{S\}$, where $S=(0,...,0,-1)$ and
$\mathbb{H}^n$ is the Heisenberg group. Note that
$\Psi=\pi^{-1}:\mathbb{H}^n\rightarrow S^{2n+1}$ is given by
\begin{equation}\label{5.6}
\Psi(z,\tau)=\left(\frac{2 z}{1+|z|^2-\sqrt{-1}\tau},
\frac{1-|z|^2+\sqrt{-1}\tau}{1+|z|^2-\sqrt{-1}\tau}\right)
\end{equation} where
$(z,\tau)\in\mathbb{H}^n\subset\mathbb{C}^n\times\mathbb{R}$. If
we write $\Psi=(\Psi_1,\cdots,\Psi_{n+1})\in
S^{2n+1}\subset\mathbb{C}^{n+1}$, and
$(z,\tau)=(z_1,\cdots,z_n,\tau)
=(a_1+\sqrt{-1}b_1,\cdots,a_n+\sqrt{-1}b_n,\tau)\in\mathbb{H}^n\subset\mathbb{C}^n\times\mathbb{R}$, then
\begin{equation}\label{5.7}
\begin{split}
&\frac{\partial\Psi_i}{\partial
a_j}=\frac{2\delta_{ij}}{1+|z|^2-\sqrt{-1}\tau}-\frac{4(a_i+\sqrt{-1}b_i)a_j}{(1+|z|^2-\sqrt{-1}\tau)^2},\\
&\frac{\partial\Psi_i}{\partial
b_j}=\frac{2\delta_{ij}\sqrt{-1}}{1+|z|^2-\sqrt{-1}\tau}-\frac{4(a_i+\sqrt{-1}b_i)b_j}{(1+|z|^2-\sqrt{-1}\tau)^2},
\frac{\partial\Psi_i}{\partial
\tau}=\frac{2\sqrt{-1}(a_i+\sqrt{-1}b_i)}{(1+|z|^2-\sqrt{-1}\tau)^2},
\end{split}
\end{equation}
for $1\leq i,j\leq n$, and
\begin{equation}\label{5.8}
\begin{split}
\frac{\partial\Psi_{n+1}}{\partial
a_j}&=-\frac{4a_j}{(1+|z|^2-\sqrt{-1}\tau)^2},\\
\frac{\partial\Psi_{n+1}}{\partial
b_j}&=-\frac{4b_j}{(1+|z|^2-\sqrt{-1}\tau)^2},
\frac{\partial\Psi_{n+1}}{\partial
\tau}=\frac{2\sqrt{-1}}{(1+|z|^2-\sqrt{-1}\tau)^2},
\end{split}
\end{equation}
for $1\leq j\leq n$. Note that $\displaystyle
X_j=\frac{\partial}{\partial a_j}+2b_j\frac{\partial}{\partial\tau},
Y_j=\frac{\partial}{\partial b_j}-2a_j\frac{\partial}{\partial\tau},
T=\frac{\partial}{\partial\tau}$, where $1\leq j\leq n$, is a basis
for the tangent space of $\mathbb{H}^n$. By (\ref{5.6}), (\ref{5.7})
and (\ref{5.8}), we have
\begin{equation}\label{5.9}
\begin{split}
X_j(\Psi_i)&=\frac{2\delta_{ij}}{1+|z|^2-\sqrt{-1}\tau}-\frac{4z_i\overline{z}_j}{(1+|z|^2-\sqrt{-1}\tau)^2},\\
%&=\left(\delta_{ij}-\frac{\Psi_i\overline{\Psi}_j}{1+\overline{\Psi}_{n+1}}\right)(1+\Psi_{n+1}),\\
Y_j(\Psi_i)&=\frac{2\sqrt{-1}\delta_{ij}}{1+|z|^2-\sqrt{-1}\tau}
-\frac{4\sqrt{-1}z_i\overline{z}_j}{(1+|z|^2-\sqrt{-1}\tau)^2},\\
%&=\sqrt{-1}\left(\delta_{ij}-\frac{\Psi_i\overline{\Psi}_j}{1+\overline{\Psi}_{n+1}}\right)(1+\Psi_{n+1}),\\
T(\Psi_i)&=\frac{2\sqrt{-1}z_i}{(1+|z|^2-\sqrt{-1}\tau)^2}%=\frac{\sqrt{-1}}{2}\Psi_i(1+\Psi_{n+1})
\end{split}
\end{equation}
for $1\leq i,j\leq n$, and
\begin{equation}\label{5.10}
\begin{split}
X_j(\Psi_{n+1})&=-\frac{4\overline{z}_j}{(1+|z|^2-\sqrt{-1}\tau)^2}
%=-\frac{\overline{\Psi}_j(1+\Psi_{n+1})^2}{1+\overline{\Psi}_{n+1}}
,\\
Y_j(\Psi_{n+1})&=-\frac{4\sqrt{-1}\overline{z}_j}{(1+|z|^2-\sqrt{-1}\tau)^2}
%=-\frac{\sqrt{-1}\overline{\Psi}_j(1+\Psi_{n+1})^2}{1+\overline{\Psi}_{n+1}}
,
T(\Psi_{n+1})=\frac{2\sqrt{-1}}{(1+|z|^2-\sqrt{-1}\tau)^2}
%=\frac{\sqrt{-1}}{2}(1+\Psi_{n+1})^2
\end{split}
\end{equation}
for $1\leq j\leq n$, where $\Psi=(\Psi_1,\cdots,\Psi_{n+1})$.
Recall that for $r>0$ the dilation
$D_{r}:\mathbb{H}^n\rightarrow\mathbb{H}^n$ and for
$q=(z',\tau')\in\mathbb{H}^n$ the translation
$T_{(z',\tau')}:\mathbb{H}^n\rightarrow\mathbb{H}^n$ are
respectively given by
$$
D_{r}(z,\tau)=(r z, r^2\tau)\mbox{ and }
T_{q}(z,\tau)=(z+z',\tau+\tau'+2Im(z'\cdot \overline{z}))\mbox{ for
}(z,\tau)\in\mathbb{H}^n.
$$
If we define $\delta_{q,r}:\mathbb{H}^n\rightarrow\mathbb{H}^n$ as
$\delta_{q,r}=T_{q}\circ D_{r}$, i.e.
\begin{equation}\label{5.11}
\delta_{q,r}(z,\tau)=(rz+z',r^2\tau+\tau'+2rIm(z'\cdot
\overline{z}))\mbox{ for }(z,\tau)\in\mathbb{H}^n,
\end{equation}
then we have
\begin{equation}\label{5.12}
\begin{split}
d\delta_{q,r}(X_j)&=d\delta_{q,r}\left(\frac{\partial}{\partial
a_j}+2b_j\frac{\partial}{\partial\tau}\right)=r\left(\frac{\partial}{\partial
a_j}+2(rb_j+b_j')\frac{\partial}{\partial\tau}\right),\\
d\delta_{q,r}(Y_j)&=d\delta_{q,r}\left(\frac{\partial}{\partial
b_j}-2a_j\frac{\partial}{\partial\tau}\right)=r\left(\frac{\partial}{\partial
b_j}-2(ra_j+a_j')\frac{\partial}{\partial\tau}\right),\\
d\delta_{q,r}(T)&=d\delta_{q,r}\left(\frac{\partial}{\partial\tau}\right)=r^2\frac{\partial}{\partial\tau},
\end{split}
\end{equation}
where $z'=(a_1'+\sqrt{-1}b_1',\cdots,a_n'+\sqrt{-1}b_n')$.

Now recall that the CR diffeomorphism
$\phi=\phi(t):S^{2n+1}\rightarrow S^{2n+1}$ is given by
$\phi=\Psi\circ\delta_{q(t),r(t)}\circ\pi$ for some
$q(t)=(z(t),\tau(t))=(a_1(t)+\sqrt{-1}b_1(t),\cdots,a_n(t)+\sqrt{-1}b_n(t),\tau(t))
\in\mathbb{H}^n$ and $r(t)>0$. Therefore, by (\ref{5.11}), we have
\begin{equation}\label{5.13}
\begin{split}
&\frac{d}{dt}\delta_{q(t),r(t)}(z,\tau)\\
&=\frac{dr(t)}{dt}
\sum_{j=1}^n\left[a_j\left(\frac{\partial}{\partial
a_j}+2(r(t)b_j+b_j(t))\frac{\partial}{\partial\tau}\right)+
b_j\left(\frac{\partial}{\partial
b_j}-2(r(t)a_j+a_j(t))\frac{\partial}{\partial\tau}\right)\right]\\
&\hspace{4mm}+2\frac{dr(t)}{dt}r(t)\tau\frac{\partial}{\partial\tau}
+\sum_{j=1}^n\left[\frac{da_j(t)}{dt}\left(\frac{\partial}{\partial
a_j}+2(r(t)b_j+b_j(t))\frac{\partial}{\partial\tau}\right)\right.\\
&\hspace{4mm} +\frac{db_j(t)}{dt}\left(\frac{\partial}{\partial
b_j}-2(r(t)a_j+a_j(t))\frac{\partial}{\partial\tau}\right) +4r(t)
\left(a_j\frac{db_j(t)}{dt}-b_j\frac{da_j(t)}{dt}\right)\frac{\partial}{\partial\tau}\\
&\hspace{4mm}\left.+2\left(a_j(t)\frac{db_j(t)}{dt}-b_j(t)\frac{da_j(t)}{dt}\right)\frac{\partial}{\partial\tau}
\right]+\frac{d\tau(t)}{dt}\frac{\partial}{\partial\tau}.
\end{split}
\end{equation}
Using (\ref{5.12}) and (\ref{5.13}), we obtain
\begin{equation}\label{5.14}
\begin{split}
&(d\delta_{q(t),r(t)})^{-1}\left(
\frac{d}{dt}\delta_{q(t),r(t)}\right)\\
&=\frac{1}{r(t)}\frac{dr(t)}{dt}\sum_{j=1}^n(a_jX_j+b_jY_j)+\frac{2\tau}{r(t)}\frac{dr(t)}{dt}T
+\frac{1}{r(t)}\sum_{j=1}^n\left(\frac{da_j(t)}{dt}X_j+\frac{db_j(t)}{dt}Y_j\right)\\
&\hspace{4mm} +\frac{4}{r(t)}\sum_{j=1}^n
\left(a_j\frac{db_j(t)}{dt}-b_j\frac{da_j(t)}{dt}\right)T
+\frac{2}{r(t)^2}\sum_{j=1}^n\left(a_j(t)\frac{db_j(t)}{dt}-b_j(t)\frac{da_j(t)}{dt}\right)T
\\
&\hspace{4mm}+\frac{1}{r(t)^2}\frac{d\tau(t)}{dt}T.
\end{split}
\end{equation}
Since $d\phi=d\Psi\circ d\delta_{q(t),r(t)}\circ d\pi$ and
$\displaystyle\frac{d\phi}{dt}=d\Psi\circ
\frac{d}{dt}(\delta_{q(t),r(t)}\circ \pi)$, we have
\begin{equation}\label{5.15}
\begin{split}
\displaystyle\xi=(d\phi)^{-1}\frac{d\phi}{dt}&=(d\pi)^{-1}\circ
(d\delta_{q(t),r(t)})^{-1}\circ (d\Psi)^{-1}\left(d\Psi\circ
\frac{d}{dt}\delta_{q(t),r(t)}\circ \pi\right)\\
&=d\Psi\circ (d\delta_{q(t),r(t)})^{-1}\left(
\frac{d}{dt}\delta_{q(t),r(t)}\circ\pi \right).
\end{split}
\end{equation}
Since $\xi=(\xi_1,\cdots,\xi_{n+1})$, it follows from (\ref{5.9}), (\ref{5.10}), (\ref{5.14}) and (\ref{5.15}) that
\begin{equation}\label{5.16}
\begin{split}
\xi_{i}&=\frac{1}{r(t)}\frac{dr(t)}{dt}\sum_{j=1}^n\big(a_jd\Psi_i(X_j)+b_jd\Psi_i(Y_j)\big)+\frac{2\tau}{r(t)}\frac{dr(t)}{dt}d\Psi_i(T)\\
&\hspace{4mm}
+\frac{1}{r(t)}\sum_{j=1}^n\left(\frac{da_j(t)}{dt}d\Psi_i(X_j)+\frac{db_j(t)}{dt}d\Psi_i(Y_j)\right)\\
&\hspace{4mm}
+\frac{4}{r(t)}\sum_{j=1}^n
\left(a_j\frac{db_j(t)}{dt}-b_j\frac{da_j(t)}{dt}\right)d\Psi_i(T)\\
&\hspace{4mm}+\frac{1}{r(t)^2}\left[2\sum_{j=1}^n\left(a_j(t)\frac{db_j(t)}{dt}-b_j(t)\frac{da_j(t)}{dt}\right)+\frac{d\tau(t)}{dt}\right]d\Psi_i(T)\\
&=\frac{1}{r(t)}\frac{dr(t)}{dt}
\sum_{j=1}^n(a_j+\sqrt{-1}b_j)\left(\frac{2\delta_{ij}(1+|z|^2-\sqrt{-1}\tau)-4z_i\overline{z}_j}{(1+|z|^2-\sqrt{-1}\tau)^2}\right)\\
&\hspace{4mm}
+\frac{2\tau}{r(t)}\frac{dr(t)}{dt}\frac{2\sqrt{-1}z_i}{(1+|z|^2-\sqrt{-1}\tau)^2}\\
&\hspace{4mm}
+\frac{1}{r(t)}\sum_{j=1}^n\left(\frac{da_j(t)}{dt}+\sqrt{-1}\frac{db_j(t)}{dt}\right)
\left(\frac{2\delta_{ij}(1+|z|^2-\sqrt{-1}\tau)-4z_i\overline{z}_j}{(1+|z|^2-\sqrt{-1}\tau)^2}\right)\\
&\hspace{4mm} +\frac{4}{r(t)}\sum_{j=1}^n
\left(a_j\frac{db_j(t)}{dt}-b_j\frac{da_j(t)}{dt}\right)\frac{2\sqrt{-1}z_i}{(1+|z|^2-\sqrt{-1}\tau)^2}\\
&\hspace{4mm}+\frac{1}{r(t)^2}\left[2\sum_{j=1}^n\left(a_j(t)\frac{db_j(t)}{dt}-b_j(t)\frac{da_j(t)}{dt}\right)+\frac{d\tau(t)}{dt}\right]
\frac{2\sqrt{-1}z_i}{(1+|z|^2-\sqrt{-1}\tau)^2}\\
&=\frac{1}{r(t)}\frac{dr(t)}{dt}\frac{2z_i(1-|z|^2)}{(1+|z|^2-\sqrt{-1}\tau)^2}
+\frac{1}{r(t)}\frac{dr(t)}{dt}\frac{2\sqrt{-1}z_i\tau}{(1+|z|^2-\sqrt{-1}\tau)^2}\\
&\hspace{4mm}+\frac{1}{r(t)}\frac{dz_i(t)}{dt}\frac{2}{1+|z|^2-\sqrt{-1}\tau}
-\frac{1}{r(t)}\sum_{j=1}^n\frac{dz_j(t)}{dt}\frac{4z_i\overline{z}_j}{(1+|z|^2-\sqrt{-1}\tau)^2}\\
&\hspace{4mm}-\frac{1}{r(t)}\frac{8z_i\sqrt{-1}}{(1+|z|^2-\sqrt{-1}\tau)^2}Im\Big(\frac{d\overline{z(t)}}{dt}\cdot
z\Big)\\
&\hspace{4mm}+\frac{1}{r(t)^2}\left[2Im\Big(\frac{dz(t)}{dt}\cdot
\overline{z(t)}\Big)+\frac{d\tau(t)}{dt}\right]\frac{2\sqrt{-1}z_i}{(1+|z|^2-\sqrt{-1}\tau)^2}\\
&=\frac{1}{r(t)}\frac{dr(t)}{dt}\Psi_i
\Psi_{n+1}+\frac{1}{r(t)}\frac{dz_i(t)}{dt}(1+\Psi_{n+1})
-\frac{1}{r(t)}
\sum_{j=1}^n\frac{dz_j(t)}{dt}\frac{(1+\Psi_{n+1})\Psi_i\overline{\Psi}_j}{1+\overline{\Psi}_{n+1}}\\
&\hspace{4mm}-\frac{2}{r(t)}
(1+\Psi_{n+1})\Psi_i\sqrt{-1}Im\left(\frac{d\overline{z(t)}}{dt}\cdot
\Big(\frac{\Psi_1}{1+\Psi_{n+1}},\cdots,\frac{\Psi_n}{1+\Psi_{n+1}}\Big)\right)\\
&\hspace{4mm}
+\frac{1}{r(t)^2}\left[2Im\Big(\frac{dz(t)}{dt}\cdot
\overline{z(t)}\Big)+\frac{d\tau(t)}{dt}\right]\frac{\sqrt{-1}}{2}\Psi_i(1+\Psi_{n+1})
\end{split}
\end{equation}
for $1\leq i\leq n$, and
\begin{equation}\label{5.17}
\begin{split}
\xi_{n+1}&=\frac{1}{r(t)}\frac{dr(t)}{dt}\sum_{j=1}^n\big(a_jd\Psi_{n+1}(X_j)+b_jd\Psi_{n+1}(Y_j)\big)
+\frac{2\tau}{r(t)}\frac{dr(t)}{dt}d\Psi_{n+1}(T)\\
&\hspace{4mm}+\frac{1}{r(t)}\sum_{j=1}^n\left(\frac{da_j(t)}{dt}d\Psi_{n+1}(X_j)
+\frac{db_j(t)}{dt}d\Psi_{n+1}(Y_j)\right)\\
&\hspace{4mm}+\frac{4}{r(t)}\sum_{j=1}^n
\left(a_j\frac{db_j(t)}{dt}-b_j\frac{da_j(t)}{dt}\right)d\Psi_{n+1}(T)\\
&\hspace{4mm}
+\frac{1}{r(t)^2}\left[2\sum_{j=1}^n\left(a_j(t)\frac{db_j(t)}{dt}-b_j(t)\frac{da_j(t)}{dt}\right)+\frac{d\tau(t)}{dt}\right]d\Psi_{n+1}(T)\\
&=-\frac{1}{r(t)}\frac{dr(t)}{dt}\sum_{j=1}^n\frac{4(a_j+\sqrt{-1}b_j)\overline{z}_j}{(1+|z|^2-\sqrt{-1}\tau)^2}
+\frac{2\tau}{r(t)}\frac{dr(t)}{dt}\frac{2\sqrt{-1}}{(1+|z|^2-\sqrt{-1}\tau)^2}\\
&\hspace{4mm}
-\frac{1}{r(t)}\sum_{j=1}^n\left(\frac{da_j(t)}{dt}+\sqrt{-1}\frac{db_j(t)}{dt}\right)
\frac{4\overline{z}_j}{(1+|z|^2-\sqrt{-1}\tau)^2}\\
&\hspace{4mm} +\frac{4}{r(t)}\sum_{j=1}^n
\left(a_j\frac{db_j(t)}{dt}-b_j\frac{da_j(t)}{dt}\right)\frac{2\sqrt{-1}}{(1+|z|^2-\sqrt{-1}\tau)^2}\\
&\hspace{4mm}+\frac{1}{r(t)^2}\left[2\sum_{j=1}^n\left(a_j(t)\frac{db_j(t)}{dt}-b_j(t)\frac{da_j(t)}{dt}\right)+\frac{d\tau(t)}{dt}\right]\frac{2\sqrt{-1}}{(1+|z|^2-\sqrt{-1}\tau)^2}\\
&=-\frac{1}{r(t)}\frac{dr(t)}{dt}\frac{4|z|^2}{(1+|z|^2-\sqrt{-1}\tau)^2}
+\frac{1}{r(t)}\frac{dr(t)}{dt}\frac{4\sqrt{-1}\tau}{(1+|z|^2-\sqrt{-1}\tau)^2}\\
&\hspace{4mm}
-\frac{1}{r(t)}\sum_{j=1}^n\frac{dz_j(t)}{dt}
\frac{4\overline{z}_j}{(1+|z|^2-\sqrt{-1}\tau)^2} -\frac{4}{r(t)}\frac{2\sqrt{-1}}{(1+|z|^2-\sqrt{-1}\tau)^2}Im\Big(\frac{d\overline{z(t)}}{dt}\cdot z\Big)\\
&\hspace{4mm}
+\frac{1}{r(t)^2}\left[2Im\Big(\frac{dz(t)}{dt}\cdot
\overline{z(t)}\Big)+\frac{d\tau(t)}{dt}\right]\frac{2\sqrt{-1}}{(1+|z|^2-\sqrt{-1}\tau)^2}\\
&=\frac{1}{r(t)}\frac{dr(t)}{dt}(\Psi_{n+1}^2-1)-\frac{1}{r(t)}\sum_{j=1}^n\frac{dz_j(t)}{dt}
\frac{\overline{\Psi}_j(1+\Psi_{n+1})^2}{1+\overline{\Psi}_{n+1}}\\
&\hspace{4mm}+\frac{2}{r(t)}(1+\Psi_{n+1})^2\sqrt{-1}Im
\left(\frac{d\overline{z(t)}}{dt}\cdot\Big(\frac{\Psi_1}{1+\Psi_{n+1}},\cdots,\frac{\Psi_n}{1+\Psi_{n+1}}\Big)\right)\\
&\hspace{4mm}
+\frac{1}{r(t)^2}\left[2Im\Big(\frac{dz(t)}{dt}\cdot
\overline{z(t)}\Big)+\frac{d\tau(t)}{dt}\right]\frac{\sqrt{-1}}{2}(1+\Psi_{n+1})^2.
\end{split}
\end{equation}
Thus, in our calculation, we may assume at time $t$, $q(t)=0$ which simplify the calculation since otherwise it is
the matter of the choice of the coordinates of $S^{2n+1}$. In doing so, we let
\begin{equation}\label{5.2}
X=(X_1,\cdots,X_{n+1})=\int_{S^{2n+1}}\xi\,dV_{\theta_0},
\end{equation}
and denote $r(t)^{-1}$ by $\epsilon$. Then by symmetry, we obtain from (\ref{5.16}) and (\ref{5.17}) that
\begin{equation}\label{5.18}
\begin{split}
X_i&=\epsilon\int_{S^{2n+1}}\left[\vphantom{\left(\frac{d\overline{z_i(t)}}{dt}\right)}
\frac{dz_i(t)}{dt}-\frac{dz_i(t)}{dt}\frac{1+\Psi_{n+1}}{1+\overline{\Psi}_{n+1}}|\Psi_i|^2\right.\\
&\hspace{8mm}\left.
-2(1+\Psi_{n+1})\Psi_i\sqrt{-1}Im\left(\frac{d\overline{z_i(t)}}{dt}\frac{\Psi_i}{1+\Psi_{n+1}}\right)\right]dV_{\theta_0}\\
&=\epsilon\int_{S^{2n+1}}\left[\vphantom{\left(\frac{d\overline{z_i(t)}}{dt}\right)}
\frac{dz_i(t)}{dt}-\frac{dz_i(t)}{dt}\frac{1+\Psi_{n+1}}{1+\overline{\Psi}_{n+1}}|\Psi_i|^2\right.\\
&\hspace{8mm}\left.
-(1+\Psi_{n+1})\Psi_i\left(\frac{d\overline{z_i(t)}}{dt}\frac{\Psi_i}{1+\Psi_{n+1}}-
\frac{dz_i(t)}{dt}\frac{\overline{\Psi}_i}{1+\overline{\Psi}_{n+1}}\right)\right]dV_{\theta_0}\\
&=\epsilon\frac{dz_i(t)}{dt}\int_{S^{2n+1}}dV_{\theta_0}
-\epsilon\frac{d\overline{z}_i(t)}{dt}\int_{S^{2n+1}}\Psi_i^2dV_{\theta_0}\\
&=\epsilon\frac{dz_i(t)}{dt}\int_{S^{2n+1}}dV_{\theta_0}
-\epsilon\frac{d\overline{z}_i(t)}{dt}\int_{S^{2n+1}}\Big(Re(\Psi_i)^2-Im(\Psi_i)^2+2\sqrt{-1}Re(\Psi_i)Im(\Psi_i)\Big)dV_{\theta_0}\\
&=\epsilon\mbox{Vol}(S^{2n+1},\theta_0)\frac{dz_i(t)}{dt}
\end{split}
\end{equation}
for $1\leq i\leq n$, and
\begin{equation}\label{5.19}
\begin{split}
X_{n+1}&=\epsilon\int_{S^{2n+1}}\frac{dr(t)}{dt}\frac{\Psi_{n+1}^2-1}{2}\,dV_{\theta_0}\\
&\hspace{4mm}
+\epsilon^2\int_{S^{2n+1}}\left[2Im\Big(\frac{dz(t)}{dt}\cdot
\overline{z(t)}\Big)+\frac{d\tau(t)}{dt}\right]\frac{\sqrt{-1}}{2}(1+\Psi_{n+1})^2dV_{\theta_0}\\
&=\frac{\epsilon}{2}\frac{dr(t)}{dt}\int_{S^{2n+1}}(Re(\Psi_{n+1})^2-Im(\Psi_{n+1})^2-1)dV_{\theta_0}\\
&\hspace{4mm}
+\frac{\epsilon^2\sqrt{-1}}{2}\left[2Im\Big(\frac{dz(t)}{dt}\cdot
\overline{z(t)}\Big)+\frac{d\tau(t)}{dt}\right]\int_{S^{2n+1}}(Re(\Psi_{n+1})^2-Im(\Psi_{n+1})^2+1)dV_{\theta_0}\\
&=\frac{\epsilon}{2}\mbox{Vol}(S^{2n+1},\theta_0)\left(-\frac{dr(t)}{dt}
+2\sqrt{-1}\epsilon Im\Big(\frac{dz(t)}{dt}\cdot
\overline{z(t)}\Big)+\sqrt{-1}\epsilon\frac{d\tau(t)}{dt}\right).
\end{split}
\end{equation}

Now we are going to get the estimate on the conformal vector field $\xi$.

\begin{lem}\label{lem5.1}
There exists a constant $C>0$ such that
$$\|\xi\|_{L^\infty}^2\leq C\int_{S^{2n+1}}(\alpha(t) f_\phi-R_h)^2dV_h.$$
\end{lem}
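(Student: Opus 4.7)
The plan is to control $\|\xi\|_{L^\infty}$ by the finite-dimensional vector $X=\int_{S^{2n+1}}\xi\,dV_{\theta_0}\in\mathbb{C}^{n+1}$, and then to bound $|X|$ by the right-hand side using the identity \eqref{5.b}. The decisive observation is that, although $\xi$ is a vector field on the sphere, the explicit formulas \eqref{5.16}--\eqref{5.17} show it lies in the finite-dimensional space of CR conformal vector fields, parametrised by only the time-derivatives of $r$, $z$, and $\tau$; the spatial dependence is absorbed into a fixed collection of bounded functions of $\Psi$.

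First I would normalise so that $q(t)=0$ at the instant under consideration, which is permissible by the remark preceding \eqref{5.2}. Setting
$$A:=\epsilon\,r'(t),\qquad B_j:=\epsilon\,z_j'(t),\qquad D:=\epsilon^2\,\tau'(t),$$
formulas \eqref{5.16}--\eqref{5.17} display each component $\xi_i$ as a linear combination of $A,B_j,D$ with coefficients that are rational expressions in $\Psi_1,\ldots,\Psi_{n+1}$. A direct inspection---checking in particular that the apparent singularities at the south pole (arising from factors like $\Psi_j/(1+\Psi_{n+1})=z_j$) are compensated by the outer factors $1+\Psi_{n+1}$ and $\Psi_i$---shows these coefficient functions are uniformly bounded on $S^{2n+1}$; hence
$$\|\xi\|_{L^\infty}\le C_1\Bigl(|A|+\sum_{j=1}^n|B_j|+|D|\Bigr).$$
At the same time, \eqref{5.18}--\eqref{5.19} evaluated at $q(t)=0$ give the invertible linear relations
$$X_i=\mathrm{Vol}(S^{2n+1},\theta_0)\,B_i,\quad \mathrm{Re}\,X_{n+1}=-\tfrac12\mathrm{Vol}(S^{2n+1},\theta_0)\,A,\quad \mathrm{Im}\,X_{n+1}=\tfrac12\mathrm{Vol}(S^{2n+1},\theta_0)\,D,$$
(the cross term $\mathrm{Im}(dz/dt\cdot\overline{z(t)})$ vanishes at $q(t)=0$). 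Solving, $|A|+\sum_j|B_j|+|D|\le C_2|X|$, so $\|\xi\|_{L^\infty}\le C_3|X|$.

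For the bound on $|X|$, the identity \eqref{5.b} together with Cauchy--Schwarz and the facts that $|x|=1$ on $S^{2n+1}$ and $\mathrm{Vol}(S^{2n+1},h)=\mathrm{Vol}(S^{2n+1},\theta_0)$ (the latter preserved along the flow by \eqref{2.01} and \eqref{2.2}) yields
$$\Bigl|\int_{S^{2n+1}}\xi\,dV_h\Bigr|\le C_4\Bigl(\int_{S^{2n+1}}(\alpha f_\phi-R_h)^2\,dV_h\Bigr)^{1/2}.$$
To pass from $dV_h$ to $dV_{\theta_0}$, I would write
$$X=\int_{S^{2n+1}}\xi\,dV_h-\int_{S^{2n+1}}\xi\,(v^{2+2/n}-1)\,dV_{\theta_0}$$
and estimate the error by $\|v^{2+2/n}-1\|_{L^1(\theta_0)}\,\|\xi\|_{L^\infty}$. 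By Lemma~\ref{lem4.14}, the first factor is $o(1)$ as $t\to\infty$, so substituting $\|\xi\|_{L^\infty}\le C_3|X|$ and absorbing the $o(1)|X|$ term into the left-hand side produces $|X|\le C_5(\int(\alpha f_\phi-R_h)^2\,dV_h)^{1/2}$ for all $t$ sufficiently large. Squaring and chaining with the previous paragraph gives the lemma.

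The main obstacle I anticipate is this final absorption, which closes the circular loop $\|\xi\|_\infty\lesssim|X|$ and $|X|\lesssim o(1)\|\xi\|_\infty+\mathrm{RHS}^{1/2}$. The absorption is clean given the $C^{1,\gamma}_P$ convergence $v\to 1$ supplied by Lemma~\ref{lem4.14}, but it restricts the estimate to large $t$; this is harmless for the intended application, since only the asymptotic regime of the flow is used in the sequel. A secondary technical nuisance is verifying the uniform boundedness at the south pole of the rational expressions in $\Psi$ appearing in \eqref{5.16}--\eqref{5.17}, which is routine but requires careful tracking of the cancellations between the outer factors $\Psi_i,\,1+\Psi_{n+1}$ and the inner blow-ups $\Psi_j/(1+\Psi_{n+1})$.
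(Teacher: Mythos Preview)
Your approach is essentially identical to the paper's: bound $\|\xi\|_{L^\infty}$ by the finite-dimensional parameter vector via \eqref{5.16}--\eqref{5.17}, invert \eqref{5.18}--\eqref{5.19} to get $\|\xi\|_{L^\infty}\le C_0|X|$, then feed \eqref{5.b} back in and absorb the $\|v^{2+2/n}-1\|\cdot\|\xi\|_{L^\infty}$ error using Lemma~\ref{lem4.14}.

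The one point where you stop short of the paper is the finite-time regime. You observe that the absorption only closes for $t$ large and declare this harmless; the lemma, however, asserts a constant valid for all $t$, and the paper actually supplies this. On any bounded interval $[0,T]$, $\xi$ is continuous on $S^{2n+1}\times[0,T]$, so $C_1:=\max_{[0,T]}\|\xi\|_{L^\infty}^2<\infty$; and crucially $F_2(t)=\int_{S^{2n+1}}(\alpha f_\phi-R_h)^2\,dV_h$ never vanishes for finite $t$, since $F_2(t_0)=0$ would mean $\alpha f$ is realized as a Webster scalar curvature, contradicting the standing hypothesis under which the whole analysis proceeds. Hence $\min_{[0,T]}F_2>0$ and $\|\xi\|_{L^\infty}^2\le \bigl(C_1/\min_{[0,T]}F_2\bigr)F_2(t)$ on $[0,T]$. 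Patching this with the large-$t$ estimate gives the uniform constant. This is a small but genuine omission: without it the lemma as stated is not proved.
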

\begin{proof}
Note that $\|\Psi_i\|_{L^\infty}\leq 3$ for $i=1,...,n+1$. Thus by (\ref{5.16}) and (\ref{5.17}),
we have
\begin{equation}\label{5.20}
\|\xi\|_{L^\infty}\leq C\left(\left|\epsilon\frac{dr(t)}{dt}\right|
+\sum_{i=1}^{n}\left|\epsilon\frac{dz_i(t)}{dt}\right|
+\left|2\epsilon^2 Im\Big(\frac{dz(t)}{dt}\cdot
\overline{z(t)}\Big)+\epsilon^2\frac{d\tau(t)}{dt}\right|\right)
\end{equation}
for some constant $C$ independent of $t$. By (\ref{5.18}) and
(\ref{5.19}), we have
\begin{equation}\label{5.21}
\begin{split}
&\epsilon\frac{dz_i(t)}{dt}
=\frac{1}{\mbox{Vol}(S^{2n+1},\theta_0)}X_i,
\epsilon\frac{dr(t)}{dt}=-\frac{1}{\mbox{Vol}(S^{2n+1},\theta_0)}(X_{n+1}+\overline{X}_{n+1}),\\
&\sqrt{-1}\epsilon^2\left[2Im\Big(\frac{dz(t)}{dt}\cdot
\overline{z(t)}\Big)+\frac{d\tau(t)}{dt}\right]=-\frac{1}{\mbox{Vol}(S^{2n+1},\theta_0)}(X_{n+1}-\overline{X}_{n+1}).
\end{split}
\end{equation}
Hence, it follows from (\ref{5.20}), (\ref{5.21}) and Cauchy-Schwartz inequality that
\begin{equation}\label{5.22}
\|\xi\|_{L^\infty}\leq C_0\|X\|
\end{equation}
for some constant $C_0$ independent of $t$. Here $\|X\|$ is the norm
of the vector $X\in\mathbb{C}^{n+1}$, i.e. $\|X\|^2=
\displaystyle\sum_{i=1}^{n+1}|X_i|^2$. Combining
(\ref{5.b}) and (\ref{5.22}), we get the following estimate:
\begin{equation}\label{5.23}
\begin{split}
\|\xi\|_{L^\infty}&\leq C_0\|X\|\\
&\leq C_0\left(
(n+1)\left|\int_{S^{2n+1}}x(\alpha(t)f_\phi-R_h)dV_h\right|
+\left|\int_{S^{2n+1}}\xi(1-v^{2+\frac{2}{n}})dV_{\theta_0}\right|\right)\\
&\leq C_0\left(
(n+1)\left|\int_{S^{2n+1}}x(\alpha(t)f_\phi-R_h)dV_h\right|
+\mbox{Vol}(S^{2n+1},\theta_0)\|\xi\|_{L^\infty}\|v^{2+\frac{2}{n}}-1\|_{C^0}\right).
\end{split}
\end{equation}
Then by Lemma \ref{lem4.14},
$\|v^{2+\frac{2}{n}}-1\|_{C^0}\rightarrow 0$ as
$t\rightarrow\infty$. Hence there exists a $T>0$ such that
$C_0\mbox{Vol}(S^{2n+1},\theta_0)\|v^{2+\frac{2}{n}}-1\|_{C^0}\leq
1/2$ if $t\geq T$. Hence, by (\ref{5.23}), for all $t\geq T$ we have
\begin{equation*}
\begin{split}
\|\xi\|_{L^\infty}&\leq
2C_0(n+1)\left|\int_{S^{2n+1}}x(\alpha(t)f_\phi-R_h)dV_h\right|\\
&\leq
2C_0(n+1)\mbox{Vol}(S^{2n+1},\theta_0)^{\frac{1}{2}}
\left(\int_{S^{2n+1}}(\alpha(t)f_\phi-R_h)^2dV_h\right)^{\frac{1}{2}}
\end{split}
\end{equation*}
by H\"{o}lder's inequality. On the other hand, $\xi$ is continuous
on $S^{2n+1}\times [0,T]$. Setting $C_1=\max_{(x,t)\in
S^{2n+1}\times [0,T]}\|\xi\|^2_{L^\infty}$, we conclude that
$$\|\xi\|^2_{L^\infty}\leq \frac{C_1}{\min_{t\in[0,T]}F_2(t)} F_2(t)$$
for all $t\leq T$. Here we observe that $F_2(t)$ can never be zero
for any finite $t$, otherwise $f$ could be realized as the Webster
scalar curvature of some conformal contact form. Hence this lemma
follows from these two estimates.
\end{proof}

\section{Spectral decomposition}\label{section6}

For convenience, we denote
$$F_p(t)=\int_{S^{2n+1}}|R_\theta-\alpha f|^pdV_\theta\hspace{2mm}\mbox{ and }\hspace{2mm}G_p(t)
=\int_{S^{2n+1}}|\nabla_\theta(R_\theta-\alpha f)|_\theta^pdV_\theta$$
for $p\geq 1$.
The following lemma was proved in part I. See Lemma 3.2 and 3.3 in \cite{Ho3}.

\begin{lem}\label{lem3.2}
For any $p<\infty$, there holds $F_p(t)\rightarrow 0$ as
$t\rightarrow\infty$. There also holds $G_2(t)\rightarrow 0$ as $t\rightarrow\infty$.
\end{lem}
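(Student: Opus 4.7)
The plan rests on two ingredients: (i) the energy monotonicity of Proposition~\ref{prop2.2}, which will give $\int_0^\infty F_2(t)\,dt<\infty$ after a few normalizations, and (ii) evolution equations for $w:=R_\theta-\alpha f$ together with uniform parabolic regularity, which let me upgrade $L^1$-in-time integrability to pointwise decay. First observe that $E_f(u(t))$ is non-increasing and bounded below, since $E(u)\geq Y(S^{2n+1},\theta_0)\,\mathrm{Vol}(S^{2n+1},\theta_0)^{n/(n+1)}$ (Yamabe-type lower bound) and $\int f u^{2+2/n}dV_{\theta_0}\leq(\max f)\mathrm{Vol}(S^{2n+1},\theta_0)$ by (\ref{2.01}). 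The denominator in Proposition~\ref{prop2.2} is correspondingly bounded above and below along the flow (using $\min f>0$ and the uniform bound $E_f(u(t))\leq\beta$), so integrating $dE_f/dt\leq -c\,F_2(t)$ from $0$ to $\infty$ produces
\[
\int_0^\infty F_2(t)\,dt<\infty.
\]

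Next I would derive the evolution equation for $w$. Differentiating the CR Yamabe equation (\ref{2.4}) in $t$ and using (\ref{2.3}), $w$ satisfies a parabolic equation of the schematic form
\[
\partial_t w=(n+1)\Delta_\theta w+R_\theta w-\dot\alpha\,f,
\]
where $\dot\alpha$ is estimated from (\ref{2.2}) by Cauchy--Schwarz as $|\dot\alpha|\leq C\,F_2(t)^{1/2}$. Combined with the uniform $L^p$ and $L^\infty$ bounds on $u$ and $u^{-1}$ available from part~I under $E_f\leq\beta$ and (sbc), together with a Moser iteration adapted to the sublaplacian, this furnishes $\|w(t)\|_{L^\infty(S^{2n+1})}\leq C$ for all large $t$. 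With this control, direct differentiation gives
\[
F_2'(t)=2\int_{S^{2n+1}}w\,\partial_t w\,dV_\theta+\int_{S^{2n+1}}w^2\,\tfrac{d}{dt}\bigl(dV_\theta\bigr),
\]
whose right-hand side is uniformly bounded in $t$. Hence $F_2$ is uniformly continuous on $[0,\infty)$, which together with the integrability of $F_2$ forces $F_2(t)\to 0$. For general $p<\infty$ one interpolates
\[
F_p(t)\leq \|w(t)\|_{L^\infty}^{p-2}\,F_2(t)\leq C\,F_2(t)\to 0.
\]

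Finally, for $G_2(t)\to 0$ I would compute $F_2'(t)$ more carefully: plugging in $\partial_t w=(n+1)\Delta_\theta w+R_\theta w-\dot\alpha f$ and integrating by parts, the Laplacian term contributes $-2(n+1)G_2(t)$, while the remaining terms are bounded by $C\bigl(\|w\|_{L^\infty}^2 F_1(t)+\|w\|_{L^\infty}|\dot\alpha|\,\mathrm{Vol}^{1/2}F_2(t)^{1/2}\bigr)$, hence integrable over $[0,\infty)$. Rearranging and integrating then yields
\[
\int_0^\infty G_2(t)\,dt<\infty.
\]
One more differentiation, combined with parabolic $L^2$-energy estimates for $\nabla_\theta w$, bounds $|G_2'(t)|$ uniformly, and the standard lemma (a nonnegative uniformly continuous $L^1$-function tends to $0$) produces $G_2(t)\to 0$. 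The main obstacle throughout is the uniform $L^\infty$ and Sobolev control on $w$: without it, the differentiations producing $F_2'$ and $G_2'$ would fail to be bounded, and one could not rule out oscillatory behaviour compatible with $L^1$-integrability but not with pointwise decay. That control is exactly what the non-blow-up regime $E_f\leq\beta$ and (sbc) from part~I is designed to supply.
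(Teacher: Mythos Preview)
The paper does not give a proof here; it simply cites Lemmas~3.2 and~3.3 of Part~I \cite{Ho3}. So there is no detailed argument in the present paper to compare against. Your outline has the right architecture---energy monotonicity gives $\int_0^\infty F_2<\infty$, and one then upgrades integrability to pointwise decay via differential inequalities for $F_p$ and $G_2$---and this is indeed the shape of the argument in curvature-flow papers such as Chen--Xu or Struwe.

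There is, however, a genuine gap in your implementation. You invoke ``uniform $L^p$ and $L^\infty$ bounds on $u$ and $u^{-1}$ available from Part~I under $E_f\le\beta$ and (sbc)'' and use them to run a Moser iteration for $w$ with respect to $\theta_0$, and later to interpolate $F_p\le\|w\|_{L^\infty}^{p-2}F_2$. Such bounds on $u,\,u^{-1}$ exist only on \emph{finite} time intervals (this is exactly what Lemma~2.8 of \cite{Ho3} gives, and it is how it is used in the proof of Lemma~\ref{lem7.2}); they are \emph{not} uniform as $t\to\infty$. In fact the whole paper is written under the standing assumption that the flow does not converge, in which case Theorem~\ref{thm4.7}(ii) says $dV_{\theta(t)}\rightharpoonup\mathrm{Vol}(S^{2n+1},\theta_0)\,\delta_Q$, i.e.\ $u(t)$ concentrates and $\|u(t)\|_{L^\infty}\to\infty$. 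So the Moser step, the interpolation for $F_p$, and the uniform bound on $G_2'$ all break down as written.

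The fix is to work entirely with the evolving contact form $\theta$ rather than $\theta_0$. The evolution equation for $w$ is parabolic with respect to $\Delta_\theta$, and the CR Sobolev inequality for $\theta$ has a \emph{uniform} constant because $E(u(t))\le C$ (this is the role of the Yamabe-type bound you already quoted). From $\partial_t w=(n+1)\Delta_\theta w+R_\theta w-\dot\alpha f$ one derives, for each $p\ge2$, a differential inequality of the form
\[
\frac{d}{dt}F_p(t)+c_p\int_{S^{2n+1}}|w|^{p-2}|\nabla_\theta w|_\theta^2\,dV_\theta\le C_p\,F_p(t)+\text{(terms controlled by }F_2\text{)},
\]
all computed with respect to $\theta$; the gradient term then controls $F_{p(n+1)/n}$ via the $\theta$-Sobolev inequality, and one bootstraps. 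No $L^\infty$ control on $u$ is needed anywhere, and this is precisely how Part~I and Lemma~\ref{lem6.1} (your own inequality $F_2'\le(n+1+o(1))(nF_2-2G_2)+o(1)F_2$) are organized.
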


The following lemma was also proved in part I. See Lemma 5.1 in \cite{Ho3}.

\begin{lem}\label{lem6.1}
With error $o(1)\rightarrow 0$ as $t\rightarrow\infty$, there holds
$$\frac{d}{dt}F_2(t)\leq (n+1+o(1))(nF_2(t)-2G_2(t))+o(1)F_2(t).$$
\end{lem}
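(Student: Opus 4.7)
The plan is to differentiate $F_2(t)=\int_{S^{2n+1}}\sigma^2\,dV_\theta$ (with $\sigma:=\alpha f-R_\theta$) directly along the flow, extract the principal $(n+1)(nF_2-2G_2)$ balance, and show that the remaining terms contribute only $o(1)F_2+o(1)G_2$ by appealing to Lemmas \ref{lem4.14} and \ref{lem3.2}. The first ingredient is the evolution of $R_\theta$ and of the volume form: differentiating the CR Yamabe equation (\ref{2.4}) in $t$ and substituting $u_t=(n/2)\sigma u$ from (\ref{2.3}) yields, after simplification (in direct analogy with the Riemannian Yamabe flow), $(R_\theta)_t=-(n+1)\Delta_\theta\sigma-R_\theta\sigma$; from $dV_\theta=u^{2+2/n}\,dV_{\theta_0}$ one reads off $\partial_t(dV_\theta)=(n+1)\sigma\,dV_\theta$.

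Differentiating $F_2$ under the integral, substituting $\sigma_t=\alpha_t f-(R_\theta)_t$, and integrating the sub-Laplacian term by parts then produces
\begin{equation*}
\frac{d}{dt}F_2=-2(n+1)G_2+2\int_{S^{2n+1}} R_\theta\sigma^2\,dV_\theta+2\alpha_t\int_{S^{2n+1}} f\sigma\,dV_\theta+(n+1)\int_{S^{2n+1}}\sigma^3\,dV_\theta.
\end{equation*}
For the dominant term I would pass to the normalized contact form via $\phi$, so that $\int R_\theta\sigma^2\,dV_\theta=\int R_h(\sigma\circ\phi)^2\,dV_h$. By Lemma \ref{lem4.14} the normalized factor $v\to 1$ in $C^{1,\lambda}_P$, and an elliptic bootstrap through the CR Yamabe equation (\ref{4.22}) promotes this to uniform convergence $R_h\to R_{\theta_0}=n(n+1)/2$. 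Thus $2\int R_\theta\sigma^2\,dV_\theta=(n(n+1)+o(1))F_2$, which combined with $-2(n+1)G_2$ gives the $(n+1+o(1))(nF_2-2G_2)$ portion of the stated bound.

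It remains to absorb the last two terms into the $o(1)F_2+o(1)G_2$ error. Because the flow preserves $\int dV_\theta$ (differentiate it and use (\ref{2.2})) and $f$ is bounded, Cauchy--Schwarz gives $|\int f\sigma\,dV_\theta|\le CF_2^{1/2}$. Differentiating (\ref{2.2}) and using the evolution equations above produces, after cancellation, $\alpha_t\int f\,dV_\theta=-\alpha\int f\sigma\,dV_\theta-nF_2$, whence
\begin{equation*}
2\alpha_t\int f\sigma\,dV_\theta=-\frac{2\alpha\bigl(\int f\sigma\,dV_\theta\bigr)^2+2nF_2\int f\sigma\,dV_\theta}{\int f\,dV_\theta};
\end{equation*}
the first summand is nonpositive and the second is $O(F_2^{3/2})=o(1)F_2$ since $F_2\to 0$ by Lemma \ref{lem3.2}. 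For the cubic term one uses $(n+1)\int\sigma^3\,dV_\theta\le(n+1)\|\sigma\|_{L^\infty}F_2$, and the $C^{1,\lambda}_P$-convergence of $v$ (Lemma \ref{lem4.14}) together with (\ref{4.22}) and the boundedness of $f$ gives $\|\sigma\|_{L^\infty}=\|R_\theta-\alpha f\|_{L^\infty}\to 0$.

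The main obstacle is this final estimate on the cubic remainder: Lemma \ref{lem3.2} only delivers $F_3\to 0$, whereas one needs $\int|\sigma|^3\,dV_\theta=o(F_2)$. Closing this gap requires upgrading the $C^{1,\lambda}_P$-control of Lemma \ref{lem4.14} through the CR Yamabe equation (\ref{4.22}) so that $R_h$, and hence $\sigma$, is small in $L^\infty$, and doing so carefully enough that the normalized-frame estimates transfer back to the unnormalized frame where $F_2$ and $G_2$ are defined. Once this is in place, regrouping the resulting $o(1)F_2$ and $o(1)G_2$ errors delivers the inequality in the stated form $(n+1+o(1))(nF_2-2G_2)+o(1)F_2$.
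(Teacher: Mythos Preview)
Your derivation of the identity
\[
\frac{d}{dt}F_2=-2(n+1)G_2+2\int_{S^{2n+1}} R_\theta\sigma^2\,dV_\theta+2\alpha_t\int_{S^{2n+1}} f\sigma\,dV_\theta+(n+1)\int_{S^{2n+1}}\sigma^3\,dV_\theta
\]
is correct, and your handling of the $\alpha_t$-term is fine. The genuine gap is in the cubic remainder (and, by the same mechanism, in the $R_\theta$-term): the assertion $\|\sigma\|_{L^\infty}\to 0$ is false in this setting. In the normalized frame $\sigma\circ\phi=\alpha f_\phi-R_h$, and since $\phi$ is a diffeomorphism of $S^{2n+1}$, the composition $f_\phi=f\circ\phi$ still attains every value of $f$. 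Hence even if one grants $R_h\to R_{\theta_0}$ uniformly and $\alpha\to R_{\theta_0}/f(Q)$, one has $\|\sigma\|_{L^\infty}=\|\sigma\circ\phi\|_{L^\infty}\ge R_{\theta_0}\max\{\max f/f(Q)-1,\ 1-\min f/f(Q)\}>0$ unless $f$ is constant. Separately, the ``elliptic bootstrap'' you invoke from $v\to 1$ in $C^{1,\gamma}_P$ to uniform convergence of $R_h$ would require control of $\Delta_{\theta_0}v$, which $C^{1,\gamma}_P$ does not provide.

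The repair is to trade the unavailable $L^\infty$ smallness for the critical Folland--Stein embedding in the normalized frame. Since $h\to\theta_0$ in $C^{1,\gamma}_P$ the Sobolev constant for $h$ stays bounded, so $\|\sigma\|_{L^{2+2/n}(\theta)}^2=\|\sigma\circ\phi\|_{L^{2+2/n}(h)}^2\le C(F_2+G_2)$. H\"older with exponents $n+1$ and $(n+1)/n$ then gives $\bigl|\int\sigma^3\,dV_\theta\bigr|\le F_{n+1}^{1/(n+1)}\cdot C(F_2+G_2)=o(1)(F_2+G_2)$ by Lemma~\ref{lem3.2}. For $2\int R_\theta\sigma^2$, write $R_\theta=\alpha f-\sigma$, absorb the new cubic piece as above, and estimate $2\int(\alpha f-R_{\theta_0})\sigma^2\,dV_\theta=2\int(\alpha f_\phi-R_{\theta_0})(\sigma\circ\phi)^2\,dV_h$ by splitting $\alpha f_\phi-R_{\theta_0}=\alpha(f_\phi-f(\widehat{P(t)}))+(\alpha f(\widehat{P(t)})-R_{\theta_0})$: the second summand is $o(1)$ uniformly by Lemma~\ref{lem4.14}, and the first contributes at most $C\|f_\phi-f(\widehat{P(t)})\|_{L^{n+1}(h)}(F_2+G_2)=o(1)(F_2+G_2)$, interpolating the $L^2$-smallness of Lemma~\ref{lem4.14} against the uniform bound on $f$. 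The resulting $o(1)G_2$ errors are then absorbed into the $o(1)$ coefficient multiplying $(nF_2-2G_2)$.
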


\subsection{The shadow flow}\label{section6.2}

From now on, we assume that $n\geq 2$, as in the assumption of Theorem \ref{thm1.1}.
Recall Theorem \ref{thm4.7}, the center of mass $\Theta(t)$ of the contact
form $\theta(t)$ is given approximately by
$$\Theta(t)=\int_{S^{2n+1}}\phi(t)dV_{\theta_0}$$
with
$\widehat{\Theta(t)}=\displaystyle\frac{\Theta(t)}{\|\Theta(t)\|}$.
For any given $t\geq 0$, rotate $\widehat{\Theta(t)}$ as the south
pole, then the conformal CR diffeomorphism may be represented as
$\phi(t)=\Psi\circ \delta_{q(t),r(t)}\circ\pi$ for some $q(t)\in\mathbb{H}^n$ and $r(t)>0$. In the following
lemma, we extend $f(\mu y)=f(y)$ for $0<\mu< 1$,
$y\in\mathbb{S}^{2n+1}$.

\begin{lem}\label{lem6.2}
With a uniform constant $C>0$, if one set $\epsilon=1/r(t)$, then
there holds
$$\|f_\phi-f(\widehat{\Theta(t)})\|_{L^2(S^{2n+1},\theta_0)}
+\|\nabla_{\theta_0}f_{\phi}\|_{L^{2}(S^{2n+1},\theta_0)}
\leq C\epsilon.$$
\end{lem}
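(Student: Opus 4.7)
After rotating so that $\widehat{\Theta(t)}$ becomes the south pole $S=(0,\ldots,0,-1)$, we have $f(\widehat{\Theta(t)})=f(S)$ and $\phi(t)=\Psi\circ\delta_{q(t),r(t)}\circ\pi$ with $\epsilon=1/r(t)$. The geometric picture is that, as $\epsilon\to 0$, the map $\phi(t)$ collapses $S^{2n+1}$ onto $S$, sending almost every point very close to $S$ and leaving only a small neighborhood of the north pole $N=(0,\ldots,0,1)$ behind. The plan is to reduce both summands in the lemma to weighted integrals on $\mathbb{H}^n$ via the Cayley transform and then estimate each by a scaling argument.

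For the first summand, smoothness of $f$ gives $|f(\phi(x))-f(S)|\leq \|f\|_{C^1}\,|\phi(x)-S|$, so it suffices to prove $\int_{S^{2n+1}}|\phi(x)-S|^2\,dV_{\theta_0}\leq C\epsilon^2$. Parametrizing $S^{2n+1}\setminus\{S\}$ by $x=\Psi(z,\tau)$ with $(z,\tau)\in\mathbb{H}^n$ and using (\ref{5.6}) and (\ref{5.11}), one obtains the explicit formula
\[
|\phi(x)-S|^2=\frac{4(1+|w|^2)}{(1+|w|^2)^2+s^2},\qquad (w,s)=\delta_{q(t),r(t)}(z,\tau),
\]
together with the Cayley pullback $\Psi^* dV_{\theta_0}=c_n((1+|z|^2)^2+\tau^2)^{-(n+1)}\,dz\,d\tau$. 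The change of variables $(w,s)=\delta_{q(t),r(t)}(z,\tau)$ has Jacobian $r(t)^{2n+2}$; a Heisenberg translation absorbs the parameter $q(t)$ (since $T_q$ has unit Jacobian), and the subsequent rescaling $(w,s)\mapsto(r\tilde w,r^2\tilde s)$ reduces the integral to a dimensionless convergent integral times the prefactor $r^{-2}$, giving the desired $O(\epsilon^2)$ bound. Geometrically, $|\phi(x)-S|$ is of order $\epsilon$ outside a neighbourhood of $N$ whose $dV_{\theta_0}$-measure is $O(\epsilon^{2n+2})$, on which $|\phi(x)-S|$ remains bounded.

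For the second summand, since $\phi$ is a CR diffeomorphism, $\phi^*\theta_0=\lambda\theta_0$ for a positive conformal factor $\lambda$, and the associated CR-conformal chain rule gives the pointwise identity
\[
|\nabla_{\theta_0}f_\phi|^2_{\theta_0}(x)=\lambda(x)\,|\nabla_{\theta_0}f|^2_{\theta_0}(\phi(x))\leq \|\nabla f\|_\infty^2\,\lambda(x),
\]
reducing the claim to $\int_{S^{2n+1}}\lambda\,dV_{\theta_0}\leq C\epsilon^2$. The Heisenberg scaling identity $\delta_{q,r}^*\theta_{\mathbb{H}^n}=r^2\theta_{\mathbb{H}^n}$ combined with the formula $\Psi^*\theta_0=2((1+|z|^2)^2+\tau^2)^{-1}\theta_{\mathbb{H}^n}$ produces the explicit expression
\[
\lambda(x)=r(t)^2\cdot\frac{(1+|z|^2)^2+\tau^2}{(1+|w|^2)^2+s^2},\qquad (w,s)=\delta_{q(t),r(t)}(\pi(x)),
\]
and the same change of variables as above yields $\int\lambda\,dV_{\theta_0}=O(r^{-2})=O(\epsilon^2)$.

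The main obstacle is the bookkeeping in these weighted Heisenberg integrals: one must carefully track how the $r$-powers from the Jacobian of $D_r$, the Cayley weight $((1+|z|^2)^2+\tau^2)^{-(n+1)}$, and the bubble denominator interact after the change of variables, and verify that the leading contribution comes from a transition region $|w|\sim r$ where the dimensionless tail integral converges (using $n\geq 2$ or, in general, checking integrability at both ends). Uniformity in $q(t)$ presents no genuine difficulty once one absorbs the translation by a Heisenberg change of variables.
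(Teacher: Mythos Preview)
Your approach is close to the paper's and would work, but differs in one place and has one loose end.

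\textbf{Comparison.} For the gradient term your route and the paper's are the \emph{same} computation in disguise: since $\phi$ is CR--conformal and the coordinate functions $x_i$ satisfy $\sum_i|\nabla_{\theta_0}x_i|_{\theta_0}^2=\text{const}$, one has $|\nabla_{\theta_0}\phi|_{\theta_0}^2=c_n\,\lambda$, so your $\int\lambda\,dV_{\theta_0}$ is exactly (a constant times) the paper's $\int|\nabla_{\theta_0}\phi|_{\theta_0}^2\,dV_{\theta_0}$, estimated there by splitting $\mathbb{H}^n$ into $B_{\epsilon^{-1}}(0)$ and its complement. For the first term the paper does \emph{not} compute $\|\phi-\widehat{\Theta}\|_{L^2}$ directly; instead it reuses the gradient bound via a Poincar\'e-type inequality (Theorem~3.20 in Dragomir--Tomassini, which needs $p=2<n+1$, hence $n\ge 2$) to get $\|\phi-\Theta\|_{L^2}\le C\|\nabla_{\theta_0}\phi\|_{L^2}\le C\epsilon$, and then the Lipschitz bound on $f$. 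Your direct calculation of $\int|\phi-\widehat{\Theta}|^2=2(\mathrm{Vol}-|\Theta|)$ is essentially the content of Lemma~\ref{lem6.9} (proved later); it is a valid alternative, but the paper's Poincar\'e shortcut is more economical since it recycles the single estimate already needed for the second summand.

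\textbf{Loose end.} Your claim that ``a Heisenberg translation absorbs the parameter $q(t)$'' is not quite right. After the change of variables $(z,\tau)\mapsto(w,s)=\delta_{q,r}(z,\tau)$, the bubble factor depends only on $(w,s)$, but the Cayley weight now depends on $T_{-q}(w,s)$; a further translation merely shifts the $q$-dependence back to the bubble factor. Neither factor is translation-invariant, so $q$ cannot be eliminated by this device. The paper handles this by taking $q(t)=0$ after its coordinate choice (see the explicit formula $\phi\circ\Psi(z,\tau)=\Psi(\epsilon z,\epsilon^2\tau)$ used in the proofs of Lemmas~\ref{lem6.2}, \ref{lem6.7}, \ref{lem6.9}); your argument needs either the same normalization or an a~priori bound on $|q(t)|$ to make the constants uniform in $t$. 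With that in place, your scaling estimates go through and yield the stated $O(\epsilon)$ bound.
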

\begin{proof}
We choose the coordinate at the point $\widehat{\Theta(t)}$ which
can be represented as the north pole so that $S^{2n+1}$ can be
represented by $\Psi$, where $\Psi(z,\tau)=\pi^{-1}(z,\tau)$,
$(z,\tau)\in\mathbb{H}^n$ defined in (\ref{5.6}). For simplicity, we set
$\epsilon(t)=\displaystyle\frac{1}{r(t)}$. Hence, by a calculation similar to (\ref{5.7}) and (\ref{5.8})
we have
\begin{equation*}
\begin{split}
\int_{S^{2n+1}}|\nabla_{\theta_0}\phi|_{\theta_0}^2dV_{\theta_0}
&=\int_{\mathbb{H}^n}|\nabla_{\Psi^*(\theta_0)}(\phi\circ\Psi)|_{\Psi^*(\theta_0)}^2
\left(\frac{4}{\tau^2+(1+|z|^2)^2}\right)^{n+1}dzd\tau\\
&=\int_{\mathbb{H}^n}\left(\frac{4n\epsilon^2}{(1+\epsilon^2|z|^2)^2+\epsilon^4\tau^2}\right)
\left(\frac{4}{\tau^2+(1+|z|^2)^2}\right)^{n}dzd\tau\\
&\leq
C\epsilon^2\int_{B_{\epsilon^{-1}}(0)}\frac{dzd\tau}{(\tau^2+(1+|z|^2)^2)^{n}}\\
&\hspace{2mm}+C\epsilon^{-2}\int_{\mathbb{H}^n\setminus B_{\epsilon^{-1}}(0)}\frac{dzd\tau}{(\tau^2+(1+|z|^2)^2)^{n+1}}\\
&\leq C\epsilon^2+C\epsilon^{2n-2}\leq C\epsilon,
\end{split}
\end{equation*}
where we have used the estimates:
For $\displaystyle0\leq m\leq\frac{n}{2}$, we have
\begin{equation}\label{6.56}
\begin{split}
&\int_{B_{\epsilon^{-1}}(0)}\frac{(\tau^2+|z|^4)^{m}dzd\tau}{(\tau^2+(1+|z|^2)^2)^{n+1}}\\
&\leq
\int_{B_{\epsilon^{-1}}(0)}\frac{dzd\tau}{(\tau^2+(1+|z|^2)^2)^{n+1-m}}
=\int_{\{\sqrt[4]{\tau^2+|z|^4}\leq\epsilon^{-1}\}}\frac{dzd\tau}{(\tau^2+(1+|z|^2)^2)^{n+1-m}}\\
&\leq
\int_{\{|z|\leq\epsilon^{-1}\}}\left(\int_{-\sqrt{\epsilon^{-4}-|z|^4}}^{\sqrt{\epsilon^{-4}-|z|^4}}\frac{d\tau}{1+\tau^2}\right)
\frac{dz}{(1+|z|^2)^{2n-2m}}\\
&=\int_{\{|z|\leq\epsilon^{-1}\}}\Big[\tan^{-1}(\tau)\Big]_{-\sqrt{\epsilon^{-4}-|z|^4}}^{\sqrt{\epsilon^{-4}-|z|^4}}
\frac{dz}{(1+|z|^2)^{2n-2m}}\\
&\leq \pi\int_{\{|z|\leq\epsilon^{-1}\}}\frac{dz}{(1+|z|^2)^{2n-2m}}
=C\int_0^{\epsilon^{-1}}\frac{r^{2n-1}dr}{(1+r^2)^{2n-2m}}\\
&=C\left(\int_0^{1}\frac{r^{2n-1}dr}{(1+r^2)^{2n-2m}}+\int_1^{\epsilon^{-1}}\frac{r^{2n-1}dr}{(1+r^2)^{2n-2m}}\right)\\
&\leq C\left(\int_0^{1}\frac{dr}{(1+r^2)^{1-2m}}+\int_1^{\epsilon^{-1}}\frac{dr}{r^{2n-4m+1}}\right)=\left\{
    \begin{array}{ll}
      C+C\epsilon^{2n-4m}, & \hbox{if $m<\frac{n}{2}$;} \\
     C+C\log \epsilon, & \hbox{if $m=\frac{n}{2}$,}
    \end{array}
  \right.
\end{split}
\end{equation}
and
\begin{equation}\label{6.3}
\begin{split}
&\int_{\mathbb{H}^n\setminus
B_{\epsilon^{-1}}(0)}\frac{dzd\tau}{(\tau^2+(1+|z|^2)^2)^{k}}
=\int_{\{\sqrt[4]{\tau^2+|z|^4}\geq\epsilon^{-1}\}}\frac{dzd\tau}{(\tau^2+(1+|z|^2)^2)^{k}}\\
&\leq
2\int_{\{|z|\geq\epsilon^{-1}\}}\left(\int_{\sqrt{\epsilon^{-4}-|z|^4}}^\infty\frac{d\tau}{1+\tau^2}\right)
\frac{dz}{(1+|z|^2)^{2k-2}}\\
&\leq\pi\int_{\{|z|\geq\epsilon^{-1}\}}\frac{dz}{(1+|z|^2)^{2k-2}}
=C\int_{\epsilon^{-1}}^\infty\frac{r^{2n-1}dr}{(1+r^2)^{2k-2}}\\
&\leq
C\int_{\epsilon^{-1}}^\infty\frac{dr}{r^{4k-2n-3}}=O(\epsilon^{4k-2n-4})\hspace{2mm}\mbox{ if }4k\geq 2n+5.
\end{split}
\end{equation}
Recall that $\Theta(t)$ is the average of $\phi(t)$, from the Poincar\'{e}-type inequality
(see Theorem 3.20 in \cite{Dragomir}),
we have
$$\|\phi(t)-\Theta(t)\|_{L^2(S^{2n+1},\theta_0)}\leq C\|\nabla_{\theta_0}\phi\|_{L^{2}(S^{2n+1},\theta_0)}\leq C\epsilon.$$
Here we need the assumption $n\geq 2$ to conclude that $p=2<n+1$ so that Theorem 3.20 in \cite{Dragomir} can be applied.
Hence, by the inequalities
$$
|f_\phi-f(\widehat{\Theta(t)})|=|f_\phi-f(\Theta(t))|\leq\|\nabla f\|_{L^\infty}|\phi(t)-\Theta(t)|$$
 and $$
|\nabla_{\theta_0} f_\phi|\leq\|\nabla f\|_{L^\infty}|\nabla_{\theta_0}\phi|,
$$
the assertion follows.
\end{proof}

Let $\{\varphi_i\}$ be an $L^2(S^{2n+1},\theta_0)$-orthonormal basis
of eigenfunctions of $-\Delta_{\theta_0}$, satisfying
$-\Delta_{\theta_0}\varphi_i=\lambda_i\varphi_i$ with eigenvalues
$0=\lambda_0<\lambda_1=\cdots=\lambda_{2n+2}=\displaystyle\frac{n}{2}<\lambda_{2n+3}\leq\cdots$. In fact,
we can take with loss of generality
\begin{equation}\label{6.a}
\varphi_i=\frac{1}{\sqrt{n+1}}x_i\hspace{2mm}\mbox{ and }\hspace{2mm}\varphi_{n+1+i}=\frac{1}{\sqrt{n+1}}\overline{x}_i\hspace{2mm}\mbox{ for }i=1,...,n+1,
\end{equation}
where $x=(x_1,\cdots,x_{n+1})$ is the coordinates of $\mathbb{C}^{n+1}$ restricted to $S^{2n+1}$. Now in terms of the orthonormal basis $\{\varphi_i^\theta\}$,
$\{\varphi_i^h\}$ of the eigenfunctions of $-\Delta_\theta$,
$-\Delta_h$ with the corresponding eigenvalues $\lambda_i^\theta$,
$\lambda_i^h$ respectively, we expand
\begin{equation*}
\alpha
f-R_\theta=\sum_{i=0}^\infty\beta_\theta^i\varphi_i^\theta\hspace{2mm}\mbox{
and }\hspace{2mm}\alpha
f_\phi-R_h=\sum_{i=0}^\infty\beta_h^i\varphi_i^h,
\end{equation*}
with coefficients
\begin{equation}\label{6.21}
\beta^i_h=\int_{S^{2n+1}}(\alpha f_\phi-R_h)\varphi_i^h
\,dV_h=\int_{S^{2n+1}}(\alpha f-R_\theta)\varphi_i^\theta
\,dV_\theta=\beta_\theta^i
\end{equation}
for all $i\in\mathbb{N}$. First notice that we always have
$\beta_0^\theta=0$ in view of (\ref{2.2}). It is well known that
$\varphi_i^h=\varphi_i^\theta\circ\phi$, which implies (\ref{6.21})
and $\lambda_i^\theta=\lambda_i^h$ for all $i\in\mathbb{N}$.

\begin{lem}\label{lem6.3}
 As $t\rightarrow\infty$, we have
$\lambda_i^\theta=\lambda_i^h\rightarrow\lambda_i$ and we can choose
$\varphi_i$ such that $\varphi_i^h\rightarrow\varphi_i$ in
$L^2(S^{2n+1},\theta_0)$ for all $i\in\mathbb{N}$.
\end{lem}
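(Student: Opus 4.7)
The plan is to exploit the conformal relation $h=v^{2/n}\theta_0$ together with the convergence $v\to 1$ in $C^{1,\lambda}_P(S^{2n+1})$ from Lemma \ref{lem4.14}. Using the transformation laws $|\nabla_h u|_h^2 = v^{-2/n}|\nabla_{\theta_0}u|_{\theta_0}^2$ and $dV_h = v^{2+2/n}\,dV_{\theta_0}$, the Rayleigh quotient for $-\Delta_h$ becomes
$$Q_h(u) = \frac{\displaystyle\int_{S^{2n+1}} v^2\,|\nabla_{\theta_0}u|_{\theta_0}^2\,dV_{\theta_0}}{\displaystyle\int_{S^{2n+1}} v^{2+2/n}\,u^2\,dV_{\theta_0}}.$$
Since $\|v-1\|_{C^0}\to 0$, for every $\eta>0$ there exists $T$ such that $(1-\eta)Q_{\theta_0}(u)\leq Q_h(u)\leq (1+\eta)Q_{\theta_0}(u)$ for all $u$ and all $t\geq T$. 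The Courant--Fischer min-max characterization
$$\lambda_i^h = \min_{\substack{V\subset S^2_1(S^{2n+1},\theta_0)\\ \dim V = i+1}}\ \max_{u\in V\setminus\{0\}} Q_h(u)$$
(with the analogous formula for $\lambda_i$) then forces $(1-\eta)\lambda_i\leq\lambda_i^h\leq (1+\eta)\lambda_i$, so $\lambda_i^h\to\lambda_i$ as $t\to\infty$.

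For the eigenfunctions, the identity $-\Delta_h\varphi_i^h=\lambda_i^h\varphi_i^h$ together with $\|\varphi_i^h\|_{L^2(S^{2n+1},h)}=1$ and the two-sided bound on $Q_h$ provides a uniform $S^2_1(S^{2n+1},\theta_0)$ bound on each $\varphi_i^h$. The compact Folland--Stein embedding $S^2_1\hookrightarrow L^2$ then furnishes a subsequential $L^2$-limit $\psi_i$. Testing the weak form of the eigenvalue equation against smooth functions and letting $v\to 1$, $\lambda_i^h\to\lambda_i$, the limit $\psi_i$ satisfies $-\Delta_{\theta_0}\psi_i=\lambda_i\psi_i$, and the orthonormality relations $\int\varphi_i^h\varphi_j^h\,dV_h=\delta_{ij}$ pass to orthonormality with respect to $dV_{\theta_0}$ because $v^{2+2/n}\to 1$ uniformly.

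The main obstacle is the spectral degeneracy: the eigenvalue $\lambda_1=n/2$ has multiplicity $2n+2$, so individual $\varphi_i^h$ for $i=1,\ldots,2n+2$ cannot be expected to approach predetermined coordinate functions. The fix is to work with spectral projections. Let $E_k$ denote the $\lambda_k$-eigenspace of $-\Delta_{\theta_0}$ and let $P_k^h$ be the orthogonal projection onto the span of those $\varphi_j^h$ whose eigenvalue lies in a small interval around $\lambda_k$ (an interval that eventually isolates $\lambda_k$ from the rest of the spectrum, by the eigenvalue convergence above). Then $P_k^h$ converges in operator norm to the projection $P_k$ onto $E_k$, and we may choose an orthonormal basis $\{\varphi_i\}$ of each $E_k$ matching the $L^2$-limits of $\{\varphi_i^h\}$. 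A standard diagonal argument over all $i$, combined with the operator-norm convergence of the projections, upgrades subsequential convergence to full convergence along $t\to\infty$ and closes the proof.
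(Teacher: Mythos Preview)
The paper omits the proof entirely, referring instead to Lemma 5.2 of Part I \cite{Ho3}. Your approach—comparing Rayleigh quotients via the uniform convergence $v\to 1$ from Lemma \ref{lem4.14}, invoking Courant--Fischer min-max for the eigenvalue convergence, and then using $S^2_1$-compactness plus spectral projections for the eigenfunctions—is the standard spectral perturbation argument and is almost certainly what Part I does as well.

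One precision issue in your final paragraph: once you have operator-norm convergence $P_k^h\to P_k$ of the spectral projections (which does follow, e.g., from norm-resolvent convergence induced by $v\to 1$ in $C^0$), the clean way to finish is to \emph{construct} the $\varphi_i^h$ from the fixed $\varphi_i$ by applying $P_k^h$ and then Gram--Schmidt orthonormalizing in $L^2(dV_h)$; this yields full (not merely subsequential) convergence $\varphi_i^h\to\varphi_i$ directly. As written, you first pass to subsequential limits of given $\varphi_i^h$ and then attempt to ``upgrade'' to full convergence by a diagonal argument—but within a degenerate eigenspace the subsequential limits need not be unique, so no diagonal argument can force full-sequence convergence without first exercising the freedom to choose the eigenbasis $\{\varphi_i^h\}$ for each $t$. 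This is a minor reordering of your ingredients rather than a missing idea.
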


Since the proof is essentially the same as the proof of Lemma 5.2 in part I, we
omit the proof and refer the reader to \cite{Ho3}. Now we define
\begin{equation}\label{6.1}
b=(b^1,\cdots,b^{2n+2})=\int_{S^{2n+1}}(x,\overline{x})(\overline{R}_h-R_h)dV_h
\end{equation}
where $x=(x_1,...,x_{n+1})\in S^{2n+1}\subset\mathbb{C}^{n+1}$ and
$\overline{x}=(\overline{x}_1,...,\overline{x}_{n+1})$.
That is,
$$
b^i=\int_{S^{2n+1}}x_i(\alpha f_\phi-R_h)dV_h\mbox{ and }b^{n+1+i}=\int_{S^{2n+1}}\overline{x}_i(\alpha f_\phi-R_h)dV_h\mbox{ for }1\leq i\leq n+1.
$$
For brevity, set $B=\sqrt{n+1}\,b$,
$\beta_\theta=(\beta_\theta^1,\cdots,\beta_\theta^{2n+2})$, then by
(\ref{6.a}), (\ref{6.21}) and Lemma \ref{lem6.3}
\begin{equation}\label{6.26}
\begin{split}
|B^i-\beta^i_\theta|&=|\sqrt{n+1}\,b^i-\beta^i_\theta|\\
&=\left|\sqrt{n+1}\int_{S^{2n+1}}x_i(\alpha
f_\phi-R_h)dV_h-\int_{S^{2n+1}}\varphi_i^h(\alpha f_\phi-R_h)
dV_h\right|\\
&=\left|\int_{S^{2n+1}}(\varphi_i-\varphi_i^h)(\alpha
f_\phi-R_h)dV_h\right|\\
&\leq\|\varphi_i-\varphi_i^h\|_{L^2(S^{2n+1},h)}\|\alpha
f_\phi-R_h\|_{L^2(S^{2n+1},h)}\\
&\leq
C\|\varphi_i-\varphi_i^h\|_{L^2(S^{2n+1},\theta_0)}F_2(h(t))^{\frac{1}{2}}=o(1)F_2(t)^{\frac{1}{2}}
\end{split}
\end{equation}
for $i=1,2,...,2n+2$, where $o(1)\rightarrow 0$ as
$t\rightarrow\infty$.

\begin{lem}\label{lem6.4}
With error $o(1)\rightarrow 0$ as $t\rightarrow\infty$, there holds
$$\frac{dB(t)}{dt}=o(1)F_2(t)^{\frac{1}{2}}.$$
\end{lem}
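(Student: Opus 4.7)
The plan is to differentiate $B(t)$ directly under the integral, substitute the evolution equations for $h$, $R_h$, $f_\phi$ and $\alpha$, exploit the Obata-type cancellation coming from the identity $(n+1)\Delta_{\theta_0}x_i + R_{\theta_0}x_i \equiv 0$ on $S^{2n+1}$, and estimate the remaining terms using Lemmas \ref{lem4.14}, \ref{lem5.1} and \ref{lem6.2}. I focus on the components $B^i$ for $1\le i\le n+1$; the conjugate components $B^{n+1+i}$ are handled symmetrically, replacing $x_i$ by $\overline{x}_i$ (also an eigenfunction of $\Delta_{\theta_0}$ with eigenvalue $-n/2$).

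The first step is to observe that the normalized contact form evolves purely conformally. Comparing $\partial_t h = \phi^*\partial_t\theta + \mathcal{L}_\xi h$ with (\ref{5.a}) forces $\mathcal{L}_\xi h = \tfrac{1}{n+1}\mbox{div}'_h(\xi)\,h$, so $\partial_t h = \tilde w\,h$ with $\tilde w := (\alpha f_\phi - R_h) + \tfrac{1}{n+1}\mbox{div}'_h(\xi)$. Hence $\partial_t\,dV_h = (n+1)\tilde w\,dV_h$, and the standard variation formula $\partial_t R_\theta = -(n+1)\Delta_\theta\mu - R_\theta\mu$ under $\partial_t\theta = \mu\theta$ gives $\partial_t R_h = -(n+1)\Delta_h\tilde w - R_h\tilde w$. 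Combined with $\partial_t f_\phi = \xi(f_\phi)$ and with $\dot\alpha$ derived from (\ref{2.2}), differentiating $B^i$ and integrating the Laplacian by parts in $h$ yields
$$\frac{1}{\sqrt{n+1}}\frac{dB^i}{dt} = \int x_i\bigl[\dot\alpha f_\phi + \alpha\xi(f_\phi) + (n+1)(\alpha f_\phi - R_h)\tilde w\bigr]dV_h + \int\bigl[(n+1)\Delta_h x_i + R_h x_i\bigr]\tilde w\,dV_h.$$

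The pivotal step is the Obata cancellation in the second integral. Using the conformal covariance of the CR Yamabe operator and (\ref{4.22}), one computes directly that $(n+1)\Delta_h x_i + R_h x_i = 2(n+1)v^{-(1+2/n)}\bigl[\langle\nabla_{\theta_0}v,\nabla_{\theta_0}x_i\rangle - \tfrac{1}{n}x_i\Delta_{\theta_0}v\bigr]$, with the leading $-\tfrac{n(n+1)}{2}x_i$ term from $\Delta_h$ exactly annihilated by $R_{\theta_0}x_i$ thanks to $R_{\theta_0} = \tfrac{n(n+1)}{2}$. By Lemma \ref{lem4.14} ($v\to 1$ in $C^{1,\gamma}_P$) and the substitution $\Delta_{\theta_0}v = \tfrac{n}{2(n+1)}(R_{\theta_0}v - R_hv^{1+2/n})$ from the Yamabe equation, this remainder, paired against $\tilde w$ in $L^2(dV_h)$, produces an $o(1)\|\tilde w\|_{L^2(dV_h)}$ contribution. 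From Lemma \ref{lem5.1} and the $C^1$-control of $\xi$ extracted from (\ref{5.16})--(\ref{5.17}) and (\ref{5.21}), which shows that $\|\mbox{div}'_h(\xi)\|_{L^2}\le C\|\xi\|_{L^\infty}\le CF_2(t)^{1/2}$, one has $\|\tilde w\|_{L^2(dV_h)}\le CF_2(t)^{1/2}$, so the Obata term is $o(1)F_2(t)^{1/2}$.

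The remaining pieces are bounded individually. The quadratic $(n+1)\int x_i(\alpha f_\phi - R_h)\tilde w\,dV_h$ is controlled by Cauchy--Schwarz and Lemma \ref{lem5.1} as $\le CF_2(t) = o(F_2(t)^{1/2})$. The transport $\alpha\int x_i\xi(f_\phi)\,dV_h$ is bounded by $\|\xi\|_{L^\infty}\|\nabla_{\theta_0}f_\phi\|_{L^1(dV_h)}\le CF_2(t)^{1/2}\epsilon$ via Lemmas \ref{lem5.1} and \ref{lem6.2}. For $\dot\alpha\int x_if_\phi\,dV_h$, the center-of-mass condition (\ref{4.19}) allows the replacement of $f_\phi$ by $f_\phi - f(\widehat{P(t)})$, giving an $O(\epsilon)$ estimate by Lemma \ref{lem6.2}; pairing this with the bound $|\dot\alpha|\le CF_2(t)^{1/2}$, which follows from differentiating (\ref{2.2}) and using Proposition \ref{prop2.2} together with Cauchy--Schwarz, yields $o(F_2(t)^{1/2})$. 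Summing completes the proof. The main obstacle is the quantitative Obata cancellation: one must ensure both the $\nabla v$ piece and the $\Delta v$ piece (after Yamabe substitution) remain small enough in norms dual to $\|\tilde w\|_{L^2}$ to give a truly decaying $o(1)$ prefactor, which is where the $C^{1,\gamma}_P$ convergence $v\to 1$ from Lemma \ref{lem4.14} is crucial.
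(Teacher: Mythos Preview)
Your proof is correct and relies on the same ingredients as the paper's (Lemmas~\ref{lem4.14}, \ref{lem5.1}, \ref{lem6.2}, the eigenfunction identity for $x_i$, and the $|\dot\alpha|\le CF_2^{1/2}$ bound), but the organization is genuinely different. The paper rewrites $b$ via the Yamabe equation as $\int(x,\bar x)\alpha f_\phi\,dV_h-\int(x,\bar x)v\,L_{\theta_0}v\,dV_{\theta_0}$, differentiates in $t$, and then regroups into three pieces $I_1,I_2,I_3$ involving $v_t\Delta_{\theta_0}v$, $v_t\langle\nabla_{\theta_0}x,\nabla_{\theta_0}v\rangle$, and $v_t(\alpha f_\phi v^{1+2/n}-R_{\theta_0}v)$; each is then estimated by substituting (\ref{5.a}) for $v_t$ and integrating by parts again. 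Your route is more intrinsic: you stay in the $h$-geometry, use the standard variation formula $\partial_tR_h=-(n+1)\Delta_h\tilde w-R_h\tilde w$, and then integrate the Laplacian onto $x_i$ so that the Obata cancellation $(n+1)\Delta_{\theta_0}x_i+R_{\theta_0}x_i=0$ becomes transparent. Your explicit identity $(n+1)\Delta_hx_i+R_hx_i=2(n+1)v^{-(1+2/n)}\bigl[\langle\nabla_{\theta_0}v,\nabla_{\theta_0}x_i\rangle-\tfrac{1}{n}x_i\Delta_{\theta_0}v\bigr]$ is precisely the residual the paper obtains after its algebraic regrouping, so the two arguments are really the same computation seen from different ends. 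What your version buys is conceptual clarity (the cancellation is isolated in one line) at the cost of needing to justify the $L^2$-smallness of $\mbox{div}'_h(\xi)$ separately, which you do correctly from the explicit formulas (\ref{5.16})--(\ref{5.17}). One minor point: when you invoke Lemma~\ref{lem6.2} for the $\dot\alpha$-term you use $\widehat{P(t)}$ rather than $\widehat{\Theta(t)}$; the paper simply appeals to Lemma~\ref{lem4.14} here, which already gives $\|f_\phi-f(\widehat{P(t)})\|_{L^2}=o(1)$ and suffices for the $o(1)F_2^{1/2}$ conclusion.
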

\begin{proof}
By (\ref{4.22}), we have
$$b=\int_{S^{2n+1}}(x,\overline{x})\alpha f_\phi dV_h
-\int_{S^{2n+1}}(x,\overline{x})v\left(-(2+\frac{2}{n})\Delta_{\theta_0}v+R_{\theta_0}v\right)dV_{\theta_0}.$$
Thus
\begin{equation}\label{6.27}
\begin{split}
\frac{db}{dt}&=\alpha_t\int_{S^{2n+1}}(x,\overline{x})f_\phi
dV_h+\int_{S^{2n+1}}(x,\overline{x})\alpha
df_\phi\cdot\xi\,dV_h\\
&\hspace{4mm} +(2+\frac{2}{n})\int_{S^{2n+1}}(x,\overline{x})\alpha
f_\phi v^{1+\frac{2}{n}}v_tdV_{\theta_0}\\
&\hspace{4mm}-\int_{S^{2n+1}}(x,\overline{x})v_t\left(-(2+\frac{2}{n})
\Delta_{\theta_0}v+R_{\theta_0}v\right)dV_{\theta_0}\\
&\hspace{4mm}-\int_{S^{2n+1}}(x,\overline{x})v\left(-(2+\frac{2}{n})
\Delta_{\theta_0}v_t+R_{\theta_0}v_t\right)dV_{\theta_0}\\
&=\alpha_t\int_{S^{2n+1}}(x,\overline{x})f_\phi
dV_h+\int_{S^{2n+1}}(x,\overline{x})\alpha
df_\phi\cdot\xi dV_h\\
&\hspace{4mm} +(2+\frac{2}{n})\int_{S^{2n+1}}(x,\overline{x})\alpha
f_\phi v^{1+\frac{2}{n}}v_tdV_{\theta_0}-2R_{\theta_0}\int_{S^{2n+1}}(x,\overline{x})vv_tdV_{\theta_0}\\
&\hspace{4mm}+(2+\frac{2}{n})\int_{S^{2n+1}}(x,\overline{x})
(v_t\Delta_{\theta_0}v+v\Delta_{\theta_0}v_t)dV_{\theta_0}.
\end{split}
\end{equation}
We are going to estimate the terms on the right hand side of (\ref{6.27}).
By (\ref{4.19}) and Lemma \ref{lem4.14}, and by (3.4)  in \cite{Ho3}, the first term on the right hand side of (\ref{6.27}) can be bounded by
\begin{equation*}
\alpha_t\int_{S^{2n+1}}(x,\overline{x})f_\phi
dV_h=\alpha_t\int_{S^{2n+1}}(x,\overline{x})(f_\phi-f(\widehat{P(t)}))
dV_h=o(1)F_2(t)^{\frac{1}{2}}.
\end{equation*}
Observe that by (\ref{6.a}) and integration by parts, the last four terms on the right hand side of (\ref{6.27}) can be rewritten as
\begin{equation}\label{6.28}
\begin{split}
&\int_{S^{2n+1}}(x,\overline{x})\alpha
df_\phi\cdot\xi\,dV_h+(2+\frac{2}{n})\int_{S^{2n+1}}(x,\overline{x})\alpha
f_\phi v^{1+\frac{2}{n}}v_tdV_{\theta_0}\\
&-2R_{\theta_0}\int_{S^{2n+1}}(x,\overline{x})vv_tdV_{\theta_0}
+(2+\frac{2}{n})\int_{S^{2n+1}}(x,\overline{x})(v_t\Delta_{\theta_0}v+v\Delta_{\theta_0}v_t)dV_{\theta_0}\\
&=\int_{S^{2n+1}}(x,\overline{x})\alpha
df_\phi\cdot\xi\,dV_h+2(2+\frac{2}{n})\int_{S^{2n+1}}
v_t\langle\nabla_{\theta_0}(x,\overline{x}),\nabla_{\theta_0}v\rangle_{\theta_0}dV_{\theta_0}
\\
&\hspace{4mm}+(2+\frac{2}{n})\int_{S^{2n+1}}(x,\overline{x})v_t\left[\alpha
f_\phi
v^{1+\frac{2}{n}}-\left(\frac{nR_{\theta_0}}{n+1}+\frac{n}{2}\right)v
+2\Delta_{\theta_0}v\right]dV_{\theta_0}\\
&=\left[\int_{S^{2n+1}}(x,\overline{x})\alpha
df_\phi\cdot\xi\,dV_h+(2+\frac{2}{n})\int_{S^{2n+1}}(x,\overline{x})v_t\big(\alpha
f_\phi v^{1+\frac{2}{n}}-R_{\theta_0}v\big)dV_{\theta_0}\right]
\\
&\hspace{4mm}+2(2+\frac{2}{n})\int_{S^{2n+1}}(x,\overline{x})v_t\Delta_{\theta_0}v\,dV_{\theta_0}
+2(2+\frac{2}{n})\int_{S^{2n+1}}
v_t\langle\nabla_{\theta_0}(x,\overline{x}),\nabla_{\theta_0}v\rangle_{\theta_0}dV_{\theta_0}\\
&=I_1+I_2+I_3.
\end{split}
\end{equation}
By (\ref{5.a}), Lemma \ref{lem4.14}, Lemma \ref{lem5.1} and Lemma \ref{lem3.2},
by integration by parts and H\"{o}lder's inequality, we obtain
\begin{equation*}
\begin{split}
I_1&=\int_{S^{2n+1}}(x,\overline{x}) \left[(n+1)(\alpha
f_\phi-R_h)+v^{-(2+\frac{2}{n})}\mbox{div}'_{\theta_0}(v^{2+\frac{2}{n}}\xi)\right](\alpha f_\phi
v^{2+\frac{2}{n}}-R_{\theta_0}v^2)dV_{\theta_0}\\
&\hspace{4mm}+\int_{S^{2n+1}}(x,\overline{x})\alpha
df_\phi\cdot\xi\,dV_h\\
&=(n+1)\int_{S^{2n+1}}(x,\overline{x})(\alpha f_\phi-R_h)(\alpha
f_\phi v^{2+\frac{2}{n}}-R_{\theta_0}v^2)dV_{\theta_0}\\
&\hspace{4mm}-\int_{S^{2n+1}}(\xi,\vec{0})(\alpha f_\phi
v^{2+\frac{2}{n}}-R_{\theta_0}v^2)dV_{\theta_0}-\frac{2}{n}R_{\theta_0}\int_{S^{2n+1}}(x,\overline{x}) v(dv\cdot\xi)dV_{\theta_0}\\
&\leq C\|\alpha f_\phi
v^{2+\frac{2}{n}}-R_{\theta_0}v^2\|_{L^2(S^{2n+1},\theta_0)}\big(\|\alpha
f_\phi-R_h\|_{L^2(S^{2n+1},h)}+\|\xi\|_{L^\infty}\big)\\
&\hspace{4mm}+C\|\nabla_{\theta_0}v\|_{L^2(S^{2n+1},\theta_0)}\|\xi\|_{L^\infty}
=o(1)F_2(t)^{\frac{1}{2}},
\end{split}
\end{equation*}
where $\mbox{div}'_{\theta_0}$ is the subdivergence operator of type $(1,0)$ with respect to the contact form $\theta_0$ (see \cite{Cheng} for the definition).
By (\ref{5.a}), Lemma \ref{lem4.14} and Lemma \ref{lem5.1}, we get
\begin{equation*}
\begin{split}
I_2&=(2n+2)\int_{S^{2n+1}}(x,\overline{x})(\alpha
f_\phi-R_h)v\Delta_{\theta_0}v\,dV_{\theta_0}\\
&\hspace{4mm}+2\int_{S^{2n+1}}(x,\overline{x})v^{-(1+\frac{2}{n})}\mbox{div}'_{\theta_0}(v^{2+\frac{2}{n}}\xi)
\Delta_{\theta_0}v\,dV_{\theta_0}\\
&=(2n+2)\int_{S^{2n+1}}(x,\overline{x})(\alpha f_\phi-R_h)v\Delta_{\theta_0}v\,
dV_{\theta_0}-2\int_{S^{2n+1}}(\xi,\vec{0})\,v\Delta_{\theta_0}v\,dV_{\theta_0}\\
&\hspace{4mm}+2(1+\frac{2}{n})\int_{S^{2n+1}}(x,\overline{x})(dv\cdot\xi)\Delta_{\theta_0}v\,dV_{\theta_0}
-2\int_{S^{2n+1}}(x,\overline{x})v\big(d(\Delta_{\theta_0}v)\cdot\xi\big)dV_{\theta_0}\\
&\leq C(\|\Delta_{\theta_0}v\|_{L^2(S^{2n+1},\theta_0)}F_2(t)^{\frac{1}{2}}+o(1)\|\xi\|_{L^\infty})=o(1)F_2(t)^{\frac{1}{2}},
\end{split}
\end{equation*}
where we have used the estimate
\begin{equation*}
\begin{split}
&-2\int_{S^{2n+1}}(x,\overline{x})v\big(d(\Delta_{\theta_0}v)\cdot\xi\big)dV_{\theta_0}
=2\int_{S^{2n+1}}(x,\overline{x})v\big[d(R_hv^{1+\frac{2}{n}}-R_{\theta_0}v)\cdot\xi\big]dV_{\theta_0}\\
&=2\int_{S^{2n+1}}(x,\overline{x})v\Big[d\big((R_h-\alpha f_\phi)v^{1+\frac{2}{n}}+(\alpha f_\phi-R_{\theta_0})
v^{1+\frac{2}{n}}+(v^{1+\frac{2}{n}}-v)R_{\theta_0}\big)\cdot\xi\Big]dV_{\theta_0}\\
&=o(1)\|\xi\|_{L^\infty},
\end{split}
\end{equation*}
thanks to Lemma \ref{lem4.14}, Lemma \ref{lem3.2} and Lemma \ref{lem6.2}. Similarly, we find that
\begin{equation*}
\begin{split}
I_3&=(2n+2)\int_{S^{2n+1}}(\alpha f_\phi-R_h)v\langle\nabla_{\theta_0}(x,\overline{x}),\nabla_{\theta_0}v\rangle_{\theta_0}dV_{\theta_0}\\
&\hspace{4mm}+2\int_{S^{2n+1}}v^{-(1+\frac{2}{n})}\mbox{div}_{\theta_0}(v^{2+\frac{2}{n}}\xi)
\langle\nabla_{\theta_0}(x,\overline{x}),\nabla_{\theta_0}v\rangle_{\theta_0}dV_{\theta_0}\\
&=(2n+2)\int_{S^{2n+1}}(\alpha f_\phi-R_h)v\langle\nabla_{\theta_0}(x,\overline{x}),\nabla_{\theta_0}v\rangle_{\theta_0}dV_{\theta_0}\\
&\hspace{4mm}+2(1+\frac{2}{n})\int_{S^{2n+1}}(dv\cdot\xi)
\langle\nabla_{\theta_0}(x,\overline{x}),\nabla_{\theta_0}v\rangle_{\theta_0}dV_{\theta_0}\\
&\hspace{4mm}
-2\int_{S^{2n+1}}v
\big(d(\langle\nabla_{\theta_0}(x,\overline{x}),\nabla_{\theta_0}v\rangle_{\theta_0})\cdot\xi\big)dV_{\theta_0}\\
&\leq C(\|\nabla_{\theta_0}v\|_{L^2(S^{2n+1},\theta_0)}F_2(t)^{\frac{1}{2}}+\|\xi\|_{L^\infty}\|v-1\|_{S_1^2(S^{2n+1},\theta_0)})
=o(1)F_2(t)^{\frac{1}{2}}.
\end{split}
\end{equation*}
Inserting these estimates of $I_1$, $I_2$ and $I_3$ into (\ref{6.28}), we obtain the desired result.
\end{proof}

\begin{lem}\label{lem6.5}
For sufficiently large time $t$, there holds
$$F_2(t)=(1+o(1))|B(t)|^2$$
with error $o(1)\rightarrow 0$ as $t\rightarrow\infty$.
\end{lem}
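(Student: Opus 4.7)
The plan is to decompose $\alpha f - R_\theta$ spectrally and combine Lemma \ref{lem6.1} with Lemma \ref{lem6.4} to show that the low-frequency part captures almost all of $F_2$. By Parseval,
\begin{equation*}
F_2(t)=\sum_{i\geq 1}(\beta^i_\theta)^2=A(t)+D(t),\quad A=\sum_{i=1}^{2n+2}(\beta^i_\theta)^2,\quad D=\sum_{i\geq 2n+3}(\beta^i_\theta)^2.
\end{equation*}
The estimate (\ref{6.26}) combined with Cauchy--Schwarz gives $A(t)=|B(t)|^2+o(1)F_2(t)$, so the task reduces to proving $D(t)=o(F_2(t))$.

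To get decay of $D$, the spectral gap $\lambda_{2n+3}^\theta\to\lambda_{2n+3}>n/2$ from Lemma \ref{lem6.3} yields $G_2\geq \frac{n}{2}F_2+cD-o(1)F_2$ for some fixed $c>0$; plugging this into Lemma \ref{lem6.1} gives
\begin{equation*}
\frac{dF_2}{dt}(t)\leq -\kappa D(t)+o(1)F_2(t),\qquad\kappa=2c(n+1)>0.
\end{equation*}
Meanwhile, Lemma \ref{lem6.4} combined with $|B|^2\leq (1+o(1))F_2$ yields $\bigl|\frac{d}{dt}|B|^2\bigr|\leq 2|B|\bigl|\frac{dB}{dt}\bigr|\leq o(1)F_2$. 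Setting $J=F_2-|B|^2=D+o(1)F_2$ and subtracting these two estimates produces the key inequality
\begin{equation*}
\frac{dJ}{dt}\leq -\kappa J+o(1)F_2\leq -\frac{\kappa}{2}J+o(1)|B|^2
\end{equation*}
for $t$ large, after absorbing the $o(1)J$ term into $-\kappa J/2$.

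Applying Gronwall to this inequality from a base time $T$ gives
\begin{equation*}
J(t)\leq e^{-\kappa(t-T)/2}F_2(T)+\int_T^t e^{-\kappa(t-s)/2}\,o(1)\,|B(s)|^2\,ds.
\end{equation*}
The slow variation of $|B|^2$ on the effective timescale $1/\kappa$ lets me replace $|B(s)|^2$ by $(1+o(1))|B(t)|^2$ under the integral, so the integral is bounded by $o(1)|B(t)|^2$; sending first $t\to\infty$ and then $T\to\infty$ kills the boundary term and yields $J(t)=o(|B(t)|^2)$, which rearranges to $F_2(t)=(1+o(1))|B(t)|^2$.

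The delicate point is the multiplicative slow-variation step $|B(s)|^2=(1+o(1))|B(t)|^2$ on $[T,t]$: this demands that the ratio $F_2/|B|^2$ remain bounded on the relevant time window, which is essentially the conclusion we are chasing. The argument must therefore be set up as a bootstrap, and the degenerate alternative in which $|B(t)|$ decays much faster than $F_2(t)^{1/2}$ (so that all $L^2$-energy is asymptotically concentrated in the higher eigenspaces) has to be ruled out using the standing hypothesis that the flow does not converge---this is the main technical obstacle.
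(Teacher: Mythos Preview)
Your spectral decomposition, the differential inequality $\frac{d}{dt}F_2\le -\kappa D+o(1)F_2$, and the use of Lemma~\ref{lem6.4} to control $\frac{d}{dt}|B|^2$ all agree with the paper. You have also correctly located the obstruction: the Gronwall step with the ``slow variation'' replacement $|B(s)|^2=(1+o(1))|B(t)|^2$ presupposes a bound on $F_2/|B|^2$, which is exactly what you are trying to prove. As written, this is circular, and your final paragraph is not a proof but a description of what remains to be done.

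The paper closes this gap by a clean dichotomy rather than by Gronwall on $J=F_2-|B|^2$. First, if there is some large time $t_1$ with $|B(t_1)|^2\ge \widehat F_2(t_1)$ (your $D$), then near $t_1$ one writes $F_2=(1+\delta)|B|^2$ with $\delta\in[-\tfrac12,2]$ and works directly with the \emph{ratio} $\delta$. Substituting into your inequality $\frac{d}{dt}F_2\le -\kappa\widehat F_2+o(1)F_2=-\kappa\delta|B|^2+o(1)F_2$ and using $|B\,\dot B|=o(1)F_2$ gives, after dividing by $|B|^2$, the closed inequality $\dot\delta\le(-\kappa+o(1))\delta+o(1)$. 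This is self-improving: $\delta$ stays bounded and is forced to $0$. No slow-variation hypothesis is needed because the ratio, not the difference, is the right variable. Second, if $|B(t)|^2<\widehat F_2(t)$ for all large $t$, then $F_2\le 2\widehat F_2+o(1)F_2$, and your differential inequality upgrades to $\frac{d}{dt}F_2\le -c\,F_2$, i.e.\ exponential decay $F_2(t)\le Ce^{-ct}$. One then integrates $\bigl|\frac{d}{dt}\mathrm{Vol}(B_{r_0}(Q),\theta)\bigr|\le C F_2^{1/2}$ to conclude that $\mathrm{Vol}(B_{r_0}(Q),\theta(t))$ remains bounded strictly below $\mathrm{Vol}(S^{2n+1},\theta_0)$ for all $t$; this contradicts the concentration statement of Theorem~\ref{thm4.7}. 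So the ``non-convergence hypothesis'' enters not abstractly but through the concrete volume-concentration conclusion, which is precisely what exponential decay of $F_2$ would forbid.
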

\begin{proof}
For brevity, we set
$\widehat{F}_2(t)=\displaystyle\sum_{i=2n+2}^\infty|\beta_\theta^i|^2.$
By (\ref{6.26}), we have
\begin{equation}\label{6.30}
\begin{split}
F_2(t)&=\int_{S^{2n+1}}(\alpha f-R_\theta)^2dV_\theta
=\sum_{i,j=1}^\infty\beta_\theta^i\beta_\theta^j\int_{S^{2n+1}}\varphi_i^\theta\varphi_j^\theta\,dV_\theta\\
&=\sum_{i=1}^\infty|\beta_\theta^i|^2=|\beta_\theta|^2+\widehat{F}_2(t)
=|B|^2+\widehat{F}_2(t)+o(1)F_2(t).
\end{split}
\end{equation}
Since
\begin{equation*}
\begin{split}
G_2(t)=\int_{S^{2n+1}}|\nabla_\theta(\alpha
f-R_\theta)|_{\theta}^2dV_\theta&=-\int_{S^{2n+1}}(\alpha
f-R_\theta)\Delta_\theta (\alpha f-R_\theta)dV_\theta\\
&=\sum_{i,j=1}^\infty\beta_\theta^i\beta_\theta^j\int_{S^{2n+1}}\varphi_i^\theta
\big(-\Delta_\theta\varphi_j^\theta\big)\,dV_\theta\\
&=\sum_{i,j=1}^\infty\beta_\theta^i\beta_\theta^j\lambda_j^\theta
\int_{S^{2n+1}}\varphi_i^\theta\varphi_j^\theta\,dV_\theta=\sum_{i=1}^\infty\lambda_i^\theta|\beta_\theta^i|^2,
\end{split}
\end{equation*}
we have
\begin{equation}\label{6.31}
\begin{split}
\frac{n}{2}F_2(t)-G_2(t)
&=\frac{n}{2}\sum_{i=1}^\infty|\beta_\theta^i|^2-\sum_{i=1}^\infty\lambda_i^\theta|\beta_\theta^i|^2\\
&=\frac{n}{2}\sum_{i=1}^\infty|\beta_\theta^i|^2-\sum_{i=1}^\infty\lambda_i|\beta_\theta^i|^2+
\sum_{i=1}^\infty(\lambda_i-\lambda_i^\theta)|\beta_\theta^i|^2\\
&=\sum_{i=2n+3}^\infty(\frac{n}{2}-\lambda_i)|\beta_\theta^i|^2+
o(1)\sum_{i=1}^\infty|\beta_\theta^i|^2\\
&\leq
(\frac{n}{2}-\lambda_{2n+3})\widehat{F}_2(t)+o(1)F_2(t),
\end{split}
\end{equation}
where we have used Lemma \ref{lem6.3} and the fact that
$0=\lambda_0<\lambda_1=\cdots\lambda_{2n+2}=\displaystyle\frac{n}{2}<\lambda_{2n+3}\leq\lambda_i$
for $i\geq 2n+3$. From (\ref{6.31}) and Lemma \ref{lem6.1}, we
deduce
\begin{equation}\label{6.32}
\begin{split}
\frac{d}{dt}F_2(t)&\leq (n+1+o(1))(nF_2(t)-2G_2(t))+o(1)F_2(t)\\
&\leq 2(n+1)(\frac{n}{2}-\lambda_{2n+3})\widehat{F}_2(t)+o(1)F_2(t).
\end{split}
\end{equation}

Suppose there exists some sufficiently large time $t_1$ such that
$|B(t_1)|^2\geq \widehat{F}_2(t_1)$. Denote
$$F_2(t)=(1+\delta(t))|B(t)|^2$$
near $t_1$. Then we have $\displaystyle-\frac{1}{2}\leq
\delta(t)\leq 2$ for all time $t$ sufficiently close to $t_1$ by
continuity of $\displaystyle\frac{\widehat{F}_2(t)}{|B(t)|^2}$ at
$t=t_1$. By (\ref{6.32}), we get
\begin{equation}\label{6.33}
\begin{split}
&\frac{d\delta(t)}{dt}|B(t)|^2+2(1+\delta(t))B(t)\frac{dB(t)}{dt}\\
&=\frac{d}{dt}F_2(t)\leq
2(n+1)(\frac{n}{2}-\lambda_{2n+3})\widehat{F}_2(t)+o(1)F_2(t)\\
&=2(n+1)(\frac{n}{2}-\lambda_{2n+3})\delta(t)|B(t)|^2+o(1)F_2(t).
\end{split}
\end{equation}
It follows from  Lemma \ref{lem6.4} that
$$\left|B(t)\frac{dB(t)}{dt}\right|=o(1)|B(t)|F_2(t)^{\frac{1}{2}}\leq o(1)F_2(t)$$
since $|B(t)|\leq F_2(t)$. Substituting it into (\ref{6.33}) and
dividing $|B(t)|^2$ on both sides, we find
\begin{equation*}
\frac{d\delta(t)}{dt}\leq2(n+1)(\frac{n}{2}-\lambda_{2n+3})\delta(t)+o(1)\frac{F_2(t)}{|B(t)|^2}
=\left[2(n+1)(\frac{n}{2}-\lambda_{2n+3})+o(1)\right]\delta(t).
\end{equation*}
Since $\lambda_{2n+3}>\displaystyle\frac{n}{2}$, this implies
$\delta(t)\rightarrow 0$ as $t\rightarrow\infty$, as required. It
follows from this argument that our choice of $t_1$ must satisfies
that $|o(1)|\leq (n+1)(\lambda_{2n+3}-\displaystyle\frac{n}{2})$
when $t\geq t_1$.

It reduces to seek a time $t_1$ such that $|B(t_1)|^2\geq
\widehat{F}_2(t_1)$ for sufficiently large $t_1$. Assume, on the
contrary, that $|B(t)|^2<\widehat{F}_2(t)$ for all sufficiently
large $t$. Therefore by (\ref{6.30})
$$F_2(t)
=|B|^2+\widehat{F}_2(t)+o(1)F_2(t)<2\widehat{F}_2(t)+o(1)F_2(t),$$
which implies that
\begin{equation*}
\begin{split}
\frac{d}{dt}F_2(t) &\leq
-2(n+1)(\lambda_{2n+3}-\frac{n}{2})\widehat{F}_2(t)+o(1)F_2(t)\\
&\leq-(n+1)(\lambda_{2n+3}-\frac{n}{2})F_2(t)+o(1)F_2(t)
\end{split}
\end{equation*}
by (\ref{6.32}). Hence, we have
\begin{equation}\label{6.34}
F_2(t)\leq C e^{-\frac{(n+1)}{2}(\lambda_{2n+3}-\frac{n}{2})t}
\end{equation}
for $t\geq t_2$ and $C$ depending only on $t_2$. Let $Q$ be the
unique concentration point described in Theorem \ref{thm4.7}, and
$B_{r_0}(Q)=B_{r_0}(Q,\theta_0)$. For any $r_0>0$, we have
\begin{equation*}
\begin{split}
\left|\frac{d}{dt}\mbox{Vol}(B_{r_0}(Q),\theta)\right|
&=\left|\frac{d}{dt}\left(\int_{B_{r_0}(Q)}dV_\theta\right)\right|=(n+1)\left|\int_{B_{r_0}(Q)}(\alpha
f-R_\theta)dV_\theta\right|\\
&\leq(n+1)\mbox{Vol}(S^{2n+1},\theta)^{\frac{1}{2}}\left(\int_{B_{r_0}(Q)}(\alpha
f-R_\theta)^2dV_\theta\right)^{\frac{1}{2}}\\
&\leq
(n+1)\mbox{Vol}(S^{2n+1},\theta_0)^{\frac{1}{2}}F_2(t)^{\frac{1}{2}}\\
&\leq C
e^{-\frac{(n+1)}{4}(\lambda_{2n+3}-\frac{n}{2})t}\hspace{2mm}\mbox{
for }t\geq t_2,
\end{split}
\end{equation*}
by (\ref{2.3}) and (\ref{6.34}).
Thus, by integrating the above inequality from $t_2$ to a larger
$t$, we get
\begin{equation}\label{6.35}
\begin{split}\mbox{Vol}(B_{r_0}(Q),\theta(t))&<\mbox{Vol}(B_{r_0}(Q),\theta(t_2))
+\frac{4C}{(n+1)(\lambda_{2n+3}-\frac{n}{2})}\,
e^{-\frac{(n+1)}{4}(\lambda_{2n+3}-\frac{n}{2})t_2}\\
&<\mbox{Vol}(S^{2n+1},\theta_0)/2
\end{split}
\end{equation}
uniformly for $t\geq t_2$ by
first choosing $t_2$ sufficiently large
and then choosing $r_0$ sufficiently small.  On the other hand, from Theorem
\ref{thm4.7}, we know that
$$\mbox{Vol}(B_{r_0}(Q),\theta(t))\rightarrow\mbox{Vol}(S^{2n+1},\theta_0)\hspace{2mm}
\mbox{ as }t\rightarrow\infty$$ which yields a contradiction with
(\ref{6.35}). Thus the proof is complete.
\end{proof}

\begin{lem}\label{lem6.6}
With a uniform constant $C>0$, there holds
$$\|v-1\|_{S_2^2(S^{2n+1},\theta_0)}\leq C(F_2(t)^{\frac{1}{2}}
+\|f_\phi-f(\Theta(t))\|_{L^2(S^{2n+1},\theta_0)}).$$
\end{lem}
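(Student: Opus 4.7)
The plan is to linearize the CR Yamabe equation (\ref{4.22}) around $v\equiv 1$, handle the kernel of the linearized operator using the normalization conditions (volume conservation and (\ref{4.19})), and conclude by the standard sub-elliptic $L^2$ regularity estimate. Setting $w=v-1$, which satisfies $\|w\|_{L^\infty}=o(1)$ by Lemma~\ref{lem4.14}, I would expand $(1+w)^{1+\frac{2}{n}}=1+\frac{n+2}{n}w+O(|w|^2)$ and rewrite (\ref{4.22}) as
\begin{equation*}
L_0 w:=-\Bigl(2+\tfrac{2}{n}\Bigr)\Delta_{\theta_0}w-(n+1)w=(R_h-R_{\theta_0})\Bigl(1+\tfrac{n+2}{n}w\Bigr)+O(|w|^2),
\end{equation*}
where I have used $R_{\theta_0}\cdot\tfrac{2}{n}=n+1$. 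The operator $L_0$ has eigenvalues $(n+1)(\tfrac{2\lambda_i}{n}-1)$ on the eigenfunctions $\varphi_i$; its kernel is exactly $\mathrm{span}\{\varphi_1,\dots,\varphi_{2n+2}\}$, it is negative on $\mathbb{R}\varphi_0$, and it is coercive on the $L^2$-orthogonal complement of $\ker L_0\oplus\mathbb{R}\varphi_0$ with spectral gap $\mu:=(n+1)(\tfrac{2\lambda_{2n+3}}{n}-1)>0$.

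To estimate the source $G:=L_0 w$, I would split $R_h-R_{\theta_0}=(R_h-\alpha f_\phi)+\alpha(f_\phi-f(\Theta(t)))+(\alpha f(\Theta(t))-R_{\theta_0})$. The first two pieces are bounded in $L^2(\theta_0)$ by $CF_2(t)^{1/2}$ and $C\|f_\phi-f(\Theta(t))\|_{L^2}$ respectively, using that $dV_h$ and $dV_{\theta_0}$ are comparable for large $t$. The delicate piece is the constant $A:=|\alpha f(\Theta(t))-R_{\theta_0}|$, which does not appear on the right-hand side of the claim. I would extract $A$ by integrating $L_0 w=G$ over $S^{2n+1}$: the left-hand side equals $-(n+1)\int_{S^{2n+1}} w\,dV_{\theta_0}$, and the volume conservation identity $\int v^{2+\frac{2}{n}}dV_{\theta_0}=\mathrm{Vol}(S^{2n+1},\theta_0)$ linearizes to $\int w\,dV_{\theta_0}=O(\|w\|_{L^2}^2)=o(1)\|w\|_{L^2}$. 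Matching this against the corresponding integral of the right-hand side isolates $A\cdot\mathrm{Vol}(S^{2n+1},\theta_0)$ modulo controlled error, and absorbing a small $o(1)A$ contribution produces $A\leq C(F_2(t)^{1/2}+\|f_\phi-f(\Theta(t))\|_{L^2})+o(1)\|w\|_{L^2}$. Consequently, $\|G\|_{L^2(\theta_0)}\leq C(F_2(t)^{1/2}+\|f_\phi-f(\Theta(t))\|_{L^2})+o(1)\|w\|_{L^2}$.

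Expanding $w=\sum_i c_i\varphi_i$, the center-of-mass condition (\ref{4.19}) linearizes, via (\ref{6.a}) and $\int\varphi_i\,dV_{\theta_0}=0$, to $c_i=O(\|w\|_{L^2}^2)=o(1)\|w\|_{L^2}$ for $1\leq i\leq 2n+2$, while volume conservation gives the same bound for $c_0$. For $i\geq 2n+3$, testing $L_0 w=G$ against $\varphi_i$ together with the spectral gap $\mu$ yields $\sum_{i\geq 2n+3}c_i^2\leq\mu^{-2}\|G\|_{L^2}^2$. Summing all modes and absorbing the $o(1)\|w\|_{L^2}^2$ contributions, I obtain $\|w\|_{L^2}\leq C(F_2(t)^{1/2}+\|f_\phi-f(\Theta(t))\|_{L^2})$. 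Feeding this back into the bound on $\|G\|_{L^2}$ and rewriting $L_0 w=G$ as $(2+\tfrac{2}{n})\Delta_{\theta_0}w=-(n+1)w-G$ gives $\|\Delta_{\theta_0}w\|_{L^2}\leq C(F_2(t)^{1/2}+\|f_\phi-f(\Theta(t))\|_{L^2})$, after which the standard sub-elliptic $L^2$ regularity estimate $\|w\|_{S_2^2(S^{2n+1},\theta_0)}\leq C(\|\Delta_{\theta_0}w\|_{L^2}+\|w\|_{L^2})$ for the CR sub-Laplacian closes the argument.

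The main obstacle is the handling of the constant mode: $L_0$ vanishes on the coordinate directions forming $\ker L_0$ and is negative on $\mathbb{R}\varphi_0$, so the source $G$ carries an a priori unknown constant piece $\alpha f(\Theta(t))-R_{\theta_0}$ that must be estimated separately. The trick of constraining $\int w\,dV_{\theta_0}$ via volume conservation and then extracting $A$ from the $\varphi_0$-projection of the equation, combined with $\|w\|_{L^\infty}=o(1)$ from Lemma~\ref{lem4.14} to close a self-improving inequality and to absorb the nonlinear $O(|w|^2)$ terms, is the key mechanism.
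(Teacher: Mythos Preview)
Your proposal is correct and follows essentially the same strategy as the paper: both linearize (\ref{4.22}) around $v=1$, kill the low modes $\varphi_0,\dots,\varphi_{2n+2}$ using volume conservation and the center-of-mass condition (\ref{4.19}) (the paper does this in (\ref{6.36})), and then exploit the spectral gap at $\lambda_{2n+3}$ to close. The only organizational difference is in how the constant offset is handled: you extract $A=|\alpha f(\Theta(t))-R_{\theta_0}|$ directly by integrating the linearized equation and using $\int w=o(1)\|w\|_{L^2}$, whereas the paper inserts the intermediary $\frac{1}{\mathrm{Vol}}\int R_h\,dV_h$, bounds $\big|\int R_h\,dV_h-R_{\theta_0}\mathrm{Vol}\big|$ by $E(v-1)$ in (\ref{6.39})--(\ref{6.41}), and then absorbs the resulting $(\int|\nabla v|^2)^2$ term via the smallness condition (\ref{6.45}) inside a hands-on Young-inequality computation (\ref{6.44}). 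Your route is a bit cleaner; the paper's is more explicit about constants but reaches the same conclusion by the same mechanism.
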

\begin{proof}
Expand $v^{2+\frac{2}{n}}-1$ and $v-1$ in terms of eigenfunctions to
get
$$v^{2+\frac{2}{n}}-1=\sum_{i=0}^\infty
V^i\varphi_i\hspace{2mm}\mbox{ and
}\hspace{2mm}v-1=\sum_{i=0}^\infty v^i\varphi_i.$$ By
Proposition 2.1 in \cite{Ho3}, we have
$$\int_{S^{2n+1}}(v^{2+\frac{2}{n}}-1)dV_{\theta_0}=\int_{S^{2n+1}}(u^{2+\frac{2}{n}}-1)dV_{\theta_0}=0$$
which implies that $V^0=0$. On the other hand, due to the
normalization (\ref{4.19}) of $v$, we have $V^i=0$ for $1\leq i\leq
2n+2$. Observe that by Taylor's expansion and Lemma \ref{lem4.14},
\begin{equation*}
\begin{split}
(2+\frac{2}{n})v^i&=(2+\frac{2}{n})\int_{S^{2n+1}}(v-1)\varphi_i\,dV_{\theta_0}\\
&=\int_{S^{2n+1}}(v^{2+\frac{2}{n}}-1)\varphi_i\,dV_{\theta_0}+O(\|v-1\|^2_{S^2_1(S^{2n+1},\theta_0)})\\
&=V^i+o(1)\|v-1\|_{S^2_1(S^{2n+1},\theta_0)}.
\end{split}
\end{equation*}
Thus it follows that
\begin{equation}\label{6.36}
\sum_{i=0}^{2n+2}|v^i|^2=o(1)\|v-1\|_{S^2_1(S^{2n+1},\theta_0)}^2.
\end{equation}

We may rewrite (\ref{4.22}) in the form
\begin{equation}\label{6.37}
\begin{split}
-(2+\frac{2}{n})\Delta_{\theta_0}v&=(R_hv^{1+\frac{2}{n}}-R_{\theta_0}v)\\
&=\left[\vphantom{\left(\alpha
f(\Theta(t))-\frac{1}{\mbox{Vol}(S^{2n+1},\theta_0)}\int_{S^{2n+1}}R_hdV_h\right)}(R_h-\alpha f_\phi)+(\alpha f_\phi-\alpha
f(\Theta(t)))\right.\\
&\hspace{4mm}\left.+\left(\alpha
f(\Theta(t))-\frac{1}{\mbox{Vol}(S^{2n+1},\theta_0)}\int_{S^{2n+1}}R_hdV_h\right)\right]v^{1+\frac{2}{n}}\\
&\hspace{4mm}+
\left(\frac{1}{\mbox{Vol}(S^{2n+1},\theta_0)}\int_{S^{2n+1}}R_hdV_h
-R_{\theta_0}\right)v^{1+\frac{2}{n}}+R_{\theta_0}(v^{1+\frac{2}{n}}-v).
\end{split}
\end{equation}
We are going to estimate the terms on the right hand side of (\ref{6.37}).
By (4.34), Proposition 2.1 and Lemma 2.4 in \cite{Ho3}, and by Lemma
\ref{lem4.14}, we have
\begin{equation}\label{6.38}
\begin{split}
&\left|\alpha
f(\Theta(t))\mbox{Vol}(S^{2n+1},\theta_0)-\int_{S^{2n+1}}R_hdV_h\right|\\
&\leq\left|\alpha \int_{S^{2n+1}}(
f(\Theta(t))-f_\phi)dV_h\right|+\left|\int_{S^{2n+1}}(\alpha
f-R_h)dV_h\right|\\
&\leq
C\|f(\Theta(t))-f_\phi\|_{L^2(S^{2n+1},\theta_0)}+\mbox{Vol}(S^{2n+1},\theta_0)^{\frac{1}{2}}\left(\int_{S^{2n+1}}(\alpha
f-R_h)^2dV_h\right)^{\frac{1}{2}}.
\end{split}
\end{equation}
On the other hand,
\begin{equation*}
\begin{split}
E(v-1)&=\int_{S^{2n+1}}\left((2+\frac{2}{n})
|\nabla_{\theta_0}v|_{\theta_0}^2+R_{\theta_0}(v-1)^2\right)dV_{\theta_0}\\
&=\int_{S^{2n+1}}\left((2+\frac{2}{n})|\nabla_{\theta_0}v|_{\theta_0}^2+R_{\theta_0}v^2\right)dV_{\theta_0}
-2R_{\theta_0}\int_{S^{2n+1}}(v-1)dV_{\theta_0}\\
&\hspace{4mm}-R_{\theta_0}\int_{S^{2n+1}}dV_{\theta_0}\\
&=\int_{S^{2n+1}}R_hdV_{h}-2R_{\theta_0}\int_{S^{2n+1}}(v-1)dV_{\theta_0}
-R_{\theta_0}\mbox{Vol}(S^{2n+1},\theta_0),
\end{split}
\end{equation*}
which implies that
\begin{equation}\label{6.39}
\begin{split}
\left|\int_{S^{2n+1}}R_hdV_{h}-R_{\theta_0}\mbox{Vol}(S^{2n+1},\theta_0)\right|
&\leq
E(v-1)+2R_{\theta_0}\left|\int_{S^{2n+1}}(v-1)dV_{\theta_0}\right|\\
&=E(v-1)+C|v^0|\\
&\leq E(v-1)+o(1)\|v-1\|_{S^2_1(S^{2n+1},\theta_0)}
\end{split}
\end{equation}
by (\ref{6.36}). Observe that we have
\begin{equation}\label{6.40}
\begin{split}
E(v-1)&=(2+\frac{2}{n})\int_{S^{2n+1}}|\nabla_{\theta_0}v|_{\theta_0}^2dV_{\theta_0}\\
&\hspace{4mm}
+R_{\theta_0}\int_{S^{2n+1}}\left(v-\frac{1}{\mbox{Vol}(S^{2n+1},\theta_0)}
\int_{S^{2n+1}}v\,dV_{\theta_0}\right)^2dV_{\theta_0}\\
&\hspace{4mm}+\frac{R_{\theta_0}}{\mbox{Vol}(S^{2n+1},\theta_0)}
\left(\int_{S^{2n+1}}(v-1)dV_{\theta_0}\right)^2.
\end{split}
\end{equation}
Since the first eigenvalue of the sub-Laplacian of $\theta_0$ is
$n/2$, together with (\ref{6.36}), we obtain from (\ref{6.40}) that
\begin{equation}\label{6.41}
\begin{split}
E(v-1)&\leq(2+\frac{2}{n})\int_{S^{2n+1}}|\nabla_{\theta_0}v|_{\theta_0}^2dV_{\theta_0}
+\frac{2R_{\theta_0}}{n}\int_{S^{2n+1}}|\nabla_{\theta_0}v|_{\theta_0}^2dV_{\theta_0}\\
&\hspace{4mm}+o(1)\|v-1\|_{S^2_1(S^{2n+1},\theta_0)}^2\\
&=(1+\frac{2}{n})(n+1)\int_{S^{2n+1}}|\nabla_{\theta_0}v|_{\theta_0}^2dV_{\theta_0}
+o(1)\|v-1\|_{S^2_1(S^{2n+1},\theta_0)}^2.
\end{split}
\end{equation}
We also need the following:
\begin{equation}\label{6.42}
v^{1+\frac{2}{n}}-v=\frac{2}{n}(v-1)+o(|v-1|).
\end{equation}

Since $\lambda_{2n+3}\geq(1-2\epsilon(n))\displaystyle\frac{n}{2}$ for some constant $\epsilon(n)>0$ depending only $n$,
with sufficiently small $\delta>0$ and $\epsilon_0>0$ we have
\begin{equation}\label{6.43}
(2+\frac{2}{n})^{-2}R_{\theta_0}^2(\frac{2}{n})^2(1+\delta)(1+\epsilon_0)
=\frac{n^2}{4}(1+\delta)(1+\epsilon_0)\leq(1-\epsilon(n))\lambda_{2n+3}^2,
\end{equation}
we find that
\begin{equation}\label{6.44}
\begin{split}
&\sum_{i=0}^\infty\lambda_i^2|v^i|^2=\|\Delta_{\theta_0}v\|^2_{L^2(S^{2n+1},\theta_0)}\\
&\leq C(\delta)\left[\|(R_h-\alpha f_\phi)v^{1+\frac{2}{n}}\|_{L^2(S^{2n+1},\theta_0)}^2+\|(\alpha f_\phi-\alpha
f(\Theta(t)))v^{1+\frac{2}{n}}\|_{L^2(S^{2n+1},\theta_0)}^2\vphantom{\left(\alpha
f(\Theta(t))-\frac{1}{\mbox{Vol}(S^{2n+1},\theta_0)}\int_{S^{2n+1}}R_hdV_h\right)^2}\right.\\
&\hspace{8mm}\left.+\left(\alpha
f(\Theta(t))-\frac{1}{\mbox{Vol}(S^{2n+1},\theta_0)}\int_{S^{2n+1}}R_hdV_h\right)^2
\|v^{1+\frac{2}{n}}\|_{L^2(S^{2n+1},\theta_0)}^2\right]\\
&\hspace{4mm}+(2+\frac{2}{n})^{-2}(1+\delta)
\left[(1+\epsilon_0)R_{\theta_0}^2\|v^{1+\frac{2}{n}}-v\|_{L^2(S^{2n+1},\theta_0)}^2\vphantom{\left(\alpha
f(\Theta(t))-\frac{1}{\mbox{Vol}(S^{2n+1},\theta_0)}\int_{S^{2n+1}}R_hdV_h\right)^2}\right.\\
&\hspace{8mm}\left.+(1+\epsilon_0^{-1})
\left(\frac{1}{\mbox{Vol}(S^{2n+1},\theta_0)}\int_{S^{2n+1}}R_hdV_h
-R_{\theta_0}\right)^2\|v^{1+\frac{2}{n}}\|_{L^2(S^{2n+1},\theta_0)}^2\right]\\
&\leq  C(\delta)\Big(F_2(t)+\| f_\phi-
f(\Theta(t))\|_{L^2(S^{2n+1},\theta_0)}^2\Big)\\
&\hspace{4mm}+(2+\frac{2}{n})^{-2}R_{\theta_0}^2(\frac{2}{n})^2(1+\delta)
(1+\epsilon_0)\|v-1\|_{L^2(S^{2n+1},\theta_0)}^2
+o(1)\|v-1\|_{S^2_1(S^{2n+1},\theta_0)}^2\\
&\hspace{4mm}+2(2+\frac{2}{n})^{-2}(1+\frac{2}{n})^2(n+1)^2\cdot
\frac{(1+\delta)(1+\epsilon_0^{-1})}{\mbox{Vol}(S^{2n+1},\theta_0)^2}
\left(\int_{S^{2n+1}}|\nabla_{\theta_0}v|_{\theta_0}^2dV_{\theta_0}\right)^2
\\
&\leq C(\delta)\Big(F_2(t)+\| f_\phi-
f(\Theta(t))\|_{L^2(S^{2n+1},\theta_0)}^2\Big)+(1-\epsilon(n))^2\lambda_{2n+3}^2\sum_{i=0}^\infty|v^i|^2
 +o(1)\|v-1\|_{S^2_1(S^{2n+1},\theta_0)}^2\\
&\hspace{4mm}
+\frac{(n+2)^2}{2}\cdot\frac{(1+\delta)(1+\epsilon_0^{-1})}{\mbox{Vol}(S^{2n+1},\theta_0)^2}
\left(\int_{S^{2n+1}}|\Delta_{\theta_0}v|^2dV_{\theta_0}\right)
\left(\int_{S^{2n+1}}(v-1)^2dV_{\theta_0}\right)
\end{split}
\end{equation}
where the first  inequality follows from (\ref{6.37}) and Young's
inequality, and the second inequality follows from (\ref{6.38}), (\ref{6.39}),
(\ref{6.41}), (\ref{6.42}), and
Lemma \ref{lem4.14}, and the last inequality follows from
(\ref{6.43}) and
\begin{equation*}
\begin{split}
\left(\int_{S^{2n+1}}|\nabla_{\theta_0}v|_{\theta_0}^2dV_{\theta_0}\right)^2
&=\left(\int_{S^{2n+1}}(v-1)\Delta_{\theta_0}v\,dV_{\theta_0}\right)^2\\
&\leq\left(\int_{S^{2n+1}}|\Delta_{\theta_0}v|^2dV_{\theta_0}\right)
\left(\int_{S^{2n+1}}(v-1)^2dV_{\theta_0}\right)
\end{split}
\end{equation*}
by H\"{o}lder's inequality. Since
$\displaystyle\int_{S^{2n+1}}(v-1)^2dV_{\theta_0}\rightarrow 0$ as
$t\rightarrow\infty$, we can choose sufficiently large $t_0$ such
that if $t\geq t_0$, then
\begin{equation}\label{6.45}
\frac{(n+2)^2}{2}\cdot\frac{(1+\delta)(1+\epsilon_0^{-1})}{\mbox{Vol}(S^{2n+1},\theta_0)}
\left(\int_{S^{2n+1}}(v-1)^2dV_{\theta_0}\right)<\frac{1}{2}.
\end{equation}
Thus by (\ref{6.36}) and (\ref{6.45}), we can absorb the last three
terms on the right hand side of (\ref{6.44}) to conclude that
\begin{equation}\label{6.46}
\int_{S^{2n+1}}|\Delta_{\theta_0}v|^2dV_{\theta_0}\leq
C(\delta)\Big(F_2(t)+\| f_\phi-
f(\Theta(t))\|_{L^2(S^{2n+1},\theta_0)}^2\Big)
+o(1)\|v-1\|_{S^2_1(S^{2n+1},\theta_0)}^2.
\end{equation}

Now we also have
\begin{equation}\label{6.47}
\begin{split}
&\frac{1}{2}\int_{S^{2n+1}}|\nabla_{\theta_0}v|_{\theta_0}^2dV_{\theta_0}+\int_{S^{2n+1}}(v-1)^2dV_{\theta_0}\\
&=\frac{1}{2}\int_{S^{2n+1}}|\nabla_{\theta_0}v|_{\theta_0}^2dV_{\theta_0}
+\int_{S^{2n+1}}\left(v-\frac{1}{\mbox{Vol}(S^{2n+1},\theta_0)}
\int_{S^{2n+1}}v\,dV_{\theta_0}\right)^2dV_{\theta_0}\\
&\hspace{4mm} +\frac{1}{\mbox{Vol}(S^{2n+1},\theta_0)}\left(\int_{S^{2n+1}}(v-1)dV_{\theta_0}\right)^2\\
&\leq(\frac{1}{2}+\frac{2}{n})\int_{S^{2n+1}}|\nabla_{\theta_0}v|_{\theta_0}^2dV_{\theta_0}
+o(1)\|v-1\|_{S^2_1(S^{2n+1},\theta_0)}^2\\
&=(\frac{1}{2}+\frac{2}{n})\int_{S^{2n+1}}(v-1)\Delta_{\theta_0}v\,dV_{\theta_0}
+o(1)\|v-1\|_{S^2_1(S^{2n+1},\theta_0)}^2\\
&\leq\frac{1}{2}(\frac{1}{2}+\frac{2}{n})^2\int_{S^{2n+1}}|\Delta_{\theta_0}v|^2dV_{\theta_0}
+\frac{1}{2}\int_{S^{2n+1}}(v-1)^2dV_{\theta_0}
+o(1)\|v-1\|_{S^2_1(S^{2n+1},\theta_0)}^2
\end{split}
\end{equation}
where the first inequality follows from (\ref{6.36}) and the fact
that the first eigenvalue $\lambda_1$ of the sub-Laplacian for
$\theta_0$ is $n/2$. By absorbing the second term on the right hand
side of (\ref{6.47}) to the left hand side, we get
\begin{equation*}
\begin{split}
&\frac{1}{2}\int_{S^{2n+1}}(v-1)^2dV_{\theta_0}+\frac{1}{2}\int_{S^{2n+1}}|\nabla_{\theta_0}v|_{\theta_0}^2dV_{\theta_0}\\
&\leq\frac{1}{2}(\frac{1}{2}+\frac{2}{n})^2\int_{S^{2n+1}}|\Delta_{\theta_0}v|^2dV_{\theta_0}
+o(1)\|v-1\|_{S^2_1(S^{2n+1},\theta_0)}^2\\
&\leq C(\delta)\Big(F_2(t)+\| f_\phi-
f(\Theta(t))\|_{L^2(S^{2n+1},\theta_0)}^2\Big)
+o(1)\|v-1\|_{S^2_1(S^{2n+1},\theta_0)}^2
\end{split}
\end{equation*}
by (\ref{6.46}). Hence we conclude that
\begin{equation}\label{6.48}
\int_{S^{2n+1}}(v-1)^2dV_{\theta_0}+\int_{S^{2n+1}}|\nabla_{\theta_0}v|_{\theta_0}^2dV_{\theta_0}
\leq C(\delta)\Big(F_2(t)+\| f_\phi-
f(\Theta(t))\|_{L^2(S^{2n+1},\theta_0)}^2\Big).
\end{equation}
Substituting (\ref{6.48}) back to (\ref{6.46}), we obtain
\begin{equation}\label{6.49}
\int_{S^{2n+1}}|\Delta_{\theta_0}v|^2dV_{\theta_0}\leq
C(\delta)\Big(F_2(t)+\| f_\phi-
f(\Theta(t))\|_{L^2(S^{2n+1},\theta_0)}^2\Big).
\end{equation}
Now the assertion follows from (\ref{6.48}) and (\ref{6.49}).
\end{proof}

\begin{lem}\label{lem6.7}
For all $t>0$, there hold
\begin{equation*}
\begin{split}
&b-\langle b,(\widehat{\Theta(t)},\widehat{\Theta(t)})\rangle(\widehat{\Theta(t)},\widehat{\Theta(t)})\\
&=\epsilon A_1 \left(\frac{\partial
f(\widehat{\Theta(t)})}{\partial a_1} +\sqrt{-1}\frac{\partial
f(\widehat{\Theta(t)})}{\partial b_1} ,\cdots,\frac{\partial
f(\widehat{\Theta(t)})}{\partial a_n} +\sqrt{-1}\frac{\partial
f(\widehat{\Theta(t)})}{\partial b_n} ,0,\right.\\
&\hspace{16mm}\left.\frac{\partial
f(\widehat{\Theta(t)})}{\partial a_1} -\sqrt{-1}\frac{\partial
f(\widehat{\Theta(t)})}{\partial b_1} ,\cdots,\frac{\partial
f(\widehat{\Theta(t)})}{\partial a_n} -\sqrt{-1}\frac{\partial
f(\widehat{\Theta(t)})}{\partial b_n} ,0\right)+O(\epsilon^2)
\end{split}
\end{equation*}
 and
$$\langle b,(\widehat{\Theta(t)},\widehat{\Theta(t)})\rangle=-2\epsilon^2A_2\alpha\Delta_{\theta_0}
f(\widehat{\Theta(t)})+O(\epsilon^2)|\nabla_{\theta_0}
f(\widehat{\Theta(t)})|_{\theta_0}^2+O(\epsilon^3),$$ where $A_1$ and $A_2$ are
positive constants defined as in $(\ref{6.60})$ and $(\ref{6.65})$
respectively.
\end{lem}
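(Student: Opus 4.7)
The plan is to reduce $b$ to integrals on the Heisenberg group via the Cayley transform, and then Taylor expand $f\circ\phi$ around $\widehat{\Theta(t)}$ in powers of $\epsilon=1/r(t)\to 0$. After rotating so that $\widehat{\Theta(t)}$ is the south pole and normalizing $q(t)=0$, the diffeomorphism becomes $\phi(t)=\Psi\circ D_{r(t)}\circ\pi$, and $\phi$ concentrates at $\widehat{\Theta(t)}$ as $\epsilon\to 0$.

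First, I would simplify each component $b^i=\int x_i(\alpha f_\phi-R_h)\,dV_h$. The normalization $\int x\,dV_h=0$ from (\ref{4.19}) kills the contribution of the constant $f(\widehat{\Theta(t)})$. The $R_h$ term can be rewritten, via the CR Yamabe equation (\ref{4.22}), integration by parts, and the identity $-\Delta_{\theta_0}x_i=(n/2)x_i$, as an integral of $O(|v-1|)$ type; by Lemmas \ref{lem4.14} and \ref{lem6.6} this is absorbed into the eventual $O(\epsilon^2)$ or $O(\epsilon^3)$ remainders. Thus up to this error, $b^i\approx\alpha\int_{S^{2n+1}}x_i(f_\phi-f(\widehat{\Theta(t)}))\,dV_h$, and analogously for $b^{n+1+i}$. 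I would then change variables to $\mathbb{H}^n$ via $\Psi$, on which $\phi$ acts as the pure dilation $D_r$, and Taylor expand $f\circ\Psi$ about the origin using the derivatives of $\Psi$ in (\ref{5.7})--(\ref{5.8}). Rescaling $(z,\tau)\mapsto(rz,r^2\tau)$ pulls the $\epsilon$-dependence out of the integrand, yielding an expansion of the form $f_\phi\circ\Psi-f(\widehat{\Theta(t)})=\epsilon L+\epsilon^2 Q+O(\epsilon^3)$, with $L$ linear and $Q$ quadratic in the Heisenberg coordinates, whose coefficients are the first and second partials of $f$ at $\widehat{\Theta(t)}$.

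Finally, I would evaluate the resulting Heisenberg integrals by parity. For the orthogonal components ($i\neq n+1$ and $i\neq 2n+2$), the linear term pairs with $x_i\circ\Psi$ so that only the diagonal $j=i$ piece survives by the symmetry $(a_j,b_j)\mapsto(-a_j,-b_j)$ of the volume form, producing the $\epsilon A_1$ expression in the statement, while higher-order terms contribute $O(\epsilon^2)$. For the parallel component $\langle b,(\widehat{\Theta(t)},\widehat{\Theta(t)})\rangle$, the linear terms vanish by parity (since $x_{n+1}\circ\Psi$ is even in each Heisenberg coordinate), and the diagonal quadratic terms collapse to $\Delta_{\theta_0}f(\widehat{\Theta(t)})$ times a positive constant $A_2$, giving the $-2\epsilon^2 A_2\alpha\Delta_{\theta_0}f$ leading term; off-diagonal cross-terms produce the $O(\epsilon^2)|\nabla_{\theta_0}f|^2$ correction, and cubic remainders the $O(\epsilon^3)$ error. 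The main obstacle is this parity analysis on $\mathbb{H}^n$: ensuring that cross-terms genuinely cancel at leading order and that the trace of the Hessian appears specifically as the sub-Laplacian $\Delta_{\theta_0}f$ rather than some other quadratic form, which uses the invariances of the measure $(4/(\tau^2+(1+|z|^2)^2))^{n+1}dz\,d\tau$. A secondary challenge is uniform control of the Heisenberg integrals near the pole of the parametrization via the decay estimates of type (\ref{6.56})--(\ref{6.3}), and tracking the $R_h$ error tightly enough to preserve the claimed $O(\epsilon^2)$ and $O(\epsilon^3)$ rates rather than merely producing $o(1)F_2(t)^{1/2}$.
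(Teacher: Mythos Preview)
Your overall plan is correct and largely parallels the paper, but there is one substantive difference and one small misattribution worth flagging.

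The paper does not handle the $R_h$ term via the CR Yamabe equation as you propose; it invokes Cheng's Kazdan--Warner--type identity (\ref{5.1}), which makes $\int_{S^{2n+1}}\langle\nabla_{\theta_0}(x,\overline{x}),\nabla_{\theta_0}R_h\rangle_{\theta_0}\,dV_h$ vanish in one stroke. After using $-\Delta_{\theta_0}x_i=\tfrac{n}{2}x_i$ and integrating by parts, this reduces $b$ directly to $\alpha\int (x,\overline{x})(f_\phi-f(\widehat{\Theta(t)}))\,dV_{\theta_0}$ plus errors $E_1,E_2$ that are already quadratic, of size $\|v-1\|_{S_1^2}\cdot(F_2^{1/2}+\|f_\phi-f(\widehat{\Theta(t)})\|_{L^2})$. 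Your alternative route via (\ref{4.22}) can be made to work, but your phrase ``an integral of $O(|v-1|)$ type'' is not enough: an $O(\|v-1\|_{S_1^2})$ error is only $O(|\nabla_{\theta_0}f|\epsilon+\epsilon^{3/2})$ by Lemma~\ref{lem6.6} and (\ref{6.55}), which at a non-critical point is the same order as the leading $\epsilon A_1\nabla f$ term. To match the paper's $O(\epsilon^2)$ you must push further, using (\ref{4.19}) to see that $\int x_i(v^{2+2/n}-1)\,dV_{\theta_0}=0$ forces $\int x_i(v-1)\,dV_{\theta_0}=O(\|v-1\|^2)$, so that the $R_h$ contribution is actually $O(\|v-1\|_{S_1^2}^2)$. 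The Kazdan--Warner identity avoids this extra step.

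One correction on the source of the errors: the $O(\epsilon^2)|\nabla_{\theta_0}f|_{\theta_0}^2$ term in the second estimate does not come from ``off-diagonal cross-terms'' in the Hessian (those vanish by the same parity that kills the linear terms). It arises from the error bounds (\ref{6.63}), (\ref{6.68}), which by Lemma~\ref{lem6.5}, Lemma~\ref{lem6.6} and (\ref{6.55}) contribute $C|\nabla_{\theta_0}f|^2\epsilon^2+C|b|^2+C\epsilon^3$; the $|b|^2$ piece is then absorbed via the bootstrap (\ref{6.69})--(\ref{6.70}).
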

\begin{proof}
Using (\ref{5.1}), we find
\begin{equation*}
\begin{split}
\frac{n}{2}b&=\frac{n}{2}\int_{S^{2n+1}}(x,\overline{x})(\alpha f_\phi-R_h)dV_h
=-\int_{S^{2n+1}}\Delta_{\theta_0}(x,\overline{x})(\alpha f_\phi-R_h)dV_h\\
&=\alpha\int_{S^{2n+1}}\langle\nabla_{\theta_0}(x,\overline{x}),\nabla_{\theta_0}f_\phi\rangle_{\theta_0}dV_h+E_1
\end{split}
\end{equation*}
with error
\begin{equation*}
E_1=(2+\frac{2}{n})\int_{S^{2n+1}}(\alpha f_\phi-R_h)\langle\nabla_{\theta_0}(x,\overline{x}),\nabla_{\theta_0}v\rangle_{\theta_0}
v^{1+\frac{2}{n}}dV_{\theta_0}.
\end{equation*}
By H\"{o}lder's inequality and Lemma \ref{lem4.14}, the error term can be estimated as
\begin{equation}\label{6.50}
|E_1|\leq C\|\nabla_{\theta_0}v\|_{L^2(S^{2n+1},\theta_0)} \|\alpha
f_\phi-R_h\|_{L^2(S^{2n+1},h)}\leq
C\|v-1\|_{S_1^2(S^{2n+1},\theta_0)}F_2^{\frac{1}{2}}.
\end{equation}
Thus we obtain
\begin{equation}\label{6.51}
\begin{split}
\frac{n}{2}b
&=\alpha\int_{S^{2n+1}}\langle\nabla_{\theta_0}(x,\overline{x}),\nabla_{\theta_0}f_\phi\rangle_{\theta_0}dV_{\theta_0}+E_1+E_2\\
&=\alpha\int_{S^{2n+1}}\Delta_{\theta_0}(x,\overline{x})(f_\phi-f(\widehat{\Theta(t)}))dV_{\theta_0}+E_1+E_2\\
&=\frac{n}{2}\alpha\int_{S^{2n+1}}(x,\overline{x})(f_\phi-f(\widehat{\Theta(t)}))dV_{\theta_0}+E_1+E_2,
\end{split}
\end{equation}
where
\begin{equation*}
\begin{split}
E_2&=\alpha\int_{S^{2n+1}}\langle\nabla_{\theta_0}
(x,\overline{x}),\nabla_{\theta_0}f_\phi\rangle_{\theta_0}(v^{2+\frac{2}{n}}-1)dV_{\theta_0}\\
&=\frac{n}{2}\alpha\int_{S^{2n+1}}(x,\overline{x})(f_\phi-f(\widehat{\Theta(t)}))(v^{2+\frac{2}{n}}-1)dV_{\theta_0}\\
&\hspace{4mm}
-(2+\frac{2}{n})\alpha\int_{S^{2n+1}}\langle\nabla_{\theta_0}(x,\overline{x}),\nabla_{\theta_0}v\rangle_{\theta_0}
(f_\phi-f(\widehat{\Theta(t)}))v^{1+\frac{2}{n}}dV_{\theta_0}.
\end{split}
\end{equation*}
Then it follows from H\"{o}lder's inequality and Lemma \ref{lem4.14}
that
\begin{equation}\label{6.52}
\begin{split}
|E_2|
&\leq C(\|v-1\|_{L^2(S^{2n+1},\theta_0)}+\|\nabla_{\theta_0}v\|_{L^2(S^{2n+1},\theta_0)})
\|f_\phi-f(\widehat{\Theta(t)})\|_{L^2(S^{2n+1},\theta_0)}\\
&\leq\|v-1\|_{S_1^2(S^{2n+1},\theta_0)}\|f_\phi-f(\widehat{\Theta(t)})\|_{L^2(S^{2n+1},\theta_0)}.
\end{split}
\end{equation}

We henceforth focus on the term
$$\int_{S^{2n+1}}(x,\overline{x})(f_\phi-f(\widehat{\Theta(t)}))dV_{\theta_0}.$$
We will keep the coordinate for $\mathbb{H}^n$ such that the north pole $N$ of $S^{2n+1}$ is $\widehat{\Theta(t)}$. If we use the
tangent plane of the sphere at the north pole $N=(0,...,0,1)$ as local coordinates for $S^{2n+1}$, then
$$\phi(z,\tau)=\left(\frac{2\epsilon z}{1+\epsilon^2|z|^2-\sqrt{-1}\epsilon^2\tau},
\frac{1-\epsilon^2|z|^2+\sqrt{-1}\epsilon^2\tau}{1+\epsilon^2|z|^2-\sqrt{-1}\epsilon^2\tau}\right),\hspace{2mm}(z,\tau)\in\mathbb{H}^n$$
where $\epsilon=\displaystyle\frac{1}{r(t)}$.
Hence, in $B_{\epsilon^{-1}}(0)$, we can expand around $(z,\tau)=(0,0)$
\begin{equation}\label{6.53}
\begin{split}
&(f_\phi-f(\widehat{\Theta(t)}))(\Psi(z,\tau))\\
&=
\sum_{i=1}^n\frac{\partial f(\phi(z,\tau))}{\partial a_i}\Big|_{(z,\tau)=(0,0)}a_i
+\sum_{i=1}^n\frac{\partial f(\phi(z,\tau))}{\partial b_i}\Big|_{(z,\tau)=(0,0)}b_i\\
&\hspace{4mm}+\frac{\partial f(\phi(z,\tau))}{\partial\tau}\Big|_{(z,\tau)=(0,0)}\tau
+\frac{1}{2}\frac{\partial^2 f(\phi(z,\tau))}{\partial\tau^2}\Big|_{(z,\tau)=(0,0)}\tau^2\\
&\hspace{4mm}
+\frac{1}{2}\sum_{i,j=1}^n\left(\frac{\partial^2 f(\phi(z,\tau))}{\partial a_i\partial a_j}\Big|_{(z,\tau)=(0,0)}a_ia_j
+\frac{\partial^2 f(\phi(z,\tau))}{\partial b_i\partial b_j}\Big|_{(z,\tau)=(0,0)}b_ib_j\right)\\
&\hspace{4mm}
+\frac{1}{2}\sum_{i=1}^n\left(\frac{\partial^2 f(\phi(z,\tau))}{\partial a_i\partial\tau}\Big|_{(z,\tau)=(0,0)}a_i\tau
+\frac{\partial^2 f(\phi(z,\tau))}{\partial b_i\partial \tau}\Big|_{(z,\tau)=(0,0)}b_i\tau\right)\\
&\hspace{4mm}
+O(\epsilon^3(|z|^4+\tau^2)^{\frac{3}{4}})\\
&=[df(\widehat{\Theta(t)})\cdot d\phi|_{(z,\tau)=(0,0)}]\cdot(z,\tau)\\
&\hspace{4mm}+\frac{1}{2}(\nabla df)(\widehat{\Theta(t)})(d\phi|_{(z,\tau)=(0,0)}(z,\tau),d\phi|_{(z,\tau)=(0,0)}(z,\tau))
+O(\epsilon^3(|z|^4+\tau^2)^{\frac{3}{4}})\\
&=df(\widehat{\Theta(t)})(\epsilon z,\epsilon^2\tau)
+\frac{1}{2}(\nabla df)(\widehat{\Theta(t)})((\epsilon z,\epsilon^2\tau),(\epsilon z,\epsilon^2\tau))
+O(\epsilon^3(|z|^4+\tau^2)^{\frac{3}{4}}),
\end{split}
\end{equation}
where $z=(z_1,...,z_n)=(a_1+\sqrt{-1}b_1,...,a_n+\sqrt{-1}b_n)\in\mathbb{C}^n$.

First, by (\ref{6.3}) and the boundedness of $x$ and $f$, we have
\begin{equation}\label{6.54}
\begin{split}
\left|\int_{\Psi(\mathbb{H}^n\setminus B_{\epsilon^{-1}}(0))}(x,\overline{x})(f_\phi-f(\widehat{\Theta(t)}))dV_{\theta_0}\right|
&\leq C\int_{\mathbb{H}^n\setminus B_{\epsilon^{-1}}(0)}\frac{dzd\tau}{(\tau^2+(1+|z|^2)^2)^{n+1}}\\
&=O(\epsilon^{2n}).
\end{split}
\end{equation}
Using (\ref{6.54}), we can give a estimate of
\begin{equation}\label{6.55}
\begin{split}
&\int_{S^{2n+1}}|f_\phi-f(\widehat{\Theta(t)})|^2dV_{\theta_0}\\
&=\int_{B_{\epsilon^{-1}}(0)}|f(\Psi(z,\tau))-f(\widehat{\Theta(t)})|^2\frac{4^{n+1}dzd\tau}{(\tau^2+(1+|z|^2)^2)^{n+1}}
+O(\epsilon^{2n})\\
&\leq
C\epsilon^2|\nabla_{\theta_0}f(\widehat{\Theta(t)})|^2_{\theta_0}
\int_{B_{\epsilon^{-1}}(0)}\frac{(\tau^2+|z|^4)^{\frac{1}{2}}dzd\tau}{(\tau^2+(1+|z|^2)^2)^{n+1}}\\
&\hspace{4mm}+C\epsilon^4\int_{B_{\epsilon^{-1}}(0)}\frac{(\tau^2+|z|^4)dzd\tau}{(\tau^2+(1+|z|^2)^2)^{n+1}}
+O(\epsilon^{2n})\\
&\leq
C|\nabla_{\theta_0}f(\widehat{\Theta(t)})|^2_{\theta_0}\epsilon^2+C\epsilon^3,
\end{split}
\end{equation}
where we have used the estimate (\ref{6.56}) in the last step.
Next, by (\ref{6.53}), we have
\begin{equation}\label{6.57}
\begin{split}
&\int_{B_{\epsilon^{-1}}(0)}(x,\overline{x})(f(\Psi(z))-f(\widehat{\Theta(t)}))\frac{4^{n+1}dzd\tau}{(\tau^2+(1+|z|^2)^2)^{n+1}}\\
&=\int_{B_{\epsilon^{-1}}(0)}(x,\overline{x})df(\widehat{\Theta(t)})(\epsilon z,\epsilon^2\tau)\frac{4^{n+1}dzd\tau}{(\tau^2+(1+|z|^2)^2)^{n+1}}\\
&\hspace{4mm}
+\frac{1}{2}\int_{B_{\epsilon^{-1}}(0)}(x,\overline{x})(\nabla df)(\widehat{\Theta(t)})((\epsilon z,\epsilon^2\tau),
(\epsilon z,\epsilon^2\tau))\frac{4^{n+1}dzd\tau}{(\tau^2+(1+|z|^2)^2)^{n+1}}+E_3\\
&:=(I_1,I_2)+(II_1,II_2)+E_3
\end{split}
\end{equation}
where
\begin{equation*}
\begin{split}
I_1&=\int_{B_{\epsilon^{-1}}(0)}xdf(\widehat{\Theta(t)})(\epsilon z,\epsilon^2\tau)\frac{4^{n+1}dzd\tau}{(\tau^2+(1+|z|^2)^2)^{n+1}},\\
I_2&=\int_{B_{\epsilon^{-1}}(0)}\overline{x}df(\widehat{\Theta(t)})(\epsilon z,\epsilon^2\tau)\frac{4^{n+1}dzd\tau}{(\tau^2+(1+|z|^2)^2)^{n+1}},\\
II_1&=\frac{1}{2}\int_{B_{\epsilon^{-1}}(0)}x(\nabla df)(\widehat{\Theta(t)})((\epsilon z,\epsilon^2\tau),
(\epsilon z,\epsilon^2\tau))\frac{4^{n+1}dzd\tau}{(\tau^2+(1+|z|^2)^2)^{n+1}},\\
II_2&=\frac{1}{2}\int_{B_{\epsilon^{-1}}(0)}\overline{x}(\nabla df)(\widehat{\Theta(t)})((\epsilon z,\epsilon^2\tau),
(\epsilon z,\epsilon^2\tau))\frac{4^{n+1}dzd\tau}{(\tau^2+(1+|z|^2)^2)^{n+1}},
\end{split}
\end{equation*}
with the error $E_3$ bounded by
\begin{equation}\label{6.58}
|E_3|\leq C\epsilon^3\int_{B_{\epsilon^{-1}}(0)}\frac{(|z|^4+\tau^2)^{\frac{3}{4}}}{(\tau^2+(1+|z|^2)^2)^{n+1}}dzd\tau
=O(\epsilon^{3})
\end{equation}
by (\ref{6.56}). Now we estimate $b$ by dividing its components into
two cases:\\
\textit{Case (i).} We deal with the tangential part first. By
(\ref{6.53}) and symmetry, we have
\begin{equation}\label{6.59}
\begin{split}
&I_1-\langle I_1, \widehat{\Theta(t)}\rangle\widehat{\Theta(t)}\\
 &=
\int_{B_{\epsilon^{-1}}(0)}\Big(\frac{2z}{1+|z|^2-\sqrt{-1}\tau},0\Big)df(\widehat{\Theta(t)})(\epsilon z,\epsilon^2\tau)\frac{4^{n+1}dzd\tau}{(\tau^2+(1+|z|^2)^2)^{n+1}}\\
&=
\int_{B_{\epsilon^{-1}}(0)}\big(2z_1(1+|z|^2+\sqrt{-1}\tau),\cdots,
2z_n(1+|z|^2+\sqrt{-1}\tau),0\big)\\
&\hspace{4mm}\cdot\left(\sum_{i=1}^n\frac{\partial
f(\widehat{\Theta(t)})}{\partial a_i}\epsilon
a_i+\sum_{i=1}^n\frac{\partial f(\widehat{\Theta(t)})}{\partial
b_i}\epsilon b_i+\frac{\partial f(\widehat{\Theta(t)})}{\partial
\tau}\epsilon^2\tau\right)\frac{4^{n+1}dzd\tau}{(\tau^2+(1+|z|^2)^2)^{n+2}}\\
&= 2\epsilon\int_{B_{\epsilon^{-1}}(0)}\left(\frac{\partial
f(\widehat{\Theta(t)})}{\partial a_1} a_1^2+\sqrt{-1}\frac{\partial
f(\widehat{\Theta(t)})}{\partial b_1} b_1^2,\cdots,\frac{\partial
f(\widehat{\Theta(t)})}{\partial a_n} a_n^2+\sqrt{-1}\frac{\partial
f(\widehat{\Theta(t)})}{\partial b_n} b_n^2,0\right)\\
&\hspace{4mm}\cdot
\frac{4^{n+1}(1+|z|^2)}{(\tau^2+(1+|z|^2)^2)^{n+2}}dzd\tau\\
&= \epsilon A_1 \left(\frac{\partial
f(\widehat{\Theta(t)})}{\partial a_1} +\sqrt{-1}\frac{\partial
f(\widehat{\Theta(t)})}{\partial b_1} ,\cdots,\frac{\partial
f(\widehat{\Theta(t)})}{\partial a_n} +\sqrt{-1}\frac{\partial
f(\widehat{\Theta(t)})}{\partial b_n} ,0\right)+O(\epsilon^{2n+1}),
\end{split}
\end{equation}
where
\begin{equation}\label{6.60}
A_1=\frac{1}{n}\int_{\mathbb{H}^n}\frac{4^{n+1}|z|^2(1+|z|^2)dzd\tau}{(\tau^2+(1+|z|^2)^2)^{n+2}}>0,
\end{equation}
since
\begin{equation*}
\int_{\mathbb{H}^n\setminus
B_{\epsilon^{-1}}(0)}\frac{4^{n+1}|z|^2(1+|z|^2)dzd\tau}{(\tau^2+(1+|z|^2)^2)^{n+2}}
\leq \int_{\mathbb{H}^n\setminus
B_{\epsilon^{-1}}(0)}\frac{4^{n+1}dzd\tau}{(\tau^2+(1+|z|^2)^2)^{n+1}}=O(\epsilon^{2n})
\end{equation*}
by (\ref{6.3}).
Similarly, we have
\begin{equation}\label{6.59a}
\begin{split}
&I_2-\langle I_2, \widehat{\Theta(t)}\rangle\widehat{\Theta(t)}\\
 &=
\int_{B_{\epsilon^{-1}}(0)}\Big(\frac{2\overline{z}}{1+|z|^2+\sqrt{-1}\tau},0\Big)df(\widehat{\Theta(t)})(\epsilon z,\epsilon^2\tau)\frac{4^{n+1}dzd\tau}{(\tau^2+(1+|z|^2)^2)^{n+1}}\\
&=
\int_{B_{\epsilon^{-1}}(0)}\big(2\overline{z}_1(1+|z|^2-\sqrt{-1}\tau),\cdots,
2\overline{z}_n(1+|z|^2-\sqrt{-1}\tau),0\big)\\
&\hspace{4mm}\cdot\left(\sum_{i=1}^n\frac{\partial
f(\widehat{\Theta(t)})}{\partial a_i}\epsilon
a_i+\sum_{i=1}^n\frac{\partial f(\widehat{\Theta(t)})}{\partial
b_i}\epsilon b_i+\frac{\partial f(\widehat{\Theta(t)})}{\partial
\tau}\epsilon^2\tau\right)\frac{4^{n+1}dzd\tau}{(\tau^2+(1+|z|^2)^2)^{n+2}}\\
&= 2\epsilon\int_{B_{\epsilon^{-1}}(0)}\left(\frac{\partial
f(\widehat{\Theta(t)})}{\partial a_1} a_1^2-\sqrt{-1}\frac{\partial
f(\widehat{\Theta(t)})}{\partial b_1} b_1^2,\cdots,\frac{\partial
f(\widehat{\Theta(t)})}{\partial a_n} a_n^2-\sqrt{-1}\frac{\partial
f(\widehat{\Theta(t)})}{\partial b_n} b_n^2,0\right)\\
&\hspace{4mm}\cdot
\frac{4^{n+1}(1+|z|^2)}{(\tau^2+(1+|z|^2)^2)^{n+2}}dzd\tau\\
&=\epsilon A_1 \left(\frac{\partial
f(\widehat{\Theta(t)})}{\partial a_1} -\sqrt{-1}\frac{\partial
f(\widehat{\Theta(t)})}{\partial b_1} ,\cdots,\frac{\partial
f(\widehat{\Theta(t)})}{\partial a_n} -\sqrt{-1}\frac{\partial
f(\widehat{\Theta(t)})}{\partial b_n} ,0\right)+O(\epsilon^{2n+1}),
\end{split}
\end{equation}
where $A_1$ is given in (\ref{6.60}). By (\ref{6.53}) and symmetry, we have
\begin{equation}\label{6.61}
\begin{split}
&II_1-\langle II_1, \widehat{\Theta(t)}\rangle\widehat{\Theta(t)}\\
 &=
\frac{1}{2}\int_{B_{\epsilon^{-1}}(0)}\Big(\frac{2z}{1+|z|^2-\sqrt{-1}\tau},0\Big)(\nabla df)(\widehat{\Theta(t)})((\epsilon z,\epsilon^2\tau),
(\epsilon z,\epsilon^2\tau))\frac{4^{n+1}dzd\tau}{(\tau^2+(1+|z|^2)^2)^{n+1}}\\
&=
\int_{B_{\epsilon^{-1}}(0)}\big(2z_1(1+|z|^2+\sqrt{-1}\tau),\cdots,
2z_n(1+|z|^2+\sqrt{-1}\tau),0\big)\\
&\hspace{4mm}\cdot\frac{1}{2}\left(\sum_{i,j=1}^n\frac{\partial^2
f(\widehat{\Theta(t)})}{\partial a_i\partial a_j}\epsilon^2
a_ia_j+\sum_{i,j=1}^n\frac{\partial^2 f(\widehat{\Theta(t)})}{\partial
b_i\partial b_j}\epsilon^2 b_ib_j+\sum_{i=1}^n\frac{\partial^2
f(\widehat{\Theta(t)})}{\partial a_i\partial
\tau}\epsilon^3a_i\tau\right.\\
&\hspace{8mm}\left.+\sum_{i=1}^n\frac{\partial^2
f(\widehat{\Theta(t)})}{\partial b_i\partial
\tau}\epsilon^3b_i\tau+\frac{\partial^2
f(\widehat{\Theta(t)})}{\partial
\tau^2}\epsilon^4\tau^2\right)\frac{4^{n+1}dzd\tau}{(\tau^2+(1+|z|^2)^2)^{n+1}}\\
&= 0.
\end{split}
\end{equation}
Similarly, we have
\begin{equation}\label{6.61a}
II_2-\langle II_2, \widehat{\Theta(t)}\rangle\widehat{\Theta(t)}=0.
\end{equation}
Using  (\ref{6.50})-(\ref{6.61a}), we can conclude that
\begin{equation}\label{6.62}
\begin{split}
&b-\langle b,(\widehat{\Theta(t)},\widehat{\Theta(t)})\rangle(\widehat{\Theta(t)},\widehat{\Theta(t)})\\
&=\epsilon A_1 \left(\frac{\partial
f(\widehat{\Theta(t)})}{\partial a_1} +\sqrt{-1}\frac{\partial
f(\widehat{\Theta(t)})}{\partial b_1} ,\cdots,\frac{\partial
f(\widehat{\Theta(t)})}{\partial a_n} +\sqrt{-1}\frac{\partial
f(\widehat{\Theta(t)})}{\partial b_n} ,0,\right.\\
&\hspace{16mm}\left.\frac{\partial
f(\widehat{\Theta(t)})}{\partial a_1} -\sqrt{-1}\frac{\partial
f(\widehat{\Theta(t)})}{\partial b_1} ,\cdots,\frac{\partial
f(\widehat{\Theta(t)})}{\partial a_n} -\sqrt{-1}\frac{\partial
f(\widehat{\Theta(t)})}{\partial b_n} ,0\right)+E_4
\end{split}
\end{equation}
 with errors
\begin{equation}\label{6.63}
\begin{split}
|E_4|&\leq C\epsilon^{3}+C\|v-1\|_{S_1^2(S^{2n+1},\theta_0)}
\big(F_2(t)^{\frac{1}{2}}+\|f-f(\widehat{\Theta(t)})\|_{L^2(S^{2n+1},\theta_0)}\big)\\
&\leq C\epsilon^{3}+
CF_2(t)+C\|f-f(\widehat{\Theta(t)})\|_{L^2(S^{2n+1},\theta_0)}^2\\
&\leq
C|\nabla_{\theta_0}f(\widehat{\Theta(t)})|^2_{\theta_0}\epsilon^2+C|b|^2+C\epsilon^3
\end{split}
\end{equation}
where the second inequality follows from Lemma \ref{lem6.6}, and the
third inequality follows from (\ref{6.55}) and Lemma \ref{lem6.5}.\\
\textit{Case (ii).} By
(\ref{6.61}) and symmetry, we have
\begin{equation}\label{6.64}
\begin{split}
\langle I_1, \widehat{\Theta(t)}\rangle
 &=
\int_{B_{\epsilon^{-1}}(0)}\frac{1-|z|^2+\sqrt{-1}\tau}{1+|z|^2-\sqrt{-1}\tau}df(\widehat{\Theta(t)})(\epsilon z,\epsilon^2\tau)\frac{4^{n+1}dzd\tau}{(\tau^2+(1+|z|^2)^2)^{n+1}}\\
&=
\int_{B_{\epsilon^{-1}}(0)}(1-|z|^2+\sqrt{-1}\tau)(1+|z|^2+\sqrt{-1}\tau)\\
&\hspace{4mm}\cdot\left(\sum_{i=1}^n\frac{\partial
f(\widehat{\Theta(t)})}{\partial a_i}\epsilon
a_i+\sum_{i=1}^n\frac{\partial f(\widehat{\Theta(t)})}{\partial
b_i}\epsilon b_i+\frac{\partial f(\widehat{\Theta(t)})}{\partial
\tau}\epsilon^2\tau\right)\frac{4^{n+1}dzd\tau}{(\tau^2+(1+|z|^2)^2)^{n+2}}\\
&= 0.
\end{split}
\end{equation}
Similarly, we have
\begin{equation}\label{6.64a}
\langle I_2, \widehat{\Theta(t)}\rangle=0.
\end{equation}
On the other hand, by
(\ref{6.61}) and symmetry, we have
\begin{equation}\label{6.65}
\begin{split}
\langle II_1, \widehat{\Theta(t)}\rangle
 &=
\frac{1}{2}\int_{B_{\epsilon^{-1}}(0)}\frac{1-|z|^2+\sqrt{-1}\tau}{1+|z|^2-\sqrt{-1}\tau}(\nabla df)(\widehat{\Theta(t)})((\epsilon z,\epsilon^2\tau),
(\epsilon z,\epsilon^2\tau))\frac{4^{n+1}dzd\tau}{(\tau^2+(1+|z|^2)^2)^{n+1}}\\
&=\frac{1}{2}\int_{B_{\epsilon^{-1}}(0)}(1-|z|^2+\sqrt{-1}\tau)(1+|z|^2+\sqrt{-1}\tau)\\
&\hspace{4mm}\cdot\frac{1}{2}\left(\sum_{i,j=1}^n\frac{\partial^2
f(\widehat{\Theta(t)})}{\partial a_i\partial a_j}\epsilon^2
a_ia_j+\sum_{i,j=1}^n\frac{\partial^2 f(\widehat{\Theta(t)})}{\partial
b_i\partial b_j}\epsilon^2 b_ib_j+\sum_{i=1}^n\frac{\partial^2
f(\widehat{\Theta(t)})}{\partial a_i\partial
\tau}\epsilon^3a_i\tau\right.\\
&\hspace{4mm}\left.+\sum_{i=1}^n\frac{\partial^2
f(\widehat{\Theta(t)})}{\partial b_i\partial
\tau}\epsilon^3b_i\tau+\frac{\partial^2
f(\widehat{\Theta(t)})}{\partial
\tau^2}\epsilon^4\tau^2\right)\frac{4^{n+1}dzd\tau}{(\tau^2+(1+|z|^2)^2)^{n+2}}\\
&=\frac{1}{2}\int_{B_{\epsilon^{-1}}(0)}(1-|z|^4-\tau^2)\cdot\frac{1}{2}\left(\sum_{i=1}^n\frac{\partial^2
f(\widehat{\Theta(t)})}{\partial a_i^2}\epsilon^2
a_i^2\right.\\
&\hspace{4mm}\left.+\sum_{i=1}^n\frac{\partial^2
f(\widehat{\Theta(t)})}{\partial b_i^2}\epsilon^2
b_i^2+\frac{\partial^2 f(\widehat{\Theta(t)})}{\partial
\tau^2}\epsilon^4\tau^2\right)\frac{4^{n+1}dzd\tau}{(\tau^2+(1+|z|^2)^2)^{n+2}}\\
&=\frac{1}{2n}\epsilon^2\Delta_{\theta_0}f(\widehat{\Theta(t)})\int_{\mathbb{H}^n}\frac{4^{n}(1-|z|^4-\tau^2)|z|^2dzd\tau}{(\tau^2+(1+|z|^2)^2)^{n+2}}\\
&\hspace{2mm}+C\epsilon^4\int_{B_{\epsilon^{-1}}(0)}\frac{4^{n+1}(1-|z|^4-\tau^2)\tau^2dzd\tau}{(\tau^2+(1+|z|^2)^2)^{n+2}}+O(\epsilon^{2n})\\
&=-\epsilon^2A_2\Delta_{\theta_0}f(\widehat{\Theta(t)})+O(\epsilon^{4})
\end{split}
\end{equation}
where
\begin{equation}\label{6.66}
A_2:=\frac{1}{2n}\int_{\mathbb{H}^n}\frac{4^{n}(|z|^4+\tau^2-1)|z|^2}{(\tau^2+(1+|z|^2)^2)^{n+2}}dzd\tau
\end{equation}
since
\begin{equation*}
\begin{split}
\left|\int_{\mathbb{H}^n\setminus B_{\epsilon^{-1}}(0)}\frac{(|z|^4+\tau^2-1)|z|^2}{(\tau^2+(1+|z|^2)^2)^{n+2}}dzd\tau\right|
&\leq\int_{\mathbb{H}^n\setminus B_{\epsilon^{-1}}(0)}\frac{|z|^2}{(\tau^2+(1+|z|^2)^2)^{n+1}}dzd\tau\\
&\leq C\int_0^\infty\frac{d\tau}{1+\tau^2}\int_{\epsilon^{-1}}^\infty\frac{r^{2n+1}}{(1+r^2)^{2n}}dr
=O(\epsilon^{2n-2}).
\end{split}
\end{equation*}
Note that $A_2$ is positive because
\begin{equation*}
\begin{split}
&\int_{\mathbb{H}^n}\frac{(|z|^4+\tau^2-1)|z|^2}{(\tau^2+(1+|z|^2)^2)^{n+2}}dzd\tau
=C\int_{\{r^4+\tau^2\geq 0\}}\frac{(r^4+\tau^2-1)r^{2n+1}}{(\tau^2+(1+r^2)^2)^{n+2}}drd\tau\\
&=C\int_{\{r^2+\tau^2\geq 0, r\geq 0\}}\frac{(r^2+\tau^2-1)r^{n}}{(r^2+\tau^2+2r+1)^{n+2}}drd\tau
\geq \frac{C}{2^{n+2}}\int_{\{r^2+\tau^2\geq 0, r\geq 0\}}\frac{(r^2+\tau^2-1)r^{n}}{(r^2+\tau^2+1)^{n+2}}drd\tau\\
&=\frac{C}{2^{n+2}}\int_0^\pi\int_{0}^\infty\frac{(r^2-1)(r\sin\theta)^n}{(r^2+1)^{n+2}}rdrd\theta
=\frac{C}{2^{n+2}}\int_0^\pi\sin^n\theta d\theta\int_{0}^\infty\frac{(r^2-1)r^{n+1}}{(r^2+1)^{n+2}}dr,
\end{split}
\end{equation*}
where we have used the change of variables $r^2\mapsto r$ in the second equality, and we have
changed the coordinates $(r,\tau)$ to the polar coordinates $(r,\theta)$ in the third equality.
To see that the last term is positive, we note that
\begin{equation*}
\begin{split}
\int_{0}^\infty\frac{(r^2-1)r^{n+1}}{(r^2+1)^{n+2}}dr
&=\int_{1}^\infty\frac{(r^2-1)r^{n+1}}{(r^2+1)^{n+2}}dr+\int_{0}^1\frac{(r^2-1)r^{n+1}}{(r^2+1)^{n+2}}dr\\
&=\int_{1}^\infty\frac{(r^2-1)r^{n+1}}{(r^2+1)^{n+2}}dr
+\int_{\infty}^1\frac{(\frac{1}{t^2}-1)\frac{1}{t^{n+1}}}{(\frac{1}{t^2}+1)^{n+2}}\left(-\frac{1}{t^2}dt\right)\\
&=\int_{1}^\infty\frac{(r^2-1)r^{n+1}}{(r^2+1)^{n+2}}dr+\int^{\infty}_1\frac{(1-t^2)t^{n-1}}{(t^2+1)^{n+2}}dt\\
&=\int_{1}^\infty\frac{(r^2-1)(r^{n+1}-r^{n-1})}{(r^2+1)^{n+2}}dr=\int_{1}^\infty\frac{(r^2-1)^2r^{n-1}}{(r^2+1)^{n+2}}dr>0.
\end{split}
\end{equation*}
Similarly, we have
\begin{equation}\label{6.65a}
\begin{split}
\langle II_2, \widehat{\Theta(t)}\rangle
 &=
\frac{1}{2}\int_{B_{\epsilon^{-1}}(0)}\frac{1-|z|^2-\sqrt{-1}\tau}{1+|z|^2+\sqrt{-1}\tau}(\nabla df)(\widehat{\Theta(t)})((\epsilon z,\epsilon^2\tau),
(\epsilon z,\epsilon^2\tau))\frac{4^{n+1}dzd\tau}{(\tau^2+(1+|z|^2)^2)^{n+1}}\\
&=\frac{1}{2}\int_{B_{\epsilon^{-1}}(0)}(1-|z|^2-\sqrt{-1}\tau)(1+|z|^2-\sqrt{-1}\tau)\\
&\hspace{4mm}\cdot\frac{1}{2}\left(\sum_{i,j=1}^n\frac{\partial^2
f(\widehat{\Theta(t)})}{\partial a_i\partial a_j}\epsilon^2
a_ia_j+\sum_{i,j=1}^n\frac{\partial^2 f(\widehat{\Theta(t)})}{\partial
b_i\partial b_j}\epsilon^2 b_ib_j+\sum_{i=1}^n\frac{\partial^2
f(\widehat{\Theta(t)})}{\partial a_i\partial
\tau}\epsilon^3a_i\tau\right.\\
&\hspace{4mm}\left.+\sum_{i=1}^n\frac{\partial^2
f(\widehat{\Theta(t)})}{\partial b_i\partial
\tau}\epsilon^3b_i\tau+\frac{\partial^2
f(\widehat{\Theta(t)})}{\partial
\tau^2}\epsilon^4\tau^2\right)\frac{4^{n+1}dzd\tau}{(\tau^2+(1+|z|^2)^2)^{n+3}}\\
&=\frac{1}{2}\int_{B_{\epsilon^{-1}}(0)}(1-|z|^4-\tau^2)\cdot\frac{1}{2}\left(\sum_{i=1}^n\frac{\partial^2
f(\widehat{\Theta(t)})}{\partial a_i^2}\epsilon^2
a_i^2\right.\\
&\hspace{4mm}\left.+\sum_{i=1}^n\frac{\partial^2
f(\widehat{\Theta(t)})}{\partial b_i^2}\epsilon^2
b_i^2+\frac{\partial^2 f(\widehat{\Theta(t)})}{\partial
\tau^2}\epsilon^4\tau^2\right)\frac{4^{n+1}dzd\tau}{(\tau^2+(1+|z|^2)^2)^{n+3}}\\
&=\frac{1}{2n}\epsilon^2\Delta_{\theta_0}f(\widehat{\Theta(t)})\int_{\mathbb{H}^n}\frac{4^{n}(1-|z|^4-\tau^2)|z|^2dzd\tau}{(\tau^2+(1+|z|^2)^2)^{n+3}}\\
&\hspace{2mm}+C\epsilon^4\int_{B_{\epsilon^{-1}}(0)}\frac{4^{n+1}(1-|z|^4-\tau^2)\tau^2dzd\tau}{(\tau^2+(1+|z|^2)^2)^{n+3}}+O(\epsilon^{2n+2})\\
&=-\epsilon^2A_2\Delta_{\theta_0}f(\widehat{\Theta(t)})+O(\epsilon^{4})
\end{split}
\end{equation}
where $A_2$ is given in (\ref{6.66}). Using (\ref{6.49})-(\ref{6.58}), (\ref{6.64})-(\ref{6.65a}), Lemma
\ref{lem6.5} and \ref{lem6.6}, we have
\begin{equation}\label{6.67}
\langle b,
(\widehat{\Theta(t)},\widehat{\Theta(t)})\rangle=-2A_2\alpha\Delta_{\theta_0}f(\widehat{\Theta(t)})\epsilon^2+E_5,
\end{equation}
with error
\begin{equation}\label{6.68}
|E_5|\leq
C|\nabla_{\theta_0}f(\widehat{\Theta(t)})|^2_{\theta_0}\epsilon^2+C|b|^2+C\epsilon^3.
\end{equation}

Now we have the estimate
\begin{equation}\label{6.69}
\begin{split}
|b|^2&=|b-\langle
b,(\widehat{\Theta(t)},\widehat{\Theta(t)})\rangle(\widehat{\Theta(t)},\widehat{\Theta(t)})|^2+C\langle
b,(\widehat{\Theta(t)},\widehat{\Theta(t)})\rangle^2\\
&\leq
C|\nabla_{\theta_0}f(\widehat{\Theta(t)})|^2_{\theta_0}\epsilon^2
+O(\epsilon^4)(1+|\Delta_{\theta_0}f(\widehat{\Theta(t)})|^2)+C|b|^4
\end{split}
\end{equation}
in view of (\ref{6.62}), (\ref{6.63}), (\ref{6.67}) and (\ref{6.68}). By Lemma \ref{lem3.2} and
\ref{lem6.5}, $|b|^2\rightarrow 0$ as $t\rightarrow\infty$. Hence,
we can choose sufficiently large $t$ such that $C|b|^2\leq 1/2$ so
that the last term on the right hand side of (\ref{6.69}) can be
absorbed to the right hand side, i.e.
\begin{equation}\label{6.70}
|b|^2\leq
C|\nabla_{\theta_0}f(\widehat{\Theta(t)})|^2_{\theta_0}\epsilon^2
+O(\epsilon^4)(1+|\Delta_{\theta_0}f(\widehat{\Theta(t)})|^2).
\end{equation}
 Now the assertion follows from
(\ref{6.62}), (\ref{6.63}), (\ref{6.67}), (\ref{6.68}) and
(\ref{6.70}).
\end{proof}

From
(\ref{6.55}), (\ref{6.70}), Lemma \ref{lem6.5}, and Lemma \ref{lem6.6}, we obtain
\begin{equation}\label{6.71}
F_2(t)=C|\nabla_{\theta_0}f(\widehat{\Theta(t)})|^2_{\theta_0}\epsilon^2+O(\epsilon^4)\mbox{
and }\|v-1\|_{S_1^2(S^{2n+1},\theta_0)}^2\leq
C|\nabla_{\theta_0}f(\widehat{\Theta(t)})|^2_{\theta_0}\epsilon^2+O(\epsilon^3).
\end{equation}

\begin{lem}\label{lem6.8}
With $O(1)\leq C$ as $t\rightarrow\infty$, there holds
\begin{equation*}
\begin{split}
b&=\frac{\epsilon\mbox{\emph{Vol}}(S^{2n+1},\theta_0)}{n+1}
\left(\frac{dz_1}{dt},\cdots,
\frac{dz_n}{dt},
-\frac{1}{2}\frac{dr}{dt}+\sqrt{-1}\epsilon Im\Big(\frac{dz(t)}{dt}\cdot
\overline{z(t)}\Big)+\sqrt{-1}\frac{\epsilon}{2}\frac{d\tau}{dt},\right.\\
&\left.\hspace{3.3cm}
\frac{d\overline{z}_1}{dt},\cdots,
\frac{d\overline{z}_n}{dt},
-\frac{1}{2}\frac{dr}{dt}-\sqrt{-1}\epsilon Im\Big(\frac{dz(t)}{dt}\cdot
\overline{z(t)}\Big)-\sqrt{-1}\frac{\epsilon}{2}\frac{d\tau}{dt}\right)\\
&\hspace{3cm}+O(|\nabla_{\theta_0}f(\widehat{\Theta(t)})|_{\theta_0}^2)\epsilon^2+O(\epsilon^3).
\end{split}
\end{equation*}
\end{lem}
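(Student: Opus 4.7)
The plan is to express $b$ through the vector integrals $\int_{S^{2n+1}}\xi\,dV_{h}$ and $\int_{S^{2n+1}}\overline{\xi}\,dV_{h}$ via the normalization identity~(\ref{5.b}), then to replace the measure $dV_{h}$ by $dV_{\theta_{0}}$ at the cost of a controllable error, and finally to recognize the resulting integrals as the vector $X$ (and its conjugate $\overline{X}$), for which the explicit formulas~(\ref{5.18}) and~(\ref{5.19}) have already been derived.

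First I would record that by~(\ref{5.b}) the first $n+1$ components of $b$ satisfy $(n+1)b^{i}=\int_{S^{2n+1}}\xi_{i}\,dV_{h}$ for $1\le i\le n+1$. Taking the complex conjugate of~(\ref{5.b})—using that $\alpha f_{\phi}-R_{h}$ is real while $\int_{S^{2n+1}}\overline{x}\,dV_{h}=0$ follows from~(\ref{4.19})—yields $(n+1)b^{n+1+i}=\int_{S^{2n+1}}\overline{\xi}_{i}\,dV_{h}$ for $1\le i\le n+1$. Thus $b$ is entirely determined by $\int\xi\,dV_{h}$ and $\int\overline{\xi}\,dV_{h}$, each divided by $n+1$.

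Next I would decompose $\int_{S^{2n+1}}\xi\,dV_{h}=\int_{S^{2n+1}}\xi\,dV_{\theta_{0}}+\int_{S^{2n+1}}\xi(v^{2+\frac{2}{n}}-1)\,dV_{\theta_{0}}=X+R(t)$, and similarly for $\overline{\xi}$. The main term $X$ is given explicitly by~(\ref{5.18}) and~(\ref{5.19}); read componentwise, these expressions are exactly the principal part of the asserted expansion (with their complex conjugates providing the last $n+1$ entries). To bound the remainder $R(t)$, I would combine Lemma~\ref{lem5.1} with the first estimate in~(\ref{6.71}) to obtain $\|\xi\|_{L^{\infty}}\le C\sqrt{F_{2}(t)}\le C|\nabla_{\theta_{0}}f(\widehat{\Theta(t)})|_{\theta_{0}}\,\epsilon+C\epsilon^{2}$, and Lemmas~\ref{lem4.14},~\ref{lem6.6} together with the second estimate in~(\ref{6.71}) (using $|v^{2+\frac{2}{n}}-1|\le C|v-1|$, valid because $v\to 1$ uniformly) to obtain $\|v^{2+\frac{2}{n}}-1\|_{L^{1}(S^{2n+1},\theta_{0})}\le C|\nabla_{\theta_{0}}f(\widehat{\Theta(t)})|_{\theta_{0}}\,\epsilon+C\epsilon^{3/2}$. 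H\"{o}lder's inequality then gives $|R(t)|\le C|\nabla_{\theta_{0}}f(\widehat{\Theta(t)})|_{\theta_{0}}^{2}\epsilon^{2}+C|\nabla_{\theta_{0}}f(\widehat{\Theta(t)})|_{\theta_{0}}\,\epsilon^{5/2}+C\epsilon^{7/2}$; absorbing the mixed middle term by AM--GM into $|\nabla f|^{2}\epsilon^{2}+\epsilon^{3}$ yields precisely the error bound stated in the lemma.

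The main obstacle, modest in this case, is the bookkeeping of the error and the verification that the formula for $X$ coming from~(\ref{5.18})--(\ref{5.19}) matches entry by entry the principal part of the assertion—the complex components coming from $X_{n+1}$ and $\overline{X}_{n+1}$ being the delicate ones, since the $\mathrm{Im}(\frac{dz}{dt}\cdot\overline{z})$ and $\frac{d\tau}{dt}$ terms are packaged together inside a single entry. No new analytic input is required beyond Lemmas~\ref{lem5.1},~\ref{lem4.14},~\ref{lem6.6} and the sharpened bounds~(\ref{6.71}).
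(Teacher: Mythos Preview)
Your proposal is correct and follows essentially the same route as the paper: identify $(n+1)b$ with $\int_{S^{2n+1}}(\xi,\overline{\xi})\,dV_h$ via~(\ref{5.b}), split off the main term $(X,\overline{X})$ using~(\ref{5.18})--(\ref{5.19}), and bound the remainder $\int(\xi,\overline{\xi})(v^{2+\frac{2}{n}}-1)\,dV_{\theta_0}$ using Lemma~\ref{lem5.1} and~(\ref{6.71}). The only cosmetic difference is that the paper bounds the remainder more directly as $C\|\xi\|_{L^\infty}\|v-1\|_{S_1^2}\le CF_2^{1/2}\|v-1\|_{S_1^2}\le C(F_2+\|v-1\|_{S_1^2}^2)$ and then applies both parts of~(\ref{6.71}) at once, rather than expanding each factor separately and recombining via AM--GM as you do; both arrive at the same error.
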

\begin{proof}
Using (\ref{5.b}), we have
\begin{equation}\label{6.72}
(n+1)b=(n+1)\int_{S^{2n+1}}(x,\overline{x})(\alpha(t)f_\phi-R_h)dV_h=\int_{S^{2n+1}}(\xi,\overline{\xi})
dV_h=(X,\overline{X})+I,
\end{equation}
where $X$ is the vector given in (\ref{5.2})
and
$$I=\int_{S^{2n+1}}(\xi,\overline{\xi}) (v^{2+\frac{2}{n}}-1)dV_{\theta_0}$$
which can be estimated as follows:
\begin{equation}\label{6.73}
\begin{split}
|I|&\leq
C\|\xi\|_{L^\infty}\|v^{2+\frac{2}{n}}-1\|_{S_1^2(S^{2n+1},\theta_0)}\leq
CF_2^{\frac{1}{2}}\|v-1\|_{S_1^2(S^{2n+1},\theta_0)}\\
&\leq C(F_2+\|v-1\|_{S_1^2(S^{2n+1},\theta_0)}^2)\leq
C|\nabla_{\theta_0}f(\widehat{\Theta(t)})|^2_{\theta_0}\epsilon^2+O(\epsilon^3)
\end{split}
\end{equation}
in view of (\ref{6.71}), Lemma \ref{lem4.14}, and Lemma \ref{lem5.1}. Now
the assertion follows from (\ref{5.18}), (\ref{5.19}), (\ref{6.72}),
and (\ref{6.73}).
\end{proof}

\begin{lem}\label{lem6.9}
With $o(1)\rightarrow 0$ as $t\rightarrow\infty$, there holds
$$\mbox{\emph{Vol}}(S^{2n+1},\theta_0)^2-|\Theta(t)|^2=\big(4\mbox{\emph{Vol}}(S^{2n+1},\theta_0)A_3+o(1)\big)\epsilon^2,$$
where $A_3$ is the positive number defined as in
$(\ref{6.77})$.
\end{lem}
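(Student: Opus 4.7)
The plan is to use the coordinate choice of Section \ref{section6.2}: rotate so that $\widehat{\Theta(t)}$ is the south pole $S=(0,\ldots,0,-1)$ and set $q(t)=0$ at the instant under consideration, giving $\phi(t)=\Psi\circ D_{r(t)}\circ\pi$ with $\epsilon=1/r(t)$. The Cayley formulas $(\ref{5.6})$ yield
\begin{equation*}
\phi_i = \frac{2rz_i}{1+r^2|z|^2-\sqrt{-1}\,r^2\tau}\ (1\le i\le n),\qquad \phi_{n+1}=\frac{1-r^2|z|^2+\sqrt{-1}\,r^2\tau}{1+r^2|z|^2-\sqrt{-1}\,r^2\tau},
\end{equation*}
where $(z,\tau)=\pi(x)$, and the pullback of $dV_{\theta_0}$ to $\mathbb{H}^n$ is $(4/(\tau^2+(1+|z|^2)^2))^{n+1}dz\,d\tau$. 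The symmetries $z_i\mapsto -z_i$ and $\tau\mapsto -\tau$ force $\Theta_i(t)=0$ for $1\le i\le n$ and $\Theta_{n+1}(t)\in\mathbb{R}$, so $\Theta(t)=(0,\ldots,0,-|\Theta(t)|)$ and the whole problem reduces to computing $\Theta_{n+1}(t)$.

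The algebraic key is the partial fraction $\phi_{n+1}=-1+2/(1+r^2|z|^2-\sqrt{-1}\,r^2\tau)$. Integrating and discarding the odd-in-$\tau$ imaginary part yields
\begin{equation*}
\mathrm{Vol}(S^{2n+1},\theta_0)-|\Theta(t)|=2J(\epsilon), \qquad J(\epsilon)=\int_{\mathbb{H}^n}\frac{(1+r^2|z|^2)\,4^{n+1}\,dz\,d\tau}{((1+r^2|z|^2)^2+r^4\tau^2)(\tau^2+(1+|z|^2)^2)^{n+1}}\ge 0.
\end{equation*}
Factoring $1+r^2|z|^2=r^2(\epsilon^2+|z|^2)$ and $(1+r^2|z|^2)^2+r^4\tau^2=r^4((\epsilon^2+|z|^2)^2+\tau^2)$ extracts the scaling directly:
\begin{equation*}
J(\epsilon)=\epsilon^2\int_{\mathbb{H}^n}\frac{(\epsilon^2+|z|^2)\,4^{n+1}\,dz\,d\tau}{((\epsilon^2+|z|^2)^2+\tau^2)(\tau^2+(1+|z|^2)^2)^{n+1}}.
\end{equation*}

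The integrand above is pointwise nonnegative and converges for $(z,\tau)\neq 0$ to $|z|^2\,4^{n+1}/((|z|^4+\tau^2)(\tau^2+(1+|z|^2)^2)^{n+1})$; it is dominated uniformly in $\epsilon\in(0,1]$ by $(1+|z|^2)\,4^{n+1}/((|z|^4+\tau^2)(\tau^2+(1+|z|^2)^2)^{n+1})$, which is integrable on $\mathbb{H}^n$ by the Heisenberg-polar estimates $(\ref{6.3})$ and $(\ref{6.56})$ already used in this section. Integrability at the origin reduces to $\int_{|z|<\delta,|\tau|<\delta}\frac{dz\,d\tau}{|z|^4+\tau^2}\sim\int_0^\delta\rho^{2n-3}d\rho$, finite precisely when $n\ge 2$ — the same place the hypothesis enters in Lemma \ref{lem6.2}. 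Dominated convergence then gives $J(\epsilon)=(A_3+o(1))\epsilon^2$, with $A_3$ the positive constant defined in $(\ref{6.77})$ as the $\epsilon\to 0$ limit of the integrand.

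Finally, the factorization
\begin{equation*}
\mathrm{Vol}(S^{2n+1},\theta_0)^2-|\Theta(t)|^2=\big(\mathrm{Vol}(S^{2n+1},\theta_0)-|\Theta(t)|\big)\big(\mathrm{Vol}(S^{2n+1},\theta_0)+|\Theta(t)|\big)
\end{equation*}
combines with the first factor $2J(\epsilon)=(2A_3+o(1))\epsilon^2$ and the second factor $2\mathrm{Vol}(S^{2n+1},\theta_0)+o(1)$ — since $J(\epsilon)\to 0$ forces $|\Theta(t)|\to\mathrm{Vol}(S^{2n+1},\theta_0)$ — to give the claimed $(4\mathrm{Vol}(S^{2n+1},\theta_0)A_3+o(1))\epsilon^2$. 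The main obstacle is the dominated-convergence bookkeeping for the integral, which is routine Heisenberg-polar estimation of the type already executed in the proof of Lemma \ref{lem6.7}.
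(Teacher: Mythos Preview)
Your argument is correct in substance and is in fact cleaner than the paper's. The paper places $\widehat{\Theta(t)}$ at the north pole, writes $\phi_{n+1}=1-2\epsilon^2(|z|^2-\sqrt{-1}\tau)/(1+\epsilon^2|z|^2-\sqrt{-1}\epsilon^2\tau)$, and then expends equations (6.76)--(6.83) bounding several error pieces by hand to show the integral equals $A_3+o(1)$. Your south-pole partial fraction $\phi_{n+1}=-1+2/(1+r^2|z|^2-\sqrt{-1}r^2\tau)$ factors the $\epsilon^2$ out immediately and reduces everything to a single dominated-convergence step; the domination $\frac{\epsilon^2+|z|^2}{(\epsilon^2+|z|^2)^2+\tau^2}\le \frac{1+|z|^2}{|z|^4+\tau^2}$ is genuine (cross-multiply and use $|z|^2\le \epsilon^2+|z|^2\le 1+|z|^2$), and the dominating function is integrable precisely for $n\ge 2$, as you note.

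There is one point you gloss over. Your dominated-convergence limit is
\[
\int_{\mathbb{H}^n}\frac{|z|^2\,dz\,d\tau}{(|z|^4+\tau^2)\,(\tau^2+(1+|z|^2)^2)^{n+1}},
\]
whereas $A_3$ in (6.77) is literally $\displaystyle\int_{\mathbb{H}^n}\frac{|z|^2\,dz\,d\tau}{(\tau^2+(1+|z|^2)^2)^{n+1}}$, with no factor $(|z|^4+\tau^2)^{-1}$. These two integrals \emph{are} equal, but this is not automatic: it is exactly the change of variables given by the Heisenberg inversion (the Cayley conjugate of the north/south swap you made at the outset). Concretely, under $(z,\tau)\mapsto(z',\tau')$ with $|z'|^2=|z|^2/(|z|^4+\tau^2)$, $|z'|^4+\tau'^2=1/(|z|^4+\tau^2)$, one has $dz'\,d\tau'=(|z|^4+\tau^2)^{-(n+1)}dz\,d\tau$ and, using the identity $(|z|^4+\tau^2+|z|^2)^2+\tau^2=(|z|^4+\tau^2)\big((1+|z|^2)^2+\tau^2\big)$, also $(1+|z'|^2)^2+\tau'^2=\big((1+|z|^2)^2+\tau^2\big)/(|z|^4+\tau^2)$; substituting gives the equality of the two integrals. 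You should insert this one-line verification (or, equivalently, adopt the paper's north-pole convention from the start) so that the constant you obtain is visibly the $A_3$ of (6.77).
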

\begin{proof}
As before, we choose coordinates of $\mathbb{H}^n$ such that
$(0,...,0,1)=\widehat{\Theta(t)}$ corresponding to the point
$\widehat{\Theta(t)}$. Hence $\phi(t)$ will have the usual
representation. In particular,
\begin{equation}\label{6.74}
\phi_{n+1}=\displaystyle\frac{1-\epsilon^2|z|^2+\sqrt{-1}\epsilon^2\tau}{1+\epsilon^2|z|^2-\sqrt{-1}\epsilon^2\tau}
=1-2\epsilon^2\frac{|z|^2-\sqrt{-1}\tau}{1+\epsilon^2|z|^2-\sqrt{-1}\epsilon^2\tau}.
\end{equation}
Thus by symmetry we have
\begin{equation}\label{6.75}
\begin{split}
\Theta(t)_{n+1}&=\int_{S^{2n+1}}\phi_{n+1}(t)dV_{\theta_0}\\
&=\mbox{Vol}(S^{2n+1},\theta_0)-2\epsilon^2\int_{\mathbb{H}^n}\frac{|z|^2-\sqrt{-1}\tau}{1+\epsilon^2|z|^2-\sqrt{-1}\epsilon^2\tau}
\frac{dzd\tau}{(\tau^2+(1+|z|^2)^2)^{n+1}}\\
&=\mbox{Vol}(S^{2n+1},\theta_0)-2\epsilon^2
\int_{\mathbb{H}^n}\frac{\epsilon^2(|z|^4+\tau^2)+|z|^2}{(1+\epsilon^2|z|^2)^2+\epsilon^4\tau^2}
\frac{dzd\tau}{(\tau^2+(1+|z|^2)^2)^{n+1}}.
\end{split}
\end{equation}
Observe that
\begin{equation}\label{6.76}
\begin{split}
&\int_{\mathbb{H}^n}\frac{\epsilon^2(|z|^4+\tau^2)+|z|^2}{(1+\epsilon^2|z|^2)^2+\epsilon^4\tau^2}
\frac{dzd\tau}{(\tau^2+(1+|z|^2)^2)^{n+1}}\\
&=\int_{\mathbb{H}^n}
\frac{|z|^2dzd\tau}{(\tau^2+(1+|z|^2)^2)^{n+1}}
\\
&\hspace{4mm}-\int_{\mathbb{H}^n}\frac{\epsilon^2(|z|^4-\tau^2)+\epsilon^4(|z|^6+|z|^2\tau^2)}{(1+\epsilon^2|z|^2)^2+\epsilon^4\tau^2}
\frac{dzd\tau}{(\tau^2+(1+|z|^2)^2)^{n+1}}.
\end{split}
\end{equation}
We are going to estimate the terms on the right hand side of (\ref{6.76}). First note that
\begin{equation}\label{6.77}
\begin{split}
A_3&:=\int_{\mathbb{H}^n}
\frac{|z|^2dzd\tau}{(\tau^2+(1+|z|^2)^2)^{n+1}}
\leq\int_{\{0\leq|z|<\infty\}}\left(\int^{\infty}_{-\infty}\frac{d\tau}{1+\tau^2}\right)
\frac{|z|^2dz}{(1+|z|^2)^{2n}}\\
&=\pi\int_{\{0\leq|z|<\infty\}}\frac{|z|^2dz}{(1+|z|^2)^{2n}}=C\int_0^{\infty}\frac{r^{2n+1}dr}{(1+r^2)^{2n}}\\
&\leq C\int_0^{1}\frac{r^{2n+1}dr}{(1+r^2)^{2n}}+C\int_1^{\infty}\frac{dr}{r^{2n-1}}<\infty
\end{split}
\end{equation}
when $n\geq 2$.
On the other hand, we have
\begin{equation}\label{6.78}
\begin{split}
&\int_{\mathbb{H}^n}\frac{\epsilon^2(|z|^4-\tau^2)+\epsilon^4(|z|^6+|z|^2\tau^2)}{(1+\epsilon^2|z|^2)^2+\epsilon^4\tau^2}
\frac{dzd\tau}{(\tau^2+(1+|z|^2)^2)^{n+1}}\\
&= C\int_0^\infty\int_0^\infty
\frac{\epsilon^2(r^4-\tau^2)+\epsilon^4(r^6+r^2\tau^2)}{(1+\epsilon^2r^2)^2+\epsilon^4\tau^2}
\frac{r^{2n-1}drd\tau}{(\tau^2+(1+r^2)^2)^{n+1}}\\
&=C\int_0^\infty\int_0^\infty
\frac{\epsilon^2r^4+\epsilon^4r^6}{(1+\epsilon^2r^2)^2+\epsilon^4\tau^2}
\frac{r^{2n-1}drd\tau}{(\tau^2+(1+r^2)^2)^{n+1}}\\
&\hspace{4mm}-C\int_0^\infty\int_0^\infty
\frac{\epsilon^2\tau^2-\epsilon^4r^2\tau^2}{(1+\epsilon^2r^2)^2+\epsilon^4\tau^2}
\frac{r^{2n-1}drd\tau}{(\tau^2+(1+r^2)^2)^{n+1}}
\end{split}
\end{equation}
for some constant $C$ depending only on $n$. Note that
\begin{equation}\label{6.79}
\begin{split}
&\int_0^\infty\int_0^\infty
\frac{\epsilon^2r^4}{(1+\epsilon^2|z|^2)^2+\epsilon^4\tau^2}
\frac{r^{2n-1}drd\tau}{(\tau^2+(1+r^2)^2)^{n+1}}\\
&\leq \int_0^\infty\left(\int_0^1+\int_1^{\frac{1}{\epsilon^2}}+\int_{\frac{1}{\epsilon^2}}^\infty\right)
\frac{\epsilon^2r^{2n+3}}{(1+\epsilon^4\tau^2)(\tau^2+(1+r^2)^2)^{n+1}}d\tau dr\\
&\leq \int_0^\infty\left(\int_0^1 \frac{1}{(1+r^2)^{2n+2}}d\tau
+\int_1^{\frac{1}{\epsilon^2}}\frac{1}{2\tau(1+r^2)^{2n+1}}d\tau\right.\\
&\hspace{8mm}\left.
+\int_{\frac{1}{\epsilon^2}}^\infty\frac{1}{2\epsilon^4\tau^3(1+r^2)^{2n+1}}d\tau\right)\epsilon^2r^{2n+3}dr\\
&= \int_0^\infty\left( \frac{1}{(1+r^2)^{2n+2}}
+\frac{-\log\epsilon}{(1+r^2)^{2n+1}}+\frac{1}{4(1+r^2)^{2n+1}}\right)\epsilon^2r^{2n+3}dr\\
&=\epsilon^2\int_0^\infty\frac{r^{2n+3}dr}{(1+r^2)^{2n+2}}
-\epsilon^2\log\epsilon\int_0^\infty\frac{r^{2n+3}dr}{(1+r^2)^{2n+1}}+\frac{\epsilon^2}{4}\int_0^\infty\frac{r^{2n+3}dr}{(1+r^2)^{2n+1}}
\leq C\epsilon,
\end{split}
\end{equation}
where we have used $\tau^2+(1+r^2)^2\geq 2\tau(1+r^2)$  in the
second inequality, and the last inequality follows from
$-\epsilon\log\epsilon\leq C$ for $\epsilon$ being sufficiently
small and
\begin{equation}\label{6.80}
\int_0^\infty\frac{r^{k}dr}{(1+r^2)^{l}}\leq
\int_0^1r^{k}dr+\int_1^\infty\frac{dr}{r^{2l-k}}=\frac{1}{k+1}+\frac{1}{2l-k-1}
\mbox{ if }2l-k\geq 2\mbox{ and }k+1>0.
\end{equation}
Similarly, we can estimate
\begin{equation}\label{6.81}
\begin{split}
&\int_0^\infty\int_0^\infty
\frac{\epsilon^4r^6}{(1+\epsilon^2r^2)^2+\epsilon^4\tau^2}
\frac{r^{2n-1}drd\tau}{(\tau^2+(1+r^2)^2)^{n+1}}\\
&\leq \int_0^\infty\left(\int_0^{\frac{1}{\epsilon^2}}+\int_{\frac{1}{\epsilon^2}}^\infty\right)
\frac{\epsilon^4r^{2n+5}}{(1+\epsilon^4\tau^2)(\tau^2+(1+r^2)^2)^{n+1}}d\tau dr\\
&\leq \int_0^\infty\left(\int_0^{\frac{1}{\epsilon^2}} d\tau
+\int_{\frac{1}{\epsilon^2}}^\infty\frac{1}{\epsilon^4\tau^2}d\tau\right)\frac{\epsilon^4r^{2n+5}}{(1+r^2)^{2n+2}}dr\\
&=2\epsilon^2\int_0^\infty\frac{r^{2n+5}dr}{(1+r^2)^{2n+2}} \leq
C\epsilon,
\end{split}
\end{equation}
where the last inequality follows from (\ref{6.80}). Note also that
\begin{equation}\label{6.82}
\begin{split}
&\int_0^\infty\int_0^\infty
\frac{\epsilon^2\tau^2}{(1+\epsilon^2r^2)^2+\epsilon^4\tau^2}
\frac{r^{2n-1}drd\tau}{(\tau^2+(1+r^2)^2)^{n+1}}\\
&\leq
\int_0^\infty\left(\int_0^{\frac{1}{\epsilon^{3/2}}}+\int_{\frac{1}{\epsilon^{3/2}}}^\infty\right)
\frac{\epsilon^2r^{2n-1}}{(1+\epsilon^4\tau^2)(\tau^2+(1+r^2)^2)^{n}}d\tau dr\\
&\leq \int_0^\infty\left(\int_0^{\frac{1}{\epsilon^{3/2}}}
\frac{1}{(1+r^2)^{2n}}d\tau
+\int_{\frac{1}{\epsilon^{3/2}}}^\infty\frac{1}{\epsilon^4\tau^3(1+r^2)^{2n-1}}d\tau\right)\epsilon^2r^{2n-1}dr\\
&= \epsilon^{1/2}\int_0^\infty \frac{r^{2n-1}dr}{(1+r^2)^{2n}}
+\frac{\epsilon}{2}\int_0^\infty \frac{
r^{2n-1}dr}{(1+r^2)^{2n-1}}\leq C \epsilon^{1/2}
\end{split}
\end{equation}
where we have used $\tau^2+(1+r^2)^2\geq 2\tau(1+r^2)$ in the second
inequality, and the last inequality follows from (\ref{6.80}). On
the other hand, we can estimate
\begin{equation}\label{6.83}
\begin{split}
&\int_0^\infty\int_0^\infty
\frac{\epsilon^4r^2\tau^2}{(1+\epsilon^2r^2)^2+\epsilon^4\tau^2}
\frac{r^{2n-1}drd\tau}{(\tau^2+(1+r^2)^2)^{n+1}}\\
&\leq
\int_0^\infty\left(\int_0^{\frac{1}{\epsilon^2}}+\int_{\frac{1}{\epsilon^2}}^\infty\right)
\frac{\epsilon^4r^{2n+1}}{(1+\epsilon^4\tau^2)(1+r^2)^{2n}}d\tau dr\\
&\leq \int_0^\infty\left(\int_0^{\frac{1}{\epsilon^2}} d\tau
+\int_{\frac{1}{\epsilon^2}}^\infty\frac{1}{\epsilon^4\tau^2}d\tau\right)\frac{\epsilon^4r^{2n+1}}{(1+r^2)^{2n}}dr\\
&=2\epsilon^2\int_0^\infty\frac{r^{2n+1}dr}{(1+r^2)^{2n}} \leq
C\epsilon,
\end{split}
\end{equation}
where the last inequality follows from (\ref{6.80}). Combining
(\ref{6.75})-(\ref{6.83}), we conclude that
\begin{equation}\label{6.84}
\Theta(t)_{n+1}=\mbox{Vol}(S^{2n+1},\theta_0)-2A_3\epsilon^2+o(\epsilon^2).
\end{equation}
From this, we have
\begin{equation*}
\begin{split}
\mbox{Vol}(S^{2n+1},\theta_0)^2-|\Theta(t)|^2
&=(\mbox{Vol}(S^{2n+1},\theta_0)+\Theta(t)_{n+1})(\mbox{Vol}(S^{2n+1},\theta_0)-\Theta(t)_{n+1})\\
&=(2\mbox{Vol}(S^{2n+1},\theta_0)+o(1))(2A_3\epsilon^2+o(\epsilon^2))\\
&=(4\mbox{Vol}(S^{2n+1},\theta_0)A_3+o(1))\epsilon^2,
\end{split}
\end{equation*}
as required.
\end{proof}

\begin{lem}\label{lem6.10}
With $o(1)\rightarrow 0$ as $t\rightarrow\infty$, there hold
\begin{equation*}
\begin{split}
&\frac{d\Theta(t)_i}{dt}=(A_4+o(1))\epsilon^2\frac{dz_i(t)}{dt}+o(1)\epsilon^2z_i(t)\frac{dr(t)}{dt},\hspace{2mm}1\leq i\leq n;\\
&\frac{d}{dt}(\mbox{\emph{Vol}}(S^{2n+1},\theta_0)^2-|\Theta(t)|^2)
=(\mbox{\emph{Vol}}(S^{2n+1},\theta_0)^2-|\Theta(t)|^2)\left[\left(\frac{A_5}{2A_3}+o(1)\right)\epsilon\frac{dr(t)}{dt}
+o(1)\epsilon\frac{d\tau(t)}{dt}\right],
\end{split}
\end{equation*}
where $A_3$ is the positive constant defined as in $(\ref{6.77})$, $A_4$ and $A_5$ are the positive constants defined as in $(\ref{6.89})$ and $(\ref{6.91})$ respectively.
\end{lem}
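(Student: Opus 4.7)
The plan is to parallel Lemma \ref{lem6.9}, working in the coordinate chart on $S^{2n+1}$ where $\widehat{\Theta(t)}$ is identified with the north pole (so $q(t)=0$ at time $t$) with $\epsilon=1/r(t)$, and exploiting the parity of the measure $\frac{4^{n+1}\,dz\,d\tau}{(\tau^2+(1+|z|^2)^2)^{n+1}}$ under $z_j\mapsto -z_j$.

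For the first estimate I differentiate $\Theta(t)_i=\int_{S^{2n+1}}\phi_i(t)\,dV_{\theta_0}$ under the integral sign and pull back to $\mathbb{H}^n$ via $\Psi$. Writing $\phi_i=\Psi_i(\delta_{q(t),r(t)}(z,\tau))$, the chain rule produces a sum of terms in which $\frac{dr(t)}{dt}$, $\frac{dz(t)}{dt}$, $\frac{d\tau(t)}{dt}$ multiply partial derivatives of $\Psi_i$ evaluated at $(rz,r^2\tau)$. For $1\le i\le n$, $\Psi_i=\frac{2u_{1,i}}{1+|u_1|^2-\sqrt{-1}u_2}$ is odd in the $i$-th component of $u_1$, so by the parity of the measure in each $z_j$ the contributions of $\frac{dr(t)}{dt}$ and $\frac{d\tau(t)}{dt}$ integrate to zero, leaving only the $\frac{dz_i(t)}{dt}$ piece. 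After the rescaling $\tilde z=rz$, $\tilde\tau=r^2\tau$ (Jacobian $\epsilon^{2n+2}$) and estimates in the spirit of $(\ref{6.56})$, $(\ref{6.3})$, $(\ref{6.80})$ — which converge because $n\ge 2$ — the surviving integral contracts to $\epsilon^2$ times a convergent positive integral over $\mathbb{H}^n$, producing $A_4\epsilon^2\frac{dz_i(t)}{dt}$ with $A_4$ the constant defined in $(\ref{6.89})$. The $o(1)\epsilon^2 z_i(t)\frac{dr(t)}{dt}$ term is an error capturing the contribution of $\partial_r\phi_i$ at points where $z(t)\ne 0$; at the moment when $q(t)=0$ is arranged it vanishes, but it must be recorded to allow the formula to be valid at neighboring times.

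For the second estimate I expand $\frac{d}{dt}\bigl(\mbox{Vol}(S^{2n+1},\theta_0)^2-|\Theta(t)|^2\bigr)=-2\sum_{j=1}^{n+1}Re\bigl(\overline{\Theta(t)_j}\,\tfrac{d\Theta(t)_j}{dt}\bigr)$. At $q(t)=0$, $\Theta(t)_j=0$ for $1\le j\le n$ by the same odd-symmetry argument, so only the $j=n+1$ term survives. Differentiating the explicit formula $(\ref{6.75})$ for $\Theta_{n+1}$ through all three parameters, the $\frac{dz(t)}{dt}$ contribution again vanishes by parity (the numerators are odd in $z$), leaving an $\frac{dr(t)}{dt}$-term whose leading order, obtained by the same rescaling, is an explicit constant times $\epsilon^3\frac{dr(t)}{dt}$, and a $\frac{d\tau(t)}{dt}$-contribution that is $o(\epsilon^3)$. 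Combining with $\Theta_{n+1}=\mbox{Vol}(S^{2n+1},\theta_0)+O(\epsilon^2)$ and using Lemma \ref{lem6.9} to write $\epsilon^2=(\mbox{Vol}(S^{2n+1},\theta_0)^2-|\Theta(t)|^2)/(4\,\mbox{Vol}(S^{2n+1},\theta_0)A_3+o(1))$, I factor $\mbox{Vol}(S^{2n+1},\theta_0)^2-|\Theta(t)|^2$ out of the leading term to obtain the stated form with coefficient $\frac{A_5}{2A_3}$ on $\epsilon\frac{dr(t)}{dt}$, where $A_5$ is the constant defined in $(\ref{6.91})$.

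The main obstacle is the uniform control of the error terms in the $\epsilon\to 0$ expansion. One must show that the subleading contributions coming from the measure factor $(\epsilon^4\tilde\tau^2+(1+\epsilon^2|\tilde z|^2)^2)^{-(n+1)}$ — which does not tend uniformly to $1$ on $\mathbb{H}^n$ — and from the imaginary cross term $2r\,Im(\frac{dz(t)}{dt}\cdot\bar z)$ inside $\frac{du_2}{dt}$ can be absorbed into the $o(1)$ coefficients. Both are handled by splitting $\mathbb{H}^n$ into $B_{\epsilon^{-1}}(0)$ and its complement, together with the bounds $(\ref{6.56})$ and $(\ref{6.3})$ used throughout the proof of Lemma \ref{lem6.9}.
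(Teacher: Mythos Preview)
Your overall strategy matches the paper's: differentiate $\Theta(t)=\int\phi(t)\,dV_{\theta_0}$ via (\ref{6.85}) and (\ref{6.86}), kill terms by the $z_j\mapsto -z_j$ symmetry of the measure, extract the leading $\epsilon$-power, and then use Lemma~\ref{lem6.9} to trade $\epsilon^2$ for $\mbox{Vol}(S^{2n+1},\theta_0)^2-|\Theta(t)|^2$. Two points deserve attention.

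First, the rescaling $\tilde z=rz,\ \tilde\tau=r^2\tau$ is not what the paper does, and it is the less convenient choice. After that substitution the integrand becomes $\epsilon$-free but the volume form becomes $4^{n+1}\epsilon^{2n+2}\bigl(\epsilon^4\tilde\tau^2+(1+\epsilon^2|\tilde z|^2)^2\bigr)^{-(n+1)}d\tilde z\,d\tilde\tau$, which concentrates as $\epsilon\to 0$ and does not converge pointwise to anything useful. The paper instead keeps the fixed spherical measure $\frac{4^{n+1}dz\,d\tau}{(\tau^2+(1+|z|^2)^2)^{n+1}}$ and lets the $\epsilon$'s sit in the integrand through $d\Psi|_{(rz,r^2\tau)}$, producing expressions like $\frac{|z|^2}{(\epsilon^2+|z|^2)^2+\tau^2}$; the key step is then the pointwise comparison of these with their $\epsilon=0$ counterparts $\frac{|z|^2}{|z|^4+\tau^2}$, via the new inequalities (\ref{6.93.5})--(\ref{6.95}) together with the integrability bound (\ref{6.96}) (this is where $n\ge 2$ is genuinely used). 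Your last paragraph anticipates the difficulty, but the splitting into $B_{\epsilon^{-1}}(0)$ and its complement with (\ref{6.56}), (\ref{6.3}) alone does not yield these comparisons; the pointwise bounds are an additional ingredient you should record.

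Second, and more substantively, the lemma asserts that $A_5$ is \emph{positive}, and this is not at all obvious from the defining integral (\ref{6.91}), whose numerator $|z|^2(|z|^4-\tau^2)$ changes sign. The paper devotes roughly half the proof to this: after the substitution $u=|z|^2$ and polar coordinates $(u,\tau)=(r\cos\theta,r\sin\theta)$, the sign is reduced to monotonicity in $z\in[0,1]$ of $h(r,z)=z^n(r^2+2rz+1)^{-(n+1)}$, again using $n\ge 2$. Your sketch does not mention this at all, and without it the statement is incomplete. (By contrast $A_4>0$ is immediate since the integrand in (\ref{6.89}) is nonnegative.)
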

\begin{proof}
As we have remarked before, we have
$$\phi(t)=\Psi\circ\delta_{q(t),r(t)}\circ\pi.$$
Differentiating the identity
\begin{equation*}
\Theta(t)=(\Theta(t)_1,...,\Theta(t)_{n+1})=\int_{S^{2n+1}}\phi(t)dV_{\theta_0}
=\int_{\mathbb{H}^n}\Psi\circ\delta_{q(t),r(t)}(z,\tau)
\frac{4^{n+1}dzd\tau}{(\tau^2+(1+|z|^2)^2)^{n+1}}
\end{equation*}
at time $t$, we obtain
\begin{equation}\label{6.85}
\frac{d\Theta(t)}{dt}=\int_{\mathbb{H}^n}d\Psi_y\left(\frac{d}{dt}\delta_{q(t),r(t)}(z,\tau)\right)
\frac{4^{n+1}dzd\tau}{(\tau^2+(1+|z|^2)^2)^{n+1}}
\end{equation}
where $y=\delta_{q(t),r(t)}(z,\tau)$. By (\ref{5.13}), we
have
\begin{equation}\label{6.86}
\begin{split}
&\frac{d}{dt}\delta_{q(t),r(t)}(z,\tau)\\
&=\frac{dr(t)}{dt}
\sum_{j=1}^n\left(a_j\frac{\partial}{\partial a_j}+
b_j\frac{\partial}{\partial
b_j}\right)+2\frac{dr(t)}{dt}\left[r(t)\tau+\sum_{j=1}^n\big(a_jb_j(t)
-b_ja_j(t)\big)\right]\frac{\partial}{\partial\tau}\\
&\hspace{4mm}
+\sum_{j=1}^n\left(\frac{da_j(t)}{dt}\frac{\partial}{\partial
a_j} +\frac{db_j(t)}{dt}\frac{\partial}{\partial b_j}\right)+
\left[2r(t)\sum_{j=1}^n\left(a_j\frac{db_j(t)}{dt}-b_j\frac{da_j(t)}{dt}\right)+\frac{d\tau(t)}{dt}
\right] \frac{\partial}{\partial\tau}.
\end{split}
\end{equation}
Let $\epsilon=1/r(t)$. Combining (\ref{5.7}), (\ref{5.8}),
(\ref{6.85}) and (\ref{6.86}), we obtain by symmetry that
\begin{equation}\label{6.88}
\begin{split}
\frac{d\Theta(t)_{n+1}}{dt}&=\int_{\mathbb{H}^n}\left\{\frac{dr(t)}{dt}
\sum_{j=1}^n\left[a_j\left(\frac{-4\epsilon^3a_j}{(\epsilon^2+|z|^2-\sqrt{-1}\tau)^2}\right)+
b_j\left(\frac{-4\epsilon^3a_j}{(\epsilon^2+|z|^2-\sqrt{-1}\tau)^2}\right)\right]\right.\\
&\hspace{4mm}+2\frac{dr(t)}{dt}\Big[r(t)\tau+\sum_{j=1}^n\big(a_jb_j(t)
-b_ja_j(t)\big)\Big]\frac{2\sqrt{-1}\epsilon^4}{(\epsilon^2+|z|^2-\sqrt{-1}\tau)^2}\\
&\hspace{4mm}
+\sum_{j=1}^n\left[\frac{da_j(t)}{dt}\left(\frac{-4\epsilon^3a_j}{(\epsilon^2+|z|^2-\sqrt{-1}\tau)^2}\right)
+\frac{db_j(t)}{dt}\left(\frac{-4\epsilon^3b_j}{(\epsilon^2+|z|^2-\sqrt{-1}\tau)^2}\right)\right]\\
&\hspace{4mm} \left.+
\left[2r(t)\sum_{j=1}^n\left(a_j\frac{db_j(t)}{dt}-b_j\frac{da_j(t)}{dt}\right)+\frac{d\tau(t)}{dt}
\right]\frac{2\sqrt{-1}\epsilon^4}{(\epsilon^2+|z|^2-\sqrt{-1}\tau)^2}\right\}\cdot\frac{4^{n+1}dzd\tau}{(\tau^2+(1+|z|^2)^2)^{n+1}}\\
&=-\epsilon^3\frac{dr(t)}{dt}\int_{\mathbb{H}^n}
\frac{4|z|^2}{(\epsilon^2+|z|^2-\sqrt{-1}\tau)^2}\cdot\frac{4^{n+1}dzd\tau}{(\tau^2+(1+|z|^2)^2)^{n+1}}\\
&\hspace{4mm}+\epsilon^4\frac{d\tau(t)}{dt}\int_{\mathbb{H}^n}\frac{2\sqrt{-1}}{(\epsilon^2+|z|^2-\sqrt{-1}\tau)^2}
\cdot\frac{4^{n+1}dzd\tau}{(\tau^2+(1+|z|^2)^2)^{n+1}}\\
&=-4\epsilon^3\frac{dr(t)}{dt}\int_{\mathbb{H}^n}
\frac{|z|^2[(\epsilon^2+|z|^2)^2-\tau^2]}{[(\epsilon^2+|z|^2)^2+\tau^2]^2}\cdot\frac{4^{n+1}dzd\tau}{(\tau^2+(1+|z|^2)^2)^{n+1}}\\
&\hspace{4mm}+2\epsilon^4\sqrt{-1}\frac{d\tau(t)}{dt}\int_{\mathbb{H}^n}
\frac{[(\epsilon^2+|z|^2)^2-\tau^2]}{[(\epsilon^2+|z|^2)^2+\tau^2]^2}\cdot\frac{4^{n+1}dzd\tau}{(\tau^2+(1+|z|^2)^2)^{n+1}}\\
&=-4\epsilon^3\frac{dr(t)}{dt}\int_{\mathbb{H}^n}
\left(\frac{|z|^2}{(\epsilon^2+|z|^2)^2+\tau^2}-\frac{2|z|^2\tau^2}{[(\epsilon^2+|z|^2)^2+\tau^2]^2}\right)\cdot\frac{4^{n+1}dzd\tau}{(\tau^2+(1+|z|^2)^2)^{n+1}}\\
&\hspace{4mm}+2\epsilon^4\sqrt{-1}\frac{d\tau(t)}{dt}\int_{\mathbb{H}^n}
\left(\frac{1}{(\epsilon^2+|z|^2)^2+\tau^2}-\frac{2\tau^2}{[(\epsilon^2+|z|^2)^2+\tau^2]^2}\right)\cdot\frac{4^{n+1}dzd\tau}{(\tau^2+(1+|z|^2)^2)^{n+1}}
\end{split}
\end{equation}
and, for $i=1,..., n,$
\begin{equation}\label{6.87}
\begin{split}
\frac{d\Theta(t)_i}{dt}
&=\int_{\mathbb{H}^n}\left\{\frac{dr(t)}{dt}
\sum_{j=1}^n\left[a_j
\Big(\frac{2\epsilon^2\delta_{ij}}{\epsilon^2+|z|^2-\sqrt{-1}\tau}
-\frac{4\epsilon^2(a_i+\sqrt{-1}b_i)a_j}{(\epsilon^2+|z|^2-\sqrt{-1}\tau)^2}\Big)\right.\right.\\
&\hspace{8mm}\left.+
b_j\Big(\frac{2\epsilon^2\sqrt{-1}\delta_{ij}}{\epsilon^2+|z|^2-\sqrt{-1}\tau}
-\frac{4\epsilon^2(a_i+\sqrt{-1}b_i)b_j}{(\epsilon^2+|z|^2-\sqrt{-1}\tau)^2}\Big)\right]\\
&\hspace{4mm}
+2\frac{dr(t)}{dt}\Big[r(t)\tau+\sum_{j=1}^n\big(a_jb_j(t)
-b_ja_j(t)\big)\Big]\frac{2\sqrt{-1}\epsilon^3(a_i+\sqrt{-1}b_i)}{(\epsilon^2+|z|^2-\sqrt{-1}\tau)^2}\\
&\hspace{4mm}
+\sum_{j=1}^n\left[\frac{da_j(t)}{dt}\Big(\frac{2\epsilon^2\delta_{ij}}{\epsilon^2+|z|^2-\sqrt{-1}\tau}
-\frac{4\epsilon^2(a_i+\sqrt{-1}b_i)a_j}{(\epsilon^2+|z|^2-\sqrt{-1}\tau)^2}\Big)\right.\\
&\hspace{8mm}\left.+\frac{db_j(t)}{dt}\Big(\frac{2\epsilon^2\sqrt{-1}\delta_{ij}}{\epsilon^2l^2+|z|^2-\sqrt{-1}\tau}
-\frac{4\epsilon^2(a_i+\sqrt{-1}b_i)b_j}{(\epsilon^2+|z|^2-\sqrt{-1}\tau)^2}\Big)\right]\\
&\hspace{4mm} \left.+
\left[2r(t)\sum_{j=1}^n\left(a_j\frac{db_j(t)}{dt}-b_j\frac{da_j(t)}{dt}\right)+\frac{d\tau(t)}{dt}
\right]
\frac{2\sqrt{-1}\epsilon^3(a_i+\sqrt{-1}b_i)}{(\epsilon^2+|z|^2-\sqrt{-1}\tau)^2}\right\}
\frac{4^{n+1}dzd\tau}{(\tau^2+(1+|z|^2)^2)^{n+1}}\\
&=4\sqrt{-1}\epsilon^3\frac{dr(t)}{dt}\int_{\mathbb{H}^n}\frac{(a_i^2b_i(t)-\sqrt{-1}b_i^2a_i(t))}{(\epsilon^2+|z|^2-\sqrt{-1}\tau)^2}
\cdot\frac{4^{n+1}dzd\tau}{(\tau^2+(1+|z|^2)^2)^{n+1}}\\
&\hspace{4mm}+2\epsilon^2\left(\frac{da_i(t)}{dt}+\sqrt{-1}\frac{db_i(t)}{dt}\right)\int_{\mathbb{H}^n}
\frac{1}{(\epsilon^2+|z|^2-\sqrt{-1}\tau)}\cdot\frac{4^{n+1}dzd\tau}{(\tau^2+(1+|z|^2)^2)^{n+1}}\\
&\hspace{4mm}-4\epsilon^2\frac{da_i(t)}{dt}\int_{\mathbb{H}^n}
\frac{a_i^2}{(\epsilon^2+|z|^2-\sqrt{-1}\tau)^2}\cdot\frac{4^{n+1}dzd\tau}{(\tau^2+(1+|z|^2)^2)^{n+1}}\\
&\hspace{4mm}-4\epsilon^2\sqrt{-1}\frac{db_i(t)}{dt}\int_{\mathbb{H}^n}
\frac{b_i^2}{(\epsilon^2+|z|^2-\sqrt{-1}\tau)^2}\cdot\frac{4^{n+1}dzd\tau}{(\tau^2+(1+|z|^2)^2)^{n+1}}\\
&\hspace{4mm}+4\sqrt{-1}\epsilon^3r(t)\frac{db_i(t)}{dt}\int_{\mathbb{H}^n}\frac{a_i^2}{(\epsilon^2+|z|^2-\sqrt{-1}\tau)^2}
\cdot\frac{4^{n+1}dzd\tau}{(\tau^2+(1+|z|^2)^2)^{n+1}}\\
&\hspace{4mm}+4\epsilon^3r(t)\frac{da_i(t)}{dt}\int_{\mathbb{H}^n}\frac{b_i^2}{(\epsilon^2+|z|^2-\sqrt{-1}\tau)^2}
\cdot\frac{4^{n+1}dzd\tau}{(\tau^2+(1+|z|^2)^2)^{n+1}}\\
&=2\epsilon^2\frac{dz_i(t)}{dt}\int_{\mathbb{H}^n}
\frac{1}{(\epsilon^2+|z|^2-\sqrt{-1}\tau)}\cdot\frac{4^{n+1}dzd\tau}{(\tau^2+(1+|z|^2)^2)^{n+1}}\\
&\hspace{4mm}+\frac{2\epsilon^3}{n}z_i(t)\frac{dr(t)}{dt}\int_{\mathbb{H}^n}
\frac{|z|^2}{(\epsilon^2+|z|^2-\sqrt{-1}\tau)^2}\cdot\frac{4^{n+1}dzd\tau}{(\tau^2+(1+|z|^2)^2)^{n+1}}
\\
&=2\epsilon^2\frac{dz_i(t)}{dt}\int_{\mathbb{H}^n}
\frac{\epsilon^2+|z|^2}{(\epsilon^2+|z|^2)^2+\tau^2}\cdot\frac{4^{n+1}dzd\tau}{(\tau^2+(1+|z|^2)^2)^{n+1}}\\
&\hspace{4mm}+\frac{2\epsilon^3}{n}z_i(t)\frac{dr(t)}{dt}\int_{\mathbb{H}^n}
\frac{|z|^2[(\epsilon^2+|z|^2)^2-\tau^2]}{[(\epsilon^2+|z|^2)^2+\tau^2]^2}\cdot\frac{4^{n+1}dzd\tau}{(\tau^2+(1+|z|^2)^2)^{n+1}}\\
&=2\epsilon^2\frac{dz_i(t)}{dt}\int_{\mathbb{H}^n}
\frac{\epsilon^2+|z|^2}{(\epsilon^2+|z|^2)^2+\tau^2}\cdot\frac{4^{n+1}dzd\tau}{(\tau^2+(1+|z|^2)^2)^{n+1}}\\
&\hspace{4mm}+\frac{2\epsilon^3}{n}z_i(t)\frac{dr(t)}{dt}\int_{\mathbb{H}^n}
\left(\frac{|z|^2}{(\epsilon^2+|z|^2)^2+\tau^2}-\frac{2|z|^2\tau^2}{[(\epsilon^2+|z|^2)^2+\tau^2]^2}\right)\cdot\frac{4^{n+1}dzd\tau}{(\tau^2+(1+|z|^2)^2)^{n+1}}.
\end{split}
\end{equation}
By
Young's inequality
$$|z|^3|\tau|^{\frac{1}{2}}\leq \frac{3}{4}|z|^4+\frac{1}{4}\tau^2\leq |z|^4+\tau^2,$$
we have
\begin{equation}\label{6.93.5}
\left|\frac{1}{(\epsilon^2+|z|^2)^2+\tau^2}-\frac{1}{|z|^4+\tau^2}\right|
=\frac{2\epsilon^2|z|^2+\epsilon^4}{[(\epsilon^2+|z|^2)^2+\tau^2](|z|^4+\tau^2)}\leq \frac{2}{|z|^4+\tau^2}\leq \frac{2}{|z|^3|\tau|^{\frac{1}{2}}}
\end{equation}
and
\begin{equation}\label{6.93.55}
\begin{split}
\left|\frac{\tau^2}{[(\epsilon^2+|z|^2)^2+\tau^2]^2}-\frac{\tau^2}{(|z|^4+\tau^2)^2}\right|
&\leq\frac{\tau^2}{(\epsilon^2+|z|^2)^2+\tau^2}
\left|\frac{1}{(\epsilon^2+|z|^2)^2+\tau^2}-\frac{1}{|z|^4+\tau^2}\right|\\
&\hspace{4mm}+\frac{\tau^2}{|z|^4+\tau^2}\left|\frac{1}{(\epsilon^2+|z|^2)^2+\tau^2}-\frac{1}{|z|^4+\tau^2}\right|\\
&\leq 2\left|\frac{|z|^2}{(\epsilon^2+|z|^2)^2+\tau^2}-\frac{|z|^2}{|z|^4+\tau^2}\right|\leq \frac{2}{|z|^3|\tau|^{\frac{1}{2}}}.
\end{split}
\end{equation}
We also have
\begin{equation}\label{6.94}
\begin{split}
&\left|\frac{|z|^2}{(\epsilon^2+|z|^2)^2+\tau^2}-\frac{|z|^2}{|z|^4+\tau^2}\right|
=\frac{|z|^2(2\epsilon^2|z|^2+\epsilon^4)}{[(\epsilon^2+|z|^2)^2+\tau^2](|z|^4+\tau^2)}\\
&\leq\frac{2\epsilon^2(|z|^4+2\epsilon^2|z|^2)}{[(\epsilon^2+|z|^2)^2+\tau^2](|z|^4+\tau^2)}\leq \frac{2\epsilon^2}{|z|^4+\tau^2}
\leq\frac{2\epsilon^2}{|z|^3|\tau|^{\frac{1}{2}}}
\end{split}
\end{equation}
which implies that
\begin{equation}\label{6.95}
\begin{split}
&\left|\frac{2|z|^2\tau^2}{[(\epsilon^2+|z|^2)^2+\tau^2]^2}-\frac{2|z|^2\tau^2}{(|z|^4+\tau^2)^2}\right|\\
&\leq\frac{2\tau^2}{(\epsilon^2+|z|^2)^2+\tau^2}
\left|\frac{|z|^2}{(\epsilon^2+|z|^2)^2+\tau^2}-\frac{|z|^2}{|z|^4+\tau^2}\right|\\
&\hspace{4mm}+\frac{2\tau^2}{|z|^4+\tau^2}\left|\frac{|z|^2}{(\epsilon^2+|z|^2)^2+\tau^2}-\frac{|z|^2}{|z|^4+\tau^2}\right|\\
&\leq 4\left|\frac{|z|^2}{(\epsilon^2+|z|^2)^2+\tau^2}-\frac{|z|^2}{|z|^4+\tau^2}\right|\leq \frac{8\epsilon^2}{|z|^3|\tau|^{\frac{1}{2}}}.
\end{split}
\end{equation}
Since
\begin{equation}\label{6.96}
\begin{split}
&\int_{\mathbb{H}^n}
\frac{1}{|z|^3|\tau|^{\frac{1}{2}}}\cdot\frac{dzd\tau}{(\tau^2+(1+|z|^2)^2)^{n+1}}
\leq 2\int^\infty_0\frac{d\tau}{\tau^{\frac{1}{2}}(1+\tau^2)}\int_{\{|z|\geq 0\}}\frac{dz}{|z|^3(1+|z|^2)^{2n-2}}\\
&\leq C\left(\int_0^1\frac{d\tau}{\tau^{\frac{1}{2}}}+\int^\infty_1\frac{d\tau}{1+\tau^2}\right)\left(\int_0^\infty\frac{r^{2n-4}dr}{(1+r^2)^{2n}}\right)
\leq C
\end{split}
\end{equation}
when $n\geq 2$ by (\ref{6.80}),
by the estimates (\ref{6.93.5})-(\ref{6.95}),
 we can rewrite (\ref{6.87}) as
\begin{equation*}
\begin{split}
\frac{d\Theta(t)_i}{dt}
&=(A_4\epsilon^2+O(\epsilon^3))\frac{dz_i(t)}{dt}
+O(\epsilon^3)z_i(t)\frac{dr(t)}{dt}
\end{split}
\end{equation*}
where $A_4$ is the positive constant given by
\begin{equation}\label{6.89}
A_4=2\int_{\mathbb{H}^n}
\frac{|z|^2}{|z|^4+\tau^2}\cdot\frac{4^{n+1}dzd\tau}{(\tau^2+(1+|z|^2)^2)^{n+1}},
\end{equation}
and  we can rewrite (\ref{6.88}) as
\begin{equation}\label{6.90}
\frac{d\Theta(t)_{n+1}}{dt}=(-\epsilon^3A_5+O(\epsilon^4))\frac{dr(t)}{dt}
+O(\epsilon^4)\frac{d\tau(t)}{dt}
\end{equation}
where
$A_5$ is the constant given by
\begin{equation}\label{6.91}
A_5=
\int_{\mathbb{H}^n}
\frac{4|z|^2(|z|^4-\tau^2)}{(|z|^4+\tau^2)^2}\cdot\frac{4^{n+1}dzd\tau}{(\tau^2+(1+|z|^2)^2)^{n+1}}.
\end{equation}
Note that $A_5$ is positive. To see this, note that the right hand side of (\ref{6.91}) can be written as
\begin{equation}\label{6.91a}
C\int_0^\infty\int_0^\infty\frac{(r^4-\tau^2)r^{2n+1}}{(r^4+\tau^2)[\tau^2+(1+r^2)^2]^{n+1}}drd\tau
\end{equation}
for some positive constant $C$. So it suffices to prove that the integral in (\ref{6.91a}) is positive. Let $u=r^2$ and $\tau=v$
and then using the polar coordinates $u=r\cos\theta$ and $v=r\sin\theta$, the integral
can be written as
\begin{equation*}
\begin{split}
&\int_0^\infty\int_0^\infty\frac{(r^4-\tau^2)r^{2n+1}}{(r^4+\tau^2)[\tau^2+(1+r^2)^2]^{n+1}}drd\tau\\
&=\frac{1}{2}\int_0^\infty\int_0^\infty\frac{(u^2-v^2)u^n}{(u^2+v^2)[v^2+(1+u)^2]^{n+1}}dudv\\
&=\frac{1}{2}\int_0^{\frac{\pi}{2}}\int_0^\infty\frac{r^{n+1}(\cos^2\theta-\sin^2\theta)\cos^n\theta}{(r^2+2r\cos\theta+1)^{n+1}}drd\theta\\
&=\frac{1}{2}\left(\int_0^{\frac{\pi}{4}}+\int_{\frac{\pi}{4}}^{\frac{\pi}{2}}\right)\int_0^\infty\frac{r^{n+1}(\cos^2\theta-\sin^2\theta)\cos^n\theta}{(r^2+2r\cos\theta+1)^{n+1}}drd\theta\\
&=\frac{1}{2}\int_0^{\frac{\pi}{4}}\int_0^\infty\frac{r^{n+1}(\cos^2\theta-\sin^2\theta)\cos^n\theta}{(r^2+2r\cos\theta+1)^{n+1}}drd\theta\\
&\hspace{4mm}
-\frac{1}{2}\int_{\frac{\pi}{4}}^{0}\int_0^\infty
\frac{r^{n+1}(\cos^2(\frac{\pi}{2}-\phi)-\sin^2(\frac{\pi}{2}-\phi))\cos^n(\frac{\pi}{2}-\phi)}{(r^2+2r\cos(\frac{\pi}{2}-\phi)+1)^{n+1}}drd\phi\\
&=\frac{1}{2}\int_0^{\frac{\pi}{4}}\int_0^\infty r^{n+1}(\cos^2\theta-\sin^2\theta)\left[\frac{\cos^n\theta}{(r^2+2r\cos\theta+1)^{n+1}}
-\frac{\sin^n\theta}{(r^2+2r\sin\theta+1)^{n+1}}\right]drd\theta\\
&=\frac{1}{2}\int_0^{\frac{\pi}{4}}\int_0^\infty r^{n+1}(\cos^2\theta-\sin^2\theta)\big[h(r,\cos\theta)-h(r,\sin\theta)\big]drd\theta,
\end{split}
\end{equation*}
where $h(r,z)=\displaystyle\frac{z^n}{(r^2+2rz+1)^{n+1}}$. Note that for $n\geq 2$ and $r>0$, $h(r,z)$ is an increasing function in $z\in[0,1]$. In particular, we have
$h(r,\cos\theta)\geq h(r,\sin\theta)$ for $\theta\in [0,\pi/4]$, which implies that the integral in (\ref{6.91a}) is positive,
 and hence $A_5$ is positive.

Now by (\ref{6.90}) and Lemma \ref{lem6.9}, we obtain
\begin{equation}\label{6.92}
\frac{d\Theta(t)_{n+1}}{dt}=-(\mbox{Vol}(S^{2n+1},\theta_0)^2-|\Theta(t)|^2)
\left[\left(\frac{A_5}{4\mbox{Vol}(S^{2n+1},\theta_0)A_3}+o(1)\right)\epsilon\frac{dr(t)}{dt}
+o(1)\epsilon\frac{d\tau(t)}{dt}\right].
\end{equation}
By symmetry,  $\Theta(t)_i=0$ for $1\leq i\leq n$. Thus by (\ref{6.84}) and (\ref{6.92}) we conclude that
\begin{equation*}
\begin{split}
&\frac{d}{dt}(\mbox{Vol}(S^{2n+1},\theta_0)^2-|\Theta(t)|^2)
=-\widehat{\Theta(t)}_{n+1}\frac{d\Theta(t)_{n+1}}{dt}-\Theta(t)_{n+1}\frac{d\widehat{\Theta(t)}_{n+1}}{dt}\\
&=2(\mbox{Vol}(S^{2n+1},\theta_0)+o(1))(\mbox{Vol}(S^{2n+1},\theta_0)^2-|\Theta(t)|^2)\\
&\hspace{4mm}\cdot
\left[\left(\frac{A_5}{4\mbox{Vol}(S^{2n+1},\theta_0)A_3}+o(1)\right)\epsilon\frac{dr(t)}{dt}
+o(1)\epsilon\frac{d\tau(t)}{dt}\right]\\
&=(\mbox{Vol}(S^{2n+1},\theta_0)^2-|\Theta(t)|^2)\left[\left(\frac{A_5}{2A_3}+o(1)\right)\epsilon\frac{dr(t)}{dt}
+o(1)\epsilon\frac{d\tau(t)}{dt}\right],
\end{split}
\end{equation*}
as required.
\end{proof}

\begin{prop}\label{prop6.11}
As $t\rightarrow\infty$, the contact form $\theta(t)$ concentrate at the critical point $Q$ of $f$
satisfying $\Delta_{\theta_0}f(Q)\leq 0$.
\end{prop}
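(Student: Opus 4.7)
The plan is to identify the limit point $Q$, then prove that $\nabla_{\theta_0}f(Q)=0$, and finally that $\Delta_{\theta_0}f(Q)\leq 0$.

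Combining Theorem \ref{thm4.7}, Lemma \ref{lem4.14}, and Lemma \ref{lem6.9}, I would first check that $\widehat{\Theta(t)}\to Q$ (the concentration point of Theorem \ref{thm4.7}) and that $\epsilon(t)=1/r(t)\to 0$.

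To show $\nabla_{\theta_0}f(Q)=0$, I would argue by contradiction. If $|\nabla_{\theta_0}f(Q)|>0$, then $|\nabla_{\theta_0}f(\widehat{\Theta(t)})|\geq c>0$ for $t$ large, so (\ref{6.71}) yields $F_2(t)\geq (c^2/2)\epsilon(t)^2$ for $t$ large. Equating the radial component of $b$ from Lemma \ref{lem6.7} with that from Lemma \ref{lem6.8} gives $|dr/dt|\leq C\epsilon$, which integrates to $r(t)\leq C'\sqrt{t}$ and thus $\epsilon(t)^2\gtrsim 1/t$. Hence
\[
\int_T^\infty F_2(t)\,dt\geq \frac{c^2}{2}\int_T^\infty \epsilon(t)^2\,dt=\infty,
\]
contradicting the bound $\int_0^\infty F_2(t)\,dt<\infty$ that follows from Proposition \ref{prop2.2} (monotonicity of $E_f$ together with its positive lower bound coming from the Yamabe-type infimum). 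Hence $\nabla_{\theta_0}f(Q)=0$.

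To show $\Delta_{\theta_0}f(Q)\leq 0$, the condition $\nabla_{\theta_0}f(Q)=0$ reduces the error term $O(\epsilon^2)|\nabla_{\theta_0}f(\widehat{\Theta(t)})|_{\theta_0}^2$ in the radial formula of Lemma \ref{lem6.7} to $o(\epsilon^2)$. Matching $\langle b,(\widehat{\Theta(t)},\widehat{\Theta(t)})\rangle$ from Lemma \ref{lem6.7} with the expression for its radial component derived from Lemma \ref{lem6.8}---taking into account that, in the convention of Section \ref{section6.2}, $\widehat{\Theta(t)}_{n+1}=-1$, so the Hermitian pairing against $b_{n+1}+b_{2n+2}$ picks up the correct sign---I would obtain
\[
\frac{dr(t)}{dt}=-\frac{2(n+1)A_2\,\alpha(t)}{\mbox{Vol}(S^{2n+1},\theta_0)}\,\epsilon(t)\,\Delta_{\theta_0}f(\widehat{\Theta(t)})+o(\epsilon(t)).
\]
Since $A_2>0$ and $\alpha(t)\to R_{\theta_0}/f(Q)>0$ by Lemma \ref{lem4.14}, a positive $\Delta_{\theta_0}f(Q)$ would force $dr/dt<0$ for $t$ large, contradicting $r(t)\to\infty$. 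Hence $\Delta_{\theta_0}f(Q)\leq 0$.

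The main obstacle will be to reconcile the two coordinate conventions used in the paper: Lemma \ref{lem6.7} is proved with $\widehat{\Theta(t)}$ represented as the north pole (under a contracting dilation), while Lemma \ref{lem6.8} (via Section \ref{section5} and Section \ref{section6.2}) uses $\widehat{\Theta(t)}$ as the south pole with $r\to\infty$. Tracking the sign of the Hermitian pairing, which depends on $\widehat{\Theta(t)}_{n+1}=\pm 1$, is precisely what produces the crucial minus sign yielding $\Delta_{\theta_0}f(Q)\leq 0$ rather than the opposite inequality.
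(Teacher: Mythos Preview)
Your overall strategy is close to the paper's, but there is a genuine gap in the first step. You propose to ``first check that $\widehat{\Theta(t)}\to Q$'' using Theorem~\ref{thm4.7}, Lemma~\ref{lem4.14}, and Lemma~\ref{lem6.9}. These results do not give this: Theorem~\ref{thm4.7} yields concentration only along a subsequence, Lemma~\ref{lem4.14} gives $\phi(t)-\widehat{P(t)}\to 0$ but not convergence of $\widehat{P(t)}$ itself, and Lemma~\ref{lem6.9} controls only the norm $|\Theta(t)|$, not its direction. In fact the convergence $\widehat{\Theta(t)}\to Q$ is part of what Proposition~\ref{prop6.11} establishes (see the last line of the paper's proof). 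The paper obtains it from the gradient-like inequality
\[
\frac{d}{dt}f(\widehat{\Theta(t)})\geq C\epsilon^2\bigl(|f'(\widehat{\Theta(t)})|^2+o(1)\bigr),
\]
derived by chaining Lemmas~\ref{lem6.7}--\ref{lem6.10}: since $f$ is bounded and $\int\epsilon^2\,dt=\infty$, the shadow flow must accumulate at a critical point, and approximate monotonicity of $f(\widehat{\Theta(t)})$ together with the Morse structure then pins down a unique limit. Your $\int F_2<\infty$ argument can recover the accumulation statement (the bound $|dr/dt|\leq C\epsilon$ holds uniformly since $\Delta_{\theta_0}f$ and $\nabla_{\theta_0}f$ are bounded on $S^{2n+1}$, so $\epsilon^2\gtrsim 1/t$ always; then $F_2\approx C|\nabla_{\theta_0}f(\widehat{\Theta})|^2\epsilon^2$ with $\int\epsilon^2=\infty$ forces $\liminf|\nabla_{\theta_0}f(\widehat{\Theta})|=0$), but it does not by itself yield full convergence of $\widehat{\Theta(t)}$.

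For $\Delta_{\theta_0}f(Q)\leq 0$, your idea of equating the radial components of $b$ from Lemmas~\ref{lem6.7} and~\ref{lem6.8} is essentially what the paper does, but the paper routes the conclusion through $\frac{d}{dt}(\mbox{Vol}(S^{2n+1},\theta_0)^2-|\Theta(t)|^2)$ via Lemma~\ref{lem6.10} (equation~(\ref{6.98})) rather than reading off $dr/dt$ directly. This detour matters precisely because of the sign issue you flag: matching Lemmas~\ref{lem6.7} and~\ref{lem6.8} with $\widehat{\Theta(t)}$ at the north pole (which is the convention under which those lemmas are actually proved) gives
\[
\frac{dr}{dt}=+\frac{2(n+1)A_2\alpha}{\mbox{Vol}(S^{2n+1},\theta_0)}\,\epsilon\,\Delta_{\theta_0}f(\widehat{\Theta(t)})+o(\epsilon),
\]
so $\Delta_{\theta_0}f(Q)>0$ would make $r$ increase---consistent with $r\to\infty$, not a contradiction. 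The paper's extra step through Lemma~\ref{lem6.10} (with the independently computed constant $A_5>0$) is what turns this into the statement that $\mbox{Vol}^2-|\Theta|^2$ is eventually increasing, contradicting $\mbox{Vol}^2-|\Theta|^2\to 0$. Your justification via ``$\widehat{\Theta(t)}_{n+1}=-1$'' invokes a convention opposite to the one under which Lemma~\ref{lem6.7} is derived, so the minus sign in your $dr/dt$ formula is not supported by those lemmas as stated.
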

\begin{proof}
It follows from Lemma \ref{lem6.7}-\ref{lem6.10} that
\begin{equation}\label{6.98}
\begin{split}
&\frac{d}{dt}(\mbox{Vol}(S^{2n+1},\theta_0)^2-|\Theta(t)|^2)\\
&=(2A_5\mbox{Vol}(S^{2n+1},\theta_0)+o(1))\epsilon^3\frac{dr(t)}{dt}+o(1)\epsilon^3\frac{d\tau(t)}{dt}\\
&=-(2(n+1)A_5+o(1))\epsilon^2(b_{n+1}+b_{2n+2})+O(|\nabla_{\theta_0}f(\widehat{\Theta(t)})|_{\theta_0}^2)\epsilon^4+O(\epsilon^5)\\
&=4(n+1)A_2A_5\alpha\epsilon^4(\Delta_{\theta_0}f(\widehat{\Theta(t)})
+O(1)|\nabla_{\theta_0}f(\widehat{\Theta(t)})|^2_{\theta_0}+O(\epsilon)).
\end{split}
\end{equation}
By (\ref{6.98}) and Lemma \ref{lem6.9}, we find
$$\left|\frac{d}{dt}(\mbox{Vol}(S^{2n+1},\theta_0)^2-|\Theta(t)|^2)\right|\leq C(\mbox{Vol}(S^{2n+1},\theta_0)^2-|\Theta(t)|^2)^2,$$
which yields
$$\mbox{Vol}(S^{2n+1},\theta_0)^2-|\Theta(t)|^2\geq\frac{C_0}{t},$$
for some constant $C_0>0$, while by Lemma \ref{lem6.9}, we have
\begin{equation}\label{6.93}
\epsilon^2\geq\frac{C_1}{t},
\end{equation}
for $t\geq t_0$ with some sufficiently large $t_0>0$ and a uniform constant $C_1>0$. It follows from Lemmas \ref{lem6.7}-\ref{lem6.10} that
\begin{equation*}
\begin{split}
\frac{d}{dt}f(\Theta(t))=\frac{d}{dt}f(\widehat{\Theta(t)})
&=\frac{1}{2|\Theta(t)|}\sum_{i=1}^n\frac{\partial f}{\partial z_i}(\widehat{\Theta(t)})\frac{d\Theta(t)_i}{dt}+O(\epsilon^3)\\
&\geq C\epsilon^2(|f'(\widehat{\Theta(t)})|^2+o(1))
\end{split}
\end{equation*}
where $f'$ denotes the gradient of $f$ with respect to the standard Riemannian metric on
$S^{2n+1}$. This implies by (\ref{6.93}) that
$$\left|\frac{d}{dt}f(\Theta(t))\right|\geq\frac{C_2}{t}(|f'(\widehat{\Theta(t)})|^2+o(1))$$
where $C_2>0$ and the error $o(1)\rightarrow 0$ as $t\rightarrow\infty$. Since $t^{-1}$ is divergent, the flow $(\Theta(t))_{t\geq 0}$
must accumulate at a critical point of $f$. To see the critical point with $\Delta_{\theta_0}f(Q)\leq 0$ are the only possible limit points of $\Theta(t)$,
first we observe that if $\Delta_{\theta_0}f(Q)>0$, then by (\ref{6.98}), we have, for sufficiently large $t$,
$\displaystyle\frac{d}{dt}(\mbox{Vol}(S^{2n+1},\theta_0)^2-|\Theta(t)|^2)>0$. Hence it will contradict the fact that
$\mbox{Vol}(S^{2n+1},\theta_0)^2-|\Theta(t)|^2\rightarrow 0$ as $t\rightarrow\infty$.

Therefore, the shadow flow $(\Theta(t))_{t\geq 0}$ converges to a unique point $Q\in S^{2n+1}$.
\end{proof}

\begin{lem}\label{lem6.12}
Under the assumptions of Theorem \ref{thm4.7}, let $u(0)=u_0\in
C^\infty_f$ be initial data of the flow $(\ref{2.3})$. Then as
$t\rightarrow\infty$, we have
$$E_f(u(t))\rightarrow R_{\theta_0}\mbox{\emph{Vol}}(S^{2n+1},\theta_0)^{\frac{1}{n+1}}f(Q)^{-\frac{n}{n+1}},$$
where $Q=\lim_{t\rightarrow\infty}\Theta(t)$ is the unique limit of
the shadow flow $\Theta(t)$ associated with $u(t)$.
\end{lem}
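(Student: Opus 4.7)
The plan is to split
\begin{equation*}
E_f(u(t))=\frac{E(u(t))}{\bigl(\int_{S^{2n+1}}f\,u(t)^{2+\frac{2}{n}}dV_{\theta_0}\bigr)^{n/(n+1)}}
\end{equation*}
into numerator and denominator and compute each limit separately by passing to the normalized contact form $h=v^{2+\frac{2}{n}}\theta_0=\phi^*\theta$, where by Lemma \ref{lem4.14} we have $v\to 1$ in $C^{1,\lambda}_P(S^{2n+1})$ and $\phi(t)-\widehat{P(t)}\to 0$ in $L^2$.

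For the numerator, I would first observe that $E$ is invariant under the conformal CR diffeomorphism $\phi(t)$. Indeed, by (\ref{2.5}) and pullback,
\begin{equation*}
E(u(t))=\int_{S^{2n+1}}R_\theta\,dV_\theta=\int_{S^{2n+1}}R_h\,dV_h=E(v(t)).
\end{equation*}
Since $v\to 1$ in $C^{1,\lambda}_P$, in particular uniformly and with $\nabla_{\theta_0}v\to 0$ uniformly, we obtain
\begin{equation*}
E(v(t))\longrightarrow \int_{S^{2n+1}}R_{\theta_0}\,dV_{\theta_0}=R_{\theta_0}\mbox{Vol}(S^{2n+1},\theta_0).
\end{equation*}

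For the denominator, the same pullback identity gives
\begin{equation*}
\int_{S^{2n+1}}f\,u(t)^{2+\frac{2}{n}}dV_{\theta_0}=\int_{S^{2n+1}}f\,dV_\theta=\int_{S^{2n+1}}f_\phi\,v(t)^{2+\frac{2}{n}}dV_{\theta_0}.
\end{equation*}
By Lemma \ref{lem4.14}, $\|f_\phi-f(\widehat{P(t)})\|_{L^2(S^{2n+1},\theta_0)}\to 0$, so it remains to show $f(\widehat{P(t)})\to f(Q)$. For this I would reconcile the two centers of mass: $P(t)=\int x\,dV_\theta=\int\phi\,v^{2+\frac{2}{n}}dV_{\theta_0}$ by pullback, while $\Theta(t)=\int\phi\,dV_{\theta_0}$, so that
\begin{equation*}
|P(t)-\Theta(t)|\leq \mbox{Vol}(S^{2n+1},\theta_0)\,\|v^{2+\frac{2}{n}}-1\|_{C^0}\to 0.
\end{equation*}
Combining this with Proposition \ref{prop6.11}, which asserts $\widehat{\Theta(t)}\to Q$, yields $\widehat{P(t)}\to Q$ and hence $f_\phi\to f(Q)$ in $L^2$. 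Together with $v\to 1$ in $C^0$ this gives $\int f_\phi v^{2+\frac{2}{n}}dV_{\theta_0}\to f(Q)\,\mbox{Vol}(S^{2n+1},\theta_0)$.

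Taking the ratio of the two limits (with the denominator raised to the power $n/(n+1)$) gives
\begin{equation*}
E_f(u(t))\to\frac{R_{\theta_0}\mbox{Vol}(S^{2n+1},\theta_0)}{\bigl(f(Q)\mbox{Vol}(S^{2n+1},\theta_0)\bigr)^{n/(n+1)}}=R_{\theta_0}\mbox{Vol}(S^{2n+1},\theta_0)^{1/(n+1)}f(Q)^{-n/(n+1)},
\end{equation*}
which is the claimed formula. The only conceptually non-routine step is the bookkeeping identifying $\lim\widehat{P(t)}$ with the point $Q$ furnished by Proposition \ref{prop6.11}; everything else is a consequence of the conformal invariance $E(u)=E(v)$ together with the strong convergence $v\to 1$ already established in Part I.
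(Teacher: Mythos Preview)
Your proof is correct and follows essentially the same route as the paper: use conformal invariance $E(u)=E(v)$ together with $v\to 1$ from Lemma~\ref{lem4.14} for the numerator, and use the pullback $\int f\,dV_\theta=\int f_\phi\,dV_h$ together with $f_\phi\to f(Q)$ for the denominator. Your explicit reconciliation of $\widehat{P(t)}$ with $\widehat{\Theta(t)}$ (via $|P(t)-\Theta(t)|\le C\|v^{2+2/n}-1\|_{C^0}\to 0$) is a welcome clarification that the paper leaves implicit.
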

\begin{proof}
Note that
\begin{equation}\label{6.97}
E(u(t))=E(v(t))\rightarrow
R_{\theta_0}\mbox{Vol}(S^{2n+1},\theta_0)\hspace{2mm}\mbox{ as }t\rightarrow\infty
\end{equation}
by  Lemma \ref{lem4.14}. On the other hand, by Lemma
\ref{lem4.14}, we have
$$\int_{S^{2n+1}}fdV_\theta=\int_{S^{2n+1}}f_\phi
dV_\theta\rightarrow f(Q)\mbox{Vol}(S^{2n+1},\theta_0)\hspace{2mm}\mbox{ as }t\rightarrow\infty.$$
Combining these, the assertion follows.
\end{proof}

\section{Existence of conformal contact form}\label{section7}

In this section, for $p\in S^{2n+1}$, $0<\epsilon<1$, as before, we denote by $\phi_{-p,\epsilon}$ the projection with $-p$
at infinity, that is, $p$ becomes the north pole in the coordinates. Define a map
$$j:S^{2n+1}\times (0,\infty)\ni (p,\epsilon)\mapsto u_{p,\epsilon}=|\det(d\phi_{-p,\epsilon})|^{\frac{n}{2n+2}}\in C^\infty_*$$
where
$$C^\infty_*:=\Big\{0<u\in C^\infty(S^{2n+1}): \theta=u^{\frac{2}{n}}\theta_0
\mbox{ satisfies }\int_{S^{2n+1}}u^{2+\frac{2}{n}}dV_{\theta_0}
=\int_{S^{2n+1}}dV_{\theta_0}\Big\}.$$
Also let $\theta_{p,\epsilon}=\phi_{p,\epsilon}^*(\theta_0)=u_{p,\epsilon}^{\frac{2}{n}}\theta_0$ to get
$$
dV_{\theta_{p,\epsilon}}=u_{p,\epsilon}^{2+\frac{2}{n}}dV_{\theta_0}\rightharpoonup
\mbox{Vol}(S^{2n+1},\theta_0)\delta_p
$$
in the weak sense of measures as $\epsilon\rightarrow 0$. For $\gamma\in\mathbb{R}$, denote by
$$L_\gamma=\{u\in C^\infty_*: E_f(u)\leq \gamma\},$$
the sub-level set of $E_f$. For convenience, labeling all the critical points of $f$ by
$p_1,..., p_N$ such that $f(p_i)\leq f(p_j)$ for $1\leq i\leq j\leq N$, we set
$$\beta_i=R_{\theta_0}\mbox{Vol}(S^{2n+1},\theta_0)^{\frac{1}{n+1}}f(p_i)^{-\frac{n}{n+1}}=\lim_{\epsilon\rightarrow 0}E_f(u_{p_i,\epsilon}),\hspace{2mm}1\leq i\leq N.$$
In view of Proposition \ref{prop6.11}, under our assumption of $f$, minimum points of $f$ cannot be concentration points,
namely, the energy level where the concentration occurs is strictly less than $\beta_1$. Without loss of generality,
we assume all critical levels $f(p_i)$, $1\leq i\leq N$, are different, so that there exists a $\nu_0>0$ such that
$\beta_i-2\nu_0>\beta_{i+1}$, in fact, we can take $\nu_0=\displaystyle\frac{1}{3}\min_{1\leq i\leq N-1}\{\beta_i-\beta_{i+1}\}>0$.
In the following, denote by $u(t,u_0)$ the flow (\ref{2.3}) with initial data $u_0\in C_*^\infty$, and again denote the shadow flow
by
$$\Theta(t,u_0)=\int_{S^{2n+1}}\phi(t,u_0)dV_{S^{2n+1}}\hspace{2mm}\mbox{ with }
\widehat{\Theta(t,u_0)}=\frac{\Theta(t,u_0)}{\|\Theta(t,u_0)\|}\mbox{ if }\|\Theta(t,u_0)\|\neq 0.$$

Our main purpose of this section is to set up the following:

\begin{prop}\label{prop7.1}
\emph{(i)} If $\beta_1<\beta_0\leq\beta$, where $\beta$ has been chosen as in $(\ref{4.15})$, then the set $L_{\beta_0}$ is contractible.\\
\emph{(ii)} For any $0<\nu\leq \nu_0$ and each $1\leq i\leq N$, the sets $L_{\beta_i-\nu}$ and $L_{\beta_{i+1}+\nu}$ are homotopic
equivalent.\\
\emph{(iii)} For each critical point $p_i$ of $f$ where $\Delta_{\theta_0}f(p_i)>0$, the
sets $L_{\beta_i+\nu_0}$ and $L_{\beta_{i}-\nu_0}$ are homotopic equivalent.\\
\emph{(iv)} For each critical point $p_i$ of $f$ where $\Delta_{\theta_0}f(p_i)<0$, the
set $L_{\beta_i+\nu_0}$ is homotopic to the set $L_{\beta_{i}-\nu_0}$ with $(2n+1-ind(f,p_i))$-cell attached.
\end{prop}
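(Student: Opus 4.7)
The proof follows the Morse-theoretic scheme of Chen--Xu \cite{Chen&Xu}, with the Webster scalar curvature flow (\ref{2.3}) playing the role of a negative-gradient-type deformation for the functional $E_f$ on $C^\infty_*$. As throughout Section \ref{section6} we argue by contradiction: assume $f$ is not the Webster scalar curvature of any contact form conformal to $\theta_0$. Then every trajectory $u(t,u_0)$ with $u_0 \in C^\infty_f$ concentrates at some critical point $p_i$ of $f$ with $\Delta_{\theta_0}f(p_i) \leq 0$ by Proposition \ref{prop6.11}, its asymptotic energy is $\beta_i$ by Lemma \ref{lem6.12}, and $E_f(u(\cdot,u_0))$ is non-increasing in $t$ by Proposition \ref{prop2.2}. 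In particular the only accumulation values of $E_f$ along trajectories are the levels $\beta_i$ corresponding to $p_i$ with $\Delta_{\theta_0}f(p_i)<0$.

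For (ii) and (iii), the interval $(\beta_{i+1}+\nu,\beta_i-\nu)$ in (ii) contains no such critical level, by the choice of $\nu_0$; in (iii) the level $\beta_i$ itself is not an accumulation value because Proposition \ref{prop6.11} forbids concentration at a critical point with $\Delta_{\theta_0}f>0$. In either case, for any $u_0$ in the upper sublevel set, monotonicity of $E_f$ combined with the dichotomy above forces $E_f(u(t,u_0))$ to cross the lower threshold at a finite time $T(u_0)$ depending continuously on $u_0$ by smooth dependence of the flow on initial data. Setting $H(s,u_0):=u(sT(u_0),u_0)$ for $s\in[0,1]$ then yields a strong deformation retract onto the lower sublevel set.

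For (iv), at $p_i$ with $\Delta_{\theta_0}f(p_i)<0$, concentration at $p_i$ is the local obstruction to flowing past $\beta_i$. An expansion analogous to Lemma \ref{lem6.7} yields $E_f(u_{p,\epsilon}) = R_{\theta_0}\mbox{Vol}(S^{2n+1},\theta_0)^{1/(n+1)} f(p)^{-n/(n+1)} - c\,\Delta_{\theta_0}f(p)\epsilon^2 + O(\epsilon^3)$ for $p$ near $p_i$ and $\epsilon$ small, with $c>0$. The leading-order function $\beta(p)=R_{\theta_0}\mbox{Vol}(S^{2n+1},\theta_0)^{1/(n+1)}f(p)^{-n/(n+1)}$ has a non-degenerate critical point at $p_i$ whose Morse index on $S^{2n+1}$ equals $2n+1-ind(f,p_i)$, the reversal of indices arising from the negative exponent $-n/(n+1)$. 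The extra coordinate $\epsilon\to 0^+$ is an unstable direction (it decreases energy precisely because $\Delta_{\theta_0}f(p_i)<0$) and is captured by the flow; standard handle attachment for Palais--Smale type deformations then attaches a cell of dimension $2n+1-ind(f,p_i)$ to $L_{\beta_i-\nu_0}$ inside $L_{\beta_i+\nu_0}$.

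Finally, for (i), the most delicate assertion: since $\beta_0>\beta_1=\max_i\beta_i$, no critical level lies above $\beta_0$. Iterating (ii)--(iv) yields a homotopy equivalence between $L_{\beta_0}$ and a neighborhood of the union of attracting bubble families $\{u_{p_i,\epsilon}:\Delta_{\theta_0}f(p_i)<0,\,\epsilon\in(0,\epsilon_0)\}$. The remaining task is to contract this neighborhood to a point. I would use the continuity of the concentration map $u_0\mapsto\widehat{\Theta(t,u_0)}$ for $t$ large (Proposition \ref{prop6.11}) together with the contractibility of the bubble parameter space $S^{2n+1}\times(0,\epsilon_0]$ to patch together local contractions on each basin of attraction. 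The hard part is performing this patching continuously across the boundaries between distinct basins of attraction; I expect it to require uniform quantitative control on both the convergence rate of the flow to a bubble and the flow time needed to enter the bubble regime.
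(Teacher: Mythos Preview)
Your treatment of (ii) matches the paper's. Your argument for (iii)---using Proposition \ref{prop6.11} to rule out $p_i$ as a limiting concentration point and hence guarantee a finite crossing time $T(u_0)$, continuous in $u_0$ by strict monotonicity of $E_f$ and Lemma \ref{lem7.2}---is correct and in fact simpler than the paper's route, which instead introduces local coordinates $(\epsilon,p,v)$ on a neighborhood $B_{r_0}(p_i)$ (the machinery needed anyway for (iv)) and pushes out of $B_{r_0}(p_i)$ via the vector field $\partial/\partial\epsilon$ using estimate (\ref{7.35}). For (iv) your index count is right, but invoking ``standard handle attachment for Palais--Smale type deformations'' is not justified: Palais--Smale fails precisely at these bubble configurations, so nothing is standard. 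The paper constructs the cell attachment by hand via two explicit homotopies in the $(\epsilon,p,v)$ coordinates---one straightening $(\epsilon,p^-,v)$ to $(\epsilon_0,0,1)$, one flowing along $p^+$---and verifies at each stage via the estimates of Lemma \ref{lem7.4} that the energy stays below $\beta_i+\nu$.

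The real gap is in (i). Your proposal---deduce contractibility by iterating (ii)--(iv) down through all critical levels and then contracting ``a neighborhood of the union of attracting bubble families''---does not work, and you correctly flag the obstacle: the basins of attraction of distinct $p_i$ do not patch continuously, and there is no obvious contractible bottom set to land in. The paper proves (i) by a direct construction that does not use (ii)--(iv) at all. Given $u_0\in L_{\beta_0}$, one first runs the flow to a time $T_1(u_0)$ (chosen continuously in $u_0$ via a monotone auxiliary functional $g(t)$, see (\ref{7.14})) at which the normalized factor $v$ is $C^1_P$-close to $1$; then one linearly interpolates $v^{2+2/n}$ to the constant $1$ while holding the conformal diffeomorphism $\phi$ fixed; finally one contracts the remaining pure bubble by sending the dilation parameter to $1$. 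The delicate step is the middle interpolation: one must show it stays inside $L_{\beta_0}$. The paper does this by proving the second variation of $E_{f\circ\phi}$ along the interpolation path $v_s$ is strictly positive (see (\ref{7.18}) and (\ref{7.30})), using the spectral gap $\lambda_{2n+3}>n/2$ together with the normalization (\ref{4.19}) to kill the low eigenmodes of $\dot v_s$; convexity in $s$ then forces the maximum of $E_f$ along the path to occur at an endpoint, and both endpoints lie in $L_{\beta_0}$. This convexity argument is the technical heart of (i) and is entirely missing from your proposal.
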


By assuming Proposition \ref{prop7.1}, we can complete the proof of our main theorem.

\begin{proof}[Proof of Theorem \ref{thm1.1}]
By contradiction, suppose that the flow does not converge and $f$ cannot be realized as the Webster scalar curvature of
a contact form conformal to the standard contact form $\theta_0$ of $S^{2n+1}$. Then Proposition \ref{prop7.1} shows that,
$L_{\beta_0}$ is contractible for some suitable $\beta_0$ chosen in part (i) of Proposition \ref{prop7.1}; in addition, the flow
gives a homotopy equivalence of the set $L_{\beta_0}$ with a set $E_\infty$ whose homotopy type consists of a point
$\{p_0\}$ with cells of dimension $(2n+1-ind(f,p_i))$ attached for each critical point $p_i$ of $f$ where $\Delta_{\theta_0}f(p_i)<0$.

From \cite{Chang}, Theorem 4.3 on page 36, we conclude that the identity
\begin{equation}\label{7.53}
\sum_{i=0}^{2n+1}t^im_i=1+(1+t)\sum_{i=0}^{2n+1}t^ik_i
\end{equation}
holds with $k_i\geq 0$ and $m_i$ is given as in (\ref{1.0}).
Equating the coefficients of $t$ in the polynomials on the left and right hand side, we obtain (\ref{1.1}),
which violates the hypothesis in Theorem \ref{thm1.1} and thus leads to the desired contradiction.
\end{proof}

\textit{Remark.} By forming the alternating sum of the terms in (\ref{1.1}),
which corresponds to setting $t =-1$ in (\ref{7.53}), we obtain
\begin{equation*}
\sum_{f'(x)=0,\,\Delta_{\theta_0}f(x)<0}(-1)^{ind(f,x)}= -1,
\end{equation*}
which contradicts (\ref{1.2}). From this, we see that Theorem \ref{thm1.1} implies Theorem \ref{thm1.2}.

The rest of this section is devoted to proving Proposition \ref{prop7.1}. By the long existence of the flow
(\ref{2.3}) which was proved in part I, we can
assume that, for any fixed initial data $u_0$ and any finite $T>0$,
there exists $C(T)>0$ such that $\|u\|_{L^\infty([0,T]\times
C^{4n+4}(S^{2n+1}))}\leq C(T)$.

\begin{lem}\label{lem7.2}
Given any $T>0$, let $u_i(t)=u(t,u_i^0)$ be the solutions to our
flow $(\ref{2.3})$ with initial data $u_i^0\in C^\infty_f$ such that
$\|u_i\|_{L^\infty([0,T]\times C^{4n+4}(S^{2n+1}))}\leq C(T)$,
$i=1,2$. Then there exists a constant $C>0$ depending on $T$, $n$
and $\|u_i\|_{L^\infty([0,T]\times C^{4n+4}(S^{2n+1}))}$, $i=1,2$,
such that
$$\sup_{0\leq t\leq T}\|u_1(t)-u_2(t)\|_{S^2_{4n+4}(S^{2n+1},\theta_0)}
\leq C\|u_1^0-u_2^0\|_{S^2_{4n+4}(S^{2n+1},\theta_0)}.$$
\end{lem}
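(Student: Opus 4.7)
The goal is a linear Gronwall estimate in $S^2_{4n+4}$ for the difference $w = u_1 - u_2$. I would begin by rewriting the flow equation (\ref{2.3}) in expanded quasilinear form using (\ref{2.4}), namely
$$u_t = (n+1)\, u^{-\frac{2}{n}}\Delta_{\theta_0} u - \frac{n}{2} R_{\theta_0}\, u^{1-\frac{2}{n}} + \frac{n}{2}\alpha(u)\, f\, u.$$
Subtracting the equations for $u_1$ and $u_2$ and splitting the leading term as $u_1^{-2/n}\Delta_{\theta_0} w + \bigl(u_1^{-2/n}-u_2^{-2/n}\bigr)\Delta_{\theta_0} u_2$, one obtains a linear parabolic equation of the form
$$w_t = (n+1)\, u_1^{-\frac{2}{n}}\Delta_{\theta_0} w + A(x,t)\, w + B(x,t),$$
where $A$ and its horizontal derivatives up to order $4n+2$ are bounded in terms of $\|u_i\|_{L^\infty([0,T]\times C^{4n+4})}$, and $B$ is a non-local forcing carrying the difference $\alpha_1(t)-\alpha_2(t)$. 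Using $\alpha(u)=E(u)\big/\!\int_{S^{2n+1}} f u^{2+2/n}\,dV_{\theta_0}$ together with the uniform $C^{4n+4}$ bound and a strictly positive lower bound on $u_i$, it is immediate that $|\alpha_1(t)-\alpha_2(t)|\leq C\|w(t)\|_{L^2}$ and hence $\|B(\cdot,t)\|_{S^2_{4n+4}}\leq C\|w(t)\|_{S^2_{4n+4}}$.

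The plan is to perform parabolic energy estimates at every level $0\leq k\leq 4n+4$, inductively. At the base step, multiply the equation for $w$ by $w$, integrate, and integrate by parts; since $u_1^{-2/n}\geq c_0>0$, the leading term contributes $-c_0(n+1)\int_{S^{2n+1}}|\nabla_{\theta_0} w|_{\theta_0}^2\,dV_{\theta_0}$, which is non-positive and can be dropped, while the remaining terms yield
$$\frac{d}{dt}\|w\|_{L^2(S^{2n+1},\theta_0)}^2 \leq C\|w\|_{L^2(S^{2n+1},\theta_0)}^2.$$
At level $k$, apply a horizontal multi-derivative $X^\beta$ with $|\beta|=k$ to the equation for $w$, pair with $X^\beta w$, and integrate. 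The commutator $[X^\beta, u_1^{-2/n}\Delta_{\theta_0}]w$ and the derivatives of $Aw$, $B$ produce terms with at most $k+1$ horizontal derivatives of $w$ and coefficients bounded by $\|u_i\|_{C^{k+2}}$. Cauchy-Schwartz and the coercivity of the leading operator allow those top-order terms to be absorbed on the left, leaving a differential inequality
$$\frac{d}{dt}\|w\|_{S^2_k}^2 \leq C\bigl(\|w\|_{S^2_k}^2 + \|w\|_{S^2_{k-1}}^2\bigr),$$
and iterating through $k\leq 4n+4$ together with Gronwall's inequality produces the desired bound $\|w(t)\|_{S^2_{4n+4}}\leq C\|w(0)\|_{S^2_{4n+4}}$.

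The main obstacle will be the bookkeeping of commutators of the horizontal derivatives $X^\beta$ with the variable coefficient $u_1^{-2/n}$ in the principal part, since the Heisenberg-type vector fields do not commute (the bracket $[X_j,Y_j]$ produces the transverse direction $T$). One must show that every term involving $T$-derivatives of $w$ is controlled either by pure horizontal derivatives of $w$ together with bounded derivatives of $u_1$, or by the sub-elliptic gain coming from the diffusion term after integration by parts; this is essentially the Folland--Stein calculus on $S^{2n+1}$ together with the observation that $T = \frac{1}{2}[X_j,Y_j]$ can be expressed as a commutator and hence absorbed. Once this algebra is in place, the energy-plus-Gronwall argument proceeds in a standard fashion, and the specific regularity order $4n+4$ plays no role beyond ensuring the $C^{4n+4}$ control of the coefficients is sufficient to close the estimate at level $k=4n+4$.
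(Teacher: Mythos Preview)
Your overall strategy is correct and matches the paper: derive a linear parabolic equation for $w=u_1-u_2$ with principal part $u_i^{-2/n}\Delta_{\theta_0}w$, bound the non-local term via $|\alpha(u_1)-\alpha(u_2)|\leq C\|w\|_{L^2}$, run an $L^2$ energy estimate, then a higher-order one, and close with Gronwall.

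The one substantive difference is in how the higher-order estimate is organized. You propose differentiating by horizontal multi-indices $X^\beta$ and then wrestling with the bracket relations $[X_j,Y_j]\sim T$; you correctly flag this commutator bookkeeping as the main obstacle and defer it. The paper sidesteps this entirely by testing against powers of the sub-Laplacian: it computes
\[
\frac{d}{dt}\int_{S^{2n+1}}\big|(-\Delta_{\theta_0})^p w\big|^2\,dV_{\theta_0}
=2\int_{S^{2n+1}} w_t\,(-\Delta_{\theta_0})^{2p}w\,dV_{\theta_0},
\qquad p=2n+2,
\]
and uses self-adjointness of $-\Delta_{\theta_0}$ to shuttle derivatives. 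The principal term $\int u_2^{-2/n}\Delta_{\theta_0}w\cdot(-\Delta_{\theta_0})^{2p}w$ then yields the coercive piece $-\tfrac12 C_2^{-2/n}\int|\nabla_{\theta_0}(-\Delta_{\theta_0})^p w|^2$ plus lower order via interpolation; the zero-order term $\int d\,w\,(-\Delta_{\theta_0})^{2p}w$ becomes $\int(-\Delta_{\theta_0})^p(dw)\cdot(-\Delta_{\theta_0})^p w$ and is bounded by Leibniz plus interpolation; and the non-local term is handled by moving \emph{all} $2p$ copies of $-\Delta_{\theta_0}$ onto $u_2 f$ (which sits in $C^{4n+4}$), reducing it to $C\|w\|_{L^2}^2$. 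Since $\|w\|_{S^2_{4n+4}}$ is equivalent to $\|(-\Delta_{\theta_0})^{2n+2}w\|_{L^2}+\|w\|_{L^2}$ by sub-elliptic regularity, this closes the estimate without ever touching individual vector-field commutators. Your route would also work, but the scalar-operator trick is what turns your ``main obstacle'' into a non-issue.
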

\begin{proof}
By the long existence of the flow (\ref{2.3}) which was proved in part I, we know that $u_i(t)$, $i=1,2$ are smooth in any
given finite time interval $[0,T]$. Moreover, by Lemma 2.8 in \cite{Ho3},
there exists constant $C_i=C_i(T)>0$ such that
\begin{equation}\label{7.1}
C_i^{-1}\leq \|u_i(t)\|_{L^\infty(S^{2n+1}\times[0,T])}\leq C_i\mbox{ for }i=1,2.
\end{equation}
For simplicity, we let $\theta_i=u_i(t)^{\frac{2}{n}}\theta_0$,
$R_i=R_{\theta_i}$, and
by (\ref{2.2}) and (\ref{2.4})
the factor $\alpha_i(t)$ can be expressed as
$$\alpha(u_i)=\alpha_i(t)=\frac{\int_{S^{2n+1}}\left((2+\frac{2}{n})
|\nabla_{\theta_0}u_i(t)|_{\theta_0}^2+R_{\theta_0}u_i(t)^2\right)dV_{\theta_0}}
{\int_{S^{2n+1}}fu_i(t)^{2+\frac{2}{n}}dV_{\theta_0}}$$ for $i=1,2$.
If set $w=u_2-u_1$, we can estimate the term
$\alpha(u_2)-\alpha(u_1)$ as follows:
\begin{equation}\label{7.2}
\begin{split}
&\alpha(u_2)-\alpha(u_1)=\int_{0}^1\frac{\partial}{\partial s}\alpha(u_1+sw)ds\\
&=\int_0^1\left[\vphantom{\frac{E(u_1+sw)}{(\int_{S^{2n+1}}f(u_1+sw)^{2+\frac{2}{n}}dV_{\theta_0})^2}}
\frac{2\int_{S^{2n+1}}\big((1-s)R_1u_1^{1+\frac{2}{n}}+sR_2u_2^{1+\frac{2}{n}}\big)w\,dV_{\theta_0}}
{\int_{S^{2n+1}}f(u_1+sw)^{2+\frac{2}{n}}dV_{\theta_0}}\right.\\
&\hspace{4mm}\left.
\vphantom{\frac{2\int_{S^{2n+1}}\big((1-s)R_1u_1^{1+\frac{2}{n}}+sR_2u_2^{1+\frac{2}{n}}\big)w\,dV_{\theta_0}}
{\int_{S^{2n+1}}f(u_1+sw)^{2+\frac{2}{n}}dV_{\theta_0}}}
-(2+\frac{2}{n})\frac{E(u_1+sw)}{(\int_{S^{2n+1}}f(u_1+sw)^{2+\frac{2}{n}}dV_{\theta_0})^2}
\int_{S^{2n+1}}f(u_1+sw)^{1+\frac{2}{n}}w\,dV_{\theta_0}\right]ds\\
&\leq C(\|R_1\|_{L^2(S^{2n+1},\theta_1)}+\|R_2\|_{L^2(S^{2n+1},\theta_2)}+E(u_1)+E(u_2))\|w\|_{L^2(S^{2n+1},\theta_0)}\\
&\leq C\|w\|_{L^2(S^{2n+1},\theta_0)},
\end{split}
\end{equation}
where we have used (\ref{7.1}), Lemma 2.11 in \cite{Ho3}, and the fact that $E(u_i)\leq\gamma$ for $i=1,2$, and
$E(u_1+sw)=E((1-s)u_1+su_2)\leq (1-s)E(u_1)+sE(u_2)\leq E(u_1)+E(u_2).$
From (\ref{2.3}) and (\ref{2.4}), that is,
$$
\frac{\partial u_i}{\partial t}=\frac{n}{2}(\alpha(u_i) f- R_i)u_i\hspace{2mm}\mbox{ for }i=1,2,
$$
and
$$
-(2+\frac{2}{n})\Delta_{\theta_0}u_i+R_{\theta_0}u_i=R_iu_i^{1+\frac{2}{n}}\hspace{2mm}\mbox{ for }i=1,2,
$$
a direct computation yields
\begin{equation}\label{7.3}
\begin{split}
\frac{\partial w}{\partial t}&=\frac{\partial u_2}{\partial t}-\frac{\partial u_1}{\partial t}\\
&=\frac{n}{2}(R_1u_1-R_2u_2)+\frac{n}{2}\big[\alpha(u_2) fu_2-\alpha(u_1) fu_1\big]\\
&=\frac{n}{2}(R_1u_1-R_2u_2)+\frac{n}{2}\big[\alpha(u_1) fw+\alpha(u_2) fu_2-\alpha(u_1) fu_2\big]\\
&=\frac{n}{2}\Big[u_2^{-\frac{2}{n}}\big[(R_{\theta_0}u_2-R_2u_2^{1+\frac{2}{n}})
-(R_{\theta_0}u_1-R_1u_1^{1+\frac{2}{n}})\big]\\
&\hspace{8mm}+\big[\alpha(u_1)f-R_1u_1^{1+\frac{2}{n}}\big(\frac{u_2^{-\frac{2}{n}}-u_1^{-\frac{2}{n}}}{u_2-u_1}\big)
-R_{\theta_0}u_2^{-\frac{2}{n}}\big]w\Big]\\
&\hspace{4mm}+\frac{n}{2}(\alpha(u_2)-\alpha(u_1))u_2f\\
&=\frac{n}{2}\big[(2+\frac{2}{n})u_2^{-\frac{2}{n}}(\Delta_{\theta_0}u_2-\Delta_{\theta_0}u_1)+d(x,t)w\big]+
\frac{n}{2}(\alpha(u_2)-\alpha(u_1))u_2f\\
&=\frac{n}{2}\big[(2+\frac{2}{n})u_2^{-\frac{2}{n}}\Delta_{\theta_0}w+d(x,t)w\big]+
\frac{n}{2}(\alpha(u_2)-\alpha(u_1))u_2f,
\end{split}
\end{equation}
where $d(x,t)=\alpha(u_1)f+b(x,t)-R_{\theta_0}u_2^{-\frac{2}{n}}$ and
$b(x,t)=-R_1u_1^{1+\frac{2}{n}}\displaystyle\left(\frac{u_2^{-\frac{2}{n}}-u_1^{-\frac{2}{n}}}{u_2-u_1}\right)$.
Thus, from (\ref{7.3}), we have
\begin{equation}\label{7.4}
\begin{split}
&\frac{d}{dt}\left(\int_{S^{2n+1}}w^2dV_{\theta_0}\right)
=\int_{S^{2n+1}}2w\frac{\partial w}{\partial t}\,dV_{\theta_0}\\
&=n\int_{S^{2n+1}}\left((2+\frac{2}{n})u_2^{-\frac{2}{n}}w\Delta_{\theta_0}w+d(x,t)w^2\right)dV_{\theta_0}
+n(\alpha(u_2)-\alpha(u_1))\int_{S^{2n+1}}u_2fw\,dV_{\theta_0}.
\end{split}
\end{equation}
By (\ref{7.1}), H\"{o}lder's inequality and Young's inequality, we have
\begin{equation}\label{7.5}
\begin{split}
&\int_{S^{2n+1}}u_2^{-\frac{2}{n}}w\Delta_{\theta_0}w\,dV_{\theta_0}\\
&=-\int_{S^{2n+1}}u_2^{-\frac{2}{n}}|\nabla_{\theta_0}w|_{\theta_0}^2dV_{\theta_0}
+\frac{2}{n}\int_{S^{2n+1}}u_2^{-(1+\frac{2}{n})}w\langle\nabla_{\theta_0}u_2,\nabla_{\theta_0}w\rangle_{\theta_0}dV_{\theta_0}\\
&\leq -\frac{1}{2}C_2^{-\frac{2}{n}}\int_{S^{2n+1}}|\nabla_{\theta_0}w|_{\theta_0}^2dV_{\theta_0}
+C\int_{S^{2n+1}}w^2dV_{\theta_0}.
\end{split}
\end{equation}
By (\ref{7.1}), (\ref{7.2}) and Lemma 2.11 in \cite{Ho3}, one has
\begin{equation}\label{7.6}
\left|(\alpha(u_2)-\alpha(u_1))\int_{S^{2n+1}}u_2fw\,dV_{\theta_0}\right|
+\left|\int_{S^{2n+1}}d(x,t)w^2dV_{\theta_0}\right|
\leq C\int_{S^{2n+1}}w^2dV_{\theta_0}.
\end{equation}
Combining (\ref{7.4}), (\ref{7.5}) and (\ref{7.6}), we obtain
\begin{equation*}
\frac{d}{dt}\left(\int_{S^{2n+1}}w^2dV_{\theta_0}\right)
+(n+1)C_2^{-\frac{2}{n}}\int_{S^{2n+1}}|\nabla_{\theta_0}w|_{\theta_0}^2dV_{\theta_0}
\leq C_0\int_{S^{2n+1}}w^2dV_{\theta_0}
\end{equation*}
for some constant $C_0$. Therefore, for any $t\in [0,T]$, we can integrate the above differential inequality
from $0$ to $t$ to obtain
\begin{equation}\label{7.7}
\int_{S^{2n+1}}w^2(t)dV_{\theta_0}\leq e^{C_0t}\int_{S^{2n+1}}w^2(0)dV_{\theta_0}.
\end{equation}

Next, for any $p\in\mathbb{N}$ with $p\leq 2n+2$, by (\ref{7.3}) one has
\begin{equation*}
\begin{split}
&\frac{d}{dt}\int_{S^{2n+1}}|(-\Delta_{\theta_0})^pw|^2dV_{\theta_0}
=2\int_{S^{2n+1}}\frac{\partial w}{\partial t}(-\Delta_{\theta_0})^{2p}wdV_{\theta_0}\\
&=n\int_{S^{2n+1}}\left[(2+\frac{2}{n})u_2^{-\frac{2}{n}}\Delta_{\theta_0}w+d(x,t)w\right](-\Delta_{\theta_0})^{2p}w\,dV_{\theta_0}\\
&\hspace{4mm}+n\int_{S^{2n+1}}(\alpha(u_2)-\alpha(u_1))u_2f(-\Delta_{\theta_0})^{2p}w\,dV_{\theta_0}.
\end{split}
\end{equation*}
By Interpolation, H\"{o}lder's and Young's inequalities, we obtain
\begin{equation*}
\begin{split}
&\int_{S^{2n+1}}(-\Delta_{\theta_0})^{2p}w(u_2^{-\frac{2}{n}}\Delta_{\theta_0}w)dV_{\theta_0}\\
&\leq -\frac{1}{2}C_2^{-\frac{2}{n}}\int_{S^{2n+1}}|\nabla_{\theta_0}(-\Delta_{\theta_0})^{p}w|^2_{\theta_0}dV_{\theta_0}
+C\int_{S^{2n+1}}|(-\Delta_{\theta_0})^{p}w|^2dV_{\theta_0}+C\int_{S^{2n+1}}w^2dV_{\theta_0}
\end{split}
\end{equation*}
and also
\begin{equation*}
\left|\int_{S^{2n+1}}d(x,t)w(-\Delta_{\theta_0})^{2p}w\, dV_{\theta_0}\right|
\leq C\int_{S^{2n+1}}|(-\Delta_{\theta_0})^{p}w|^2 dV_{\theta_0}+C\int_{S^{2n+1}}w^2 dV_{\theta_0}.
\end{equation*}
By (\ref{7.2}) and integration by parts, we get
\begin{equation*}
\begin{split}
&\left|\int_{S^{2n+1}}(-\Delta_{\theta_0})^{2p}w(\alpha(u_2)-\alpha(u_1))u_2f\,dV_{\theta_0}\right|\\
&
=\left|\int_{S^{2n+1}}w(\alpha(u_2)-\alpha(u_1))(-\Delta_{\theta_0})^{2p}(u_2f)dV_{\theta_0}\right|\\
&\leq
C\|w\|_{L^2(S^{2n+1},\theta_0)}\left|\int_{S^{2n+1}}w(-\Delta_{\theta_0})^{2p}(u_2f)dV_{\theta_0}\right|
\leq C\|w\|_{L^2(S^{2n+1},\theta_0)}^2.
\end{split}
\end{equation*}
Combining the above estimates, we obtain
\begin{equation*}
\begin{split}
\frac{d}{dt}\int_{S^{2n+1}}|(-\Delta_{\theta_0})^pw|^2dV_{\theta_0}
&\leq C\int_{S^{2n+1}}|(-\Delta_{\theta_0})^{p}w|^2dV_{\theta_0}+C\int_{S^{2n+1}}w^2 dV_{\theta_0}\\
&\leq C\int_{S^{2n+1}}|(-\Delta_{\theta_0})^{p}w|^2dV_{\theta_0}+Ce^{C_0t}\int_{S^{2n+1}}w^2(0)dV_{\theta_0}
\end{split}
\end{equation*}
by (\ref{7.7}). Integrating it from $0$ to $t$, where $t\in [0,T]$, we get
\begin{equation}\label{7.8}
\begin{split}
&\int_{S^{2n+1}}|(-\Delta_{\theta_0})^pw(t)|^2dV_{\theta_0}\\
&\leq e^{Ct}\int_{S^{2n+1}}|(-\Delta_{\theta_0})^{p}w(0)|^2dV_{\theta_0}+e^{(C+C_0)t}(\frac{C}{C_0})\int_{S^{2n+1}}w^2(0)dV_{\theta_0}.
\end{split}
\end{equation}
Therefore, by choosing $p=2n+2$, we can combine (\ref{7.7}) and (\ref{7.8}) to yield
$$\sup_{0\leq t\leq T}\|w(t)\|_{S^2_{4n+4}(S^{2n+1},\theta_0)}
\leq Ce^{Ct}\|w(0)\|_{S^2_{4n+4}(S^{2n+1},\theta_0)}$$
as required.
\end{proof}

\begin{proof}[Proof of Proposition \ref{prop7.1} \emph{(i)}] Let
$\beta_0$ be chosen above, i.e. $\beta_1<\beta_0\leq \beta$. For
$u_0\in L_{\beta_0}$, let $u(t,u_0)$ be the solution of the flow
determined by the initial data $u_0$. By Proposition \ref{prop2.2}, the energy $E_f$
is  decreasing along the flow. In particular, we have
$$E_f(u(t,u_0))\leq \beta_0.$$
Now for sufficiently small $\epsilon>0$, we claim that there exists
$T_1(u_0,\epsilon)>0$ which depends continuously on $u_0$ in the
$S_{4n+4}^2(S^{2n+1},\theta_0)$ topology and if
$t>T_1=T_1(u_0,\epsilon)$, we have
\begin{equation}\label{7.9}
\|v-1\|_{C^1_P(S^{2n+1})}<\epsilon.
\end{equation}

To prove this claim, first note that we can choose $T_2$ large so
that if $t\geq T_2$, then
\begin{equation}\label{7.10}
\|v-1\|_{C^1_P(S^{2n+1})}<\frac{1}{2}.
\end{equation}
This is possible since $\|v-1\|_{C^1(S^{2n+1},\theta_0)}\rightarrow
0$ as $t\rightarrow\infty$ by Lemma \ref{lem4.14}. Thus it follows
from the expression for $\Delta_{\theta_0}v$ as in (\ref{6.44})
that, for some constant $C_1$ which depends on $n$ and $T_3$, the
upper bounds of $F_{4n+4}$ and $\alpha(t)$, the maximum of $f$ as
well as the constant we have found in Lemma \ref{lem6.6},
\begin{equation}\label{7.11}
\int_{S^{2n+1}}|-\Delta_{\theta_0}v|^{2n+2}dV_{\theta_0}\leq
C_1(F_2^{\frac{1}{2}}+\|f_\phi-f(\widehat{\Theta(t)})\|_{L^2(S^{2n+1},\theta_0)}).
\end{equation}
Second it follows from (\ref{7.10}) and Lemma \ref{lem6.6} that for
$t\geq T_2$
\begin{equation}\label{7.12}
\begin{split}
\int_{S^{2n+1}}|v-1|^{2n+2}dV_{\theta_0}
&\leq\left(\frac{1}{2}\right)^{2n}\int_{S^{2n+1}}|v-1|^2dV_{\theta_0}\\
&\leq
C_2(F_2^{\frac{1}{2}}+\|f_\phi-f(\widehat{\Theta(t)})\|_{L^2(S^{2n+1},\theta_0)})
\end{split}
\end{equation}
for some constant $C_2>0$.

Then by Folland-Stein embedding theorem, there exists a constant $C_0>0$ depending only on the dimension $n$ such that
\begin{equation}\label{7.13}
\|v-1\|_{C^1_P(S^{2n+1})}\leq C_0\left[\int_{S^{2n+1}}|-\Delta_{\theta_0}v|^{2n+2}dV_{\theta_0}+\int_{S^{2n+1}}|v-1|^{2n+2}dV_{\theta_0}\right]^{\frac{1}{2n+2}}.
\end{equation}

Now we choose $T_3>T_2$ such that the quantity $|o(1)|<1$ in the Lemma \ref{lem6.1}
 for $t>T_3$.

Choose $B=(n+3)M^{\frac{n}{n+1}}\mbox{Vol}(S^{2n+1},\theta_0)^{\frac{n}{n+1}}$ where $M=\max_{S^{2n+1}}f$ and consider
\begin{equation}\label{7.14}
g(t)=F_2(t)+B\left[E_f(u)(t)-R_{\theta_0}\mbox{Vol}(S^{2n+1},\theta_0)^{\frac{1}{n+1}}f(Q)^{-\frac{n}{n+1}}\right],
\end{equation}
where $Q$ is the unique concentration point of the flow or the shadow flow $\Theta(t)$. It follows from Proposition \ref{prop2.2} and Lemma \ref{lem6.1}
that
\begin{equation*}
\begin{split}
\frac{dg(t)}{dt}&=\frac{d}{dt}F_2(t)-\frac{Bn\int_{S^{2n+1}}(\alpha f-R_\theta)^2u^{2+\frac{2}{n}}dV_{\theta_0}}{
\left(\int_{S^{2n+1}}fu^{2+\frac{2}{n}}dV_{\theta_0}\right)^{\frac{n}{n+1}}}\\
&\leq (n+1+o(1))(nF_2(t)-2G_2(t))+o(1)F_2(t)-n(n+3)F_2(t)<0
\end{split}
\end{equation*}
and $g(t)>0$ for all $t\geq T_3$.

Now for any $\epsilon>0$, since $\lim_{t\rightarrow\infty}\|f_\phi-f(\widehat{\Theta(t)})\|_{L^2(S^{2n+1},\theta_0)}=0$,
there exists a bigger $T_4\geq T_3$ such that for all $t\geq T_4$, we have
$\|f_\phi-f(\widehat{\Theta(t)})\|_{L^2(S^{2n+1},\theta_0)}\leq\displaystyle \frac{\epsilon^{2n+2}}{2C_3(2C_0)^{2n+2}}$ where $C_0$ is
given in the inequality (\ref{7.13}), and $C_3=C_1+C_2$ where $C_1$ and $C_2$ are respectively given in the inequalities
(\ref{7.11}) and (\ref{7.12}). Then we define $\delta=\min\displaystyle\left\{\frac{\epsilon^{4n+4}}{4C_3^2(2C_0)^{4n+4}},g(T_4)\right\}>0$.
Since $\lim_{t\rightarrow\infty}g(t)=0$ in view of Lemma \ref{lem3.2} and \ref{lem6.12}, there exists a $T_5\geq T_4+1$ such
that $g(T_5)<\delta$. Hence the set $\{t: t\geq T_4+1\mbox{ and }g(t)<\delta\}$ is non-empty. Finally we select $T_1(u_0)\equiv T_1(\epsilon,u_0)
=\inf\{t: t\geq T_4+1\mbox{ and }g(t)<\delta\}$. We need the following two properties: (i) $T_1(u_0)$ is continuously dependent on $u_0$ in $S^2_{4n+4}(S^{2n+1},\theta_0)$ and (ii) for all $t\geq T_1(u)$, $\|v-1\|_{C^1_P(S^{2n+1})}<\epsilon$.

In fact, (i) follows from monotonicity of $g$ and continuous dependence on the initial data of our flow in $S^2_{4n+4}(S^{2n+1},\theta_0)$-norm
as we did in Lemma \ref{lem7.2}. For (ii), observe that if $t>T_1(u_0)$, then $g(t)<g(T_1(u_0))\leq \delta$ thanks to the fact that $g$ is decreasing. Since
$E_f(u)(t)-R_{\theta_0}\mbox{Vol}(S^{2n+1},\theta_0)^{\frac{1}{n+1}}f(Q)^{-\frac{n}{n+1}}\geq 0$ for all $t\geq 0$ in view of
Proposition \ref{prop2.2} and Lemma \ref{lem6.12}, we conclude that $F_2(t)\leq\delta$ for all $t\geq T_1(u_0)$. Thus by estimates
(\ref{7.11}), (\ref{7.12}) and (\ref{7.13}), if $t>T_1(u_0)>T_4$, then we have
\begin{equation*}
\begin{split}
\|v-1\|_{C^1_P(S^{2n+1})}&\leq C_0\Big[(C_1+C_2)\big(F_2(t)^{\frac{1}{2}}+\|f_\phi-f(\widehat{\Theta(t)})\|_{L^2(S^{2n+1},\theta_0)}\big)\Big]^{\frac{1}{2n+2}}\\
&\leq C_0\left[(C_1+C_2)\left(\delta^{\frac{1}{2}}+\frac{\epsilon^{2n+2}}{2C_3(2C_0)^{2n+2}}\right)\right]^{\frac{1}{2n+2}}<\epsilon.
\end{split}
\end{equation*}
Therefore our claim is established.

Then we choose two positive constants $\sigma_1$, $\sigma_2$ to
normalize the two functions $v=u(T_1)\circ\phi
(\det(d\phi))^{\frac{n}{2n+2}}$ and $1$, such that
\begin{equation}\label{7.16}
\sigma_1^{2+\frac{2}{n}}\int_{S^{2n+1}}f\circ\phi\,
v^{2+\frac{2}{n}}dV_{\theta_0}=1\hspace{2mm}\mbox{ and }\hspace{2mm}
\sigma_2^{2+\frac{2}{n}}\int_{S^{2n+1}}f\circ\phi\, dV_{\theta_0}=1.
\end{equation}
By (\ref{7.9}), we have
\begin{equation}\label{7.17}
|\sigma_1-\sigma_2|=O(\epsilon).
\end{equation}

Now we define a homotopy on $L_{\beta_0}$ by
\begin{equation*}H(s,u_0)=\left\{
             \begin{array}{ll}
               u(3sT_1,u_0),  \hbox{ if }0\leq s\leq\displaystyle\frac{1}{3}; \\
               \displaystyle\frac{1}{\sigma_1}\Big[(2-3s)\sigma_1^{2+\frac{2}{n}}
               u(T_1,u_0)^{2+\frac{2}{n}}+(3s-1)\sigma_2^{2+\frac{2}{n}}\det(d\phi^{-1})\Big]^{\frac{n}{2n+2}} \hbox{ if }\displaystyle\frac{1}{3}\leq s\leq\frac{2}{3}; \\
               \displaystyle\frac{\sigma_2}{\sigma_1}\Big[\det\big(d(\Psi\circ\delta_{-q(T_1),3(1-s)r(T_1)+(3s-2)}
               \circ\pi)\big)\Big]^{\frac{n}{2n+2}},
               \hbox{ if }\displaystyle\frac{2}{3}\leq s\leq 1.
             \end{array}
           \right.
\end{equation*}
Obviously, $H(s,u_0)$ induces a contraction within $C^\infty_*$. One calculates that $E_f(H(s,u_0))\leq\beta_0$ if
$s\in\displaystyle[0,\frac{1}{3}]\cup[\frac{2}{3},1]$. Hence we have left to check that we also have $E_f(H(s,u_0))\leq\beta_0$
for $s\in\displaystyle[\frac{1}{3},\frac{2}{3}]$. To do this, for simplicity,
set $F(s)=E_f(H(s,u_0))$ for $s\in\displaystyle[\frac{1}{3},\frac{2}{3}]$.
Then we claim that
for sufficiently large $T_1>0$, there holds
\begin{equation}\label{7.18}
\frac{d^2}{ds^2}F(s)>0\hspace{2mm}\mbox{ for all }s\in[\frac{1}{3},\frac{2}{3}].
\end{equation}
Thus we can conclude that $F(s)$ achieves its maximum value at $s=\displaystyle\frac{1}{3}$
or $s=\displaystyle\frac{2}{3}$, namely,
$$E_f(H(s,u_0))\leq\max\Big\{E_f(H(\frac{1}{3},u_0)),E_f(H(\frac{2}{3},u_0))\Big\}\leq\beta_0\hspace{2mm}
\mbox{ for all }s\in[\frac{1}{3},\frac{2}{3}].$$
So the homotopy $H(s,u_0)$ is essentially a contraction within $C_f^\infty$.

In order to show (\ref{7.18}), first by conformal invariance of the energy, we have
$$E_f(H(s,u_0))=E_{f\circ\phi}\Big(H(s,u_0)\circ\phi\,(\det(d\phi))^{\frac{n}{2n+2}}\Big).$$
Then if we set
\begin{equation}\label{7.19}
v_s^{2+\frac{2}{n}}=(2-3s)(\sigma_1v)^{2+\frac{2}{n}}+(3s-1)\sigma_2^{2+\frac{2}{n}},
\end{equation}
we have
$$\sigma_1 H(s,u_0)\circ\phi\,(\det(d\phi))^{\frac{n}{2n+2}}=v_s$$
and
$$E_f(H(s,u_0))=E_{f\circ\phi}(v_s)$$
by using the fact that $E_f(\sigma u)=E_f(u)$ for any constant $\sigma>0$.
Hence we only need to estimate the energy $E_{f\circ\phi}(v_s)$
for $s\in\displaystyle[\frac{1}{3},\frac{2}{3}]$.
Now we denote a dot the $s$-derivative. We can derive from (\ref{7.19}) that
\begin{equation}\label{7.20}
\dot{v}_s=3v_s^{-1-\frac{2}{n}}\big(\sigma_2^{2+\frac{2}{n}}-(\sigma_1v)^{2+\frac{2}{n}}\big)/(2+\frac{2}{n}).
\end{equation}
One has the estimate
\begin{equation}\label{7.21}
\|\dot{v}_s\|_{C^0(S^{2n+1})}=O(\epsilon)
\end{equation}
thanks to (\ref{7.9}) and (\ref{7.17}). Note also that by (\ref{7.20}) we have
\begin{equation}\label{7.22}
\ddot{v}_s=-(1+\frac{2}{n})v_s^{-1}(\dot{v}_s)^2.
\end{equation}
By (\ref{7.9}) and (\ref{7.17}), we have
\begin{equation*}
\begin{split}
\|v_s^{2+\frac{2}{n}}-\sigma_1^{2+\frac{2}{n}}\|_{C^0}
&=\|(2-3s)(\sigma_1v)^{2+\frac{2}{n}}+(3s-1)\sigma_2^{2+\frac{2}{n}}-\sigma_1^{2+\frac{2}{n}}\|_{C^0}\\
&=\|(2-3s)\sigma_1^{2+\frac{2}{n}}(v^{2+\frac{2}{n}}-1)+(3s-1)(\sigma_2^{2+\frac{2}{n}}-\sigma_1^{2+\frac{2}{n}})\|_{C^0}\\
&= O(\epsilon),
\end{split}
\end{equation*}
which implies that
\begin{equation}\label{7.23}
\|v_s-\sigma_1\|_{C^0}=O(\epsilon).
\end{equation}
It follows from (\ref{7.19}) that
$$(2+\frac{2}{n})v_s^{1+\frac{2}{n}}\nabla_{\theta_0}v_s
=(2-3s)(2+\frac{2}{n})\sigma_1^{2+\frac{2}{n}}v^{1+\frac{2}{n}}\nabla_{\theta_0}v,$$
which implies that
\begin{equation}\label{7.24}
\|\nabla_{\theta_0}v_s\|_{C^0}=O(\epsilon)
\end{equation}
by (\ref{7.9}). Moreover, it follows from (\ref{4.19}), (\ref{7.16}) and (\ref{7.19}) that
\begin{equation}\label{7.25}
\begin{split}
&\int_{S^{2n+1}}f\circ\phi\,v_s^{2+\frac{2}{n}}dV_{\theta_0}\\
&=(2-3s)\sigma_1^{2+\frac{2}{n}}\int_{S^{2n+1}}f\circ\phi\,v^{2+\frac{2}{n}}dV_{\theta_0}
+(3s-1)\sigma_2^{2+\frac{2}{n}}\int_{S^{2n+1}}f\circ\phi\,dV_{\theta_0}\\
&=(2-3s)+(3s-1)=1
\end{split}
\end{equation}
and
\begin{equation}\label{7.26}
\begin{split}
&\int_{S^{2n+1}}(x,\overline{x})v_s^{2+\frac{2}{n}}dV_{\theta_0}\\
&=(2-3s)\sigma_1^{2+\frac{2}{n}}\int_{S^{2n+1}}(x,\overline{x})v^{2+\frac{2}{n}}dV_{\theta_0}
+(3s-1)\sigma_2^{2+\frac{2}{n}}\int_{S^{2n+1}}(x,\overline{x})dV_{\theta_0}=0.
\end{split}
\end{equation}
From (\ref{7.25}) and (\ref{7.26}), we obtain
\begin{equation}\label{7.27}
\int_{S^{2n+1}}f\circ\phi\,v_s^{1+\frac{2}{n}}\dot{v}_s\,dV_{\theta_0}=0\hspace{2mm}\mbox{ and }
\hspace{2mm}\int_{S^{2n+1}}(x,\overline{x})v_s^{1+\frac{2}{n}}\dot{v}_s\,dV_{\theta_0}=0.
\end{equation}
On the other hand, for any positive function $f$, a direct computation yields
\begin{equation}\label{7.28}
\begin{split}
dE_f(u)(\eta)&=\frac{d}{dr}E_f(u+r\eta)\Big|_{r=0}\\
&=2\left(\int_{S^{2n+1}}fu^{2+\frac{2}{n}}dV_{\theta_0}\right)^{-\frac{n}{n+1}}\\
&\hspace{4mm}\cdot
\left[\int_{S^{2n+1}}\left((2+\frac{2}{n})\langle\nabla_{\theta_0}u,\nabla_{\theta_0}\eta\rangle_{\theta_0}+R_{\theta_0}u\eta \right)dV_{\theta_0}
\vphantom{\frac{\int_{S^{2n+1}}\left((2+\frac{2}{n})|\langle\nabla_{\theta_0}u|^2_{\theta_0}+R_{\theta_0}u^2\right)dV_{\theta_0}}
{\int_{S^{2n+1}}fu^{2+\frac{2}{n}}dV_{\theta_0}}}\right.\\
&\hspace{8mm}\left.-\frac{\int_{S^{2n+1}}\left((2+\frac{2}{n})|\nabla_{\theta_0}u|^2_{\theta_0}+R_{\theta_0}u^2\right)dV_{\theta_0}}
{\int_{S^{2n+1}}fu^{2+\frac{2}{n}}dV_{\theta_0}}\int_{S^{2n+1}}fu^{1+\frac{2}{n}}\eta\,dV_{\theta_0}\right]
\end{split}
\end{equation}
and
\begin{equation}\label{7.29}
\begin{split}
&d^2E_f(u)(\zeta,\eta)=\frac{d}{dr}\big[dE_f(u+r\zeta)(\eta)\big]\Big|_{r=0}\\
&=2\left(\int_{S^{2n+1}}fu^{2+\frac{2}{n}}dV_{\theta_0}\right)^{-\frac{2n+1}{n+1}}\\
&\hspace{4mm}\cdot
\left\{
\vphantom
{\frac{\int_{S^{2n+1}}\left((2+\frac{2}{n})|\nabla_{\theta_0}u|^2_{\theta_0}+R_{\theta_0}u^2\right)dV_{\theta_0}}
{\int_{S^{2n+1}}fu^{2+\frac{2}{n}}dV_{\theta_0}}}
\int_{S^{2n+1}}\left((2+\frac{2}{n})\langle\nabla_{\theta_0}\zeta,\nabla_{\theta_0}\eta\rangle_{\theta_0}
+R_{\theta_0}\zeta\eta \right)dV_{\theta_0}\cdot
\int_{S^{2n+1}}fu^{2+\frac{2}{n}}dV_{\theta_0}\right.\\
&\hspace{8mm}-(1+\frac{2}{n})\int_{S^{2n+1}}\left((2+\frac{2}{n})|\nabla_{\theta_0}u|^2_{\theta_0}
+R_{\theta_0}u^2 \right)dV_{\theta_0}\cdot
\int_{S^{2n+1}}fu^{\frac{2}{n}}\zeta\eta\,dV_{\theta_0}\\
&\hspace{8mm}-2\left[\int_{S^{2n+1}}\left((2+\frac{2}{n})\langle\nabla_{\theta_0}u,\nabla_{\theta_0}\zeta\rangle_{\theta_0}
+R_{\theta_0}u\zeta \right)dV_{\theta_0}
\cdot
\int_{S^{2n+1}}fu^{1+\frac{2}{n}}\eta\,dV_{\theta_0}\right.\\
&\hspace{16mm}\left.+\int_{S^{2n+1}}\left((2+\frac{2}{n})\langle\nabla_{\theta_0}u,\nabla_{\theta_0}\eta\rangle_{\theta_0}
+R_{\theta_0}u\eta \right)dV_{\theta_0}
\cdot
\int_{S^{2n+1}}fu^{1+\frac{2}{n}}\zeta\,dV_{\theta_0}\right]\\
&\hspace{8mm}\left.+(2+\frac{2}{n})\frac{\int_{S^{2n+1}}\left((2+\frac{2}{n})|\nabla_{\theta_0}u|^2_{\theta_0}+R_{\theta_0}u^2\right)dV_{\theta_0}}
{\int_{S^{2n+1}}fu^{2+\frac{2}{n}}dV_{\theta_0}}\cdot\int_{S^{2n+1}}fu^{1+\frac{2}{n}}\zeta\,dV_{\theta_0}
\cdot\int_{S^{2n+1}}fu^{1+\frac{2}{n}}\eta\,dV_{\theta_0}\right\}.
\end{split}
\end{equation}
We observe that Folland-Stein embedding theorem shows that the map
$$u\mapsto d^2E_f(u)(\cdot,\cdot)\in L(S_1^2(S^{2n+1},\theta_0)\times S_1^2(S^{2n+1},\theta_0),\mathbb{R})$$
is continuous.

Notice that
\begin{equation*}
\begin{split}
&\int_{S^{2n+1}}\langle\nabla_{\theta_0}v_s,\nabla_{\theta_0}(v_s^{-1}\dot{v}_s^2)\rangle_{\theta_0}dV_{\theta_0}\\
&=-\int_{S^{2n+1}}v_s^{-2}|\nabla_{\theta_0}v_s|^2_{\theta_0}\dot{v}_s^2dV_{\theta_0}
+2\int_{S^{2n+1}}v_s^{-1}\dot{v}_s\langle\nabla_{\theta_0}v_s,\nabla_{\theta_0}\dot{v}_s\rangle_{\theta_0}dV_{\theta_0}\\
&=O(\|\nabla_{\theta_0}v_s\|^2_{C^0})\,(\|\dot{v}_s\|^2_{L^2}+\|\nabla_{\theta_0}\dot{v}_s\|^2_{L^2})\\
&=O(\epsilon)\,(\|\dot{v}_s\|^2_{L^2}+\|\nabla_{\theta_0}\dot{v}_s\|^2_{L^2})
\end{split}
\end{equation*}
by (\ref{7.23}) and (\ref{7.24}). Using  (\ref{7.21})-(\ref{7.25}),
 (\ref{7.28}) and (\ref{7.29}), we obtain
\begin{equation}\label{7.30}
\begin{split}
&\frac{d^2}{ds^2}E_{f\circ\phi}(v_s)\\
&=d^2E_{f\circ\phi}(v_s)(\dot{v}_s,\dot{v}_s)+dE_{f\circ\phi}(v_s)(\ddot{v}_s)\\
&=2(2+\frac{2}{n})\int_{S^{2n+1}}|\nabla_{\theta_0}\dot{v}_s|^2_{\theta_0}dV_{\theta_0}+2R_{\theta_0}
\int_{S^{2n+1}}\dot{v}_s^2dV_{\theta_0}\\
&\hspace{4mm}-2(1+\frac{2}{n})\int_{S^{2n+1}}\left((2+\frac{2}{n})|\nabla_{\theta_0}v_s|^2_{\theta_0}
+R_{\theta_0}v_s^2 \right)dV_{\theta_0}\cdot
\int_{S^{2n+1}}fv_s^{\frac{2}{n}}\dot{v}_s^2\,dV_{\theta_0}\\
&\hspace{4mm}
-8\int_{S^{2n+1}}\left((2+\frac{2}{n})\langle\nabla_{\theta_0}v_s,\nabla_{\theta_0}\dot{v}_s\rangle_{\theta_0}
+R_{\theta_0}v_s\dot{v}_s \right)dV_{\theta_0}
\cdot
\int_{S^{2n+1}}fv_s^{1+\frac{2}{n}}\dot{v}_s\,dV_{\theta_0}\\
&\hspace{4mm}+2(2+\frac{2}{n})\int_{S^{2n+1}}\left((2+\frac{2}{n})|\nabla_{\theta_0}v_s|^2_{\theta_0}
+R_{\theta_0}v_s^2 \right)dV_{\theta_0}\cdot
\left(\int_{S^{2n+1}}fv_s^{1+\frac{2}{n}}\dot{v}_s\,dV_{\theta_0}\right)^2\\
&\hspace{4mm}+2\int_{S^{2n+1}}\left((2+\frac{2}{n})\langle\nabla_{\theta_0}v_s,\nabla_{\theta_0}\ddot{v}_s\rangle_{\theta_0}
+R_{\theta_0}v_s\ddot{v}_s \right)dV_{\theta_0}\\
&\hspace{4mm}-2\int_{S^{2n+1}}\left((2+\frac{2}{n})|\nabla_{\theta_0}v_s|^2_{\theta_0}+R_{\theta_0}v_s^2 \right)dV_{\theta_0}\cdot
\int_{S^{2n+1}}fv_s^{1+\frac{2}{n}}\ddot{v}_s\,dV_{\theta_0}\\
&=\left(2(2+\frac{2}{n})+O(\epsilon)\right)\int_{S^{2n+1}}|\nabla_{\theta_0}\dot{v}_s|^2_{\theta_0}dV_{\theta_0}
-\left(\frac{4R_{\theta_0}}{n}+O(\epsilon)\right)\int_{S^{2n+1}}\dot{v}_s^2dV_{\theta_0}.
\end{split}
\end{equation}

Now we decompose $\dot{v}_s=\varphi+w$, where
$$w=\int_{S^{2n+1}}\dot{v}_s\,dV_{\theta_0}+\sum_{i=1}^{2n+2}\left(\int_{S^{2n+1}}\dot{v}_s\varphi_i\,dV_{\theta_0}\right)\varphi_i$$
and $\{\varphi_i\}$ are the eigenfunctions of $-\Delta_{\theta_0}$
given in section \ref{section6.2}. Let $\widehat{P(t)}$ be the limit point of the conformal CR diffeomorphism
 $\phi(t)$ in view of Lemma \ref{lem4.14}, one finds
by (\ref{7.27}) that
\begin{equation*}
\begin{split}
&\sigma_1^{1+\frac{2}{n}}f(\widehat{P(t)})\int_{S^{2n+1}}\dot{v}_s\,dV_{\theta_0}\\
&=\int_{S^{2n+1}}(\sigma_1^{1+\frac{2}{n}}f(\widehat{P(t)})-f\circ\phi\,
v_s^{1+\frac{2}{n}})\dot{v}_s\,dV_{\theta_0}\\
&=\int_{S^{2n+1}}\left[\sigma_1^{1+\frac{2}{n}}(f(\widehat{P(t)})-f\circ\phi)+
f\circ\phi\,(\sigma_1^{1+\frac{2}{n}}-
v_s^{1+\frac{2}{n}})\right]\dot{v}_s\,dV_{\theta_0}
\end{split}
\end{equation*}
and
$$\sigma_1^{1+\frac{2}{n}}\int_{S^{2n+1}}\dot{v}_s\varphi_i\,dV_{\theta_0}
=\int_{S^{2n+1}}(\sigma_1^{1+\frac{2}{n}}-
v_s^{1+\frac{2}{n}})\dot{v}_s\varphi_i\,dV_{\theta_0}.$$ Hence from
Lemma \ref{lem4.14}, (\ref{7.23}), and H\"{o}lder's inequality, we
obtain
\begin{equation*}
\int_{S^{2n+1}}\dot{v}_s\,dV_{\theta_0}=o(1)\|\dot{v}_s\|_{L^2}\hspace{2mm}\mbox{
and
}\hspace{2mm}\int_{S^{2n+1}}\dot{v}_s\varphi_i\,dV_{\theta_0}=O(\epsilon)\|\dot{v}_s\|_{L^2},
\end{equation*}
which implies that
\begin{equation}\label{7.31}
\begin{split}
&\|w\|_{L^2}=o(1)\|\dot{v}_s\|_{L^2}\hspace{2mm}\mbox{ and }\\
&\|\nabla_{\theta_0}w\|_{L^2}
=\left\|\sum_{i=1}^{2n+2}\left(\int_{S^{2n+1}}\dot{v}_s\varphi_i\,dV_{\theta_0}\right)
\nabla_{\theta_0}\varphi_i\right\|_{L^2}=O(\epsilon)\|\dot{v}_s\|_{L^2}
\end{split}
\end{equation}
by the definition of $w$. By (\ref{7.31}) and $\dot{v}_s=\varphi+w$,
we have
\begin{equation}\label{7.32}
\begin{split}
(1+o(1))\int_{S^{2n+1}}|\nabla_{\theta_0}\dot{v}_s|^2_{\theta_0}dV_{\theta_0}
&=\int_{S^{2n+1}}|\nabla_{\theta_0}\varphi|^2_{\theta_0}dV_{\theta_0}\\
&\geq\lambda_{2n+3}\int_{S^{2n+1}}\varphi^2dV_{\theta_0}\\
&=(\lambda_{2n+3}+o(1))\int_{S^{2n+1}}\dot{v}_s^2dV_{\theta_0}.
\end{split}
\end{equation}
Combining (\ref{7.30}) and (\ref{7.32}), we can conclude that
\begin{equation*}
\begin{split}
&\frac{d^2}{ds^2}E_{f\circ\phi}(v_s)\\
&=\left(2(2+\frac{2}{n})+O(\epsilon)\right)\int_{S^{2n+1}}|\nabla_{\theta_0}\dot{v}_s|^2_{\theta_0}dV_{\theta_0}
-\left(\frac{4R_{\theta_0}}{n}+O(\epsilon)\right)\int_{S^{2n+1}}\dot{v}_s^2dV_{\theta_0}\\
&\geq\left(2(2+\frac{2}{n})\lambda_{2n+3}-\frac{4R_{\theta_0}}{n}+o(1)\right)\int_{S^{2n+1}}\dot{v}_s^2dV_{\theta_0}>0,
\end{split}
\end{equation*}
since
$$2(2+\frac{2}{n})\lambda_{2n+3}-\frac{4R_{\theta_0}}{n}
=2(2+\frac{2}{n})\lambda_{2n+3}-\frac{4}{n}\cdot\frac{n(n+1)}{2}>0$$
thanks to $\lambda_{2n+3}>n/2$.

So we have established (\ref{7.18}). Notice that our homotopy $H(s,u_0)$ is the one
which is homotopic to the constant $\sigma_2/\sigma_1$.
Since
this is a constant, its energy is always less than $\beta_0$.
Then clearly this constant will be homotopic to the constant $1$ in the set $C_f^\infty$.
So we have finished the proof of (i).
\end{proof}

\begin{proof}[Proof of Proposition \ref{prop7.1} \emph{(ii)}]
In order to prove (ii), we re-scale the time $t$ by letting $\tau(t)$ solve
\begin{equation}\label{7.32.5}
\frac{d\tau}{dt}=\min\left\{\frac{1}{2},\epsilon^2(t,u_0)\right\},\hspace{2mm}\tau(0)=0.
\end{equation}
By (\ref{6.93}), we see that $\tau(t)\to \infty$ as $t\to\infty$. Set $U=u(\tau(t),u_0)$ and
$\Gamma(\tau)=\Theta(\tau(t),u_0)$. As in the proof of Proposition \ref{prop6.11}, we have
$$
\frac{d}{d\tau}(\mbox{Vol}(S^{2n+1},\theta_0)^2-|\Gamma(\tau)|^2)=4(n+1)A_2A_5\alpha\epsilon^3(\Delta_{\theta_0}f(\widehat{\Gamma(\tau)})
+O(1)|\nabla_{\theta_0}f(\widehat{\Gamma(\tau)})|^2_{\theta_0}+O(\epsilon)).
$$
and
\begin{equation}\label{7.32.6}
\left|\frac{d}{d\tau}(\mbox{Vol}(S^{2n+1},\theta_0)^2-|\Gamma(\tau)|^2)\right|\leq C(\mbox{Vol}(S^{2n+1},\theta_0)^2-|\Gamma(\tau)|^2)^2,
\end{equation}
with error $O(1)$ which means it is bounded as $\epsilon\to 0$. In the following argument, we will still use $t$ for $\tau(t)$, $u(t,u_0)$
for $U(\tau(t),u_0)$ and $\Theta(t)$ for $\Gamma(\tau(t))$ when there is no confusion arising.

Thus, for a given $0<\nu\leq\nu_0$, we claim that there exists $T>0$ such that $u(T,L_{\beta_i-\nu})\subset L_{\beta_{i+1}+\nu}$. Suppose on the contrary,
there exist, for each integer $k$, $T_k\geq 2k$ and an initial data $u_k\in L_{\beta_i-\nu}\setminus L_{\beta_{i+1}+\nu}$, such that
$$E_f(u(T_k,u_k))>\beta_{i+1}+\nu\hspace{2mm}\mbox{ for all }k.$$
By Lemma \ref{lem3.2}, there exists a sequence $t_k\in [T_k/2,T_k]$ such that $\displaystyle\int_{S^{2n+1}}|\alpha(t_k)f-R_{\theta_k}|^2dV_{\theta_k}\rightarrow 0$ as $k\rightarrow\infty$, where $\theta_k=u(t_k,u_k)^{\frac{2}{n}}\theta_0$, $k\in\mathbb{Z}^+$ and $R_{\theta_k}$ is the Webster scalar curvature of $\theta_k$. In fact, if for all $t\in  [T_k/2,T_k]$, $\displaystyle\int_{S^{2n+1}}|\alpha(t)f-R_{\theta(t)}|^2dV_{\theta(t)}\geq\epsilon_0>0$ for some fixed $\epsilon_0>0$ and $k$ sufficiently large, we would have
$$\beta_i-\beta_{i+1}-2\nu\geq\int_{\frac{T_k}{2}}^{T_k}\left(-\frac{dE_f(u(t,u_k))}{dt}\right)dt\geq\epsilon_0 C T_k/2$$
which contradicts the assumption that $T_k\rightarrow\infty$ as $k\rightarrow\infty$.

Let $\widehat{\Theta(t_k,u_k)}\in S^{2n+1}$ be the shadow points of the flow with the initial data $u_k$ valued at time $t_k$.
And $v_k=u(t_k,u_k)\circ\phi(t_k)(\det(d\phi(t_k)))^{\frac{n}{2n+2}}$. Then $v_k\rightarrow 1$ as $k$ sufficiently large. Up to a subsequence, the limit
$\widehat{\Theta}=\lim_{k\rightarrow\infty}\widehat{\Theta(t_k,u_k)}$ exists and $v_k\rightarrow 1$ in $C^{1,\alpha}$ for some $\alpha>0$ according to
Lemma \ref{lem4.14}. Then $\widehat{\Theta}$ must be a critical point of $f$ by (\ref{7.32.6}). Then, as in the proof of Lemma \ref{lem6.12},
the direct calculation shows that
$E_f(u(t_k,u_k))\rightarrow R_{\theta_0}\mbox{Vol}(S^{2n+1},\theta_0)^{\frac{1}{n+1}}f(\widehat{\Theta})^{-\frac{n}{n+1}}$ as $k\rightarrow\infty$. Since
$E_f(u_k)\leq\beta_i-\nu$, by Proposition \ref{prop2.2}, there hold $\widehat{\Theta}=p_{i_0}$ for some $i_0>i$ and
\begin{equation*}
\begin{split}
E_f(u(T_k,u_k))&\leq E_f(u(t_k,u_k))=E_{f\circ\phi(t_k)}(v(t_k,u_k))\\
&\leq R_{\theta_0}\mbox{Vol}(S^{2n+1},\theta_0)^{\frac{1}{n+1}}f(\widehat{\Theta})^{-\frac{n}{n+1}}+\nu\leq\beta_{i+1}+\nu,
\end{split}
\end{equation*}
which yields a contradiction.

For $u_0\in  L_{\beta_i-\nu}\setminus L_{\beta_{i+1}+\nu}$, let
$$T(u_0)=\inf\{t\geq 0: E_f(u(t,u_0))\leq\beta_{i+1}+\nu\}\leq T.$$
As in (i), $T(u_0)$ continuously depends on $u_0$ and the map $K(s,u_0)=u(sT(u_0),u_0)$ for $0\leq s\leq 1$ if $u\in L_{\beta_i-\nu}\setminus L_{\beta_{i+1}+\nu}$
and $K(s,u_0)=u_0$ if $u_0\in L_{\beta_{i+1}+\nu}$ defines the desired homotopy equivalence between $L_{\beta_{i+1}+\nu}$
 and $L_{\beta_i-\nu}$. This finishes the proof of (ii).
\end{proof}

For the proof of (iii) and (iv), we need some additional lemmas.

\begin{lem}\label{lem7.3}
With two dimensional constants $C_1>0$, $C_2>0$, provided that $\|v-1\|_{S_1^2(S^{2n+1},\theta_0)}$ is sufficiently small,
there holds
$$C_1\|v-1\|_{S_1^2(S^{2n+1},\theta_0)}^2\geq E(v)-R_{\theta_0}\mbox{\emph{Vol}}(S^{2n+1},\theta_0)\geq C_2\|v-1\|_{S_1^2(S^{2n+1},\theta_0)}^2,$$
for all $v\in S_1^2(S^{2n+1},\theta_0)\cap C_*^\infty$, the conformal factor of the normalized contact form $h$ satisfying $(\ref{4.19})$.
\end{lem}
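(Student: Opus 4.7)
The plan is to set $w=v-1$, use the volume normalization and the center-of-mass condition (\ref{4.19}) to control the low-frequency modes of $w$, and then exploit the spectral gap $\lambda_{2n+3}>n/2$ to extract a positive-definite quadratic form. First I would expand $E$ around $v=1$; since $E(1)=R_{\theta_0}\mbox{Vol}(S^{2n+1},\theta_0)$, a direct computation gives
\[
E(v)-R_{\theta_0}\mbox{Vol}(S^{2n+1},\theta_0)
= (2+\tfrac{2}{n})\!\int_{S^{2n+1}}\! |\nabla_{\theta_0}w|^2_{\theta_0}\, dV_{\theta_0}
+ 2R_{\theta_0}\!\int_{S^{2n+1}}\! w\,dV_{\theta_0}
+ R_{\theta_0}\!\int_{S^{2n+1}}\! w^2\,dV_{\theta_0}.
\]
Taylor-expanding the volume constraint yields $\int w\,dV_{\theta_0}=-\tfrac{n+2}{2n}\int w^2\, dV_{\theta_0}+\mathcal{R}$ with a cubic-type remainder $\mathcal{R}$; substituting and using $R_{\theta_0}=n(n+1)/2$ simplifies the identity to
\[
E(v)-R_{\theta_0}\mbox{Vol}(S^{2n+1},\theta_0)
= (2+\tfrac{2}{n})\!\int_{S^{2n+1}}\! |\nabla_{\theta_0}w|^2_{\theta_0}\, dV_{\theta_0}
- (n+1)\!\int_{S^{2n+1}}\! w^2\, dV_{\theta_0}+\mathcal{R}',
\]
from which the upper bound $\leq C_1\|w\|^2_{S_1^2(S^{2n+1},\theta_0)}$ is immediate.

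For the lower bound I would expand $w=\sum_{i}w^i\varphi_i$ in the $L^2$-orthonormal eigenbasis from section \ref{section6.2}. The quadratic form becomes $\sum_i[(2+\tfrac{2}{n})\lambda_i-(n+1)](w^i)^2$: the coefficient is $-(n+1)$ for $i=0$, \emph{vanishes exactly} on the first eigenspace $i=1,\dots,2n+2$ where $\lambda_i=n/2$, and satisfies $(2+\tfrac{2}{n})\lambda_i-(n+1)\geq c(1+\lambda_i)$ for all $i\geq 2n+3$ and some fixed $c>0$, because $\lambda_{2n+3}>n/2$ strictly (one solves $\lambda_i\geq(n+1+c)/(2+\tfrac{2}{n}-c)$ with the right hand side converging to $n/2$ as $c\to 0$). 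Taylor-expanding both the volume constraint and the center-of-mass condition (\ref{4.19}) in powers of $w$ gives $|w^i|\leq C\|w\|_{L^2(S^{2n+1},\theta_0)}^2$ for $i=0,1,\ldots,2n+2$, so that the low-frequency contribution is $O(\|w\|^4_{S_1^2})$ and the high-frequency part satisfies $\sum_{i\geq 2n+3}(1+\lambda_i)(w^i)^2=(1+o(1))\|w\|^2_{S_1^2}$. Combining yields the quadratic form $\geq\tfrac{c}{2}\|w\|^2_{S_1^2}$ for $\|w\|_{S_1^2}$ sufficiently small, which is the desired $C_2\|w\|^2_{S_1^2}$ lower bound.

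The main obstacle is showing that the remainders $\mathcal{R},\mathcal{R}'$ and the analogous cubic quantities from the expansions of (\ref{4.19}) are genuinely $o(\|w\|^2_{S_1^2})$, not merely $O(\|w\|^2_{S_1^2})$, without any pointwise control on $w$. The key estimate is the pointwise bound
\[
\Big|v^{2+2/n}-1-(2+\tfrac{2}{n})w-\tfrac{(2+2/n)(1+2/n)}{2}w^2\Big|\leq C|w|^{2+2/n},
\]
which holds on $\{|w|\leq 1/2\}$ because the standard cubic Taylor remainder is dominated by $|w|^{2+2/n}$ (the hypothesis $n\geq 2$ ensures $2+2/n\leq 3$, hence $|w|^3\leq|w|^{2+2/n}$ when $|w|\leq 1$), and holds on $\{|w|>1/2\}$ because all polynomial terms are absorbed into $|w|^{2+2/n}$. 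Combined with the Folland--Stein critical embedding $S_1^2(S^{2n+1},\theta_0)\hookrightarrow L^{2+2/n}(S^{2n+1},\theta_0)$, this gives $\int|w|^{2+2/n}\,dV_{\theta_0}\leq C\|w\|^{2+2/n}_{S_1^2}=o(\|w\|^2_{S_1^2})$, since $2+2/n>2$; this estimate closes the argument.
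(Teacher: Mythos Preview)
Your argument is correct and in fact patches a gap in the paper's own proof. The paper expands $v^2-1=(v-1)^2+2(v-1)$ and then asserts that $2R_{\theta_0}\int(v-1)\,dV_{\theta_0}=o(1)\|v-1\|_{S_1^2}^2$, so that the remaining quadratic form $(2+\tfrac{2}{n})\int|\nabla_{\theta_0}w|^2+R_{\theta_0}\int w^2$ is manifestly coercive and the lower bound follows with $C_2=\min\{2+\tfrac{2}{n},R_{\theta_0}\}$. But the volume constraint only yields $\int(v-1)=-\tfrac{n+2}{2n}\int(v-1)^2+\text{(higher order)}$, so that linear term is genuinely $O(\|v-1\|_{S_1^2}^2)$ with leading constant $-\tfrac{(n+1)(n+2)}{2}$, which overwhelms the $R_{\theta_0}\int w^2=\tfrac{n(n+1)}{2}\int w^2$ term; the paper's lower-bound step as written does not close.

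Your route is the right fix: substituting the constraint gives the indefinite form $(2+\tfrac{2}{n})\int|\nabla_{\theta_0}w|^2-(n+1)\int w^2$, which vanishes on the first eigenspace, and the strict spectral gap $\lambda_{2n+3}>n/2$ together with the quadratic smallness of the modes $w^0,\dots,w^{2n+2}$ (from the volume and center-of-mass constraints) then yields coercivity on the full space. Your remainder estimate via the pointwise bound $|w|^{2+2/n}$ and the critical Folland--Stein embedding $S_1^2\hookrightarrow L^{2+2/n}$ is also sharper than what the paper uses: the paper's derivation of (\ref{6.36}) tacitly relies on $C^0$-smallness of $v-1$ coming from Lemma~\ref{lem4.14}, which is available along the flow but not from the $S_1^2$-hypothesis of Lemma~\ref{lem7.3} alone. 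Both proofs ultimately rest on the same structural fact---the constraints kill the low modes---but your version deploys it at the correct place.
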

\begin{proof}
Note that
$$E(v)-R_{\theta_0}\mbox{Vol}(S^{2n+1},\theta_0)
=\int_{S^{2n+1}}\left((2+\frac{2}{n})|\nabla_{\theta_0}v|_{\theta_0}^2+R_{\theta_0}(v^2-1)\right)dV_{\theta_0}.$$
Note also that
\begin{equation*}
\begin{split}
R_{\theta_0}\int_{S^{2n+1}}(v^2-1)dV_{\theta_0}&=R_{\theta_0}\int_{S^{2n+1}}(v-1)^2dV_{\theta_0}+2R_{\theta_0}\int_{S^{2n+1}}(v-1)dV_{\theta_0}\\
&=R_{\theta_0}\int_{S^{2n+1}}(v-1)^2dV_{\theta_0}+o(1)\|v-1\|_{S_1^2(S^{2n+1},\theta_0)}^2.
\end{split}
\end{equation*}
Thus, it is easy to derive from the above inequalities that there exists some constant $C_1>0$ such that
$$E(v)-R_{\theta_0}\mbox{Vol}(S^{2n+1},\theta_0)
\leq C_1\|v-1\|_{S_1^2(S^{2n+1},\theta_0)}^2.$$
On the other hand, let us assume that $\|v-1\|_{S_1^2(S^{2n+1},\theta_0)}\leq 1$. Since $v$ satisfies (\ref{4.19}), we use
(\ref{6.36}) to estimate
\begin{equation*}
\begin{split}
&E(v)-R_{\theta_0}\mbox{Vol}(S^{2n+1},\theta_0)\\
&=(2+\frac{2}{n})\int_{S^{2n+1}}|\nabla_{\theta_0}v|_{\theta_0}^2dV_{\theta_0}+R_{\theta_0}\int_{S^{2n+1}}(v-1)^2dV_{\theta_0}
+2R_{\theta_0}\int_{S^{2n+1}}(v-1)dV_{\theta_0}\\
&=\min\Big\{(2+\frac{2}{n}),R_{\theta_0}\Big\}
\int_{S^{2n+1}}\big(|\nabla_{\theta_0}v|_{\theta_0}^2+(v-1)^2\big)dV_{\theta_0}
+o(1)\|v-1\|_{S_1^2(S^{2n+1},\theta_0)}^2\\
&=C_2\int_{S^{2n+1}}\big(|\nabla_{\theta_0}v|_{\theta_0}^2+(v-1)^2\big)dV_{\theta_0}
\end{split}
\end{equation*}
for some constant $C_2>0$.
\end{proof}

For $r_0>0$ and each critical point $p_i\in S^{2n+1}$ of $f$, set
\begin{equation*}
\begin{split}
B_{r_0}(p_i)=\Big\{&u\in C^\infty_*: \theta=u^{\frac{2}{n}}\theta_0\mbox{ induces normalized contact form }\\
&h=\phi^*\theta=v^{\frac{2}{n}}\theta_0
\mbox{ with }\phi=\phi_{-p,\epsilon}\mbox{ for some }p\in S^{2n+1}\mbox{ and }\\
&0<\epsilon\leq 1\mbox{ such that }\|v-1\|^2_{S_1^2(S^{2n+1},\theta_0)}+|p-p_i|^2+\epsilon^2<r_0^2\Big\}.
\end{split}
\end{equation*}
As shown in \cite{Malchiodi&Struwe}, the new coordinates $(\epsilon, p, v)$ are introduced to $u\in B_{r_0}(p_i)$.
Under the assumption on $f$, from Morse lemma, we introduce the local coordinates $p=p^++p^-$ near $p_i=0$,
such that
$$f(p)=f(p_i)+|p^+|^2-|p^-|^2.$$

\begin{lem}\label{lem7.4}
For $r_0>0$ and $u=(\epsilon, p,v)\in B_{r_0}(p_i)$, with $o(1)\rightarrow 0$ as $r_0\rightarrow 0$,
there hold\\
\emph{(a)}\begin{equation}\label{7.33}
\begin{split}
\int_{S^{2n+1}}f\circ\phi_{-p,\epsilon}dV_h&=f(p)\mbox{\emph{Vol}}(S^{2n+1},\theta_0)+A_6\epsilon^2\Delta_{\theta_0}f(p)+O(\epsilon^4)\\
&\hspace{2mm}+o(1)\epsilon\|v-1\|_{S_1^2(S^{2n+1},\theta_0)},
\end{split}
\end{equation}
where $A_6$ is the positive constant defined as in $(\ref{7.38})$.\\
\emph{(b)} There holds
\begin{equation}\label{7.34}
\begin{split}
&\left|\frac{\partial}{\partial\epsilon}E_f(u)+\frac{n}{n+1}E(u)\Big(f(p)\mbox{\emph{Vol}}(S^{2n+1},\theta_0)\Big)^{-\frac{2n+1}{n+1}}
\epsilon A_6 \Delta_{\theta_0}f(p)\right|\\
&\leq C\epsilon^{2}+C(\epsilon+|p-p_i|)\|v-1\|_{S_1^2(S^{2n+1},\theta_0)}.
\end{split}
\end{equation}
In particular, if $\Delta_{\theta_0}f(p)>0$, we have
\begin{equation}\label{7.35}
\begin{split}
\frac{\partial}{\partial\epsilon}E_f(u)&\leq-\frac{n^2}{2}f(p)^{-\frac{2n+1}{n+1}}\mbox{\emph{Vol}}(S^{2n+1},\theta_0)^{-\frac{2n}{n+1}}
\epsilon A_6 \Delta_{\theta_0}f(p)\\
&\hspace{2mm}+C\epsilon^{2}+C(\epsilon+|p-p_i|)\|v-1\|_{S_1^2(S^{2n+1},\theta_0)}.
\end{split}
\end{equation}
\emph{(c)} For any $q\in T_p(S^{2n+1})$, there holds
\begin{equation}\label{7.36}
\begin{split}
&\left|\frac{\partial E_f(u)}{\partial p}\cdot q+\frac{n}{n+1}E(v)f(p)^{-\frac{2n+1}{n+1}}df(p)\cdot q\right|\\
&\leq  C\epsilon(\epsilon+\|v-1\|_{S_1^2(S^{2n+1},\theta_0)})|q|.
\end{split}
\end{equation}
\emph{(d)} There exists a uniform constant $C_0>0$ such that
\begin{equation}\label{7.37}
\left\langle\frac{\partial}{\partial v}E_f(u),v-1\right\rangle
\geq C_0\|v-1\|_{S_1^2(S^{2n+1},\theta_0)}^2+o(1)\epsilon\|v-1\|_{S_1^2(S^{2n+1},\theta_0)},
\end{equation}
where $\langle\cdot,\cdot\rangle$ denotes the duality pairing of $S_1^2(S^{2n+1},\theta_0)$ with its dual.
\end{lem}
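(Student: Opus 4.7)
The plan is to treat part (a) as the analytic core; parts (b) and (c) then follow by the chain rule together with CR-conformal invariance of $E$, and (d) uses a spectral-gap argument closely modelled on the one developed for (\ref{7.18}) in the proof of Proposition~\ref{prop7.1}(i).

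For (a), I would center the Cayley chart at $p$ so that $\phi_{-p,\epsilon}=\Psi\circ D_\epsilon\circ\pi$ and split
\[
\int_{S^{2n+1}}f_\phi\,dV_h = \int_{S^{2n+1}}f_\phi\,dV_{\theta_0} + \int_{S^{2n+1}}(f_\phi-f(p))(v^{2+\frac{2}{n}}-1)\,dV_{\theta_0}+ f(p)\int_{S^{2n+1}}(v^{2+\frac{2}{n}}-1)\,dV_{\theta_0}.
\]
The last term vanishes since $u\in C^\infty_*$ forces $\int v^{2+\frac{2}{n}}dV_{\theta_0}=\mbox{Vol}(S^{2n+1},\theta_0)$. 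The middle term is bounded by H\"older together with the $L^2$-estimate of $f_\phi-f(p)$ analogous to (\ref{6.55}), yielding the $o(1)\epsilon\|v-1\|_{S_1^2}$ remainder. For the first piece, I would Taylor-expand $f_\phi(\Psi(z,\tau))$ around $(z,\tau)=(0,0)$ exactly as in (\ref{6.53}): the linear Heisenberg term kills by antipodal symmetry of the measure $4^{n+1}/(\tau^2+(1+|z|^2)^2)^{n+1}$; the quadratic term produces $A_6\epsilon^2\Delta_{\theta_0}f(p)$ via an integral identity of the same shape as (\ref{6.65})--(\ref{6.66}); the cubic tail is $O(\epsilon^4)$ via (\ref{6.56}).

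For (b) and (c), the crucial observation is that in the coordinates $(\epsilon,p,v)$ the energy $E(u)$ equals $E(v)$ by CR-conformal invariance, so $\partial_\epsilon E(u)=\partial_p E(u)=0$. Thus
\[
\frac{\partial E_f}{\partial\epsilon} = -\frac{n}{n+1}E(u)\Big(\textstyle\int f u^{2+\frac{2}{n}}dV_{\theta_0}\Big)^{-\frac{2n+1}{n+1}}\frac{\partial}{\partial\epsilon}\int f_\phi\,dV_h,
\]
and likewise in the $p$-direction. Differentiating the expansion (\ref{7.33}) in $\epsilon$ yields (\ref{7.34}); inserting $E(u)=E(v)\geq R_{\theta_0}\mbox{Vol}(S^{2n+1},\theta_0)=\frac{n(n+1)}{2}\mbox{Vol}(S^{2n+1},\theta_0)$ then produces (\ref{7.35}) with the expected prefactor $\frac{n}{n+1}\cdot\frac{n(n+1)}{2}=\frac{n^2}{2}$. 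Part (c) proceeds identically: the leading contribution $df(p)\cdot q\cdot\mbox{Vol}(S^{2n+1},\theta_0)$ comes from differentiating the $f(p)\mbox{Vol}(S^{2n+1},\theta_0)$-term in (\ref{7.33}) in $p$, with the $O(\epsilon^2)$ remainder arising from the Hessian term $A_6\epsilon^2\Delta_{\theta_0}f(p)$.

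For (d), I would apply the chain rule to $E_f(v)=E(v)/(\int f_\phi v^{2+\frac{2}{n}}dV_{\theta_0})^{n/(n+1)}$, pair with $v-1$, and invoke (\ref{6.36}): the Frank--Lieb normalization (\ref{4.19}) forces the first $2n+3$ spectral coefficients $v^i$ of $v-1$ to be $o(1)\|v-1\|_{S_1^2}$. This reduces $\langle dE(v),v-1\rangle$ to $2(2+\frac{2}{n})\int|\nabla_{\theta_0}(v-1)|^2_{\theta_0}\,dV_{\theta_0}+2R_{\theta_0}\int(v-1)^2\,dV_{\theta_0}$ modulo negligible error, and $\int f_\phi v^{1+\frac{2}{n}}(v-1)\,dV_{\theta_0}$ to $(1+\frac{2}{n})f(p)\int(v-1)^2\,dV_{\theta_0}$ via a first-order Taylor expansion, with the $\epsilon$-weighted remainder coming from the Heisenberg expansion of $f_\phi$ around $f(p)$. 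Positivity of the resulting quadratic form in $v-1$ then reduces to the strict spectral gap $\lambda_{2n+3}>n/2$: splitting $\int|\nabla_{\theta_0}(v-1)|^2_{\theta_0}=\delta\int|\nabla_{\theta_0}(v-1)|^2_{\theta_0}+(1-\delta)\int|\nabla_{\theta_0}(v-1)|^2_{\theta_0}$ and applying $\int|\nabla_{\theta_0}(v-1)|^2_{\theta_0}\geq\lambda_{2n+3}\int(v-1)^2\,dV_{\theta_0}$ on the second piece, a suitable $\delta>0$ renders both coefficients of $\int|\nabla_{\theta_0}(v-1)|^2_{\theta_0}$ and $\int(v-1)^2dV_{\theta_0}$ strictly positive and delivers the uniform constant $C_0$.

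The main obstacle will be bookkeeping of errors in the three small parameters $\epsilon$, $|p-p_i|$ and $\|v-1\|_{S_1^2}$, so that each cross contribution decays at precisely the sharp rate asserted in (\ref{7.33})--(\ref{7.37}). In particular, securing the sharp $o(1)\epsilon\|v-1\|_{S_1^2}$ remainder in (\ref{7.33}) hinges on exact vanishing of the odd-order Heisenberg integrals, and carrying that precision through to (\ref{7.37}) requires verifying that the Frank--Lieb normalization yields an $\epsilon$-weighted (rather than a merely $o(1)$) loss when coupled to the first-order Taylor correction in $f_\phi$.
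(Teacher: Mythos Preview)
Your outline for parts (a) and (d) is essentially the paper's argument; the decomposition in (a) and the spectral-gap computation in (d) both match the paper line for line.

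There is, however, a genuine gap in your treatment of (b) and (c). You propose to obtain (\ref{7.34}) and (\ref{7.36}) by ``differentiating the expansion (\ref{7.33}) in $\epsilon$'' (respectively in $p$). This is not legitimate: (\ref{7.33}) is an asymptotic expansion whose remainder $O(\epsilon^4)+o(1)\epsilon\|v-1\|_{S_1^2}$ carries no information about its $\epsilon$-derivative. The paper does \emph{not} differentiate (\ref{7.33}); it computes $\partial_\epsilon\int f\circ\phi_{-p,\epsilon}\,dV_h$ from scratch, Taylor-expanding $\partial_\epsilon f(\Psi_\epsilon(z,\tau))$ directly (see the display after (\ref{7.39})) and then splitting the integral into a $v\equiv 1$ piece $I$ and a remainder $II$ weighted by $v^{2+2/n}-1$. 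The same is done for the $p$-derivative in (c).

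A second, related omission: the sharp error $C(\epsilon+|p-p_i|)\|v-1\|_{S_1^2}$ in (\ref{7.34}) does not fall out of (\ref{7.33}) at all. It arises because $p_i$ is a \emph{critical} point of $f$, so in the first-order Heisenberg expansion of $\partial_\epsilon f(\Psi_\epsilon)$ one may replace $\partial_{a_j}f(p)$ by $\partial_{a_j}f(p)-\partial_{a_j}f(p_i)=O(|p-p_i|)$; this is exactly (\ref{7.40}) in the paper, and is what produces the factor $|p-p_i|$ multiplying $\|v-1\|_{S_1^2}$ in the bound for $II$. Without invoking $\nabla f(p_i)=0$ explicitly, you would only obtain an $O(1)\|v-1\|_{S_1^2}$ cross term, which is too crude for the subsequent homotopy arguments in Proposition~\ref{prop7.1}(iii)--(iv).
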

\begin{proof}
For notational convenience, let
$$A=A(u)=\int_{S^{2n+1}}f\circ\phi_{-p,\epsilon}dV_h.$$
(a) Observe that
$$A-f(p)\mbox{Vol}(S^{2n+1},\theta_0)=\int_{S^{2n+1}}(f\circ\phi_{-p,\epsilon}-f(p))dV_{\theta_0}+I,$$
where the error term $I$ is given by
$$I=\int_{S^{2n+1}}(f\circ\phi_{-p,\epsilon}-f(p))(v^{2+\frac{2}{n}}-1)dV_{\theta_0}$$
which can be estimated as follows:
\begin{equation}\label{7.50}
\begin{split}
|I|&\leq \|f\circ\phi_{-p,\epsilon}-f(p)\|_{L^2(S^{2n+1},\theta_0)}\|v^{2+\frac{2}{n}}-1\|_{L^2(S^{2n+1},\theta_0)}\\
&\leq o(1)\epsilon\|v-1\|_{S_1^2(S^{2n+1},\theta_0)}
\end{split}
\end{equation}
in view of (\ref{6.55}) and $|\nabla_{\theta_0}f(p)|_{\theta_0}\rightarrow 0$ as $r_0\rightarrow 0$. Using the expansion of $f$ in (\ref{6.53})
around $p$,
we obtain by symmetry, (\ref{6.54}) and (\ref{6.58}) that
\begin{equation*}
\begin{split}
&A-f(p)\mbox{Vol}(S^{2n+1},\theta_0)\\
&=\int_{B_{\epsilon^{-1}}(0)}(f\circ\phi_{-p,\epsilon}-f(p))\frac{4^{n+1}dzd\tau}{(\tau^2+(1+|z|^2)^2)^{n+1}}
+O(\epsilon^{2n})+o(1)\epsilon\|v-1\|_{S_1^2(S^{2n+1},\theta_0)}\\
&=\frac{1}{2n}\epsilon^2\Delta_{\theta_0}f(p)\int_{\mathbb{H}^n}\frac{4^{n+1}|z|^2dzd\tau}{(\tau^2+(1+|z|^2)^2)^{n+1}}
+C\epsilon^4\int_{\mathbb{H}^n}\frac{4^{n+1}\tau^2dzd\tau}{(\tau^2+(1+|z|^2)^2)^{n+1}}\\
&\hspace{2mm}
+C\int_{B_{\epsilon^{-1}}(0)}\frac{\epsilon^3(|z|^4+\tau^2)^{\frac{3}{4}}dzd\tau}{(\tau^2+(1+|z|^2)^2)^{n+1}}
+O(\epsilon^{2n})+o(1)\epsilon\|v-1\|_{S_1^2(S^{2n+1},\theta_0)}\\
&=\epsilon^2A_6\Delta_{\theta_0}f(p)+O(\epsilon^{4})+o(1)\epsilon\|v-1\|_{S_1^2(S^{2n+1},\theta_0)}
\end{split}
\end{equation*}
where $A_6$ is given by
\begin{equation}\label{7.38}
A_6:=\frac{1}{2n}\int_{\mathbb{H}^n}\frac{4^{n+1}|z|^2dzd\tau}{(\tau^2+(1+|z|^2)^2)^{n+1}}.
\end{equation}
This yields the first assertion.\\
(b) Note that
$$E_f(u)=\frac{E(u)}{(\int_{S^{2n+1}}fu^{2+\frac{2}{n}}dV_{\theta_0})^{\frac{n}{n+1}}}
=\frac{E(v)}{(\int_{S^{2n+1}}f\circ\phi_{-p,\epsilon}v^{2+\frac{2}{n}}dV_{\theta_0})^{\frac{n}{n+1}}}.$$
Thus it follows that
$$\frac{\partial}{\partial\epsilon}E_f(u)=-\frac{n}{n+1}E(v)A^{-\frac{2n+1}{n+1}}\frac{\partial}{\partial\epsilon}
\int_{S^{2n+1}}f\circ\phi_{-p,\epsilon}dV_{h}.$$
Denote $\phi_{-p,\epsilon}$ by $\Psi_\epsilon$, as in (a), we have
\begin{equation*}
\begin{split}
\frac{\partial}{\partial\epsilon}
\int_{S^{2n+1}}f\circ\phi_{-p,\epsilon}dV_{h}
&=\int_{\mathbb{H}^n}\frac{\partial}{\partial\epsilon}f(\Psi_\epsilon(z,\tau))\frac{4^{n+1}dzd\tau}{(\tau^2+(1+|z|^2)^2)^{n+1}}\\
&\hspace{2mm}
+\int_{\mathbb{H}^n}\frac{\partial}{\partial\epsilon}f(\Psi_\epsilon(z,\tau))(v^{2+\frac{2}{n}}-1)\frac{4^{n+1}dzd\tau}{(\tau^2+(1+|z|^2)^2)^{n+1}}\\
&=I+II.
\end{split}
\end{equation*}

First note that
\begin{equation*}
\begin{split}
\frac{\partial}{\partial\epsilon}f(\Psi_\epsilon(z,\tau))&=\sum_{i=1}^n\frac{\partial f}{\partial x_i}\circ\Psi_\epsilon(z,\tau)
\frac{2z_i(1-\epsilon^2|z|^2+\sqrt{-1}\epsilon^2\tau)}{(1+\epsilon^2|z|^2-\sqrt{-1}\epsilon^2\tau)^2}\\
&\hspace{2mm}+\frac{\partial f}{\partial x_{n+1}}\circ\Psi_\epsilon(z,\tau)
\frac{-4\epsilon(|z|^2-\sqrt{-1}\tau)}{(1+\epsilon^2|z|^2-\sqrt{-1}\epsilon^2\tau)^2}.
\end{split}
\end{equation*}
Since $\displaystyle\sum_{i=1}^{n+1}\left|\frac{\partial f}{\partial x_i}\right|^2=\frac{4}{(1+|z|^2)^2+\tau^2}|\nabla_{\theta_0} f|^2_{\theta_0}$, we have
\begin{equation}\label{7.39}
\left|\frac{\partial}{\partial\epsilon}f(\Psi_\epsilon(z,\tau))\right|\leq C(|z|^4+\tau^2)^{\frac{1}{4}}\hspace{2mm}\mbox{ for }(z,\tau)\in\mathbb{H}^n.
\end{equation}
Now using the expansion of $f$ in (\ref{6.53}), we have
\begin{equation*}
\begin{split}
\frac{\partial}{\partial\epsilon}f(\Psi_\epsilon(z,\tau))&=
\sum_{i=1}^n\left(\frac{\partial f(p)}{\partial a_i}a_i
+\frac{\partial f(p)}{\partial b_i}b_i\right)+2\epsilon\frac{\partial f(p)}{\partial \tau}\tau+2\epsilon^3\frac{\partial^2 f(p)}{\partial\tau^2}\tau^2\\
&\hspace{4mm}
+\epsilon\sum_{i,j=1}^n\left(\frac{\partial^2 f(p)}{\partial a_i\partial a_j}a_ia_j
+\frac{\partial^2 f(p)}{\partial b_i\partial b_j}b_ib_j\right)\\
&\hspace{4mm}
+\frac{3}{2}\epsilon^2\sum_{i=1}^n\left(\frac{\partial^2 f(p)}{\partial a_i\partial\tau}a_i\tau
+\frac{\partial^2 f(p)}{\partial b_i\partial \tau}b_i\tau\right)
+O(\epsilon^2(|z|^4+\tau^2)^{\frac{3}{4}})
\end{split}
\end{equation*}
in $B_{\epsilon^{-1}}(0)$. By symmetry, we obtain
\begin{equation*}
\begin{split}
I&=\frac{\epsilon}{2n}\Delta_{\theta_0}f(p)\int_{\mathbb{H}^n}\frac{4^{n+1}|z|^2dzd\tau}{(\tau^2+(1+|z|^2)^2)^{n+1}}+O(\epsilon^{2n})\\
&\hspace{2mm}
+C\epsilon^3\int_{B_{\epsilon^{-1}}(0)}\frac{\tau^2dzd\tau}{(\tau^2+(1+|z|^2)^2)^{n+1}}
+C\epsilon^2\int_{B_{\epsilon^{-1}}(0)}\frac{(|z|^4+\tau^2)^{\frac{3}{4}}dzd\tau}{(\tau^2+(1+|z|^2)^2)^{n+1}}\\
&=\epsilon A_6\Delta_{\theta_0}f(p)+O(\epsilon^{2})
\end{split}
\end{equation*}
where $A_6$ is the constant defined in (\ref{7.38}). On the other hand, the expansion of $f$ in $B_{\epsilon^{-1}}(0)$ to the first order
\begin{equation*}
\begin{split}
&\frac{\partial}{\partial\epsilon}f(\Psi_\epsilon(z,\tau))=
\sum_{j=1}^n\left(\frac{\partial f(p)}{\partial a_j}a_j
+\frac{\partial f(p)}{\partial b_j}b_j\right)+O(\epsilon(|z|^4+\tau^2)^{\frac{1}{4}})\\
&=\sum_{j=1}^n\left[\left(\frac{\partial f(p)}{\partial a_j}-\frac{\partial f(p_i)}{\partial a_j}\right)a_j
+\left(\frac{\partial f(p)}{\partial b_j}-\frac{\partial f(p_i)}{\partial b_j}\right)b_j\right]+O(\epsilon\sqrt{|z|^4+\tau^2})
\end{split}
\end{equation*}
gives the uniform estimate
\begin{equation}\label{7.40}
\left|\frac{\partial}{\partial\epsilon}f(\Psi_\epsilon(z,\tau))\right|\leq C|p-p_i||z|+C\epsilon\sqrt{|z|^4+\tau^2}\hspace{2mm}\mbox{ in }\hspace{2mm}B_{\epsilon^{-1}}(0).
\end{equation}
By (\ref{7.39}) and (\ref{7.40}),  we get the estimate
\begin{equation*}
\begin{split}
|II|&\leq C\left[(\epsilon+|p-p_i|)\int_{B_{\epsilon^{-1}}(0)}|v^{2+\frac{2}{n}}-1|
\frac{(1+\sqrt{|z|^4+\tau^2})dzd\tau}{(\tau^2+(1+|z|^2)^2)^{n+1}}\right.\\
&\hspace{1cm}\left.+\int_{\mathbb{H}^n\backslash B_{\epsilon^{-1}}(0)}|v^{2+\frac{2}{n}}-1|
\frac{(|z|^4+\tau^2)^{\frac{1}{4}}dzd\tau}{(\tau^2+(1+|z|^2)^2)^{n+1}}\right]\\
&\leq C(\epsilon+|p-p_i|)\|v-1\|_{S_1^2(S^{2n+1},\theta_0)}
\end{split}
\end{equation*}
by (\ref{6.56}) and (\ref{6.3}).
Thus, (\ref{7.34}) follows from the estimates above and (a). Moreover, if $\Delta_{\theta_0}f(p)>0$, the lower bound of
$E(u)=E(v)\geq R_{\theta_0}\mbox{Vol}(S^{2n+1},\theta_0)^{\frac{1}{n+1}}=\displaystyle\frac{n(n+1)}{2}\mbox{Vol}(S^{2n+1},\theta_0)^{\frac{1}{n+1}}$ in view of Lemma 2.3 in \cite{Ho3} and $u\in B_{r_0}(p_i)\subset C^\infty_*$, together with (\ref{7.34})
derive the estimate (\ref{7.35}).\\
(c) For any $q\in T_p(S^{2n+1})$, as shown in (\ref{6.53}), we obtain the expansion of $d(f\circ\phi_{-p,\epsilon})\cdot q$ around $p$
as
\begin{equation*}
\begin{split}
&d(f\circ\phi_{-p,\epsilon})\cdot q-df(p)\cdot q\\
&=
\frac{\epsilon}{2}\sum_{i,j=1}^n\left(\frac{\partial^2 f(p)}{\partial a_i\partial a_j}\Big|_{(z,\tau)=(0,0)}a_i\widetilde{a}_j
+\frac{\partial^2 f(p)}{\partial b_i\partial b_j}\Big|_{(z,\tau)=(0,0)}b_i\widetilde{b}_j\right)\\
&\hspace{4mm}
+\frac{1}{2}\sum_{i=1}^n\left(\frac{\partial^2 f(p)}{\partial a_i\partial\tau}\Big|_{(z,\tau)=(0,0)}(\epsilon\widetilde{a_i}\tau+\epsilon^2a_i\widetilde{\tau})
+\frac{\partial^2 f(p)}{\partial b_i\partial \tau}\Big|_{(z,\tau)=(0,0)}(\epsilon\widetilde{b_i}\tau+\epsilon^2b_i\widetilde{\tau})\right)\\
&\hspace{4mm}+\frac{\epsilon^2}{2}\frac{\partial^2 f(p)}{\partial\tau^2}\Big|_{(z,\tau)=(0,0)}\tau\widetilde{\tau}
+O(\epsilon^2(|z|^4+\tau^2)^{\frac{3}{4}})
\end{split}
\end{equation*}
where $q=(\widetilde{a}_1,...,\widetilde{a}_n,\widetilde{b}_1,...,\widetilde{b}_n,\widetilde{\tau})\in T_p(S^{2n+1})$.
Observe that
\begin{equation*}
\begin{split}
&\frac{\partial E_f(u)}{\partial p}\cdot q+\frac{n}{n+1}E(v)A^{-\frac{2n+1}{n+1}}\mbox{Vol}(S^{2n+1},\theta_0)df(p)\cdot q\\
&=-\frac{n}{n+1}E(v)A^{-\frac{2n+1}{n+1}}\int_{S^{2n+1}}(d(f\circ\phi_{-p,\epsilon})\cdot q-df(p)\cdot q)v^{2+\frac{2}{n}}dV_{\theta_0}\\
&=-\frac{n}{n+1}E(v)A^{-\frac{2n+1}{n+1}}\left[\int_{S^{2n+1}}(d(f\circ\phi_{-p,\epsilon})\cdot q-df(p)\cdot q)dV_{\theta_0}\right.\\
&\hspace{4mm}+\left.\int_{S^{2n+1}}(d(f\circ\phi_{-p,\epsilon})\cdot q-df(p)\cdot q)(v^{2+\frac{2}{n}}-1)dV_{\theta_0}\right]\\
&=-\frac{n}{n+1}E(v)A^{-\frac{2n+1}{n+1}}(I_1+I_2),
\end{split}
\end{equation*}
then the assertion (\ref{7.36}) follows by
$$|I_1|\leq C\epsilon^2|q|\hspace{2mm}\mbox{ and }\hspace{2mm}|I_2|\leq C\epsilon\|v-1\|_{S_1^2(S^{2n+1},\theta_0)}|q|,$$
as similarly obtained in (a).\\
(d) By a direct computation, we have
\begin{equation*}
\begin{split}
\left\langle\frac{\partial}{\partial v}E_f(u),v-1\right\rangle
&=2A^{-\frac{n}{n+1}}\left[\int_{S^{2n+1}}\left((2+\frac{2}{n})|\nabla_{\theta_0}v|^2_{\theta_0}+R_{\theta_0}v(v-1)\right)dV_{\theta_0}\right.\\
&\hspace{2.2cm}\left.-E(v)A^{-1}\int_{S^{2n+1}}f\circ\phi_{-p,\epsilon} v^{\frac{n+2}{n}}(v-1)dV_{\theta_0}\right]\\
&=2A^{-\frac{n}{n+1}}\left[\int_{S^{2n+1}}R_{\theta_0}(v-1)^2dV_{\theta_0}-\int_{S^{2n+1}}R_{\theta_0}(v^{\frac{n+2}{n}}-1)(v-1)dV_{\theta_0}\right.\\
&\hspace{2.2cm}\left.+(2+\frac{2}{n})\int_{S^{2n+1}}|\nabla_{\theta_0}v|^2_{\theta_0}dV_{\theta_0}+I\right],
\end{split}
\end{equation*}
where
\begin{equation}\label{7.41}
\begin{split}
I&=-\int_{S^{2n+1}}(E(v)A^{-1}f\circ\phi_{-p,\epsilon}-R_{\theta_0})v^{\frac{n+2}{n}}(v-1)dV_{\theta_0}\\
&=-(E(v)-R_{\theta_0}\mbox{Vol}(S^{2n+1},\theta_0))A^{-1}\int_{S^{2n+1}}f\circ\phi_{-p,\epsilon}v^{\frac{n+2}{n}}(v-1)dV_{\theta_0}\\
&\hspace{2mm}-R_{\theta_0}A^{-1}\int_{S^{2n+1}}(f\circ\phi_{-p,\epsilon}\cdot\mbox{Vol}(S^{2n+1},\theta_0)-A)v^{\frac{n+2}{n}}(v-1)dV_{\theta_0}\\
&=I_1+I_2.
\end{split}
\end{equation}
In the following, we use the notation in the proof of Lemma \ref{lem6.6}. By the identity
\begin{equation}\label{7.42}
v^{\frac{n+2}{n}}-1=\frac{n+2}{n}(v-1)+o(|v-1|),
\end{equation}
and (\ref{6.36}), we obtain
\begin{equation*}
\begin{split}
&\int_{S^{2n+1}}R_{\theta_0}(v-1)^2dV_{\theta_0}-\int_{S^{2n+1}}R_{\theta_0}(v^{\frac{n+2}{n}}-1)(v-1)dV_{\theta_0}
+(2+\frac{2}{n})\int_{S^{2n+1}}|\nabla_{\theta_0}v|^2_{\theta_0}dV_{\theta_0}\\
&=(2+\frac{2}{n})\int_{S^{2n+1}}|\nabla_{\theta_0}v|^2_{\theta_0}dV_{\theta_0}-\frac{2R_{\theta_0}}{n}\int_{S^{2n+1}}(v-1)^2dV_{\theta_0}
+o(1)\|v-1\|^2_{S_1^2(S^{2n+1},\theta_0)}\\
&=(2+\frac{2}{n})\left(\sum_{i=1}^\infty\lambda_i|v^i|^2-\frac{n}{2}\sum_{i=0}^\infty|v^i|^2\right)+o(1)\|v-1\|^2_{S_1^2(S^{2n+1},\theta_0)}\\
&\geq(2+\frac{2}{n})\left(\frac{\lambda_{2n+3}-n/2}{\lambda_{2n+3}+1}\right)\sum_{i=2n+3}^\infty(\lambda_{i}+1)|v^i|^2+o(1)\|v-1\|^2_{S_1^2(S^{2n+1},\theta_0)}\\
&\geq C_0\|v-1\|^2_{S_1^2(S^{2n+1},\theta_0)}.
\end{split}
\end{equation*}
On the other hand, we can estimate (\ref{7.41}) as follows. By (\ref{7.33}), (\ref{7.42}), and Lemma \ref{lem7.3}, we have
$$|I_1|\leq C\|v-1\|_{S_1^2(S^{2n+1},\theta_0)}^3=o(1)\|v-1\|_{S_1^2(S^{2n+1},\theta_0)}^2.$$
By (\ref{6.55}), (\ref{7.33}), and the fact that
$$|df(p)|\rightarrow 0\hspace{2mm}\mbox{ as }r_0\rightarrow 0,$$
we also have
\begin{equation*}
\begin{split}
|I_2|&\leq C\left|\mbox{Vol}(S^{2n+1},\theta_0)\int_{S^{2n+1}}(f\circ\phi_{-p,\epsilon}-f(p))v^{\frac{n+2}{n}}(v-1)dV_{\theta_0}\right|\\
&\hspace{2mm}+C\left|(f(p)\mbox{Vol}(S^{2n+1},\theta_0)-A)\int_{S^{2n+1}}v^{\frac{n+2}{n}}(v-1)dV_{\theta_0}\right|\\
&\leq C\big(\|f\circ\phi_{-p,\epsilon}-f(p)\|_{L^2(S^{2n+1},\theta_0)}+|f(p)\mbox{Vol}(S^{2n+1},\theta_0)-A|\big)\cdot\|v-1\|_{S_1^2(S^{2n+1},\theta_0)}\\
&\leq(o(1)\epsilon+C\epsilon^2)\|v-1\|_{S_1^2(S^{2n+1},\theta_0)}=o(1)\epsilon\|v-1\|_{S_1^2(S^{2n+1},\theta_0)}.
\end{split}
\end{equation*}
Therefore, the above estimates yields (\ref{7.37}).
\end{proof}

Now we are going to complete the proof of Proposition \ref{prop7.1} by proving part (iii) and (iv).
Recall our convention in the proof of part (ii). Choose $\nu\leq r_0^3\leq\nu_0$ and $r_0>0$ sufficiently small
such that $B_{r_0}(p_i)\subset L_{\beta_i+\nu}\setminus L_{\beta_i-\nu}$. Similar to (ii), for any $1\leq i\leq N$ and a sufficient large $T>0$,
we can show that $u(T,L_{\beta_i+\nu_0})\subset L_{\beta_i+\nu}$. In addition, for any $u_0\in L_{\beta_i+\nu_0}$, if necessary, choosing a larger
$T=T(u_0)>0$, we either have $u(T,u_0)\in L_{\beta_i-\nu_0}$ or $u(t,u_0)\in B_{r_0/4}(p_i)$ for some $t\in [0,T]$.

For $u=(\epsilon, p, \nu)\in B_{r_0}(p_i)$, we have
\begin{equation}\label{7.43}
\begin{split}
&E_f(u)-\beta_i\\
&=\frac{E(u)}{(\int_{S^{2n+1}}fdV_{\theta})^{\frac{n}{n+1}}}-R_{\theta_0}\mbox{Vol}(S^{2n+1},\theta_0)^{\frac{1}{n+1}}f(p_i)^{-\frac{n}{n+1}}\\
&=\frac{E(v)}{(\int_{S^{2n+1}}f\circ\phi_{-p,\epsilon}dV_{h})^{\frac{n}{n+1}}}-R_{\theta_0}\mbox{Vol}(S^{2n+1},\theta_0)^{\frac{1}{n+1}}f(p_i)^{-\frac{n}{n+1}}\\
&=A^{-\frac{n}{n+1}}\Big[\Big(E(v)-R_{\theta_0}\mbox{Vol}(S^{2n+1},\theta_0)\Big)\\
&\hspace{4mm}-R_{\theta_0}f(p_i)^{-\frac{n}{n+1}}\mbox{Vol}(S^{2n+1},\theta_0)^{\frac{1}{n+1}}\Big(A^{\frac{n}{n+1}}
-f(p_i)^{\frac{n}{n+1}}\mbox{Vol}(S^{2n+1},\theta_0)^{\frac{n}{n+1}}\Big)\Big]
\end{split}
\end{equation}
where $A=\displaystyle\int_{S^{2n+1}}f\circ\phi_{-p,\epsilon}dV_{h}$. Note that
\begin{equation*}
\begin{split}
&A^{\frac{n}{n+1}}-f(p_i)^{\frac{n}{n+1}}\mbox{Vol}(S^{2n+1},\theta_0)^{\frac{n}{n+1}}\\
&=f(p_i)^{\frac{n}{n+1}}\mbox{Vol}(S^{2n+1},\theta_0)^{\frac{n}{n+1}}
\left[\left(1+\frac{A-f(p_i)\mbox{Vol}(S^{2n+1},\theta_0)}{f(p_i)\mbox{Vol}(S^{2n+1},\theta_0)}\right)^{\frac{n}{n+1}}-1\right]\\
&=f(p_i)^{\frac{n}{n+1}}\mbox{Vol}(S^{2n+1},\theta_0)^{\frac{n}{n+1}}\left[\frac{n}{n+1}
\frac{A-f(p_i)\mbox{Vol}(S^{2n+1},\theta_0)}{f(p_i)\mbox{Vol}(S^{2n+1},\theta_0)}+O(|A-f(p_i)\mbox{Vol}(S^{2n+1},\theta_0)|^2)\right]
\end{split}
\end{equation*}
and
$$A-f(p_i)\mbox{Vol}(S^{2n+1},\theta_0)=A-f(p)\mbox{Vol}(S^{2n+1},\theta_0)+\mbox{Vol}(S^{2n+1},\theta_0)\big(f(p)-f(p_i)\big),$$
together with Lemma \ref{lem7.4}(a), we find that
\begin{equation}\label{7.44}
\begin{split}
&f(p_i)^{\frac{1}{n+1}}\big[A^{\frac{n}{n+1}}
-f(p_i)^{\frac{n}{n+1}}\mbox{Vol}(S^{2n+1},\theta_0)^{\frac{n}{n+1}}\big]\\
&=\mbox{Vol}(S^{2n+1},\theta_0)^{-\frac{1}{n+1}}\frac{n}{n+1}A_6\epsilon^2\Delta_{\theta_0}f(p)
+\mbox{Vol}(S^{2n+1},\theta_0)^{\frac{n}{n+1}}\frac{n}{n+1}(|p^+|^2-|p^-|^2)\\
&\hspace{4mm}+o(1)(\epsilon^2+|p-p_i|^2+\|v-1\|^2_{S_1^2(S^{2n+1},\theta_0)}).
\end{split}
\end{equation}
Hence, from (\ref{7.43}) and Lemma \ref{lem7.3}, we conclude that
$$E_f(u)-\beta_i\geq C\|v-1\|_{S_1^2(S^{2n+1},\theta_0)}-C(\epsilon^2+|p-p_i|^2).$$
Consequently, for $u\in L_{\beta_i+\nu}\cap B_{r_0}(p_i)$, we have
\begin{equation}\label{7.45}
\|v-1\|_{S_1^2(S^{2n+1},\theta_0)}\leq C(\epsilon^2+|p-p_i|^2+r_0^3).
\end{equation}

Now we still use the same normalization (\ref{7.32.5}) in $t$ used in the proof of part (ii). Now with this scale, (\ref{7.44}), Proposition \ref{prop2.2},
Lemma \ref{lem6.5} and \ref{lem6.7} yield that
\begin{equation*}
\begin{split}
\frac{d}{d\tau}E_f(U(\tau,u_0))&=\epsilon^{-2}\frac{d}{dt}E_f(u(t(\tau),u_0))\\
&\leq-C_3(|f'(p)|^2+\epsilon^2|\Delta_{\theta_0}f(p)|^2)\\
&\leq-C_4(\epsilon^2+|p-p_i|^2),
\end{split}
\end{equation*}
with uniform constants $C_3>0$, $C_4>0$ and for all $u_0\in B_{r_0}(p_i)$. Note that the last inequality holds because with the coordinates we chose,
$|f'(p)|^2=|p-p_i|^2$ and observe that the non-degeneracy condition implies that $|\Delta_{\theta_0}f(p)|>0$ if
$r_0$ is sufficiently small since $p_i$ is a critical point of $f$.

Thus for each $u_0\in B_{r_0}\backslash B_{r_0/4}(p_i)$, we have
\begin{equation}\label{7.46}
\frac{d}{d\tau}E_f(U(\tau,u_0))\leq -C_5r_0^2,
\end{equation}
with a uniform constant $C_5>0$ in view of (\ref{7.45}). Hence, transversal time of the annular region
$L_{\beta_i+\nu}\cap(B_{r_0/2}\backslash B_{r_0/4}(p_i))$ is uniformly positive. Choosing sufficiently large $T^*>0$ and sufficiently small
$\nu>0$, we have
\begin{equation}\label{7.47}
U(T^*,L_{\beta_i+\nu})\subset L_{\beta_i-\nu}\cup(B_{r_0/2}(p_i)\cap L_{\beta_i+\nu}).
\end{equation}
Then
$$T_\nu(u_0)=\min\{T^*,\inf\{t:E_f(U(t,u_0))\leq\beta_i-\nu\}\}$$
continuously depends on $u_0$. Thus the map $(t,u_0)\mapsto U(\min\{t,T_\nu(u_0)\},u_0)$ gives a homotopy equivalence
of $L_{\beta_i+\nu}$ with a subset of $L_{\beta_i-\nu}\cup(B_{r_0/2}(p_i)\cap L_{\beta_i+\nu})$.

With all these preparations, now we are ready to prove part (iii) and (iv).

\begin{proof}[Proof of Proposition \ref{prop7.1} \emph{(iii)}]
Assume $\Delta_{\theta_0}f(p_i)>0$. For $u=(\epsilon,p,v)\in B_{r_0}(p_i)$, denote the vector field $X(u)$ on $B_{r_0}(p_i)$
by setting
$$X(u)=(1,0,0).$$
Then let $G(u,s)$ be the solution of the flow equation
$$\frac{d}{ds}G(u,s)=X(G(u,s)),$$
with initial data $G(u,0)=u$. Since $X$ is transversal to $\partial B_{r_0}(p_i)$ and $G(u,r_0)\notin B_{r_0}(p_i)$, there exists
a first time $0\leq s=s(u)\leq r_0$ such that $G(u,s(u))\notin B_{r_0}(p_i)$ and furthermore the map $u\mapsto s(u)$ is continuous.
Then $H(u,s)=G(u,\min\{s,s(u)\})$ defines a homotopy $H:\overline{B_{r_0}(p_i)}\times[0,r_0]\rightarrow\overline{B_{r_0}(p_i)}$ such that
$$H(B_{r_0}(p_i),r_0)\subset \partial B_{r_0}(p_i)\hspace{2mm}\mbox{ and }\hspace{2mm} H(\cdot,s)|_{\partial B_{r_0}(p_i)}=id,\hspace{2mm}0\leq s\leq r_0.$$
Then by (\ref{7.35}), letting $u_s=H(u,s)$, we have
\begin{equation*}
\begin{split}
\frac{d}{ds}E_f(u_s)&=dE_f(u_s)\cdot X(u_s)=\frac{\partial}{\partial\epsilon}E_f(u_s)\\
&\leq -\frac{n^2}{2}f(p)^{-\frac{2n+1}{n+1}}\mbox{Vol}(S^{2n+1},\theta_0)^{-\frac{2n}{n+1}}
\epsilon A_6 \Delta_{\theta_0}f(p)+o(r_0).
\end{split}
\end{equation*}
It follows that there exists a uniform constant $C_5>0$ such that
$$E_f(H(u,r))\leq E_f(u)-C_5r_0^2\leq \beta_i-\nu\hspace{2mm}\mbox{ for all }u\in B_{r_0/2}(p_i)\cap L_{\beta_i+\nu}$$
if $r_0>0$ is sufficiently small. Composing $H$ with the flow $(t,u_0)\mapsto U(\min\{t,T_\nu(u_0)\},u_0)$, we then obtain a homotpy
$K:L_{\beta_i+\nu_0}\times[0,1]\rightarrow L_{\beta_i+\nu_0}$ such that $K(L_{\beta_i+\nu_0},1)\subset L_{\beta_i+\nu_0}$. Moreover,
by the choice of $r_0>0$, it is easy to show that
$$K(\cdot, r)|_{L_{\beta_i-\nu}}=id\hspace{2mm}\mbox{ for }0\leq r\leq 1.$$
Finally, for each $u_0\in L_{\beta_i-\nu}$, let $T_{\nu_0}(u_0)=\inf\{t\geq 0: E_f(U(t,u_0))\leq \beta_i-\nu_0\}.$ As in the proof of part (ii), the number
$T_{\nu_0}(u_0)$ are uniformly bounded and continuously depend on $u_0$. By composing $K$ with the flow $(t,u)\rightarrow U(\min\{t,T_{\nu_0}(u_0)\},u_0)$,
we therefore obtain a homotopy equivalence of $L_{\beta_i+\nu}$ with $L_{\beta_i-\nu}$. This finishes the proof
of part (iii).
\end{proof}

\begin{proof}[Proof of Proposition \ref{prop7.1} \emph{(iv)}]
Suppose $\Delta_{\theta_0}f(p_i)<0$. From (\ref{7.43}), (\ref{7.44}), and with the constant $C_2$ in Lemma \ref{lem7.3},
we find that
\begin{equation*}
\begin{split}
E_f(u)-\beta_i&\geq A^{-\frac{n}{n+1}}\left[C_2\|v-1\|_{S_1^2(S^{2n+1},\theta_0)}^2-\frac{n}{n+1}R_{\theta_0}A_6\epsilon^2f(p_i)^{-1}\Delta_{\theta_0}f(p)\right.\\
&\hspace{2cm}+\frac{n}{n+1}R_{\theta_0}\mbox{Vol}(S^{2n+1},\theta_0)f(p_i)^{-1}(|p^-|^2-|p^+|^2)\\
&\hspace{2cm}\left.\vphantom{\frac{n}{n+1}}+o(1)(\epsilon^2+|p-p_i|^2+\|v-1\|^2_{S_1^2(S^{2n+1},\theta_0)})
\right],
\end{split}
\end{equation*}
where $o(1)\to 0$ as $r_0\to 0$. Then we deduce that there exists some number $\delta>0$
with $4\delta^2<\displaystyle\frac{7}{16}\min\{1,r_0^2\}$ such that
\begin{equation}\label{7.48}
\epsilon^2+|p^-|^2+\|v-1\|^2_{S_1^2(S^{2n+1},\theta_0)}\leq r_0^2/4,
\end{equation}
for any $u=(\epsilon,p,v)\in B_{r_0}(p_i)\cap L_{\beta_i+\nu}$ with $|p^+|<2\delta r_0$, provided $r_0>0$ is sufficiently small
and $\nu\leq r_0^3$.

Let $a_+=\max\{a,0\}$ for $a\in\mathbb{R}$. We construct a cut-off function $\eta$ defined by
$\eta=\eta(|p^+|)=\displaystyle\Big(1-\frac{(|p^+|-\delta r_0)_+}{\delta r_0}\Big)_+$ with $\delta>0$ given as above.
For $0\leq r\leq 1$, $u=(\epsilon,p,v)\in B_{r_0}(p_i)$, choose $\epsilon_0>0$ sufficiently small such that $0<\displaystyle\frac{1}{3}\epsilon<\epsilon_0<\frac{2}{3}\epsilon$, and define $u_r$ by
$$u_r=(\epsilon_r,p_r,v_r)=(\epsilon+(\epsilon_0-\epsilon)r\eta,p-r\eta p^-,((1-r\eta)v^{2+\frac{2}{n}}+r\eta)^{\frac{n}{2n+2}}).$$

First we claim that if $\|v-1\|_{C^1_P(S^{2n+1})}$ is sufficiently small, then $u_r\in B_{r_0}(p_i)$. To see this, we first
consider the function $g(r)$ with $\eta=1$:
$$g(r)=(\epsilon+(\epsilon_0-\epsilon)r)^2+|p-r p^-|^2+\|v_r-1\|^2_{S_1^2(S^{2n+1},\theta_0)}.$$
Then we have
\begin{equation*}
\begin{split}
g'(r)&=2(\epsilon+(\epsilon_0-\epsilon)r)(\epsilon_0-\epsilon)+2\langle p-r p^-,-p^-\rangle\\
&\hspace{4mm}+2\int_{S^{2n+1}}\left[\langle\nabla_{\theta_0}v_r,\nabla_{\theta_0}\frac{dv_r}{dr}\rangle_{\theta_0}+(v_r-1)\frac{dv_r}{dr}\right]dV_{\theta_0}.
\end{split}
\end{equation*}
A simple calculation gives $\displaystyle\frac{d^2v_r}{dr^2}=-\frac{n+2}{n}v_r^{-1}\left(\frac{dv_r}{dr}\right)^2$. Thus we have
\begin{equation*}
\begin{split}
g''(r)
&=2(\epsilon_0-\epsilon)^2+2|p^-|^2
+2\int_{S^{2n+1}}\left[\left|\nabla_{\theta_0}\frac{dv_r}{dr}\right|_{\theta_0}+\left(\frac{dv_r}{dr}\right)^2\right]dV_{\theta_0}
\\
&\hspace{4mm}+2\int_{S^{2n+1}}\left[\langle\nabla_{\theta_0}v_r,\nabla_{\theta_0}\frac{d^2v_r}{dr^2}\rangle_{\theta_0}+(v_r-1)\frac{d^2v_r}{dr^2}\right]dV_{\theta_0}\\
&=2(\epsilon_0-\epsilon)^2+2|p^-|^2
+2\int_{S^{2n+1}}\left[\left|\nabla_{\theta_0}\frac{dv_r}{dr}\right|_{\theta_0}+\left(\frac{dv_r}{dr}\right)^2\right]dV_{\theta_0}
\\
&\hspace{4mm}-\frac{2(n+2)}{n}\int_{S^{2n+1}}\left[\langle\nabla_{\theta_0}v_r,\nabla_{\theta_0}\left(v_r^{-1}\left(\frac{dv_r}{dr}\right)^2\right)\rangle_{\theta_0}
+(v_r-1)v_r^{-1}\left(\frac{dv_r}{dr}\right)^2\right]dV_{\theta_0}\\
&=2(\epsilon_0-\epsilon)^2+2|p^-|^2
+2\int_{S^{2n+1}}\left[\left|\nabla_{\theta_0}\frac{dv_r}{dr}\right|_{\theta_0}+\left(\frac{dv_r}{dr}\right)^2\right]dV_{\theta_0}
\\
&\hspace{4mm}+\frac{2(n+2)}{n}\int_{S^{2n+1}}|\nabla_{\theta_0}v_r|^2_{\theta_0}v_r^{-2}\left(\frac{dv_r}{dr}\right)^2dV_{\theta_0}\\
&\hspace{4mm}-\frac{2(n+2)}{n}\int_{S^{2n+1}}\left[\frac{n}{n+1}v_r^{-(2+\frac{2}{n})}\frac{dv_r}{dr}
\langle\nabla_{\theta_0}v_r^{2+\frac{2}{n}},\nabla_{\theta_0}\frac{dv_r}{dr}\rangle_{\theta_0}
+(v_r-1)v_r^{-1}\left(\frac{dv_r}{dr}\right)^2\right]dV_{\theta_0}.
\end{split}
\end{equation*}
Now observe that $|v_r^{-(2+\frac{2}{n})}\nabla_{\theta_0}v_r^{2+\frac{2}{n}}|_{\theta_0}=o(1)$ and $(v_r-1)v_r^{-1}=o(1)$
if $\|v-1\|_{C^1_P(S^{2n+1})}$ is sufficiently small. Hence, using the H\"{o}lder's and Young's inequality, we get
\begin{equation*}
\begin{split}
g''(r)&\geq 2(\epsilon_0-\epsilon)^2+2|p^-|^2
+(2+o(1))\int_{S^{2n+1}}\left[\left|\nabla_{\theta_0}\frac{dv_r}{dr}\right|_{\theta_0}+\left(\frac{dv_r}{dr}\right)^2\right]dV_{\theta_0}\geq 0.
\end{split}
\end{equation*}
This shows that $g''(r)\geq 0$ for all $r\in [0,1]$. Thus we conclude that
$$g(r)\leq\max\{g(0),\,g(1)\}=\max\{r_0^2,\,\epsilon_0^2+|p^+|^2\}.$$
Now note that if $\eta r>0$, then $\eta>0$, hence $|p^+|<2\delta r_0$ by the definition of $\eta$. Thus by the estimate (\ref{7.48}), we have
$\epsilon^2\leq\displaystyle\frac{r_0^2}{4}$. This implies that $\epsilon_0^2\leq\displaystyle\frac{r_0^2}{9}$ and $4\delta^2\leq\displaystyle 1-\frac{2}{9}$, we have $g(1)\leq\epsilon_0^2+|p^+|^2\leq\displaystyle(\frac{1}{9}+4\delta^2)r_0^2\leq r_0^2.$ Therefore, we have
$g(\eta r)\leq \max\{g(0),\,g(1)\}\leq r_0^2$.

Therefore under smallness condition of $\|u-1\|_{C^1_P(S^{2n+1})}$, (which can be guaranteed by the construction of homotopies below), one has shown that
the homotopy $H_1: \overline{B_{r_0}(p_i)}\cap L_{\beta_i+\nu}\times[0,1]\rightarrow\overline{B_{r_0}(p_i)}$ given by $H_1(u,r)=u_r$ is well defined
and $H_1(\cdot, 1)$ maps the set $\{u\in B_{r_0}(p_i)\cap L_{\beta_i+\nu}: |p^+|<\delta r_0\}$ to the set
$B^+_{\delta r_0}$, where for $0<\rho<r_0$,
$$B^+_\rho:=\{u\in B_{r_0}(p_i): \epsilon=\epsilon_0, p^-=0, |p^+|<\rho, v=1\}$$
which is diffeomorphic to the unit ball of dimension $2n+1-ind(f,p_i)$.

Now we need to show that the energy level of $u_r$ is under control; that is, $E_f(u_r)\leq\beta_i+\nu$ if $\nu$ is sufficiently small.
To do this, we observe that
\begin{equation}\label{7.51}
\begin{split}
\frac{d}{dr}E_f(u_r)&=\eta\left(\frac{\partial E_f(u_r)}{\partial\epsilon_r}(\epsilon_0-\epsilon)-\frac{\partial E_f(u_r)}{\partial p_r}p^-\right.\\
&\hspace{4mm}\left.-\frac{n}{2n+2}\left\langle\frac{\partial E_f(u_r)}{\partial v_r},v_r^{-\frac{n+2}{n}}(v^{2+\frac{2}{n}}-1)\right\rangle\right)\\
&=\eta(1-r\eta)^{-1}\left(\frac{\partial E_f(u_r)}{\partial\epsilon_r}(\epsilon_0-\epsilon_r)-\frac{\partial E_f(u_r)}{\partial p_r}p_r^-\right.\\
&\hspace{4mm}\left.-\frac{n}{2n+2}\left\langle\frac{\partial E_f(u_r)}{\partial v_r},v_r^{-\frac{n+2}{n}}(v_r^{2+\frac{2}{n}}-1)\right\rangle\right)\\
&:=\eta(1-r\eta)^{-1}D:=\eta(1-r\eta)^{-1}(I-II-III).
\end{split}
\end{equation}
First we deal with the last term:
\begin{equation*}
\begin{split}
III&:=\frac{n}{2n+2}\left\langle\frac{\partial E_f(u_r)}{\partial v_r},v_r^{-\frac{n+2}{n}}(v_r^{2+\frac{2}{n}}-1)\right\rangle\\
&=\frac{n}{n+1}A_r^{-\frac{n}{n+1}}\left[\int_{S^{2n+1}}(2+\frac{2}{n})\langle\nabla_{\theta_0}v_r,
\nabla_{\theta_0}(v_r^{-\frac{n+2}{n}}(v_r^{2+\frac{2}{n}}-1))\rangle_{\theta_0}\right.\\
&\hspace{4mm}\left.
+\int_{S^{2n+1}}R_{\theta_0}v_rv_r^{-\frac{n+2}{n}}(v_r^{2+\frac{2}{n}}-1)dV_{\theta_0}\right]\\
&\hspace{4mm}-\frac{n}{n+1}A_r^{-\frac{n}{n+1}}E(v_r)A_r^{-1}\int_{S^{2n+1}}f\circ\phi_{-p_r,\epsilon_r}(v_r^{2+\frac{2}{n}}-1)dV_{\theta_0}\\
&:=\frac{n}{n+1}A_r^{-\frac{n}{n+1}}(III_1+III_2),
\end{split}
\end{equation*}
where $A_r=\displaystyle\int_{S^{2n+1}}f\circ\phi_{-p_r,\epsilon_r}v_r^{2+\frac{2}{n}}dV_{\theta_0}$. Since $\|v-1\|_{C^1_P(S^{2n+1})}=o(1)$ is sufficiently small, it yields $\|v_r-1\|_{C^1_P(S^{2n+1})}=o(1)$. We can estimate $III_1$ as follows:
\begin{equation*}
\begin{split}
III_1&=(2+\frac{2}{n})^2\int_{S^{2n+1}}|\nabla_{\theta_0}v_r|^2_{\theta_0}dV_{\theta_0}+R_{\theta_0}\int_{S^{2n+1}}(v_r^{\frac{2}{n}}-1)(v_r^{2+\frac{2}{n}}-1)dV_{\theta_0}\\
&\hspace{4mm}-\frac{(2n+2)(n+2)}{n^2}\int_{S^{2n+1}}|\nabla_{\theta_0}v_r|^2_{\theta_0}(1-v_r^{-(2+\frac{2}{n})})dV_{\theta_0}\\
&=(2+\frac{2}{n})^2\left[\int_{S^{2n+1}}|\nabla_{\theta_0}v_r|^2_{\theta_0}dV_{\theta_0}
-\frac{n}{2}\int_{S^{2n+1}}(v_r-1)^2dV_{\theta_0}\right]\\
&\hspace{4mm}+o(1)\|v_r-1\|_{S_1^2(S^{2n+1},\theta_0)}^2.
\end{split}
\end{equation*}
where in the first equality we have used the fact that
$\displaystyle\int_{S^{2n+1}}(v_r^{2+\frac{2}{n}}-1)dV_{\theta_0}=(1-r\eta)\int_{S^{2n+1}}(v^{2+\frac{2}{n}}-1)dV_{\theta_0}=0$. Observe that the following estimate holds true by the same argument of proving (\ref{6.31}):
\begin{equation}\label{7.49}
\int_{S^{2n+1}}|\nabla_{\theta_0}v_r|^2_{\theta_0}dV_{\theta_0}
-\frac{n}{2}\int_{S^{2n+1}}(v_r-1)^2dV_{\theta_0}\geq\left[\lambda_{2n+3}-\frac{n}{2}+o(1)\right]\|v_r-1\|^2_{S_1^2(S^{2n+1},\theta_0)},
\end{equation}
which will be used in controlling $D$. For $III_2$, we can rewrite it as
\begin{equation*}
III_2=-E(v_r)\left(1-A_r^{-1}\int_{S^{2n+1}}f\circ\phi_{-p_r,\epsilon_r}dV_{\theta_0}\right).
\end{equation*}
Note that
\begin{equation*}
\begin{split}
&\int_{S^{2n+1}}f\circ\phi_{-p_r,\epsilon_r}dV_{\theta_0}-f(p_r)\mbox{Vol}(S^{2n+1},\theta_0)\\
&=\left(A_r-f(p_r)\mbox{Vol}(S^{2n+1},\theta_0)\right)+\int_{S^{2n+1}}f\circ\phi_{-p_r,\epsilon_r}(1-v_r^{2+\frac{2}{n}})dV_{\theta_0}\\
&=\left(A_r-f(p_r)\mbox{Vol}(S^{2n+1},\theta_0)\right)+\int_{S^{2n+1}}(f\circ\phi_{-p_r,\epsilon_r}-f(p_r))(1-v_r^{2+\frac{2}{n}})dV_{\theta_0}\\
&=\left(A_r-f(p_r)\mbox{Vol}(S^{2n+1},\theta_0)\right)+o(1)\epsilon_r\|v_r-1\|_{S_1^2(S^{2n+1},\theta_0)}
\end{split}
\end{equation*}
where the third equality follows from $\displaystyle\int_{S^{2n+1}}(v_r^{2+\frac{2}{n}}-1)dV_{\theta_0}=0$ and the last equality follows
from (\ref{7.50}). Hence,
\begin{equation*}
\begin{split}
&1-A_r^{-1}\int_{S^{2n+1}}f\circ\phi_{-p_r,\epsilon_r}dV_{\theta_0}\\
&=A_r^{-1}\left[A_r-f(p_r)\mbox{Vol}(S^{2n+1},\theta_0)
-\left(\int_{S^{2n+1}}f\circ\phi_{-p_r,\epsilon_r}dV_{\theta_0}-f(p_r)\mbox{Vol}(S^{2n+1},\theta_0)\right)\right]\\
&=o(1)\epsilon_r\|v_r-1\|_{S_1^2(S^{2n+1},\theta_0)}.
\end{split}
\end{equation*}
Combining these estimates, we obtain
\begin{equation*}
\begin{split}
III&=2(2+\frac{2}{n})A_r^{-\frac{n}{n+1}}\left[\int_{S^{2n+1}}|\nabla_{\theta_0}v_r|^2_{\theta_0}dV_{\theta_0}
-\frac{n}{2}\int_{S^{2n+1}}(v_r-1)^2dV_{\theta_0}\right]\\
&\hspace{4mm}+o(1)(\epsilon_r+\|v_r-1\|_{S_1^2(S^{2n+1},\theta_0)})\|v_r-1\|_{S_1^2(S^{2n+1},\theta_0)}.
\end{split}
\end{equation*}
For $I$ and $II$, we can apply Lemma \ref{lem7.4} to get
\begin{equation*}
II=-\frac{n}{n+1}A_r^{-\frac{n}{n+1}}E(v_r)\mbox{Vol}(S^{2n+1},\theta_0)^{\frac{2n+1}{n+1}}\frac{df(p_r)\cdot p_r^-}{A_r}
+C\epsilon_r(\epsilon_r+\|v_r-1\|_{S_1^2(S^{2n+1},\theta_0)})|p_r^-|
\end{equation*}
and
\begin{equation*}
\begin{split}
I&=-\frac{n}{n+1}A_r^{-\frac{n}{n+1}}E(v_r)
\epsilon_r A_6\frac{\Delta_{\theta_0}f(p_r)}{A_r}(\epsilon_0-\epsilon_r)\\
&\hspace{4mm}+C\big(\epsilon_r^{2}+(\epsilon_r+|p_r-p_i|)\|v_r-1\|_{S_1^2(S^{2n+1},\theta_0)}\big)(\epsilon_r-\epsilon_0).
\end{split}
\end{equation*}

Note that in the local coordinates of $p_i$, $f(p_r)=f(p_i)+|p_r^+|^2-|p_r^-|^2$ and
$df(p_r)\cdot p_r^-=-2|p_r^-|^2$. Therefore, combining the estimates of $I$, $II$,  $III$ and (\ref{7.49}), we obtain
\begin{equation*}
\begin{split}
D&\leq A_r^{-\frac{n}{n+1}}\left\{-\frac{nA_6}{n+1}E(v_r)
\epsilon_r(\epsilon_0-\epsilon_r) \frac{\Delta_{\theta_0}f(p_r)}{A_r}
-\frac{2n}{n+1}E(v_r)\mbox{Vol}(S^{2n+1},\theta_0)^{\frac{2n+1}{n+1}}\frac{|p_r^-|^2}{A_r}\right.\\
&\hspace{4mm}+C|p_r^+|\|v-1\|_{S_1^2(S^{2n+1},\theta_0)}(\epsilon_r-\epsilon_0)
-2(2+\frac{2}{n})\left[\lambda_{2n+3}-\frac{n}{2}+o(1)\right]\|v_r-1\|^2_{S_1^2(S^{2n+1},\theta_0)}\\
&\hspace{4mm}\left.\vphantom{\frac{|p_r^-|^2}{A_r}}+o(1)\big(\epsilon_r(\epsilon_r-\epsilon_0)+\|v-1\|_{S_1^2(S^{2n+1},\theta_0)}^2+|p_r^-|^2\big)\right\}.
\end{split}
\end{equation*}
Now set $$d=\min\left\{\min_{|p-p_i|\leq r_0}\left(-\frac{nA_6}{n+1}E(v_r)\frac{\Delta_{\theta_0}f(p_r)}{A_r}\right),\frac{2n}{n+1}\frac{E(v_r)}{A_r},
2(2+\frac{2}{n})(\lambda_{2n+3}-\frac{n}{2})\right\}.$$
Since $\Delta_{\theta_0}f(p_i)<0$, by continuity, when $r_0$ is sufficiently small, $\Delta_{\theta_0}f(p_r)<0$ if $|p_r-p_i|<r_0$. Note also that $A_6>0$ by (\ref{7.38}), we have $d>0$. We can rewrite the above estimate as
\begin{equation}\label{7.52}
\begin{split}
D&\leq A_r^{-\frac{n}{n+1}}\left\{-\frac{d}{2}\big(\epsilon_r(\epsilon_r-\epsilon_0)+\|v-1\|_{S_1^2(S^{2n+1},\theta_0)}^2+|p_r^-|^2\big)\right.\\
&\hspace{4mm}\left.\vphantom{\frac{d}{2}}+C|p_r^+|\|v-1\|_{S_1^2(S^{2n+1},\theta_0)}(\epsilon_r-\epsilon_0)\right\}:=A_r^{-\frac{n}{n+1}}D_1.
\end{split}
\end{equation}
If $\eta>0$, then by the definition of $\eta$ we have $|p^+|<2\delta r_0$. By definition of $p_r$, $|p_r^+|=|p^+|$, hence,  we have $|p_r^+|<2\delta r_0$. Now if we choose $r_0$ sufficiently small such that $2C\delta r_0<d$, we have
\begin{equation*}
D_1\leq -\frac{d}{2}\left\{\big(\epsilon_r(\epsilon_r-\epsilon_0)+\|v-1\|_{S_1^2(S^{2n+1},\theta_0)}^2+|p_r^-|^2\big)-2\|v-1\|_{S_1^2(S^{2n+1},\theta_0)}(\epsilon_r-\epsilon_0)\right\}.
\end{equation*}
Since $\epsilon_r(\epsilon_r-\epsilon_0)\geq (\epsilon_r-\epsilon_0)^2$, it is easy to see that $D_1<0$ when $\|v-1\|_{S_1^2(S^{2n+1},\theta_0)}$ is sufficiently small, which implies that $D<0$ by (\ref{7.52}). Hence, by (\ref{7.51}), we have
$\displaystyle\frac{d}{dr}E_f(u_r)\leq 0$. Note that when $r=0$, $u_r=u=(\epsilon,p,v)$. Since $u\in B_{r_0}(p_i)\cap L_{\beta_i+\nu}$, we have $E_f(u_r)\leq E_f(u)\leq \beta_i+\nu$ for all $r\in [0,1]$.

Moreover, from (\ref{7.48}) and our choice of $\delta$, we have
$$H(\cdot,r)|_{\partial B_{r_0}(p_i)\cap L_{\beta_i+\nu}}=id,\hspace{2mm}\mbox{ for }0\leq r\leq 1.$$
Denote the vector field $X_1(u)$ as
$$X_1(u)=(0,p^+,0),$$
and $G_1(u,s)$ solves the flow equation
$$\frac{d}{ds}G_1(u,s)=X_1(G_1(u,s)),\hspace{2mm}0\leq s\leq \delta^{-1},$$
with initial data $G_1(u,0)=u$. Notice that $X_1$ is transversal to $\partial B_{r_0}(p_i)$ within $L_{\beta_i+\nu}$; in addition, for any
$u\in B_{r_0}(p_i)\cap L_{\beta_i+\nu}$ with $|p^+|\geq \delta r_0$, there holds $G_1(u,\delta^{-1})\not\in B_{r_0}(p_i)$ for some sufficiently small $\delta>0$, then there exists a first time $0\leq s_1\leq r(u)\leq\delta^{-1}$ such that $G_1(u,s_1)\not\in B_{r_0}(p_i)$, and the map $u\mapsto s_1(u)$ is continuous.
We extend this map to whole set $B_{r_0}(p_i)\cap L_{\beta_i+\nu}$ by letting $s_1(u)=\delta^{-1}$ whenever $G_1(u,s)\in B_{r_0}(p_i)$
for all $s\in [0,r_1]$. Setting $H_2(u,s)=G_1(u,\min\{s,s_1(u)\})=u_s$, with a uniform $C>0$, we obtain by
(\ref{7.36})  that
\begin{equation*}
\begin{split}
\frac{d E_f(u_s)}{ds}&=\frac{\partial E_f(u_s)}{\partial p}\cdot p^+\\
&\leq-\frac{n}{n+1}E(u_s)f(p)^{-\frac{2n+1}{n+1}}df(p)\cdot p^++ C\epsilon(\epsilon+\|v-1\|_{S_1^2(S^{2n+1},\theta_0)})|p^+|\\
&\leq-\frac{2n}{n+1}R_{\theta_0}\mbox{Vol}(S^{2n+1},\theta_0)f(p)^{-\frac{2n+1}{n+1}}|p^+|^2+Cr_0^3\leq -C r_0^2
\end{split}
\end{equation*}
if $|p^+|\geq\delta r_0$. Here we have used (\ref{6.97}) in the second inequality. Then, let $H$ be the composition of $H_1$ with $H_2$,
for sufficiently small $r_0>0$, it yields a homotopy $H:\overline{B_{r_0}(p_i)}\cap L_{\beta_i+\nu}\times [0,1]\rightarrow
\overline{B_{r_0}(p_i)}\cap L_{\beta_i+\nu}$ such that
$$B_{r_0}(p_i)\cap L_{\beta_i+\nu}\subset B_{\delta r_0}^+\cup(\partial B_{r_0}(p_i)\cap L_{\beta_i+\nu})$$
and
$$H(\cdot, r)|_{\partial B_{r_0}(p_i)\cap L_{\beta_i+\nu}}=id,\hspace{2mm}0\leq r\leq 1.$$
Composing $H$ with $U(T,\cdot)$ where $T=T(u_0)=\inf\{t\geq 0: E_f(U(t,u_0))\leq \beta_i-\nu\}$
for $u\in L_{\beta_i+\nu}$. From (\ref{7.46}) and (\ref{7.47}), since the transversal time of the annular region
$L_{\beta_i+\nu}\cap(B_{r_0}(p_i)\setminus B_{r_0/4}(p_i))$ is uniformly positive, then it follows that
$U(T,\partial B_{r_0}(p_i)\cap L_{\beta_i+\nu})\subset L_{\beta_i-\nu}$. Therefore, the proof can be followed as in part (iii).
\end{proof}

This completes the proof of Proposition \ref{prop7.1}.

\section{Concluding remarks}

We have proved that Theorem \ref{thm1.1} is true when $n\geq 2$. The natural question would be:
is Theorem \ref{thm1.1} true when $n=1$? We conjecture the answer is yes. In fact, we only used the assumption that $n\geq 2$
in section \ref{section6.2}. Especially, we need to used the assumption $n\geq 2$ in the proof of
Lemma \ref{lem6.2} and \ref{lem6.7}. So if one can prove the results in section \ref{section6.2}
for the case when $n=1$,
one would be able to prove Theorem \ref{thm1.1} by following the same arguments of the remaining part of this paper.

One would also like to study the largest possible number $\delta_n$ in the simple bubble condition
$$\max_{S^{2n+1}}f/\min_{S^{2n+1}}f<\delta_n$$
such that Theorem \ref{thm1.1} holds. In Theorem \ref{thm1.1}, we have $\delta_n=2^{\frac{1}{n}}$. Is it the best possible?

\bibliographystyle{amsplain}

\begin{thebibliography}{30}

\bibitem{Aubin0} T. Aubin, \'{E}quations diff\'{e}rentielles non lin\'{e}aires et probl\`{e}me de Yamabe concernant
la courbure scalaire. \textit{J. Math. Pures Appl. (9)} \textbf{55} (1976), 269--296.


\bibitem{Chen&Xu} X. Chen and X. Xu, The scalar curvature flow on $S^n$---perturbation theorem revisited.
\textit{Invent Math.}  \textbf{187} (2012), 395--506.


\bibitem{Chang} K. C. Chang, Infinite dimensional Morse theory and multiple solution problems. Birhk\"{a}user, Basel (1993).

 \bibitem{Chang&Gursky&Yang} S.-Y. A. Chang, M. Gursky and P. Yang, Prescribing scalar curvature
on $S^2$ and $S^3$.  \textit{Calc. Var. Partial Differential
Equations} \textbf{1} (1993), 205--229.


\bibitem{Chang&Yang5} S.-Y. A. Chang and P. Yang, A perturbation result in
prescribing scalar curvature on $S^n$. \textit{Duke Math. J.}
\textbf{64} (1991), 27--69.

\bibitem{Chang&Yang3} S.-Y. A. Chang and P. Yang, Conformal deformation of metrics on $S\sp 2$. \textit{J. Differential Geom.}
\textbf{27} (1988),  259--296.



\bibitem{Chang&Yang4} S.-Y. A. Chang and P. Yang, Prescribing Gaussian
curvature on $S\sp 2$. \textit{Acta Math.} \textbf{159} (1987),
215--259.


\bibitem{Cheng} J. H. Cheng, Curvature functions for the sphere in
pseudohermitian geometry. \textit{Tokyo J. Math.} \textbf{14}
(1991), 151--163.


\bibitem{Chtioui&Elmehdi&Gamara} H. Chtioui, K. Elmehdi and N. Gamara, The Webster scalar curvature problem on the three dimensional CR manifolds.
\textit{Bull. Sci. Math.} \textbf{131} (2007), 361--374.


\bibitem{Dragomir} S. Dragomir and G. Tomassini, Differential Geometry and Analysis
on CR Manifolds, Progress in Mathematics, \textbf{246}, Birkh\"{a}user
Boston, Boston, MA, (2006).

\bibitem{Felli&Uguzzoni} V. Felli and F. Uguzzoni,
Some existence results for the Webster scalar curvature problem in presence of symmetry.
\textit{Ann. Math. Pura Appl. (4)} \textbf{183} (2004), 469--493.



\bibitem{Frank&Lieb} R. L. Frank and  E. H. Lieb, Sharp constants in
several inequalities on the Heisenberg group. \textit{Ann. of Math.
(2)} \textbf{176} (2012), 349--381.



\bibitem{Gamara2} N. Gamara, The CR Yamabe conjecture---the case $n=1$. \textit{J.
Eur. Math. Soc.} \textbf{3} (2001), 105--137.


\bibitem{Gamara3} N. Gamara, The prescribed scalar curvature on a 3-dimensional CR manifold.
\textit{Adv. Nonlinear Stud.} \textbf{2} (2002), 193--235.


\bibitem{Gamara1} N. Gamara and R. Yacoub, CR Yamabe conjecture---the
conformally flat case. \textit{Pacific J. Math.} \textbf{201}
(2001), 121--175.

\bibitem{Ho1} P. T. Ho,
Result related to prescribing pseudo-Hermitian scalar
curvature. \textit{Int. J. Math.} \textbf{24} (2013), 29pp.


\bibitem{Ho3} P. T. Ho, The Webster scalar curvature flow on CR sphere. Part I



\bibitem{Jerison&Lee1} D. Jerison and J.M. Lee, Extremals for the Sobolev inequality on
the Heisenberg group and the CR Yamabe problem. \textit{J. Amer.
Math. Soc.} \textbf{1} (1988), 1--13.

\bibitem{Jerison&Lee2} D. Jerison and J. M. Lee,
Intrinsic CR normal coordinates and the CR Yamabe problem.
\textit{J. Differential Geom.} \textbf{29} (1989), 303--343.

\bibitem{Jerison&Lee3} D. Jerison and J. M. Lee, The Yamabe problem on CR manifolds, \textit{J.
Differential Geom.} \textbf{25} (1987), 167--197.




\bibitem{Kazdan&Warner2} J. Kazdan and F. Warner, Scalar curvature and conformal deformation
of Riemannian structure. \textit{J.
Differential Geom.} \textbf{10} (1975), 113--134.


\bibitem{Malchiodi&Struwe} A. Malchiodi and M. Struwe, Michael, $Q$-curvature flow on $S^4$.
\textit{J. Differential Geom.} \textbf{73} (2006), 1--44.

\bibitem{Malchiodi&Uguzzoni} A. Malchiodi and F. Uguzzoni, A perturbation result in prescribing Webster curvature on the Heisenberg sphere,
\textit{J. Math. Pure. Appl.} \textbf{81} (2002), 983--997.


\bibitem{Riahia&Gamara} M. Riahia and N. Gamara,
Multiplicity results for the prescribed Webster scalar curvature on the three CR sphere under ``flatness condition".
\textit{Bull. Sci. Math.} 136 (2012), 72--95.




 \bibitem{Salem&Gamara} E. Salem and N. Gamara, The Webster scalar curvature revisited:
the case of the three dimensional CR sphere. \textit{Calc. Var.
Partial Differential Equations} \textbf{42} (2011), 107--136.


\bibitem{Schoen} R. Schoen, Conformal deformation of a Riemannian metric to constant scalar curvature.
\textit{J. Differential Geom.} \textbf{20} (1984), 479--495.



\bibitem{Struwe} M. Struwe, A flow approach to Nirenberg's problem. \textit{Duke Math. J.} \textbf{128} (2005),
 19--64.


\bibitem{Trudinger} N. S. Trudinger, Remarks concerning the conformal deformation of Riemannian structures
on compact manifolds. \textit{Ann. Scuola Norm. Sup. Pisa (3)} \textbf{22} (1968), 265--274.

\end{thebibliography}

\end{document}